\let\c@equation\c@subsubsection
\newtheorem{theorem}[subsubsection]{Theorem}
\newtheorem{lemma}[subsubsection]{Lemma}
\newtheorem{proposition}[subsubsection]{Proposition}
\newtheorem{corollary}[subsubsection]{Corollary}
\newtheorem{theoremintro}{Theorem}
\newtheorem{corollaryintro}[theoremintro]{Corollary}
\theoremstyle{definition}
\theoremstyle{remark}
\newtheorem{example}[subsubsection]{Example}
\newtheorem{remark}[subsubsection]{Remark}
\newtheorem{definition}[subsubsection]{Definition}
\newenvironment{alphenumerate}{\begin{enumerate}[label=\textup{(\alph*)}]}{\end{enumerate}}
\newcommand{\AAA}{\mathbb{A}}
\newcommand{\FF}{\mathbb{F}}
\newcommand{\ZZ}{\mathbb{Z}}
\newcommand{\RR}{\mathbb{R}}
\newcommand{\CC}{\mathbb{C}}
\newcommand{\TT}{\mathbb{T}}
\newcommand{\DD}{\mathbb{D}}
\newcommand{\EE}{\mathbb{E}}
\newcommand{\LL}{\mathbb{L}}
\newcommand{\MM}{\mathbb{M}}
\newcommand{\OO}{\mathbb{O}}
\newcommand{\HH}{\mathbb{H}}
\newcommand{\KK}{\mathbb{K}}
\newcommand{\bA}{\mathbf{A}}
\newcommand{\bb}{\mathbf{b}}
\newcommand{\bC}{\mathbf{C}}
\newcommand{\bE}{\mathbf{E}}
\newcommand{\bh}{\mathbf{h}}
\newcommand{\bi}{\mathbf{i}}
\newcommand{\bj}{\mathbf{j}}
\newcommand{\bk}{\mathbf{k}}
\newcommand{\bm}{\mathbf{m}}
\newcommand{\bn}{\mathbf{n}}
\newcommand{\bP}{\mathbf{P}}
\newcommand{\bQ}{\mathbf{Q}}
\newcommand{\bs}{\mathbf{s}}
\newcommand{\bu}{\mathbf{u}}
\newcommand{\bv}{\mathbf{v}}
\newcommand{\bx}{\mathbf{x}}
\newcommand{\by}{\mathbf{y}}
\newcommand{\bz}{\mathbf{z}}
\newcommand{\cA}{\mathcal{A}}
\newcommand{\cD}{\mathcal{D}}
\newcommand{\cE}{\mathcal{E}}
\newcommand{\cI}{\mathcal{I}}
\newcommand{\cL}{\mathcal{L}}
\newcommand{\cM}{\mathcal{M}}
\newcommand{\cN}{\mathcal{N}}
\newcommand{\cS}{\mathcal{S}}
\newcommand{\rB}{\mathrm{B}}
\newcommand{\rD}{\mathrm{D}}
\newcommand{\rG}{\mathrm{G}}
\newcommand{\rH}{\mathrm{H}}
\newcommand{\rI}{\mathrm{I}}
\newcommand{\rU}{\mathrm{U}}
\newcommand{\rX}{\mathrm{X}}
\newcommand{\rY}{\mathrm{Y}}
\newcommand{\sA}{\mathsf{A}}
\newcommand{\sC}{\mathsf{C}}
\newcommand{\bsalpha}{\boldsymbol{\alpha}}
\newcommand{\bsbeta}{\boldsymbol{\beta}}
\newcommand{\bsgamma}{\boldsymbol{\gamma}}
\newcommand{\bseta}{\boldsymbol{\eta}}
\newcommand{\bslambda}{\boldsymbol{\lambda}}
\newcommand{\bsmu}{\boldsymbol{\mu}}
\newcommand{\bsnu}{\boldsymbol{\nu}}
\newcommand{\bszeta}{\boldsymbol{\zeta}}
\DeclareMathOperator{\Aut}{Aut}
\DeclareMathOperator{\diag}{diag}
\DeclareMathOperator{\Exp}{Exp}
\DeclareMathOperator{\ev}{ev}
\DeclareMathOperator{\id}{id}
\DeclareMathOperator{\Log}{Log}
\DeclareMathOperator{\GL}{GL}
\DeclareMathOperator{\Lie}{Lie}
\DeclareMathOperator{\Mat}{Mat}
\DeclareMathOperator{\Char}{Char}
\DeclareMathOperator{\Span}{Span}
\DeclareMathOperator{\Gal}{Gal}
\DeclareMathOperator{\Hom}{Hom}
\DeclareMathOperator{\Tr}{Tr}
\DeclareMathOperator{\rank}{rank}
\DeclareMathOperator{\Res}{Res}
\DeclareMathOperator{\sgn}{sgn}
\DeclareMathOperator{\ord}{ord}
\DeclareMathOperator{\im}{Im}
\newcommand{\oF}{\mkern2.5mu\overline{\mkern-2.5mu F}}
\newcommand{\oK}{\mkern2.5mu\overline{\mkern-2.5mu K}}
\newcommand{\oM}{\mkern2.5mu\overline{\mkern-2.5mu M}}
\newcommand{\oalpha}{\overline{\alpha}}
\newcommand{\oGamma}{\overline{\Gamma}}
\newcommand{\ochi}{\overline{\chi}}
\newcommand{\oeta}{\overline{\eta}}
\newcommand{\okappa}{\overline{\kappa}}
\newcommand{\ophi}{\mkern2.5mu\overline{\mkern-2.5mu \phi}}
\newcommand{\opsi}{\mkern2.5mu\overline{\mkern-2.5mu \psi}}
\newcommand{\otheta}{\mkern2.5mu\overline{\mkern-2.5mu \theta}}
\newcommand{\oTheta}{\overline{\Theta}}
\newcommand{\oUpsilon}{\overline{\Upsilon}}
\newcommand{\perf}{\mathrm{perf}}
\newcommand{\Reg}{\mathrm{Reg}}
\newcommand{\sep}{\mathrm{sep}}
\newcommand{\St}{\mathrm{St}}
\newcommand{\nr}{\mathrm{nr}}
\newcommand{\tpi}{\widetilde{\pi}}
\newcommand{\tbA}{\widetilde{\bA}}
\newcommand{\tbP}{\widetilde{\bP}}
\newcommand{\tAA}{\widetilde{\AAA}}
\newcommand{\tKK}{\widetilde{\KK}}
\newcommand{\LLhat}{\widehat{\LL}}
\newcommand{\C}{\CC_{\infty}}
\newcommand{\oDD}{\overline{\DD}}
\newcommand{\oEE}{\overline{\EE}}
\newcommand{\soEE}{\mkern1mu\overline{\mkern-1mu \EE}}
\newcommand{\sobE}{\mkern1mu\overline{\mkern-1mu \bE}}
\newcommand{\oFF}{\overline{\FF}}
\newcommand{\tauid}{{\tau=\mathrm{id}}}
\newcommand{\iso}{\stackrel{\sim}{\longrightarrow}}
\newcommand{\mayeq}{\stackrel{?}{=}}
\newcommand{\power}[2]{{#1 [\![ #2 ]\!]}}
\newcommand{\laurent}[2]{{#1 (\!( #2 )\!)}}
\newcommand{\FZ}[2]{{#1 \langle #2 \rangle }}
\newcommand{\norm}[1]{\lvert #1 \rvert}
\newcommand{\dnorm}[1]{\lVert #1 \rVert}
\newcommand{\inorm}[1]{{\lvert #1 \rvert}_{\infty}}
\newcommand{\Aord}[2]{{[ #1 ]}_{#2}}
\newcommand{\bigAord}[2]{{\left[ #1 \right]}_{#2}}
\newcommand{\pd}{\partial}
\newcommand{\tr}{{\mathsf{T}}}
\newcommand{\apairing}[2]{\langle #1 \mathbin{,} #2 \rangle}
\newcommand{\bpairing}[2]{[ #1 \mathbin{,} #2 ]}
\newcommand{\assign}{\mathrel{\vcenter{\baselineskip0.5ex \lineskiplimit0pt
                     \hbox{\scriptsize.}\hbox{\scriptsize.}}}%
                     =}
\newcommand{\rassign}{=%
                     \mathrel{\vcenter{\baselineskip0.5ex \lineskiplimit0pt
                     \hbox{\scriptsize.}\hbox{\scriptsize.}}}%
                     }
\begin{document}

\title{Convolutions of Goss and Pellarin $L$-series}

\author{Wei-Cheng Huang}
\address{Department of Mathematics, University of Rochester, Rochester, NY 14627, U.S.A.}
\email{w.huang@rochester.edu}

\author{Matthew A. Papanikolas}
\address{Department of Mathematics, Texas A{\&}M University, College Station, TX 77843, U.S.A.}
\email{papanikolas@tamu.edu}

\subjclass{Primary 11M38; Secondary 11G09, 11M32}

\date{August 7, 2025}

\begin{abstract}
We establish special value results of convolutions of Goss and Pellarin $L$-series attached to Drinfeld modules that take values in Tate algebras. Applying the class module formula of Demeslay to certain rigid analytic twists of one Drinfeld module by another, we extend the special value formula for the Pellarin $L$-function associated to the Carlitz module and the Anderson-Thakur function to Drinfeld modules of arbitrary rank and their rigid analytic trivializations. By way of the theory of Schur polynomials these identities take the form of specializations of convolutions of Rankin-Selberg type. These convolution $L$-series are also identified with covolumes of Stark units.
\end{abstract}

\keywords{Goss $L$-series, Pellarin $L$-series, Drinfeld modules, Anderson $t$-modules, Tate algebras, Poonen pairings, class module formulas, Schur polynomials}

\maketitle

\tableofcontents

\section{Introduction} \label{S:Intro}

Let $\FF_q$ be a field with $q=p^m$ elements for $p$ a prime. For a variable $\theta$ we let $A \assign \FF_q[\theta]$ be a polynomial ring in~$\theta$ over~$\FF_q$, and let $K \assign \FF_q(\theta)$ be its fraction field. We take $K_{\infty} \assign \laurent{\FF_q}{\theta^{-1}}$ for the completion of $K$ at $\infty$, and let $\C$ be the completion of an algebraic closure of $K_{\infty}$. We normalize the $\infty$-adic norm $\inorm{\,\cdot\,}$ on $\C$ so that $\inorm{\theta} = q$, and letting $\deg \assign -\ord_{\infty} = \log_q \inorm{\,\cdot\,}$, we see that $\deg a = \deg_{\theta} a$ for any $a \in A$. Finally, we let $A_+$ denote the monic elements of $A$.

For a variable $z$ independent from $\theta$, we let $\TT_z \subseteq \power{\C}{z}$ denote the Tate algebra of power series that converge on the closed unit disk of~$\C$, and we let $\TT_z(K_\infty) \assign \TT_z \cap \power{K_{\infty}}{z}$. We let $\LLhat_z$ (resp.\ $\KK_{\infty}$) denote the completion of the fraction field of $\TT_z$ (resp.\ $\TT_z(K_{\infty})$), and we note that $\TT_z(K_{\infty}) = \laurent{\FF_q[z]}{\theta^{-1}}$ and $\KK_{\infty} = \laurent{\FF_q(z)}{\theta^{-1}}$. The Gauss norm $\dnorm{\,\cdot\,}$ on $\TT_z$ extends uniquely to $\LLhat_z$. We let $\AAA \assign \FF_q(z)[\theta]$.

\subsection{Motivation}
In groundbreaking work Pellarin~\cite{Pellarin12} introduced a new class of $L$-functions that take values in Tate algebras. In particular, he defined for $s \in \ZZ_+$,
\begin{equation}
L(\AAA,s) = \sum_{a \in A_+} \frac{a(z)}{a^s} \quad \in \TT_z(K_{\infty}),
\end{equation}
which for fixed $s$ is in fact an entire function of $z$. By ordering the sum appropriately by degree, $L(\AAA,s)$ can be extended to all $s\in \ZZ$. This $L$-function possesses intriguing special value formulas, such as
\begin{equation} \label{E:Pellarinvalueintro}
L(\AAA,1) = -\frac{\tpi}{(z-\theta)\omega_z},
\end{equation}
where $\tpi \in \C$ is the Carlitz period (see Example~\ref{Ex:Carlitz2}) and
\begin{equation} \label{E:omegazdef}
\omega_z \assign (-\theta)^{1/(q-1)} \prod_{i=0}^{\infty} \biggl( 1 - \frac{z}{\theta^{q^i}} \biggr)^{-1} \quad \in \TT_z^{\times}
\end{equation}
is the Anderson-Thakur function defined in~\cite{AndThak90}. Subsequently, Angl\`es, Pellarin, and Tavares Ribeiro~\cite{APT16} considered classes of Drinfeld modules over $\LLhat_z$ defined by conjugating the Carlitz module by~$\omega_z$ or similar functions. In particular,
if we let $\bA \assign \FF_q(z)[t]$ be the polynomial ring in a new variable~$t$, they defined the $\FF_q(z)$-algebra homomorphism $\bC:\bA \to \AAA[\tau]$ so that
\begin{equation}
\bC_t = \omega_z^{-1} \cdot \sC_t \cdot \omega_z = \theta + (z-\theta)\tau.
\end{equation}
Then $\bC$ is a twist of the Carlitz module~$\sC$ (see Example~\ref{Ex:Carlitz1}) by $\omega_z$, taking values in the twisted polynomial ring $\AAA[\tau]$ in the $q$-th power Frobenius operator~$\tau$ on $\C$ that is extended $\FF_q(z)$-linearly to $\LLhat_z$ (see \S\ref{SSS:twistedpolys}).

About the same time Taelman~\cites{Taelman09, Taelman10, Taelman12} developed a theory of special values of Goss $L$-functions attached to Drinfeld modules defined over finite extensions of~$K$. Although at first unrelated to Pellarin's $L$-functions, Angl\`{e}s, Pellarin, and Tavares Ribeiro~\cite{APT16} used Demeslay's extension~\cites{DemeslayPhD, Demeslay22} of Taelman's class module formula (see Theorem~\ref{T:Demeslay}) to establish a connection. Demeslay's formula here is an identity in $\KK_{\infty}$,
\begin{equation} \label{E:DemeslayC}
\prod_f \frac{\Aord{\FF_f(z)}{\AAA}}{\Aord{\bC(\FF_f(z))}{\AAA}} = \Reg_{\bC} \cdot \Aord{\rH(\bC)}{\AAA}.
\end{equation}
On the left-hand side the product is taken over all irreducible $f \in A_+$, for which we write $\FF_f \assign A/fA$, and for a finitely generated torsion $\bA$-module $M$, $\Aord{M}{\AAA}$ denotes the \emph{$\AAA$-order} of $M$, i.e., the monic generator of the Fitting ideal of $M$ in $\bA$, coerced into $\AAA$. Moreover,
\[
\prod_f \frac{\Aord{\FF_f(z)}{\AAA}}{\Aord{\bC(\FF_f(z))}{\AAA}} = \prod_f \frac{f}{f-f(z)}
= L(\AAA,1).
\]
The right-hand side of~\eqref{E:DemeslayC} includes the regulator of $\bC$ and the $\AAA$-order of the class module $\rH(\bC)$. In this case $\Reg_{\bC}$ can be identified with the right-hand side of~\eqref{E:Pellarinvalueintro} and the class module is trivial (see also Example~\ref{Ex:Carlitz3} and \S\ref{SS:Pellarin}). Thus Demeslay's identity bridges Taelman special $L$-value identities and the special value formulas of Pellarin.

Using this story about the Carlitz module as a guide, the goal of the present paper has been to address three questions that arise naturally in the context of Drinfeld modules of arbitrary rank defined over global function fields, especially defined over $A$ itself.
\begin{itemize}
\item What is a reasonable definition of a Pellarin $L$-series attached to a Drinfeld module over~$A$?
\item To what extent can the twist $\bC$ of the Carlitz module by~$\omega_z$ be generalized in the context of Drinfeld modules of higher rank?
\item What does Demeslay's class module formula reveal about special values of their $L$-functions?
\end{itemize}
Answers to these questions in the case that a Drinfeld module is conjugated by $\omega_z$ have been obtained by Angl\`es and Tavares Ribeiro~\cite{AT17} and Gezmi\c{s}~\cite{Gezmis19} (see Example~\ref{Ex:LphixC}). Their work indicates that rather than this being a circumstance tied to a single Drinfeld module, that one could consider interactions between two Drinfeld modules.

Thus to answer these questions in full, for two Drinfeld modules $\phi$ and $\psi$ defined over~$A$, we define a $t$-module $\EE(\phi \times \psi)$ that is the twist of $\phi$ by the rigid analytic trivialization of~$\psi$. Then its associated $L$-function includes a Rankin-Selberg type convolution of a Goss $L$-series and a Pellarin $L$-series (see Theorem~\ref{T:LEEintro}) and can be evaluated using Demeslay's identity (see Theorem~\ref{T:EEAordintro} and Corollaries~\ref{C:Lmunurxrintro} and~\ref{C:Lmunurxlintro}). We now summarize these results.

\subsection{Rigid analytic twists}
Let $\sA = \FF_q[t]$, and let $\phi$, $\psi : \sA \to A[\tau]$ be Drinfeld modules defined over~$A$ by
\begin{equation} \label{E:phipsidefintro}
\phi_t = \theta + \kappa_1 \tau + \cdots + \kappa_r \tau^r, \quad
\psi_t = \theta + \eta_1 \tau + \cdots + \eta_{\ell} \tau^{\ell}, \quad \kappa_r, \eta_\ell \in \FF_q^{\times}.
\end{equation}
Thus $\phi$ has rank $r$ and $\psi$ has rank $\ell$, and moreover because their leading coefficients are in $\FF_q^{\times}$, both $\phi$ and $\psi$ have everywhere good reduction. Using the theory of Anderson generating functions, we define a rigid analytic trivialization $\Upsilon_{\psi,z} \in \GL_{\ell}(\TT_z)$ for $\psi$ (see \eqref{E:Upsilondef}) such that if
\begin{equation} \label{E:Thetadefintro}
\Theta_{\psi,z} = \begin{pmatrix}
0 & \cdots & 0 & (z-\theta)/\eta_{\ell} \\
1 & \cdots & 0 & -\eta_1/\eta_{\ell} \\
\vdots & \ddots & \vdots & \vdots \\
0 & \cdots & 1 & -\eta_{\ell-1}/\eta_{\ell}
\end{pmatrix} \quad \in \Mat_{\ell}(\AAA),
\end{equation}
then
\[
\Upsilon_{\psi,z}^{(1)} = \Upsilon_{\psi,z} \Theta_{\psi,z},
\]
where $\Upsilon_{\psi,z}^{(1)}$ denotes the Frobenius twists of the entries of $\Upsilon_{\psi,z}$ (see \S\ref{SSS:Frobops}). We then define $\EE = \EE(\phi \times \psi) : \bA \to \Mat_{\ell}(\AAA[\tau])$ to be the Anderson $t$-module determined by
\begin{align} \label{E:EEtintro}
\EE_t &= \Upsilon_{\psi,z}^{-1} \cdot \phi_t^{\oplus \ell} \cdot \Upsilon_{\psi,z}\\
&= \theta \rI_{\ell} + \kappa_1 \Theta_{\psi,z} \tau + \kappa_2 \Theta_{\psi,z} \Theta_{\psi,z}^{(1)} \tau^2 + \cdots + \kappa_r \Theta_{\psi,z} \Theta_{\psi,z}^{(1)} \cdots \Theta_{\psi,z}^{(r-1)} \tau^{r}, \notag
\end{align}
where $\rI_{\ell}$ is the $\ell \times \ell$ identity matrix.

Thus $\EE(\phi \times \psi)$ is the conjugation of the direct sum $\phi^{\oplus \ell}$ by the rigid analytic trivialization $\Upsilon_{\psi,z}$. It induces an $\bA$-module structure on $\LLhat_z^{\ell}$, and as such it is a $t$-module over~$\LLhat_z$. It was shown in~\cite{GezmisP19} that the induced exponential function $\Exp_{\phi} : \LLhat_z \to \LLhat_z$ over $\LLhat_z$ is surjective, and it follows that $\Exp_{\EE} : \LLhat_z^{\ell} \to \LLhat_z^{\ell}$ is also surjective. Moreover, if we let $\Lambda_{\phi} = \ker \Exp_{\phi}$ and $\Lambda_{\EE} = \ker \Exp_{\EE}$, then
\[
\Lambda_{\EE} = \Upsilon_{\psi,z}^{-1}\, \Lambda_{\phi}^{\oplus \ell},
\]
and $\Lambda_{\EE}$ is a free $\AAA$-module of rank $r\ell$. Furthermore, for $\nu \in \sA$, $\nu \neq 0$, we find that the $\nu$-torsion submodule $\EE[\nu] = \{ \bx \in \LLhat_z^{\ell} \mid \EE_{\nu}(\bx) = 0 \}$ yields an isomorphism of $\bA$-modules, $\EE[\nu] \cong (\bA/\nu \bA)^{r\ell}$. To that end if $\lambda \in \sA_+$ is irreducible, then we have a Tate module
\[
T_{\lambda}(\EE) = \varprojlim \EE[\lambda^m] \cong \bA_{\lambda}^{r \ell},
\]
where $\bA_{\lambda}$ is the $\lambda$-adic completion of $\bA$. See \S\ref{SS:EEprops} for more details.

Demeslay's class module formula (Theorem~\ref{T:Demeslay}) applies generally to Anderson $t$-modules over Tate algebras, and in particular to $\EE$. In this case it states
\begin{equation} \label{E:DemeslayEEintro}
\prod_f \frac{\Aord{\Lie(\soEE)(\FF_f(z))}{\AAA}}{\Aord{\soEE(\FF_f(z))}{\AAA}} = \Reg_{\EE} \cdot \bigAord{\rH(\EE)}{\AAA} \quad \in \KK_{\infty},
\end{equation}
where as in \eqref{E:DemeslayC} the product is taken over all irreducible $f \in A_+$. Here $\oEE$ denotes the reduction of $\EE$ modulo $f$, $\Reg_{\EE}$ is the regulator of $\EE$, and $\rH(\EE)$ is its class module. Our initial task is to determine the factors in this product for each~$f$, and then we associate it to the special value of an $L$-function.

\subsection{Characteristic polynomials of Frobenius}
For our Drinfeld module $\phi$ in \eqref{E:phipsidefintro}, if we fix $f \in A_+$ irreducible of degree $d$ and let $\lambda \in \sA_+$ be irreducible with $\lambda(\theta) \neq f$, then by work of Gekeler, Hsia, Takahashi, and Yu~\cites{Gekeler91, HsiaYu00, Takahashi82}, the characteristic polynomial $P_{\phi,f}(X) = \Char(\tau^d,T_{\lambda}(\ophi),X) = X^r + c_{r-1} X^{r-1} + \cdots + c_0 \in A[X]$ of $\tau^d$ acting on $T_{\lambda}(\ophi)$ satisfies $c_0 = (-1)^r \ochi_{\phi}(f) f$, where $\chi_{\phi}(a) \assign ((-1)^{r+1} \kappa_r)^{\deg a}$ and $\ochi_{\phi} = \chi_{\phi}^{-1}$, and moreover,
\begin{equation} \label{E:ophiorderintro}
\bigAord{\ophi(\FF_f)}{A} =  (-1)^r \chi_{\phi}(f) \cdot P_{\phi,f}(1).
\end{equation}
See \S\ref{SS:munu} for more details.
Our first result is to establish an identity corresponding to~\eqref{E:ophiorderintro} for $\Aord{\soEE(\FF_f(z))}{\AAA}$. Letting $P_{\psi,f}^{\vee}(X) = \Char(\tau^d,T_{\lambda}(\opsi)^{\vee},X) \in K[X]$, where $T_{\lambda}(\opsi)^{\vee}$ is the dual of $T_{\lambda}(\opsi)$, we let $P_{\psi,f(z)}^{\vee}(X) \in \FF_q(z)[X]$ denote $P_{\psi,f}^{\vee}(X)|_{\theta=z}$. We then define
\begin{equation} \label{E:bPfintro}
\bP_f(X) \assign \bigl( P_{\phi,f} \otimes P_{\psi,f(z)}^{\vee} \bigr)(X) \quad \in \AAA[X],
\end{equation}
where if $P(X) = (X-\alpha_1) \cdots (X-\alpha_r)$ and $Q(X) = (X-\beta_1) \cdots (X-\beta_{\ell})$, then $(P\otimes Q)(X) = \prod_{i,j}(X- \alpha_i\beta_j)$. The reason to study $\bP_f(X)$ is that if $\alpha_f \in \Gal(K^{\sep}/K)$ is a Frobenius element for~$f$, then (see Proposition~\ref{P:EEcharpolys})
\begin{equation} \label{E:EEcharpolysintro}
\bP_f(X) = \Char(\alpha_f,T_{\lambda}(\EE(\phi\times \psi)),X).
\end{equation}
This is a key ingredient for the following result (stated later as Theorem~\ref{T:EEAord}).

\begin{theoremintro} \label{T:EEAordintro}
Let $f \in \sA_+$ be irreducible. Then
\[
\bigAord{\soEE(\FF_f(z))}{\AAA} = (-1)^{r\ell} \chi_{\phi}(f)^{\ell}\, \ochi_{\psi}(f)^r \cdot \bP_f(1).
\]
\end{theoremintro}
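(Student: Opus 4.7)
My approach is to reduce Theorem~\ref{T:EEAordintro} to a characteristic-polynomial computation on the Tate module $T_\lambda(\EE)$ at an auxiliary irreducible $\lambda \in \sA_+$ coprime to $f$. This is a direct generalization of the Gekeler--Hsia--Takahashi--Yu identity~\eqref{E:ophiorderintro} from rank-$r$ Drinfeld modules to the higher-dimensional $t$-module $\soEE$, and the bridge between the two formulas is Proposition~\ref{P:EEcharpolys}.

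I would first establish a $t$-module analog of~\eqref{E:ophiorderintro}: for any Anderson $t$-module $\bG$ defined over $\FF_f(z)$ with good reduction (so that $T_\lambda(\bG)$ is $\bA_\lambda$-free and $\alpha_f - 1$ acts invertibly on it), the $\AAA$-order $\bigAord{\bG(\FF_f(z))}{\AAA}$ is a unit multiple of $\det(1 - \alpha_f \mid T_\lambda(\bG))$, with the unit uniquely determined by the monicity requirement on elements of $\bA$. The proof follows the familiar template: construct a Poonen-type perfect $\bA$-bilinear pairing between $\bG(\FF_f(z))$ and the cokernel of $(\alpha_f - 1)$ on $T_\lambda(\bG)$, then identify Fitting ideals of such cokernels with $\det(1 - \alpha_f)$. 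Here the good-reduction hypotheses $\kappa_r, \eta_\ell \in \FF_q^\times$ are what guarantee $T_\lambda(\EE)$ is $\bA_\lambda$-free of rank $r\ell$ and enable the perfectness of the pairing.

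Combining this general principle with Proposition~\ref{P:EEcharpolys}, equation~\eqref{E:EEcharpolysintro} then yields
\[
\bigAord{\soEE(\FF_f(z))}{\AAA} = u_f \cdot \bP_f(1)
\]
for a unique unit $u_f \in \FF_q(z)^\times$. To pin down $u_f$ I would expand
\[
\bP_f(1) = \prod_{i,j}(1 - \alpha_i \beta_j) = \prod_{j} \beta_j^r\, P_{\phi,f}(1/\beta_j),
\]
and invoke the identities $\prod_i \alpha_i = \ochi_\phi(f)\, f$ and $\prod_j \beta_j = \chi_\psi(f)/f(z)$, which follow from the constant-term formula $c_0 = (-1)^r \ochi_\phi(f) f$ of Section~\ref{SS:munu} applied to $P_{\phi,f}$ and (dually, after substitution $\theta = z$) to $P_{\psi,f(z)}^\vee$. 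Isolating the dominant $\theta$-term of $\bP_f(1)$ via these formulas then reads off $u_f = (-1)^{r\ell}\chi_\phi(f)^\ell \ochi_\psi(f)^r$, giving the claimed identity.

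The principal technical hurdle is the $t$-module Poonen-pairing argument underlying the first step. While the shape of the argument is classical, in this mixed $\theta$/$z$ context one must first verify that $T_\lambda(\EE)$ really is $\bA_\lambda$-free of rank $r\ell$ with $\alpha_f - 1$ invertible, and then that the constructed pairing is both $\bA$-bilinear and perfect. The explicit companion-type description of $\EE_t$ via the matrices $\Theta_{\psi,z}$ in~\eqref{E:EEtintro}, together with the $\bA$-module structure $\Lambda_{\EE} = \Upsilon_{\psi,z}^{-1}\Lambda_\phi^{\oplus\ell}$ on the period lattice, is what makes these Frobenius-equivariance properties accessible.
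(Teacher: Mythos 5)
Your overall strategy matches the paper's: reduce to a general principle equating $\bigAord{\cE(\FZ{\FF_f}{Z})}{\cA}$ with $\Char(\tau^d,T_\lambda(\cE),1)$ up to a unit (Theorem~\ref{T:charpoly1}, proved via Poonen-type pairings in \S\ref{SS:pairings}), apply Proposition~\ref{P:EEcharpolys} to identify the characteristic polynomial with $\bP_f(1)$, and then pin down the unit by a degree estimate. That is exactly the shape of the paper's argument. However, there are two genuine gaps and one arithmetic error to flag.

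First, your argument only covers $f \neq \theta$. Proposition~\ref{P:EEcharpolys}, which you invoke, has the hypothesis $f \neq \theta$; the paper proves the $f=\theta$ case of Theorem~\ref{T:EEAord} by a separate direct computation (writing down explicit companion-type matrices for $\oGamma_{\phi}$ and $\oTheta_{\psi,z}$ at $\otheta = 0$ and conjugating by a block-diagonal matrix). You would need an analogous special argument for $f=\theta$: the reduction argument breaks down there because $\omega_z \notin \power{O^{\nr}_\theta}{z}^\times$ and the coefficients of the Anderson generating functions may fail to be $\theta$-integral.

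Second, even for $f \neq \theta$, verifying that $T_\lambda(\oEE)$ is free of the expected rank (equivalently, that $\oEE[\lambda^m]$ has full dimension, Definition~\ref{D:Tatemodules}(c)) is not something you can carry out directly over $\FF_f(z)$. The point is that $\Upsilon_{\psi,z} \in \GL_\ell(\power{O^{\nr}_f}{z})$ but its reduction $\oUpsilon_{\psi,z}$ modulo $f$ has entries in $\laurent{\oFF_f}{z}$, not in $\oFF_f(z)$. The paper therefore carries out the whole pairing/full-dimension argument with $Z=\{z\}$ and $\FZ{\FF_q}{Z} = \laurent{\FF_q}{z}$ (so over $\laurent{\oFF_f}{z}$), where the isomorphism $\oUpsilon_{\psi,z}\colon \oEE \to \smash{\ophi}^{\oplus\ell}$ is available, and only at the very end transfers the resulting identity of $\laurent{\FF_q}{z}[t]$-orders to $\bA$-orders using the faithfully flat base change $\oEE(\laurent{\FF_f}{z}) \cong \laurent{\FF_q}{z}\otimes_{\FF_q(z)}\oEE(\FF_f(z))$. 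Your proposal treats this as merely a ``technical hurdle'' to be verified, but without passing to the Laurent series field the verification has no obvious route; this is precisely why the framework in \S\ref{S:Anderson} was built over the two options $\FZ{F}{Z} \in \{F(Z), \laurent{F}{Z}\}$ simultaneously.

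Third, an arithmetic point: carrying out your degree computation honestly gives the unit $u_f = (-1)^{r\ell}\chi_\phi(f)^{\ell}\,\ochi_\psi(f)^{r}\cdot f(z)^{r}$, not $(-1)^{r\ell}\chi_\phi(f)^{\ell}\,\ochi_\psi(f)^{r}$. Indeed the dominant $\theta$-term of $\bP_f(1)$ is $\bP_f(0) = (-1)^{r\ell}(\ochi_\phi(f) f)^{\ell}(\chi_\psi(f)/f(z))^{r}$, whose leading $\theta^{d\ell}$-coefficient is $(-1)^{r\ell}\ochi_\phi(f)^{\ell}\chi_\psi(f)^{r}/f(z)^{r}$; the unit making $\bP_f(1)$ monic is the reciprocal of this, which carries a factor $f(z)^r$. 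Without it, the right-hand side is not even an element of $\AAA$ (try $\phi=\psi=\sC$: you would get $(f-f(z))/f(z)$, not $f - f(z)$). The statement as displayed in Theorem~A should be read as the body version Theorem~\ref{T:EEAord}, which includes $f(z)^r$.

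Finally, a smaller inaccuracy: the Poonen-type pairing used in the paper is between $\cE(\FZ{\FF_f}{Z})$ and $\cE^*(\FZ{\FF_f}{Z})$ (the trace pairing for $\eta = (1-\tau^d)\rI_\ell$, Proposition~\ref{P:apairing} and \S\ref{SSS:1-Frob}), and the order is then extracted through the $t$-motive via Lemma~\ref{L:Ffpoints} and equation~\eqref{E:detI-G}, not by pairing directly with the cokernel of $\alpha_f-1$ on $T_\lambda$. Your description collapses these two steps into one; the spirit is right but the stated construction would require its own justification.
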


Unfortunately the techniques of \cites{APT16, AT17, Demeslay22, Gezmis19} used to evaluate such $\AAA$-orders in the case where one twists by the Carlitz module (i.e., $\psi = \sC$) relied on the fact that~$\sC$ is rank~$1$. In order to account for $\psi$ having higher rank, we develop general results about Anderson $t$-modules in finite characteristic that we apply to $\EE$ and prove Theorem~\ref{T:EEAordintro}.

The main line of our argument is to adapt constructions of Poonen~\cite{Poonen96} of Galois equivariant and non-degenerate pairings on torsion submodules of Drinfeld modules and their adjoints over finite fields to higher dimensions and to more general fields. This takes up the bulk of \S\ref{S:Anderson}. What we produce is a bit more general than what we require, though we anticipate it will be useful for future work (e.g., see Corollary~\ref{C:charpoly1}).

For $n \geqslant 0$, let $Z = \{ z_1, \dots, z_n\}$ denote a set of variables (if $n=0$, then $Z= \emptyset$), and let $\FZ{\FF_q}{Z}$ denote either $\FF_q(Z)$ or $\laurent{\FF_q}{Z}$. We let $\cA = \FZ{\FF_q}{Z}[t]$. For fixed $f \in A_+$ irreducible of degree $d$, we have a structure map $\iota:\cA \to \FZ{\FF_f}{Z}$ that extends the map $\sA \to \FF_f$ sending $t$ to a root of~$f$. We consider an Anderson $t$-module $\cE : \cA \to \Mat_{\ell}(\FZ{\FF_f}{Z}[\tau])$ over $\FZ{\FF_f}{Z}$ together with its adjoint $\cE^* : \cA \to \Mat_{\ell}(\FZ{\FF_f}{Z}[\sigma])$ (see \S\ref{SS:cAtmodules}), and for $\nu \in \cA$, following Poonen we define an $\FZ{\FF_q}{Z}$-bilinear pairing (see Proposition~\ref{P:apairing})
\[
\apairing{\cdot}{\cdot}_{\nu} : \cE[\nu] \times \cE^{*}[\nu] \to \FZ{\FF_q}{Z}
\]
that is $\Gal(\oFF_f/\FF_f)$-equivariant. If the torsion modules $\cE[\nu]$ and $\cE^*[\nu]$ have maximal dimension over $\FZ{\FF_q}{Z}$, then the pairing is non-degenerate.

For $\lambda \in \sA_+$ irreducible with $\lambda(\theta) \neq f$, we obtain a pairing (see Theorem~\ref{T:Tatepairing})
\[
\bpairing{\cdot}{\cdot}_{\lambda} : T_{\lambda}(\cE) \times T_{\lambda}(\cE^*) \to \cA_{\lambda}
\]
that is $\cA_{\lambda}$-bilinear and $\Gal(\oFF_f/\FF_f)$-equivariant. Moreover, under certain conditions (see Definition~\ref{D:Tatemodules}), which are satisfied in the case $\cE =\EE(\phi \times \psi)$, the pairing $\bpairing{\cdot}{\cdot}_{\lambda}$ is non-degenerate. The main result in this part of the paper (Theorem~\ref{T:charpoly1}) is the following.

\begin{theoremintro} \label{T:charpoly1intro}
Let $\cE : \cA \to \Mat_{\ell}(\FZ{\FF_f}{Z}[\tau])$ be a $t$-module defined over $\FZ{\FF_f}{Z}$. Let $\lambda \in \sA_+$ be irreducible with $\lambda(\theta) \neq f$ such that Definitions~\ref{D:Tatemodules}(a)--(c) are satisfied. Then
\[
\bigl[ \cE(\FZ{\FF_f}{Z}) \bigr]_{\cA} = \gamma \cdot \Char(\tau^d,T_{\lambda}(\cE),1),
\]
where $\gamma \in \FZ{\FF_q}{Z}^{\times}$ uniquely forces the right-hand expression to be monic in~$t$.
\end{theoremintro}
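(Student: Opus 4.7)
The plan is to prove the theorem by a $\lambda$-adic analysis of the $\cA$-module $M \assign \cE(\FZ{\FF_f}{Z})$ in the spirit of the classical characteristic polynomial formula for Drinfeld modules recalled in~\eqref{E:ophiorderintro}. For each irreducible $\lambda' \in \sA_+$ with $\lambda'(\theta) \neq f$, I will identify the $\lambda'$-primary part $M[(\lambda')^\infty]$ with the kernel of $\tau^d - \id$ on $\cE[(\lambda')^\infty]$; its $\cA_{\lambda'}$-Fitting ideal then coincides, via the multiplicativity of Fitting ideals along short exact sequences of finite modules over the PID $\cA_{\lambda'}$, with the Fitting ideal of $\mathrm{coker}(\tau^d - \id \mid T_{\lambda'}(\cE))$, which is generated by $\det(\tau^d - \id \mid T_{\lambda'}(\cE)) = (-1)^{r\ell}\Char(\tau^d, T_{\lambda'}(\cE), 1)$. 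The non-degenerate Galois-equivariant Tate pairing $\bpairing{\cdot}{\cdot}_{\lambda'}$ of Theorem~\ref{T:Tatepairing} is what promotes these local statements to a global equality in~$\cA$.

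In detail, the first step is to verify that $M$ is a finitely generated torsion $\cA$-module (using that $\cE$ is defined over a field of $\sA$-characteristic~$f$ together with the freeness conditions of Definition~\ref{D:Tatemodules}) and to decompose $M = \bigoplus_{\lambda'} M[(\lambda')^\infty]$ by the structure theorem for $\cA$. The second step is the local identification $M[(\lambda')^m] = \cE[(\lambda')^m]^{\tau^d = \id}$, which, combined with the equality of Fitting ideals of kernel and cokernel of an endomorphism of a finite module over a PID and passage to the limit $m \to \infty$, yields
\[
\bigl[ M[(\lambda')^\infty] \bigr]_{\cA_{\lambda'}} = \bigl( \det(\tau^d - \id \mid T_{\lambda'}(\cE)) \bigr).
\]
The third step is to establish that $\Char(\tau^d, T_\lambda(\cE), X)$ in fact lies in $\cA[X]$ and is independent of~$\lambda$ (a priori it only lies in $\cA_\lambda[X]$), using the Galois-equivariant pairing with $T_\lambda(\cE^*)$ to give a ``motivic'' description of the Frobenius action whose coefficients lie intrinsically in~$\cA$. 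Assembling the local identities over all $\lambda'$ then gives the global equality $[M]_\cA \cdot \cA = \Char(\tau^d, T_\lambda(\cE), 1) \cdot \cA$ in $\cA$, and the unit $\gamma \in \FZ{\FF_q}{Z}^\times$ is pinned down by the monicity-in-$t$ normalization of $[M]_\cA$.

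The main obstacle is the third step: the $\cA$-rationality and $\lambda$-independence of the characteristic polynomial of $\tau^d$. In the classical rank-$r$ Drinfeld module case this is a substantial result of Gekeler, Hsia--Yu, and Takahashi; here it must be re-proved in the higher-dimensional $t$-module setting with auxiliary variables~$Z$, and this is precisely the motivation for developing the generalized Poonen pairings of \S\ref{S:Anderson}. The non-degenerate pairing with $T_\lambda(\cE^*)$ supplies the required self-duality by transporting the action of $\tau^d$ between $T_\lambda(\cE)$ and its adjoint, forcing a functional equation whose coefficients are seen to be $\cA$-valued. Once this rationality is in hand, the remaining steps are a routine assembly of the local Fitting-ideal computations.
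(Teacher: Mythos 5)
Your strategy is a $\lambda$-adic local-to-global assembly in the spirit of Yu's method for Drinfeld modules over finite fields, but the paper explicitly flags that this route breaks down in the present setting (see the discussion immediately preceding Theorem~\ref{T:charpoly1}): ``the methods of Yu \dots do not readily extend to Anderson $\cA$-modules, as we do not have as fine control over $\nu$-torsion when $\nu \in \cA \setminus A$.'' The difficulty is twofold, and your write-up does not resolve either part.

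First, your decomposition of $M = \cE(\FZ{\FF_f}{Z})$ into primary parts runs over irreducibles $\lambda' \in \sA_+ = \FF_q[t]$, but $M$ is a module over $\cA = \FZ{\FF_q}{Z}[t]$, whose primes strictly contain those of $\sA$ once $Z \neq \emptyset$. A prime of $\cA$ such as $t - z_1$ could in principle appear in the Fitting ideal of $M$, and for such primes there is no Tate module $T_{\lambda'}(\cE)$ to invoke and no torsion control available. Second, even for $\lambda' \in \sA_+$ with $\lambda'(\theta) \neq f$, the snake-lemma identification of $\ker(\tau^d - 1 \mid \cE[(\lambda')^\infty])$ with $\mathrm{coker}(\tau^d - 1 \mid T_{\lambda'}(\cE))$, and the resulting Fitting-ideal equality, requires $\cE[(\lambda')^m]$ to have full dimension for all $m$ — but the hypothesis of the theorem only grants Definition~\ref{D:Tatemodules}(c) for a \emph{single} chosen $\lambda$. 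You would need a separate argument that fullness at one prime propagates to all primes, and no such argument is supplied.

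Your third step also misidentifies the source of $\cA$-rationality. The Galois-equivariant pairing of Theorem~\ref{T:Tatepairing} gives a duality $T_{\lambda}(\cE^*) \cong T_{\lambda}(\cE)^{\vee}$ and hence a functional equation relating two characteristic polynomials inside $\cA_{\lambda}[X]$; by itself it does not force the coefficients down into $\cA$. The paper obtains $\cA$-rationality (and $\lambda$-independence) in Corollary~\ref{C:charpolys} by an elementary $t$-motive computation: $\Char(\tau^d, T_\lambda(\cE), X) = \Char(\rG, X)$ with $\rG = \Gamma^{(d-1)} \cdots \Gamma^{(1)} \Gamma$, and the invariance $\Char(\rG,X)^{(1)} = \Char(\rG,X)$ follows from $\Gamma^{(d)} = \Gamma$. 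The Poonen pairing is used for something else entirely — namely Corollary~\ref{C:kerandkerstar}, which yields the $\cA$-module isomorphism $\cE(\FZ{\FF_f}{Z}) \cong \cE^*(\FZ{\FF_f}{Z})$.

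The paper's actual proof avoids the $\lambda$-adic assembly altogether. Using Lemma~\ref{L:Ffpoints}(a), it realizes $\cE^*(\FZ{\FF_f}{Z})$ as the $\tau$-fixed points of $\Mat_{1\times r}(\FZ{\oFF_f}{Z}[t])/(\cdot)(\rI-\rG)$, shows by a descent argument (extension of scalars from $\FZ{\FF_q}{Z}$ to $\FZ{\oFF_f}{Z}$, using that $\cE^*(\FZ{\FF_f}{Z})$ automatically has full dimension for $\eta = (1-\tau^d)\rI_{\ell}$ by \S\ref{SSS:1-Frob}) that $\bigl[ \cE^*(\FZ{\FF_f}{Z}) \bigr]_{\cA} = \gamma \cdot \det(\rI - \rG)$, identifies $\det(\rI - \rG)$ with $\Char(\tau^d, T_{\lambda}(\cE), 1)$ via Corollary~\ref{C:charpolys}, and then transfers to $\cE$ via Corollary~\ref{C:kerandkerstar}. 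This computes the global $\cA$-order in one step, with no need to understand $\nu$-torsion prime by prime.
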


When $f \neq \theta$, Theorem~\ref{T:EEAordintro} follows from Theorem~\ref{T:charpoly1intro} by using~\eqref{E:EEcharpolysintro}. It is here that we need to work over $\laurent{\oFF_f}{z}$ as well as $\oFF_f(z)$, since the entries of $\Upsilon_{\psi,z}$ reduce to elements of $\laurent{\oFF_f}{z}$ modulo primes different from $\theta$. For $f=\theta$ we employ a separate direct argument.

\subsection{Convolution \texorpdfstring{$L$}{L}-functions and special values}
For the Drinfeld module $\phi$ from \eqref{E:phipsidefintro}, Goss defined two $L$-functions, for $s \in \ZZ$,
\[
L(\phi^{\vee},s) = \prod_f Q_{\phi,f}^{\vee} \bigl( f^{-s} \bigr)^{-1}, \quad 
L(\phi,s) = \prod_f Q_{\phi,f}\bigl( f^{-s} \bigr)^{-1},
\]
where $Q_{\phi,f}(X)$ and $Q_{\phi,f}^{\vee}(X)$ are the reciprocal polynomials of $P_{\phi,f}(X)$ and $P_{\phi,f}^{\vee}(X)$ respectively. These give rise to multiplicative functions $\mu_{\phi}$, $\nu_{\phi}: A_+ \to A$ such that
\[
\sum_{m=1}^{\infty} \mu_{\phi}(f^m) X^m = Q_f^{\vee}(fX)^{-1}, \quad
\sum_{m=1}^{\infty} \nu_{\phi}(f^m) X^m = Q_f(X)^{-1}.
\]
We similarly define $\mu_{\psi}$ and $\nu_{\psi}$. See \S\ref{SS:munu} for details. In a similar fashion we define
\[
L(\EE^{\vee},s) = L(\EE(\phi \times \psi)^{\vee},s) \assign \prod_f \bQ_f^{\vee} \bigl( f^{-s} \bigr)^{-1}, \quad s \geqslant 0,
\]
where $\bQ_f^{\vee}(X)$ is the reciprocal polynomial of $\bP_{f}^{\vee}(X) = (P_{\phi,f}^{\vee} \otimes P_{\psi,f(z)})(X) \in K[z][X]$. This product for $L(\EE^{\vee},s)$ converges in $\TT_z(K_{\infty})$ for integers $s \geqslant 0$. Moreover, it follows from Theorem~\ref{T:EEAordintro} that (see Proposition~\ref{P:LEE0})
\begin{equation} \label{E:LEE0intro}
L(\EE^{\vee},0) = \prod_f \frac{\bigAord{\FF_f(z)^{\ell}}{\AAA}}{\bigAord{\soEE(\FF_f(z))}{\AAA}},
\end{equation}
and thus Demeslay's identity~\eqref{E:DemeslayEEintro} implies that $L(\EE^{\vee},0) = \Reg_{\EE} \cdot \bigAord{\rH(\EE)}{\AAA}$.

On the other hand, we can express $L(\EE^{\vee},s)$ in an alternative form as the convolution of $L$-series for $\phi$ and $\psi$, following the situation for Maass forms on $\GL_n$ (see \cites{Bump89, Goldfeld}). For fixed $f \in A_+$ irreducible, using Cauchy's identity (see Theorem~\ref{T:Cauchy}) we can write $\bQ_f^{\vee}(X)^{-1}$ in terms of Schur polynomials evaluated at the roots of $P_{\phi,f}^{\vee}(X)$ and $P_{\psi,f(z)}(X)$. In particular if $\alpha_1, \dots, \alpha_r \in \oK$ are the roots of $P_{\phi,f}^{\vee}(X)$, then for $k_1, \dots, k_{r-1} \geqslant 0$, we define
\begin{align}
\bsmu_{\phi,\theta}\bigl( f^{k_1}, \dots, f^{k_{r-1}} \bigr) &\assign S_{k_1, \dots, k_{r-1}} (\alpha_1, \dots, \alpha_r) \cdot f^{k_1 + \cdots + k_{r-1}} \\
\bsnu_{\phi,\theta} \bigl( f^{k_1}, \dots, f^{k_{r-1}} \bigr) &\assign S_{k_1, \dots, k_{r-1}} \bigl( \alpha_1^{-1}, \dots, \alpha_{r}^{-1} \bigr),
\end{align}
where $S_{k_1, \dots, k_{r-1}}(x_1, \dots, x_r)$ is the Schur polynomial defined in~\eqref{E:Sk}. We can extend $\bsmu_{\phi,\theta}$ and $\bsnu_{\phi,\theta}$ to functions on $(A_+)^{r-1}$ multiplicatively, and then we find that for $a_1, \dots, a_{r-1} \in A_+$, we have $\bsmu_{\phi,\theta}(a_1, \dots, a_{r-1}) \in A$ and $\bsnu_{\phi,\theta}(a_1, \dots, a_{r-1}) \in A$, and
\[
\bsmu_{\phi,\theta}(a_1, \dots, a_{r-1}) = \chi_{\phi}(a_1 \cdots a_{r-1}) \cdot \bsnu_{\phi,\theta}(a_{r-1}, \dots, a_1).
\]
Furthermore, for any $a \in A_+$,
\[
\bsmu_{\phi,\theta}(a,1, \dots, 1) = \mu_{\phi}(a), \quad \bsnu_{\phi,\theta}(a,1,\dots, 1) = \nu_{\phi}(a).
\]
The functions $\bsmu_{\phi,\theta}$, $\bsnu_{\phi,\theta} : (A_+)^{r-1} \to A$ satisfy various relations induced by relations on Schur polynomials. See \S\ref{SS:bsmubsnu} for details.

Returning to the situation of Drinfeld modules $\phi$ and $\psi$, we set $\bsnu_{\psi,z}(a_1, \dots, a_r) = \bsnu_{\psi,\theta}(a_1, \dots, a_r)|_{\theta=z} \in \FF_q[z]$. When $r$, $\ell \geqslant 2$, we define an $L$-function $L(\bsmu_{\phi,\theta} \times \bsnu_{\psi,z},s)$ as follows. If $r = \ell$, then
\begin{equation}
L(\bsmu_{\phi,\theta} \times \bsnu_{\psi,z},s) \assign \sum_{a_1, \dots, a_{r-1} \in A_+} \frac{\bsmu_{\phi,\theta}(a_1, \dots, a_{r-1}) \bsnu_{\psi,z}(a_1, \dots, a_{r-1})}{a_1 \cdots a_{r-1} (a_1 a_2^2 \cdots a_{r-1}^{r-1})^s}.
\end{equation}
If $r< \ell$, then
\begin{equation}
L(\bsmu_{\phi,\theta} \times \bsnu_{\psi,z},s) \assign
\sum_{a_1, \ldots, a_r \in A_+}
\frac{\chi_{\phi}(a_r)\bsmu_{\phi,\theta}(a_1, \dots, a_{r-1}) \bsnu_{\psi,z}(a_1, \dots, a_r,1, \ldots, 1)}{a_1 \cdots a_r (a_1 a_2^2 \cdots a_r^r)^s},
\end{equation}
and if $r > \ell$, then
\begin{multline}
L(\bsmu_{\phi,\theta} \times \bsnu_{\psi,z},s)  \assign \\
\sum_{a_1, \ldots, a_{\ell} \in A_+}
\frac{\ochi_{\psi}(a_{\ell}) a_{\ell}(z) \bsmu_{\phi,\theta}(a_1, \dots, a_{\ell},1, \dots, 1) \bsnu_{\psi,z}(a_1, \dots, a_{\ell-1})}{a_1 \cdots a_{\ell} (a_1 a_2^2 \cdots a_{\ell}^{\ell})^s}.
\end{multline}
The different versions are governed by the different applications of Cauchy's identity (Theorem~\ref{T:Cauchy}, Corollary~\ref{C:Cauchynl}) that are needed. See \S\ref{SS:Lconvrxr}--\ref{SS:Lconvrxl} for more details.

As $\bsmu_{\phi,\theta}(a_1, \dots, a_{r-1}) \in A$ and $\bsnu_{\psi,z}(a_1, \dots, a_{r-1}) \in \FF_q[z]$, we interpret $L(\bsmu_{\phi,\theta} \times \bsnu_{\psi,z},s)$ as a convolution of Goss and Pellarin $L$-series. Now $L(\bsmu_{\phi,\theta} \times \bsnu_{\psi,z},s)$ is related to $L(\EE(\phi\times \psi)^{\vee},s)$ by the following result (stated later as Theorems~\ref{T:LEErxr} and~\ref{T:LEErxl}), where we let $L(\AAA,\chi_{\phi}\ochi_{\psi},s) = \sum_{a \in A_+} \chi_{\phi}(a)\ochi_{\psi}(a)a(z) \cdot a^{-s}$ be a twist of $L(\AAA,s)$.

\begin{theoremintro} \label{T:LEEintro}
Let $\phi$, $\psi : \sA \to A[\tau]$ be Drinfeld modules of ranks $r$ and $\ell$ respectively with everywhere good reduction as defined in~\eqref{E:phipsidefintro}. Assume that $r$, $\ell \geqslant 2$, and let $s \geqslant 0$.
\begin{alphenumerate}
\item If $r= \ell$, then
\[
L( \EE(\phi \times \psi)^{\vee},s) = L(\AAA,\chi_{\phi}\ochi_{\psi},rs+1) \cdot L(\bsmu_{\phi,\theta} \times \bsnu_{\psi,z},s).
\]
\item If $r \neq \ell$, then
\[
L(\EE(\phi\times \psi)^{\vee},s) = L(\bsmu_{\phi,\theta} \times \bsnu_{\psi,z},s).
\]
\end{alphenumerate}
In both cases, $L(\EE(\phi\times \psi)^{\vee},s) \in \TT_z(K_{\infty})^{\times}$.
\end{theoremintro}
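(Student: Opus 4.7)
The plan is to compare both sides as Euler products at each irreducible $f \in A_+$. The series $L(\EE^\vee,s)$ is an Euler product by definition. For the right-hand side, multiplicativity of $\bsmu_{\phi,\theta}$ and $\bsnu_{\psi,z}$ in each argument (see \S\ref{SS:bsmubsnu}), together with unique factorization in $A$, expresses $L(\bsmu_{\phi,\theta}\times\bsnu_{\psi,z},s)$ as a product over primes $f$ of local sums indexed by tuples of non-negative integers; the Pellarin twist $L(\AAA,\chi_\phi\ochi_\psi,rs+1)$ is an Euler product as well. It therefore suffices to match Euler factors at each $f$.

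Fix $f$, let $\alpha_1,\ldots,\alpha_r \in \oK$ be the roots of $P_{\phi,f}^\vee(X)$, and $\beta_1,\ldots,\beta_\ell$ the roots of $P_{\psi,f(z)}(X)$. Since $\bP_f^\vee(X) = \prod_{i,j}(X - \alpha_i\beta_j)$, the local factor of $L(\EE^\vee,s)$ at $f$ equals $\prod_{i,j}(1-\alpha_i\beta_j f^{-s})^{-1}$. Applying Cauchy's identity (Theorem~\ref{T:Cauchy} and Corollary~\ref{C:Cauchynl}) with $x_i=\alpha_i$ and $y_j=\beta_j f^{-s}$ yields
\[
\bQ_f^\vee\bigl(f^{-s}\bigr)^{-1}
= \sum_{\lambda:\,\ell(\lambda)\leqslant\min(r,\ell)}
S_\lambda(\alpha_1,\ldots,\alpha_r)\,S_\lambda(\beta_1,\ldots,\beta_\ell)\,f^{-s|\lambda|}.
\]
Parameterizing each partition of length at most $n:=\min(r,\ell)$ by successive differences $(k_1,\ldots,k_n)\in\ZZ_{\geqslant 0}^n$ gives $|\lambda|=k_1+2k_2+\cdots+nk_n$, matching the $(a_1a_2^2\cdots a_n^n)^s$ factor in the convolution denominators.

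For case~(a) with $r=\ell$, I would split off the $k_r$-summation via the Schur identity $S_{(k_1,\ldots,k_r)}(x_1,\ldots,x_r)=(x_1\cdots x_r)^{k_r}S_{(k_1,\ldots,k_{r-1},0)}(x_1,\ldots,x_r)$ applied on both the $\alpha$- and $\beta$-sides. The outer geometric series in $k_r$ collapses, upon substituting the constant-term relations $\alpha_1\cdots\alpha_r=\chi_\phi(f)$ and $\beta_1\cdots\beta_\ell=\ochi_\psi(f)f(z)$ (read off from the definitions of $\chi_\phi$, $\ochi_\psi$ and the constant terms of the two characteristic polynomials), to the Euler factor of $L(\AAA,\chi_\phi\ochi_\psi,rs+1)$. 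The inner sum over $(k_1,\ldots,k_{r-1})$ then matches the Euler factor of $L(\bsmu_{\phi,\theta}\times\bsnu_{\psi,z},s)$: one passes between $S_\lambda(\beta)$ and $\bsnu_{\psi,z}=S_\lambda(\beta^{-1})$ using the Schur reciprocity identity $S_\lambda(y_1^{-1},\ldots,y_\ell^{-1})=(y_1\cdots y_\ell)^{-\lambda_1}S_{\tilde\lambda}(y_1,\ldots,y_\ell)$, with $\tilde\lambda$ the complement of $\lambda$ in the $\lambda_1\times\ell$ rectangle, and the index-reversal symmetry $\bsmu_{\phi,\theta}(a_1,\ldots,a_{r-1})=\chi_\phi(a_1\cdots a_{r-1})\bsnu_{\phi,\theta}(a_{r-1},\ldots,a_1)$ of \S\ref{SS:bsmubsnu} absorbs the reversal.

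For case~(b) with $r<\ell$ (the case $r>\ell$ is symmetric, factoring on the $\beta$-side to produce the weight $\ochi_\psi(a_\ell)a_\ell(z)$ in the convolution), partitions have length at most $r$, so all $r$ Cauchy indices remain and no geometric-series separation is needed. The factorization $S_{(k_1,\ldots,k_r)}(\alpha)=(\alpha_1\cdots\alpha_r)^{k_r}S_{(k_1,\ldots,k_{r-1},0)}(\alpha)$ on the $\alpha$-side produces the $\chi_\phi(a_r)$ weight, while the padding invariance $S_\lambda(\beta_1,\ldots,\beta_\ell)=S_{(k_1,\ldots,k_r,0,\ldots,0)}(\beta_1,\ldots,\beta_\ell)$ combined with Schur reciprocity on the $\beta$-side identifies the remaining factor with $\bsnu_{\psi,z}(a_1,\ldots,a_r,1,\ldots,1)$. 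The main obstacle throughout is bookkeeping: carefully tracking the scalar factors $\chi_\phi(f)$, $\ochi_\psi(f)$, $f$, and $f(z)$ that arise from Schur reciprocity and constant-term relations, and confirming that the shift $s\mapsto rs+1$ in the Pellarin factor appears in case~(a) precisely from the $k_r$ geometric series while being absent in case~(b).
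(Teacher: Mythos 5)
Your overall strategy---comparing Euler factors at each irreducible $f$ via Cauchy's identity---is indeed the paper's approach; the matching happens in \S\ref{SS:Lconvrxr}--\ref{SS:Lconvrxl}. However, there are two issues in your proposed matching, one computational and one conceptual.

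\textbf{The constant-term slip.} You assert $\alpha_1\cdots\alpha_r=\chi_\phi(f)$, but from~\eqref{E:PfandPfvee} the constant term of $P_{\phi,f}^\vee(X)$ is $(-1)^r\chi_\phi(f)/f$, so the product of roots is $\alpha_1\cdots\alpha_r=\chi_\phi(f)\,f^{-1}$. This missing $f^{-1}$ matters: in case~(a), Cauchy's Theorem~\ref{T:Cauchy} applied with $T=f^{-s}$ gives the prefactor
\[
\bigl(1 - (\alpha_1\cdots\alpha_r)(\beta_1\cdots\beta_r)\,f^{-rs}\bigr)^{-1}
= \biggl(1 - \frac{\chi_\phi(f)\,\ochi_\psi(f)\,f(z)}{f^{rs+1}}\biggr)^{-1},
\]
and it is precisely the $f^{-1}$ you dropped that produces the exponent $rs+1$ (rather than $rs$) in the Pellarin factor. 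With your stated relation the shift would come out wrong.

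\textbf{Unnecessary detours through Schur reciprocity.} You propose to ``pass between $S_\lambda(\beta)$ and $\bsnu_{\psi,z}=S_\lambda(\beta^{-1})$ using Schur reciprocity'' and then ``absorb the reversal'' via the index-reversal symmetry of Proposition~\ref{P:bsmunuprops}(c). Neither step is needed. The definitions~\eqref{E:bsmudef}--\eqref{E:bsnudef} were engineered so that the Cauchy coefficients match directly: $\bsnu_{\psi,\theta}(f^{k_1},\ldots,f^{k_{\ell-1}})$ is $S_{k_1,\ldots,k_{\ell-1}}$ evaluated at the \emph{reciprocals} of the roots of $P_{\psi,f}^\vee(X)$, which are exactly the roots of $P_{\psi,f}(X)$; after specializing $\theta\mapsto z$ this is $S_{k_1,\ldots,k_{\ell-1}}(\beta_1,\ldots,\beta_\ell)$ on the nose. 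Likewise $\bsmu_{\phi,\theta}(f^{k_1},\ldots,f^{k_{r-1}}) = S_{k_1,\ldots,k_{r-1}}(\bsalpha)\cdot f^{k_1+\cdots+k_{r-1}}$, and that extra power of $f$ (which you also do not account for) is what supplies the extra $a_1\cdots a_{r-1}$ in the denominator of~\eqref{E:Lmunurxr}. So the identification of $S_k(\bsalpha)S_{k'}(\bsbeta)$ with $\bsmu_{\phi,\theta}\cdot\bsnu_{\psi,z}$ up to the correct power of $f$ is a direct substitution; invoking reciprocity and index reversal is an unnecessary complication and suggests you have not fully internalized why $\bsmu$ is built from $P_\phi^\vee$ while $\bsnu$ is built from $P_\psi$. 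Once these two points are repaired, your argument lines up with the paper's.
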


Taking $s=0$ in Theorem~\ref{T:LEEintro} provides special value identities for $L(\bsmu_{\phi,\theta} \times \bsnu_{\psi,z},0)$.
If $r=\ell$ and $\kappa_r = \eta_r$, then $L(\AAA,\chi_{\phi}\ochi_{\psi},s) = L(\AAA,s)$, and~\eqref{E:Pellarinvalueintro} and~\eqref{E:DemeslayEEintro} imply the following corollary (stated as Corollary~\ref{C:Lmunurxr}). Under the condition that $\dnorm{\Upsilon_{\psi,z}}$ is less than the radius of convergence of $\Log_{\phi}(z)$, part (b) emerges because we can evaluate $\Reg_{\EE}$ exactly and $\rH(\EE)$ is trivial. For the case where $\kappa_r \neq \eta_r$, see Corollary~\ref{C:Lmunurxr}(c).

\begin{corollaryintro} \label{C:Lmunurxrintro}
Let $\phi$, $\psi : \sA \to A[\tau]$ be Drinfeld modules both of rank $r \geqslant 2$ with everywhere good reduction, as defined in~\eqref{E:phipsidefintro}, and assume $\kappa_r = \eta_r$.
\begin{alphenumerate}
\item Then
\begin{align*}
L(\bsmu_{\phi,\theta} \times \bsnu_{\psi,z},0)
&= \sum_{a_1 \in A_+} \cdots \sum_{a_{r-1} \in A_+} \frac{\bsmu_{\phi,\theta}(a_1, \dots, a_{r-1}) \bsnu_{\psi,z}(a_1, \dots, a_{r-1})}{a_1 \cdots a_{r-1}} \\[10pt]
&= (\theta-z) \cdot \frac{\omega_z}{\tpi} \cdot \Reg_{\EE} \cdot \Aord{\rH(\EE)}{\AAA}.
\end{align*}
\item If $\dnorm{\Upsilon_{\psi,z}} < R_{\phi}$, where $R_{\phi}$ is the radius of convergence of $\Log_{\phi}(z)$ in~\eqref{E:Rphi}, then
\[
L(\bsmu_{\phi,\theta} \times \bsnu_{\psi,z},0)
=(\theta - z) \cdot \frac{\omega_z}{\tpi} \cdot \det \Bigl( \Upsilon_{\psi,z}^{-1} \Log_{\phi} \bigl(\Upsilon_{\psi,z} \bigr) \Bigr).
\]
\end{alphenumerate}
\end{corollaryintro}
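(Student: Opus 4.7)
The plan is to reduce the corollary to three ingredients already assembled in the introduction: Theorem~\ref{T:LEEintro}, the Pellarin special value~\eqref{E:Pellarinvalueintro}, and the Demeslay class module formula~\eqref{E:DemesalyEEintro} applied to $\EE = \EE(\phi\times\psi)$.

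For part (a), the first displayed equality is simply the definition of $L(\bsmu_{\phi,\theta}\times\bsnu_{\psi,z},s)$ in the case $r=\ell$ evaluated at $s=0$. For the second equality, I specialize Theorem~\ref{T:LEEintro}(a) at $s=0$ to obtain
\[
L(\EE(\phi\times\psi)^{\vee},0) = L(\AAA,\chi_\phi\ochi_\psi,1)\cdot L(\bsmu_{\phi,\theta}\times\bsnu_{\psi,z},0).
\]
Under the hypothesis $\kappa_r=\eta_r$ (implicit from the paragraph immediately preceding the statement, with the $\kappa_r\ne\eta_r$ case deferred to Corollary~\ref{C:Lmunurxr}(c)), the character $\chi_\phi\ochi_\psi$ is trivial, so the twisted Pellarin $L$-value reduces to $L(\AAA,1) = -\tpi/\bigl((z-\theta)\omega_z\bigr)$ by~\eqref{E:Pellarinvalueintro}. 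On the other hand, Theorem~\ref{T:EEAordintro} combined with Proposition~\ref{P:LEE0} identifies $L(\EE^{\vee},0)$ with the left-hand product in~\eqref{E:DemesalyEEintro}, and hence $L(\EE^{\vee},0) = \Reg_{\EE}\cdot\Aord{\rH(\EE)}{\AAA}$. Solving for $L(\bsmu_{\phi,\theta}\times\bsnu_{\psi,z},0)$ and rewriting $(z-\theta)\omega_z/(-\tpi)$ as $(\theta-z)\omega_z/\tpi$ gives the claimed formula.

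For part (b), I use the analytic input from the convergence hypothesis $\dnorm{\Upsilon_{\psi,z}} < R_\phi$, which ensures that $\Log_\phi$ converges at each column of $\Upsilon_{\psi,z}$ with values that remain inside the integral region. In this regime, two statements should hold by the standard analytic machinery of Taelman-Demeslay theory: first, the class module $\rH(\EE)$ is trivial; and second, the unit module of $\EE$ admits a basis obtained by transporting the canonical one for $\phi^{\oplus\ell}$ through $\Upsilon_{\psi,z}$, so that the regulator equals $\Reg_{\EE} = \det\bigl(\Upsilon_{\psi,z}^{-1}\Log_\phi(\Upsilon_{\psi,z})\bigr)$. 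Substituting these two facts into the identity of part (a) delivers part (b). The main conceptual hurdle here is the second point: I must verify that conjugation by $\Upsilon_{\psi,z}$ carries the convergence-range basis for the unit module of $\phi^{\oplus\ell}$ to a full basis of the unit module of $\EE$ (as opposed to a finite-index submodule), which is precisely the kind of argument for which the norm bound $\dnorm{\Upsilon_{\psi,z}} < R_\phi$ has been engineered. Part (a) is essentially a bookkeeping exercise assembling established identities, whereas part (b) carries the analytic content.
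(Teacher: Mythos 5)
Your proof of part (a) matches the paper's: set $s=0$ in Theorem~\ref{T:LEErxr}, apply Proposition~\ref{P:LEE0}, Theorem~\ref{T:Pellarin}, and Theorem~\ref{T:Demeslay}, and simplify the sign, with $\kappa_r=\eta_r$ tacitly assumed (the general case being Corollary~\ref{C:Lmunurxr}(c)).

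For part (b) you correctly identify the two needed ingredients---the regulator evaluation $\Reg_\EE = \det\bigl(\Upsilon_{\psi,z}^{-1}\Log_\phi(\Upsilon_{\psi,z})\bigr)$ and the triviality of $\rH(\EE)$---but leave the second as something that ``should hold by the standard analytic machinery'' without supplying an argument. The paper takes a different and more self-contained route here: it obtains the regulator formula from Remark~\ref{R:RegLog1}, and then \emph{derives} $\Aord{\rH(\EE)}{\AAA}=1$ from a Gauss-norm comparison rather than from any direct analytic consideration. Namely, $\dnorm{L(\EE^{\vee},0)}=1$ by Proposition~\ref{P:LEE0} and $\dnorm{L(\AAA,1)}=1$, so $\dnorm{L(\bsmu_{\phi,\theta}\times\bsnu_{\psi,z},0)}=1$; Remark~\ref{R:RegLog1} also shows the determinant and the prefactor $(\theta-z)\omega_z/\tpi$ have norm~$1$; and since $\Aord{\rH(\EE)}{\AAA}$ is monic in $\AAA=\FF_q(z)[\theta]$, comparing norms in the identity from~(a) forces it to equal~$1$. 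This sidesteps the need to prove class-module vanishing directly, so when you call the regulator formula the ``main conceptual hurdle'' and treat $\rH(\EE)=0$ as an afterthought, you have the emphasis reversed relative to the paper's actual argument: the regulator formula is handled by Remark~\ref{R:RegLog1}, and the novel move in the proof is precisely the norm trick that dispatches the class module. The direct analytic route you gesture at (the convergence polydisc together with $\EE(\AAA)$ exhausting $\EE(\KK_\infty)$) is plausible and resembles the later discussion in \S\ref{SSS:bigUps}, but you would need to carry it out, whereas the paper's norm argument makes that unnecessary. A minor imprecision: the regulator formula is not a matter of ``transporting a canonical basis of $\phi^{\oplus\ell}$ through $\Upsilon_{\psi,z}$''; rather, one applies $\Log_\EE$ to the standard basis of $\Lie(\EE)(\AAA)$ and then unwinds $\Log_\EE(\rI_\ell)=\Upsilon_{\psi,z}^{-1}\Log_\phi(\Upsilon_{\psi,z})$ via the conjugation identity in~\eqref{E:ExpLogEE}.
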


In the case that $r \neq \ell$, we obtain the following (stated later as Corollay~\ref{C:Lmunurxl}).

\begin{corollaryintro} \label{C:Lmunurxlintro}
Let $\phi$, $\psi : \sA \to A[\tau]$ be Drinfeld modules of ranks $r$ and $\ell$ respectively with everywhere good reduction, as defined in~\eqref{E:phipsidefintro}. Assume that $r$, $\ell \geqslant 2$ and that $r \neq \ell$.
\begin{alphenumerate}
\item If $r < \ell$, then
\begin{align*}
L(\bsmu_{\phi,\theta} \times \bsnu_{\psi,z},0)
&= \sum_{a_1, \ldots, a_{r} \in A_+} \frac{\chi_{\phi}(a_r) \bsmu_{\phi,\theta}(a_1, \dots, a_{r-1}) \bsnu_{\psi,z}(a_1, \dots, a_{r},1, \dots, 1)}{a_1 \cdots a_r} \\[10pt]
&= \Reg_{\EE} \cdot \Aord{\rH(\EE)}{\AAA}.
\end{align*}
\item If $r > \ell$, then
\begin{align*}
L(\bsmu_{\phi,\theta} \times \bsnu_{\psi,z},0)
&= \sum_{a_1, \ldots, a_{\ell} \in A_+} \frac{\ochi_{\psi}(a_\ell) a_{\ell}(z) \bsmu_{\phi,\theta}(a_1, \dots, a_{\ell},1, \ldots, 1) \bsnu_{\psi,z}(a_1, \dots, a_{\ell-1})}{a_1 \cdots a_{\ell}} \\[10pt]
&= \Reg_{\EE} \cdot \Aord{\rH(\EE)}{\AAA}.
\end{align*}
\item If $\dnorm{\Upsilon_{\psi,z}} < R_{\phi}$, where $R_{\phi}$ is the radius of convergence of $\Log_{\phi}(z)$ in~\eqref{E:Rphi}, then
\[
L(\bsmu_{\phi,\theta} \times \bsnu_{\psi,z},0)
= \det \Bigl( \Upsilon_{\psi,z}^{-1} \Log_{\phi} \bigl(\Upsilon_{\psi,z} \bigr) \Bigr).
\]
\end{alphenumerate}
\end{corollaryintro}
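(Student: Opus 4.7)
The plan is to deduce this corollary directly from Theorem~\ref{T:LEEintro}(b), Theorem~\ref{T:EEAordintro}, Proposition~\ref{P:LEE0}, and Demeslay's identity~\eqref{E:DemesalyEEintro}. First, since $r \neq \ell$, Theorem~\ref{T:LEEintro}(b) specialized at $s=0$ gives
\[
L(\bsmu_{\phi,\theta} \times \bsnu_{\psi,z},0) = L\bigl(\EE(\phi \times \psi)^{\vee},0\bigr).
\]
The two explicit series in (a) and (b) are obtained by substituting $s=0$ in the defining formulas for $L(\bsmu_{\phi,\theta} \times \bsnu_{\psi,z},s)$ in the cases $r<\ell$ and $r>\ell$ respectively; convergence at $s=0$ is inherited from convergence of the Euler product for $L(\EE^{\vee},0)$ in $\TT_z(K_{\infty})$. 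On the Euler product side, Proposition~\ref{P:LEE0} combined with Theorem~\ref{T:EEAordintro} rewrites
\[
L\bigl(\EE^{\vee},0\bigr) = \prod_f \frac{\bigAord{\FF_f(z)^{\ell}}{\AAA}}{\bigAord{\soEE(\FF_f(z))}{\AAA}},
\]
and Demeslay's formula~\eqref{E:DemesalyEEintro} applied to $\EE = \EE(\phi \times \psi)$ equates this to $\Reg_{\EE} \cdot \bigAord{\rH(\EE)}{\AAA}$. Chaining these identities yields the second equalities in both (a) and (b).

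For part (c), the hypothesis $\dnorm{\Upsilon_{\psi,z}} < R_{\phi}$ plays a two-fold role: it guarantees that $\Log_{\phi}$ converges on the entries of $\Upsilon_{\psi,z}$, and it forces the class module $\rH(\EE)$ to be trivial and the exponential $\Exp_{\EE}$ to be a genuine bijection onto a sublattice of $\LLhat_z^{\ell}$ complementary to $\Lambda_{\EE}$. From the conjugation identity $\EE_t = \Upsilon_{\psi,z}^{-1} \phi_t^{\oplus \ell} \Upsilon_{\psi,z}$, one checks functorially that $\Log_{\EE}(\bv) = \Upsilon_{\psi,z}^{-1} \Log_{\phi}(\Upsilon_{\psi,z} \bv)$ on the domain of convergence. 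Choosing $\bv$ to run through the columns of $\rI_{\ell}$ (viewed as a generating set of a unit ball complement of $\Lambda_{\EE}$ inside $\Lie(\EE)(\LLhat_z)$), the regulator $\Reg_{\EE}$—defined as the determinant of the change-of-basis from a basis of $\Lambda_{\EE}$ to $\Exp_{\EE}$-images of a $\LLhat_z$-basis of $\Lie(\EE)(\LLhat_z)/\Lambda_{\EE}$—is computed as
\[
\Reg_{\EE} = \det\bigl(\Upsilon_{\psi,z}^{-1} \Log_{\phi}(\Upsilon_{\psi,z})\bigr),
\]
and the formula in (c) follows by combining this with (a) or (b). The key mechanism for triviality of $\rH(\EE)$ under the size bound is that convergence of $\Log_{\phi}$ on $\Upsilon_{\psi,z}$ means every class in $\Lie(\EE)/(\Lambda_{\EE} + \Exp_{\EE}(\mathrm{small\ ball}))$ is represented by $\Exp_{\EE}$ of something small, making $\rH(\EE)$ zero.

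The main obstacle is part (c): rigorously identifying $\Reg_{\EE}$ with $\det(\Upsilon_{\psi,z}^{-1} \Log_{\phi}(\Upsilon_{\psi,z}))$ and simultaneously vanishing $\rH(\EE)$ under the hypothesis $\dnorm{\Upsilon_{\psi,z}} < R_{\phi}$. This requires a careful analysis of the lattice $\Lambda_{\EE} = \Upsilon_{\psi,z}^{-1} \Lambda_{\phi}^{\oplus \ell}$ relative to the unit ball in $\Lie(\EE)(\LLhat_z)$, and of the way $\Log_{\EE}$ inherits its radius of convergence from $\Log_{\phi}$. By contrast, parts (a) and (b) are essentially bookkeeping: unwinding definitions and invoking the three named results in sequence. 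I expect the proof to devote a short self-contained lemma to this convergence/regulator computation and then conclude in a few lines.
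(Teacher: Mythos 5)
Your treatment of parts (a) and (b) matches the paper's: combine Theorem~\ref{T:LEEintro}(b) (equivalently Theorem~\ref{T:LEErxl}) at $s=0$ with Proposition~\ref{P:LEE0} and Demeslay's formula (Theorem~\ref{T:Demeslay}). The paper explicitly says the proof is identical to that of Corollary~\ref{C:Lmunurxr} with the factor $L(\AAA,\chi_\phi\ochi_\psi,1)$ absent.

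For part (c), however, your description contains several inaccuracies that keep it from being a correct proof as written. The regulator $\Reg_{\EE}$ in Demeslay's framework is a $\KK_\infty$-level object: it is defined (see \S\ref{SS:Demeslayformula}) as $\gamma\cdot\det(V)$, where $V\in\GL_\ell(\KK_\infty)$ records the coordinates of an $\AAA$-basis of the Taelman-type lattice $\Exp_{\EE,\KK_\infty}^{-1}\bigl(\EE(\AAA)\bigr)\subset\KK_\infty^\ell$ with respect to the standard basis of $\Lie(\EE)(\AAA)$. Your description of $\Reg_{\EE}$ as a change-of-basis determinant involving $\Lambda_{\EE}$ and a complement inside $\LLhat_z^\ell$ is the wrong invariant; the period lattice $\Lambda_{\EE}=\ker\Exp_{\EE}\subset\LLhat_z^\ell$ does not appear in the regulator. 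Likewise, the class module $\rH(\EE)=\EE(\KK_\infty)/\bigl(\Exp_{\EE,\KK_\infty}(\Lie(\EE)(\KK_\infty))+\EE(\AAA)\bigr)$ lives over $\KK_\infty$, not $\LLhat_z$, and surjectivity of $\Exp_{\EE}$ over $\LLhat_z$ has no direct bearing on it. Your heuristic that the class module dies ``because everything is represented by something small'' has the right germ (when $\dnorm{\Upsilon_{\psi,z}}<R_\phi$, the disc $\rD(\KK_\infty)$ on which $\Log_{\EE}$ converges contains the unit ball, so $\EE(\AAA)+\rD(\KK_\infty)=\EE(\KK_\infty)$ and $\rD(\KK_\infty)\subseteq\im\Exp_{\EE,\KK_\infty}$, hence $\rH(\EE)=0$; this is exactly the parenthetical in \S\ref{SSS:bigUps}), but your proposal never pins this down and works in the wrong ambient ring.

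The paper instead uses a cleaner route for the triviality of the class module (proof of Corollary~\ref{C:Lmunurxr}(b), which applies verbatim here): from Proposition~\ref{P:LEE0} and the shape of the Euler product, $\dnorm{L(\EE^\vee,0)}=1$; from Remark~\ref{R:RegLog1} and the cited norm estimates on $\Log_\phi$, $\dnorm{\det(\Upsilon_{\psi,z}^{-1}\Log_\phi(\Upsilon_{\psi,z}))}=1$ and this determinant is monic in $\theta$, so equals $\Reg_{\EE}$; and since $\Aord{\rH(\EE)}{\AAA}$ is a monic polynomial in $\AAA=\FF_q(z)[\theta]$, the identity $L(\bsmu_{\phi,\theta}\times\bsnu_{\psi,z},0)=\Reg_{\EE}\cdot\Aord{\rH(\EE)}{\AAA}$ forces $\Aord{\rH(\EE)}{\AAA}=1$. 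Your direct lattice argument for $\rH(\EE)=0$ would also work once stated over $\KK_\infty$, but the Gauss norm argument is what makes Remark~\ref{R:RegLog1} together with Demeslay's theorem close the loop without any separate analysis of $\Exp_{\EE,\KK_\infty}^{-1}(\EE(\AAA))$.
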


Similar results hold when $r=1$ or $\ell =1$. The cases where $\phi$ or $\psi$ are the Carlitz module are worked out in Examples~\ref{Ex:Carlitz3}, \ref{Ex:LphixC}, and~\ref{Ex:LCxphi}, and we observe that Corollaries~\ref{C:Lmunurxrintro} and~\ref{C:Lmunurxlintro} degenerate to these cases. Example~\ref{Ex:Carlitz3} is Pellarin's original case in~\cite{Pellarin12}, and Example~\ref{Ex:LphixC} was worked out by Angl\`es, Gezmi\c{s}, and Tavares Ribeiro~\cites{AT17, Gezmis19}.

\begin{remark} \label{R:badred}
Throughout we have restricted our attention to cases of everywhere good reduction. This was partly to provide a more simplified and unified treatment of what was to the authors quite complicated to parcel out among various constituent identities. However, \eqref{E:Thetadefintro} and~\eqref{E:EEtintro} indicate that there are delicate issues to resolve in determining appropriate Euler factors at bad primes (especially primes dividing~$\eta_\ell$).
\end{remark}

\begin{remark}
In many works on Pellarin $L$-series (e.g., \cites{ANT17a, ANT17b, AnglesPellarin14, APT16, AT17, Demeslay22, Gezmis19, PellarinPerkins16}), one considers $L$-functions in several $z$-variables, say $z_1, \dots z_n$. The present techniques should apply, but it would entail several tensor products of polynomials as in~\eqref{E:bPfintro} and applications of Cauchy's identity (Theorem~\ref{T:Cauchy}). In the same vein, one could attempt to extend these results to convolutions of $L$-functions for Drinfeld modules defined over a finite extension of $K$, such as in \cites{ANT17a, ANT17b, AT17, ANT20, ANT22}. However, these potential extensions fell beyond the scope of the present paper.

One might wonder if the present framework can be adapted to convolutions of Goss $L$-functions with values in~$\C$, say for tensor products of Drinfeld modules defined over~$A$, and whether their special values can be related to class module formulas for $t$-modules from~\cites{Fang15,ANT20,ANT22}. This is the subject of forthcoming work of the first author~\cite{Huang23}.
\end{remark}

As suggested by the referee, to form explicit descriptions of $L(\EE(\phi \times \psi)^{\vee},0)$ in terms of logarithms of special points, we investigate the module $\rU_{\St}(\EE/\AAA)$ of Stark units for $\EE$, which is an $\bA$-submodule of the unit module $\rU(\EE/\AAA) \subseteq \Lie(\EE)(\TT_{z}(K_{\infty}))$ (see \S\ref{SS:Stark}). We prove the following result (stated later as Theorem~\ref{T:StarkLvalue}) which is a version of a theorem of Angl\`es and Tavares Ribeiro~\cite{AT17}*{Thm.~1}. Their result was extended to Anderson $t$-modules over finite extensions of $K$ in joint work with Ngo Dac and Pellarin~\cites{ANT20, ANT22, APT18}, whereas Stark units for $\omega_z$ twists of the Carlitz module were studied in \cites{APT16, AT17}.

\begin{theoremintro}[{cf.\ Angl\`es, Tavares Ribeiro~\cite{AT17}*{Thm.~1}}] \label{T:StarkLvalueintro}
For $\EE = \EE(\phi \times \psi) : \bA \to \Mat_{\ell}(A[z][\tau])$, the following hold.
\begin{alphenumerate}
\item $\rU(\EE/\AAA)/\rU_{\St}(\EE/\AAA)$ is a finitely generated torsion $\bA$-module, and
\[
\biggl[ \frac{\rU(\EE/\AAA)}{\rU_{\St}(\EE/\AAA)} \biggr]_{\AAA}
= \bigl[ \rH(\EE) \bigr]_{\AAA}.
\]
\item Moreover,
\[
L(\EE(\phi \times \psi)^{\vee},0) = \bigl[ \Lie(\EE)(\AAA) : \rU_{\St}(\EE/\AAA) \bigr]_{\AAA} = \cL(\DD/\tAA)|_{\bszeta=1},
\]
where the middle term is the covolume of $\rU_{\St}(\EE/\AAA)$.
\end{alphenumerate}
\end{theoremintro}

In this result, $\cL(\DD/\tAA)$ is an $L$-value associated to a deformation $\DD$ of $\EE$ by an additional variable $\bszeta$ over the ring $\tAA = \FF_q(z,\bszeta)[\theta]$, similar to deformations studied in~\cite{AT17}. It is defined as a product of local factors of $\tAA$-orders as in~\eqref{E:DemeslayEEintro} and~\eqref{E:LEE0intro}. See \S\ref{SS:deformations} for details. We prove in Proposition~\ref{P:deformationorder} and Corollary~\ref{C:deformationconv} that $\cL(\DD/\tAA)$ has an explicit description in terms of convolution $L$-values in the Tate algebra $\TT_{z,\bszeta}(K_{\infty})^{\times}$. For example, when $r=\ell$ and $\kappa_r=\eta_r$,
\[
\cL(\DD/\tAA) = \cL(\tAA) \cdot \cL(\bsmu_{\phi,\theta} \times \bsnu_{\psi,z}),
\]
where
\[
\cL(\tAA) \assign \sum_{a \in A_+} \frac{a(z)}{a} \bszeta^{r \deg a}
\]
and
\[
\cL(\bsmu_{\phi,\theta} \times \bsnu_{\psi,z}) \assign \sum_{a_1, \ldots, a_{r-1} \in A_+} \frac{\bsmu_{\phi,\theta}(a_1, \dots, a_{r-1}) \bsnu_{\psi,z}(a_1, \dots, a_{r-1})}{a_1 \cdots a_{r-1}} \bszeta^{\delta(a_1, \dots, a_{r-1})},
\]
with $\delta(a_1, \dots, a_{r-1}) = \deg a_1 + 2 \deg a_2 + \dots + (r-1) \deg a_{r-1}$. Comparing with Theorem~\ref{T:LEEintro} and Corollary~\ref{C:Lmunurxrintro}, we readily find that $L(\EE^{\vee},0) = \cL(\DD/\tAA)|_{\bszeta=1}$. However, the identity in Theorem~\ref{T:StarkLvalueintro}(b) implies that $L(\EE^{\vee},0)$ can be expressed as a determinant of Stark units, which are logarithms of special points in $\EE(A[z])$ and by Theorem~\ref{T:StarkLvalueintro}(a) analogues of circular units. In this sense, $L(\EE^{\vee},0)$ is expressible in terms of log-algebraic identities, e.g., as in \cites{And94, And96, ANT17b, ANT20, AnglesTaelman15, AT17, CEP18, GreenNgoDac23, GreenP18}. See \S\ref{SS:logalg} for a brief discussion on log-algebraicity.

\subsubsection*{Outline}
After summarizing preliminary material in~\S\ref{S:prelim}, we consider Anderson $t$-modules over Tate algebras and over ``$\tau$-perfect'' fields of finite characteristic in~\S\ref{S:Anderson}. Especially in~\S\ref{S:Anderson} we extend Poonen pairings to $t$-modules over more general fields of finite characteristic so as to find identities for $\AAA$-orders of $t$-modules in terms of characteristic polynomials of Frobenius. In~\S\ref{S:RAtwists} we introduce the rigid analytic twists $\EE(\phi\times \psi)$ associated to two Drinfeld modules over~$A$, as well as determine identities for their $\AAA$-orders over fields of finite characteristic. In~\S\ref{S:Lseries} we review the theories of Goss and Pellarin $L$-series, as well as Demeslay's class module formula. We introduce the $L$-function of $E(\phi \times \psi)$ and demonstrate how we can apply Demeslay's formula to it. As a stepping stone to the next section we consider the cases where one of $\phi$ or $\psi$ is the Carlitz module. In~\S\ref{S:convolutions} we introduce the convolution $L$-series $L(\bsmu_{\phi,\theta} \times \bsnu_{\psi,z},s)$, relate it to $L(\EE(\phi \times \psi)^{\vee},s)$ and Pellarin's $L$-series, and investigate special value identities. We analyze a few examples, including one where $\dnorm{\Upsilon_{\psi,z}} \geqslant R_{\phi}$, at the end of~\S\ref{S:convolutions}. Finally, in \S\ref{S:Stark} we work out the theory of Stark units for $\EE$ and investigate their connections with special $L$-values.

\subsubsection*{Acknowledgments}
The authors thank Y.-T.~Chen, O.~Gezmi\c{s}, and J.~Ye for a number of helpful discussions during the preparation of this manuscript. The authors especially thank M.~P.~Young for explaining the phenomenon of Schur polynomials in the case of Hecke eigenvalues of Maass forms. The authors further thank the referee for a number of helpful comments, especially for the suggestion to develop the results in \S\ref{S:Stark} on Stark units and deformation $L$-values.

\section{Preliminaries} \label{S:prelim}

\subsection{Notation} \label{SS:notation}
We will use the following notation throughout.
\begin{longtable}{p{1.25truein}@{\hspace{5pt}$=$\hspace{5pt}}p{4.5truein}}
$A$ & $\FF_q[\theta]$, polynomial ring in variable $\theta$ over $\FF_q$. \\
$A_+$ & the monic elements of $A$. \\
$K$ & $\FF_q(\theta)$, the fraction field of $A$. \\
$K_{\infty}$ & $\laurent{\FF_q}{\theta^{-1}}$, the completion of $K$ at $\infty$. \\
$\C$ & the completion of an algebraic closure $\oK_{\infty}$ of $K_{\infty}$.\\
$\inorm{\,\cdot\,}$; $\deg$ & $\infty$-adic norm on $\C$, extended to the sup norm on a finite dimensional $\C$-vector space; $\deg = -\ord_{\infty} = \log_q\inorm{\,\cdot\,}$.\\
$\FF_f$ & $A/fA$ for $f \in A_+$ irreducible. \\
$\sA$ & $\FF_q[t]$, for a variable $t$ independent from $\theta$. \\
$\TT_t$; $\LL_t$ & Tate algebra in $t$ $= \{ \sum a_i t^i \in \power{\C}{t} \mid \inorm{a_i} \to 0 \} = $ completion of $\C[t]$ with respect to Gauss norm; and $\LL_t$ its fraction field. \\
$\LLhat_t$ & completion of $\LL_t$ with respect to Gauss norm. \\
$\TT_t(K_{\infty})$; $\LL_t(K_{\infty})$ & $\TT_t \cap \power{K_{\infty}}{t} = \laurent{\FF_q[t]}{\theta^{-1}}$; and $\LL_t(K_{\infty})$ its fraction field. \\
$\LLhat_t(K_{\infty})$ & completion of $\LL_t(K_{\infty})$, or equivalently $\laurent{\FF_q(t)}{\theta^{-1}}$. \\
$\AAA$ & $\FF_q(z)[\theta]$, for a third variable $z$ independent from $\theta$, $t$. \\
$\KK$ & $\FF_q(z,\theta)$, the fraction field of $\AAA$. \\
$\KK_{\infty}$ & $\laurent{\FF_q(z)}{\theta^{-1}}$, the completion of $\KK$ at $\infty$, or equivalently $\LLhat_z(K_{\infty})$. \\
\raggedright $\TT_z$, $\TT_z(K_{\infty})$, \linebreak \raggedright $\LLhat_z$, $\LLhat_z(K_{\infty})$ & same as above, with $t$ replaced by $z$. \\
$\bA$ & $\FF_q(z)[t]$.\\
$\FZ{F}{Z}$ & $F(Z)$ or $\laurent{F}{Z}$, where $Z = \{ z_1, \dots, z_n \}$, $n \geqslant 0$.\\
$\cA$ & $\FZ{\FF_q}{Z}[t]$, for $Z = \{ z_1, \dots, z_n \}$, $n \geqslant 0$.\\
$\Mat_{k \times \ell}(R)$ & for a ring $R$, the left $R$-module of $k \times \ell$ matrices over $R$. \\
$\Mat_{k}(R)$; $R^k$ & $\Mat_{k \times k}(R)$; $\Mat_{k\times 1}(R)$.\\
$B^{\tr}$ & the transpose of a matrix $B$. \\
$\Char(B,X)$ & the monic characteristic polynomial in $X$ of a square matrix $B$.\\
$\Char(\alpha,V,X)$ & the monic characteristic polynomial of a linear map $\alpha : V \to V$.
\end{longtable}

\subsubsection{Rings of scalars, functions, and operators} \label{SSS:fields}
For a variable $t$ independent from $\theta$ we let $\sA \assign \FF_q[t]$. We let $\TT_t$ denote the standard \emph{Tate algebra}, $\TT_t \subseteq \power{\C}{t}$,
consisting of power series that converge on the closed unit disk of~$\C$, and we take
\begin{equation} \label{E:TateKinfty}
\TT_t(K_\infty) \assign \TT_t \cap \power{K_{\infty}}{t} = \laurent{\FF_q[t]}{\theta^{-1}}.
\end{equation}
We let $\dnorm{\,\cdot\,}$ denote the Gauss norm on $\TT_t$, such that $
\lVert \sum_{i=0}^{\infty} a_i t^i \rVert = \max_i \{ \inorm{a_i} \}$,
under which $\TT_t$ is a complete normed $\C$-vector space, and likewise $\TT_t(K_\infty)$ is a complete normed $K_{\infty}$-vector space. We extend the degree map on $\C$ to $\TT_t$ by taking
$\deg = \log_q \dnorm{\,\cdot\,}$. We let $\LL_t$ (respectively $\LL_t(K_\infty)$) denote the fraction field of $\TT_t$ (respectively $\TT_t(K_{\infty})$), to which we extend the Gauss norm, and we take $\LLhat_t$ (respectively $\LLhat_t(K_{\infty})$) for its completion.
We note the following properties. With respect to $\dnorm{\,\cdot\,}$,
\begin{itemize}
\item $\TT_t$ is the completion of $\C[t]$ and $\TT_t(K_{\infty})$ is the completion of $K_{\infty}[t]$,
\item and $\LLhat_t$ is the completion of $\C(t)$ and $\LLhat_t(K_{\infty})$ is the completion of $K_{\infty}(t)$.
\item Also, $\LLhat_t(K_{\infty}) = \laurent{\FF_q(t)}{\theta^{-1}}$.
\end{itemize}
We make any finite dimensional $\LLhat_t$-vector space a complete normed vector space by taking the sup norm. For more information on Tate algebras the reader is directed to~\cite{FresnelvdPut}. 

Having defined several rings so far in terms of the variable $t$, we make also identical copies, $\TT_z$, $\TT_z(K_{\infty})$, etc., with a new variable $z$. We fix also
\[
\AAA \assign \FF_q(z)[\theta], \quad \KK \assign \FF_q(z,\theta), \quad \KK_{\infty} \assign \laurent{\FF_q(z)}{\theta^{-1}} = \LLhat_z(K_{\infty}).
\]
The Gauss norm and degree on these new rings are also denoted $\dnorm{\,\cdot\,}$ and $\deg$.

We finally let $\bA \assign \FF_q(z)[t]$. The reader has undoubtedly noticed that we now have four similar polynomial rings to keep track of, $A \cong \sA$ and $\AAA \cong \bA$, but each will have its own use in due course. A fifth ring ``$\cA$'' we also appear in \S\ref{S:Anderson}.

\begin{remark}
From the point of view of the present paper, the rings in terms of $t$ are rings of functions and operators, whereas the rings in terms of $z$ are rings of scalars.  As such the rings ($A$, $\AAA$) are scalars and ($\sA$, $\bA$) are operators. We note that the roles of $t$ and $z$ here would be $\theta$ and $t_1, \dots, t_s$ in \cites{APT16, DemeslayPhD, Demeslay22}, and $z$ and $t_1, \dots, t_s$ in~\cite{GezmisP19}.
\end{remark}

\subsubsection{Frobenius operators} \label{SSS:Frobops}
We take $\tau : \C \to \C$ for the \emph{$q$-th power Frobenius automorphism}, which we extend to $\laurent{\C}{t}$ and $\laurent{\C}{z}$ (and their compositum) by requiring it to commute with $t$ and $z$. For $g = \sum c_i t^i \in \laurent{\C}{t}$, we define the \emph{$n$-th Frobenius twist},
\[
g^{(n)} \assign \tau^n(g) = \sum c_i^{q^n} t^i, \quad \forall\, n \in \ZZ,
\]
and likewise for $g \in \laurent{\C}{z}$. Then $\tau$ induces $\FF_q(t)$-linear automorphisms of $\TT_t$, $\LL_t$, $\LLhat_t$, etc., and $\FF_q(z)$-linear automorphisms of their $z$-counterparts, and the fixed rings of $\tau$ are
\[
 \TT_t^{\tau} = \FF_q[t], \quad \LL_t^{\tau} = \FF_q(t), \quad \LLhat_t^{\tau} = \FF_q(t),
\]
(e.g., see \citelist{\cite{Demeslay22}*{Lem.~2.2} \cite{P08}*{Lem.~3.3.2}}). Furthermore, $\tau$ restricts to (non-invertible) endomorphisms of $\TT_t(K_{\infty})$, $\LL_t(K_{\infty})$, $\LLhat_t(K_{\infty})$, etc.

\subsubsection{Twisted polynomials} \label{SSS:twistedpolys}
Let $R$ be any commutative $\FF_q$-algebra, and let $\tau : R \to R$ be an injective $\FF_q$-algebra endomorphism. Let $R^{\tau}$ be the $\FF_q$-subalgebra of $R$ of elements fixed by $\tau$. For $n \in \ZZ$ for which $\tau^n$ is defined on $R$ and a matrix $B=(b_{ij})$ with entries in $R$, we let $B^{(n)}$ be defined by twisting each entry. That is, $(b_{ij})^{(n)} = (b_{ij}^{(n)})$. For $\ell \geqslant 1$ we let $\Mat_\ell(R)[\tau] = \Mat_\ell(R[\tau])$ be the ring of \emph{twisted polynomials} in $\tau$ with coefficients in $\Mat_\ell(R)$, subject to the relation $\tau B = B^{(1)} \tau$ for $B \in \Mat_\ell(R)$.
In this way, $R^{\ell}$ is a left $\Mat_{\ell}(R)[\tau]$-module, where if $\beta = B_0 + B_1 \tau + \dots + B_m \tau^m \in \Mat_{\ell}(R)[\tau]$ and $\bx \in R^{\ell}$, then
\begin{equation} \label{E:taueval}
\beta(\bx) = B_0 \bx + B_1 \bx^{(1)} + \cdots + B_m \bx^{(m)}.
\end{equation}
More generally \eqref{E:taueval} extends to $\beta \in \Mat_{k \times \ell}(R[\tau])$ and $\bx \in \Mat_{\ell \times n}(R)$, thus defining
\begin{equation} \label{E:gentaueval}
(\beta, \bx) \mapsto \beta(\bx) : \Mat_{k \times \ell}(R[\tau]) \times \Mat_{\ell \times n}(R) \to \Mat_{k \times n}(R),
\end{equation}
which is $R^{\tau}$-bilinear and $R$-linear in the first entry. Furthermore,
\begin{equation} \label{E:tauevalassoc}
(\beta_1 \beta_2)(\bx) = \beta_1(\beta_2(\bx)),
\end{equation}
for all $\beta_1$, $\beta_2$ matrices over $R[\tau]$ and $\bx$ over $R$ of the appropriate sizes.

If furthermore $\tau$ is an automorphism of $R$, then we set $\sigma \assign \tau^{-1}$ and form the twisted polynomial ring $\Mat_{\ell}(R)[\sigma]$, subject to $\sigma B = B^{(-1)} \sigma$ for $B \in \Mat_{\ell}(R)$.
Then $R^{\ell}$ is a left $\Mat_{\ell}(R)[\sigma]$-module, where for $\gamma = C_0 + C_1\sigma + \dots + C_m\sigma^m \in \Mat_{\ell}(R)[\sigma]$ and $\bx \in R^{\ell}$,
\begin{equation} \label{E:sigmaeval}
\gamma(\bx) = C_0 \bx + C_1 \bx^{(-1)} + \cdots + C_m \bx^{(-m)}.
\end{equation}
As above this evaluation operation extends to
\begin{equation} \label{E:gensigmaeval}
(\gamma, \bx) \mapsto \gamma(\bx) : \Mat_{k \times \ell}(R[\sigma]) \times \Mat_{\ell \times n}(R) \to \Mat_{k \times n}(R),
\end{equation}
which is $R^{\tau}$-bilinear and $R$-linear in the first entry. It is also associative as in
\eqref{E:tauevalassoc}.

For $\beta \in \Mat_{\ell}(R)[\tau]$ (or $\gamma \in \Mat_{\ell}(R)[\sigma]$), we write $\pd \beta$ (or $\pd \gamma$) for the constant term with respect to $\tau$ (or $\sigma$). We have natural inclusions of $\FF_q$-algebras,
\[
\Mat_{\ell}(R)[\tau] \subseteq \power{\Mat_{\ell}(R)}{\tau}, \quad \Mat_{\ell}(R)[\sigma] \subseteq \power{\Mat_{\ell}(R)}{\sigma},
\]
into \emph{twisted power series} rings, the latter when $\tau$ is an automorphism.

\subsubsection{Ore anti-involution} \label{SSS:star}
We assume that $\tau : R \to R$ is an automorphism, and recall the anti-isomorphism $* : R[\tau] \to R[\sigma]$ of $\FF_q$-algebras originally defined by Ore~\cite{Ore33a} (see also~\citelist{\cite{Goss}*{\S 1.7} \cite{NamoijamP24}*{\S 2.3} \cite{Poonen96}}),  given by
\[
\biggl( \sum_{i=0}^\ell b_i\tau^i \biggr)^* = \sum_{i=0}^{\ell} b_i^{(-i)} \sigma^i.
\]
One verifies that $(\alpha\beta)^* = \beta^*\alpha^*$ for $\alpha$, $\beta \in R[\tau]$. For $B = (\beta_{ij}) \in \Mat_{k \times \ell}(R[\tau])$, we set
\[
B^* \assign \bigl( \beta_{ij}^* \bigr)^{\tr} \in \Mat_{\ell \times k}(R[\sigma]),
\]
which then satisfies
\begin{equation} \label{E:BCstar}
(BC)^* = C^*B^* \in \Mat_{m \times k}(R[\sigma]), \quad B \in \Mat_{k \times \ell}(R[\tau]),\ C \in \Mat_{\ell \times m}(R[\tau]).
\end{equation}
The inverse of $* : \Mat_{k \times \ell}(R[\tau]) \to \Mat_{\ell \times k}(R[\sigma])$ is also denoted by ``$*$.''

\subsubsection{Division algorithms} \label{SSS:divalg}
When $R$ is a field and $\tau : R \to R$ is an automorphism, $R[\tau]$ and $R[\sigma]$ possess both left and right division algorithms (see \citelist{\cite{Cohn}*{\S 7.3} \cite{Goss}*{\S 1.6} \cite{Ore33b}}). For $\alpha$, $\beta \in R[\tau]$ with $\beta \neq 0$, there exist unique $\eta$, $\eta'$, $\delta$, $\delta' \in R[\tau]$ so that
\[
\alpha = \beta \eta + \delta, \quad \alpha = \eta'\beta + \delta', \quad \deg_\tau \delta < \deg_\tau \beta,\quad \deg_\tau \delta' < \deg_\tau \beta.
\]
In this way $R[\tau]$ is both a left and right principal ideal domain. The right division algorithm does not require $\tau$ to be an automorphism, but it is required for the left division algorithm. By applying the $*$-anti-involution, similar statements hold for $R[\sigma]$.

\begin{proposition}[{cf.\ Anderson~\cite{And86}*{Props.~1.4.2, 1.4.4}}] \label{P:AndUBV}
Let $R$ be a field such that $\tau: R \to R$ is an automorphism. Let $B \in \Mat_{\ell \times k}(R[\tau])$.
\begin{alphenumerate}
\item There exist $U \in \GL_{\ell}(R[\tau])$ and $V \in \GL_{k}(R[\tau])$ so that the entries of $D = UBV$ vanish off its diagonal.
\item If $\ell \geqslant k$ and if the diagonal entries of $D$ are $\delta_1, \dots, \delta_k \in R[\tau]$, then 
\[
\dim_{R} \biggl( \frac{\Mat_{1 \times k}(R[\tau])}{\Mat_{1 \times \ell}(R[\tau]) B} \biggr) = \sum_{i=1}^k \dim_{R} \biggl( \frac{R[\tau]}{R[\tau]\delta_i} \biggr).
\]
\end{alphenumerate}
With appropriate changes, similar results hold for matrices over $R[\sigma]$.
\end{proposition}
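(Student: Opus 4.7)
The plan is to adapt the classical Smith normal form reduction to the skew polynomial ring $R[\tau]$, exploiting the availability of both the left and right division algorithms from \S\ref{SSS:divalg} when $\tau$ is an automorphism of the field $R$. Throughout, row operations will be realized as left multiplication by matrices in $\GL_{\ell}(R[\tau])$ and column operations as right multiplication by matrices in $\GL_{k}(R[\tau])$, so what remains is an existence argument for a diagonalization via such operations.

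For part (a), I would proceed by descent on the minimum $\tau$-degree of a nonzero entry of the matrix being reduced. If $B = 0$, take $U = \rI_{\ell}$ and $V = \rI_k$. Otherwise, use row and column swaps (permutation matrices over $R[\tau]$) to move a nonzero entry of minimal $\tau$-degree into position $(1,1)$, and call it $b$. For each $i \geqslant 2$, apply right division in $R[\tau]$ to write $b_{i1} = \eta_i b + \delta_i$ with $\deg_\tau \delta_i < \deg_\tau b$, then subtract $\eta_i$ times the first row from the $i$-th via an elementary unipotent matrix in $\GL_\ell(R[\tau])$. If some $\delta_i$ is nonzero, swap it into the pivot position and restart the procedure; otherwise the first column is cleared below the pivot. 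Perform the symmetric reduction on the first row using left division. Each restart strictly decreases $\deg_\tau$ of the pivot, which is a non-negative integer, so the process terminates. Once the first row and column are cleared except at $(1,1)$, recurse on the $(\ell-1) \times (k-1)$ lower-right block and compose the elementary matrices to obtain $U$ and $V$ with $D = UBV$ diagonal.

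For part (b), the invertible transformations furnish an isomorphism of $R$-vector spaces
\[
\Mat_{1 \times k}(R[\tau]) / \Mat_{1 \times \ell}(R[\tau]) B \iso \Mat_{1 \times k}(R[\tau]) / \Mat_{1 \times \ell}(R[\tau]) D,
\]
since $\Mat_{1\times \ell}(R[\tau]) U^{-1} = \Mat_{1\times \ell}(R[\tau])$ and right multiplication by $V$ is an automorphism of $\Mat_{1\times k}(R[\tau])$. Because $\ell \geqslant k$ and $D$ has diagonal entries $\delta_1, \dots, \delta_k$ with vanishing last $\ell - k$ rows, the image $\Mat_{1\times \ell}(R[\tau]) D$ decomposes as the internal direct sum $\bigoplus_{i=1}^k R[\tau]\delta_i$ inside $\Mat_{1\times k}(R[\tau]) = \bigoplus_{i=1}^k R[\tau]$. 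The quotient therefore splits as $\bigoplus_i R[\tau]/R[\tau]\delta_i$, and each summand has $R$-dimension $\deg_\tau \delta_i$ via the right division algorithm (and is infinite-dimensional when $\delta_i = 0$), yielding the stated formula.

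The main subtlety is bookkeeping in the non-commutative setting: one must consistently use the Ore relation $\tau b = b^{(1)}\tau$ when composing elementary row operations, and choose left versus right division correctly at each stage so that the reductions actually eliminate entries rather than spoil the pivot. The $R[\sigma]$ analogue then follows either by applying the Ore anti-involution $*$ of \S\ref{SSS:star}, which interchanges $R[\tau]$ with $R[\sigma]$ while swapping rows with columns, or by repeating the reduction verbatim using the $R[\sigma]$ division algorithms.
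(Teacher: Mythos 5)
Your proposal is correct, and it follows essentially the same route that the paper points to: the paper itself supplies no proof but refers to Anderson's Propositions~1.4.2 and 1.4.4 and to Cohn's treatment of modules over skew polynomial rings, which is exactly the Euclidean degree-descent to a Smith-type diagonal form over $R[\tau]$ together with the observation that invertible left/right multiplications preserve the cokernel. Your argument correctly chooses right division for the row operations that clear the first column and left division for the column operations that clear the first row, notes that $\Mat_{1\times\ell}(R[\tau])U = \Mat_{1\times\ell}(R[\tau])$ and that right multiplication by $V$ is an $R$-linear automorphism of $\Mat_{1\times k}(R[\tau])$ carrying $\Mat_{1\times\ell}(R[\tau])B$ onto $\Mat_{1\times\ell}(R[\tau])D$, and uses right division once more to see that $R[\tau]/R[\tau]\delta_i$ has $R$-dimension $\deg_\tau\delta_i$; the $R[\sigma]$ case via the $*$-anti-involution is also in line with \S\ref{SSS:star}.
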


The proof of this proposition follows exactly as the proofs of \cite{And86}*{Props.~1.4.2, 1.4.4}, relying on the left and right division algorithms for $R[\tau]$ and $R[\sigma]$, and generally follows from standard arguments for modules over skew polynomial rings~\cite{Cohn}*{\S 7.2--7.3}.

\subsubsection{Orders of finite $F[x]$-modules} \label{SSS:orders}
For $F[x]$ a polynomial ring in one variable over a field~$F$, we say that an $F[x]$-module is \emph{finite} if it is finitely generated and torsion.
Now fix a finite $F[x]$-module $M$. Then there are monic polynomials $f_1, \dots, f_\ell \in F[x]$ so that
\[
M \cong F[x]/(f_1) \oplus \dots \oplus F[x]/(f_\ell).
\]
We set $\Aord{M}{F[x]} \assign f_1 \cdots f_\ell \in F[x]$, which is a generator of the Fitting ideal of $M$, and we call $\Aord{M}{F[x]}$ the \emph{$F[x]$-order} of $M$. If $m_x : M \to M$ is left-multiplication by~$x$, then
\[
\Aord{M}{F[x]} = \Char(m_x,M,X)|_{X=x},
\]
where $\Char(m_x,M,X) \in F[X]$ is the characteristic polynomial of $m_x$ as an $F$-linear map.
For a variable $y$ independent from $x$, but $M$ still an $F[x]$-module, we will write
\[
\Aord{M}{F[y]} \assign \Aord{M}{F[x]}|_{x=y} = \Char(m_x,M,y).
\]
This will be of particular use for us when $M$ is an $\sA$-module (or $\bA$-module), where
\[
\Aord{M}{A} = \Aord{M}{\sA}|_{t=\theta} = \Char(m_t,M,\theta) \in A, \ \textup{(or $\Aord{M}{\AAA} = \Aord{M}{\bA}|_{t=\theta} = \Char(m_t,M,\theta) \in \AAA$),}
\]
coercing $A$-orders and $\AAA$-orders to be elements of our scalar fields.

\subsection{Drinfeld modules, Anderson \texorpdfstring{$t$}{t}-modules, and their adjoints} \label{SS:Drinfeld}
Given a field $F\supseteq \FF_q$ and an $\FF_q$-algebra map $\iota : \sA \to F$, we call $F$ an \emph{$\sA$-field}.  The kernel of $\iota$ is the \emph{characteristic} of $F$, and if $\iota$ is injective then the characteristic is \emph{generic}. If $F \subseteq \C$ has generic characteristic, then we always assume that $\iota(t) = \theta$. Otherwise, $\iota(t) \rassign \otheta \in F$.

\subsubsection{Drinfeld modules and Anderson $t$-modules}
A \emph{Drinfeld module} over $F$ is defined by an $\FF_q$-algebra homomorphism $\phi : \sA \to F[\tau]$ such that
\begin{equation} \label{E:Drindef}
\phi_t = \otheta + \kappa_1 \tau + \dots + \kappa_r \tau^r, \quad \kappa_r \neq 0.
\end{equation}
We say that $\phi$ has \emph{rank $r$}. We then make $F$ into an $\sA$-module by setting
\[
t \cdot x \assign \phi_t(x) = \otheta x + \kappa_1 x^q + \cdots + \kappa_r x^{q^r}, \quad x \in F.
\]
Similarly an \emph{Anderson $t$-module of dimension~$\ell$} over $F$ is defined by an $\FF_q$-algebra homomorphism $\psi : \sA \to \Mat_{\ell}(F)[\tau]$ such that
\begin{equation} \label{E:tmoddef}
\psi_t = \pd \psi_t + E_1 \tau + \dots + E_{w} \tau^{w}, \quad E_i \in \Mat_{\ell}(F),
\end{equation}
where $\pd \psi_t - \otheta\cdot \rI_{\ell}$ is nilpotent. A Drinfeld module is then
a $t$-module of dimension~$1$. 
We write $\psi(F)$ for $F^{\ell}$ with the $\sA$-module structure given by $a \cdot \bx \assign \psi_a(\bx)$ through~\eqref{E:taueval}. Similarly, we write $\Lie(\psi)(F)$ for~$F^{\ell}$ with $F[t]$-module structure defined by $\pd \psi_a$ for $a \in \sA$. For $a \in \sA$, the $a$-torsion submodule of $\psi(\oF)$ is denoted
\[
\psi[a] \assign \{ \bx \in \smash{\oF}^{\ell} \mid \psi_a(\bx) = 0 \}.
\]

Given $t$-modules $\phi : \sA \to \Mat_{k}(F)[\tau]$, $\psi : \sA \to \Mat_{\ell}(F)[\tau]$, a morphism $\eta : \phi \to \psi$ is a matrix $\eta \in \Mat_{\ell \times k}(F[\tau])$ such that
$\eta \phi_a = \psi_a \eta$ for all $a \in \sA$.
Moreover, $\eta$ induces an $\sA$-module homomorphism $\eta : \phi(F) \to \psi(F)$, and we have a functor $\psi \mapsto \psi(F)$ from the category of $t$-modules to $\sA$-modules. We also have an induced map of $F[t]$-modules, $\pd \psi : \Lie(\phi)(F) \to \Lie(\psi)(F)$.

Anderson defined $t$-modules in~\cite{And86}, and following his language we sometimes abbreviate ``Anderson $t$-module'' by ``$t$-module.'' For more information about Drinfeld modules and $t$-modules see \cites{Goss,Thakur}. 

\subsubsection{Exponential and logarithm series} \label{SS:ExpLog}
Suppose now that $F \subseteq \C$ has generic characteristic and that $\psi$ is defined over $F$. Then there is a twisted power series $\Exp_{\psi} \in \power{\Mat_{\ell}(F)}{\tau}$, called the \emph{exponential series} of~$\psi$, such that
\[
\Exp_{\psi} = \sum_{i=0}^{\infty} B_i \tau^i, \quad B_0 = \rI_{\ell},\ B_i \in \Mat_{\ell}(F),
\]
and for all $a \in \sA$, $\Exp_{\psi} {}\cdot \pd\psi_a = \psi_a \cdot \Exp_{\psi}$.
This functional identity for $a=t$ induces a recursive relation that uniquely determines $\Exp_{\psi}$. That the coefficient matrices have entries in $F$ is due to Anderson \cite{And86}*{Prop.~2.1.4, Lem.~2.1.6}. In fact if $\psi$ is defined over $H$ with $K \subseteq H \subseteq F$, then $B_i \in \Mat_{\ell}(H)$ even if $H$ is not perfect (see \cite{NamoijamP24}*{Rem.~2.6} for further discussion). The exponential series induces an $\FF_q$-linear and entire function,
\[
\Exp_{\psi} : \C^{\ell} \to \C^{\ell}, \quad \Exp_{\psi}(\bz) = \sum_{i=0}^{\infty} B_i \bz^{(i)}, \quad \bz \assign (z_1, \dots, z_{\ell})^{\tr},
\]
called the \emph{exponential function} of $\psi$. That $\Exp_{\psi}$ converges everywhere is equivalent to
$\lim_{i \to \infty} \inorm{B_i}^{1/q^i} = 0 \Leftrightarrow \lim_{i \to \infty} \deg (B_i)/q^i = -\infty$.
We also identify the exponential function with the $\FF_q$-linear formal power series $\Exp_\psi(\bz) \in \power{\C}{\bz}^{\ell}$. The functional equation for $\Exp_\psi$ induces the identities,
\[
\Exp_\psi (\pd \psi_a \bz) = \psi_a \bigl( \Exp_\psi(\bz) \bigr), \quad \forall\, a \in \sA.
\]
The exponential function of $\psi$ is always surjective for Drinfeld modules, but it may not be surjective when $\ell \geqslant 2$. We say that $\psi$ is \emph{uniformizable} if $\Exp_{\psi} : \C^{\ell} \to \C^{\ell}$ is surjective. The kernel of $\Exp_{\psi} \subseteq \C^{\ell}$,
\[
\Lambda_\psi \assign \ker \Exp_{\psi},
\]
is a finitely generated and discrete $\pd\psi(\sA)$-submodule of $\C^{\ell}$ called the \emph{period lattice} of~$\psi$.
Thus if $\psi$ is uniformizable, then we obtain an exact sequence of $\sA$-modules,
\[
0 \to \Lambda_{\psi} \to \C^{\ell} \xrightarrow{\Exp_{\psi}} \psi(\C) \to 0.
\]
As an element of $\power{\Mat_{\ell}(F)}{\tau}$ the series $\Exp_{\psi}$ is invertible, and we let $\Log_{\psi} \assign \Exp_{\psi}^{-1} \in \power{\Mat_{\ell}(F)}{\tau}$ be the \emph{logarithm series} of~$\psi$, satisfying
\[
\Log_{\psi} = \sum_{i=0}^{\infty} C_i \tau^i, \quad C_0 = \rI_{\ell},\ C_i \in \Mat_{\ell}(F).
\]
Together with the \emph{logarithm function}, $\Log_\psi(\bz) = \sum_{i \geqslant 0} C_i \bz^{(i)} \in \power{\C}{\bz}^{\ell}$, we have
$\pd \psi_a \cdot \Log_{\psi} = \Log_{\psi} {} \cdot \psi_a$ and $\pd\psi_a ( \Log_{\psi}(\bz) ) = \Log_{\psi} (\psi_a (\bz))$, for all $a \in \sA$.
In general $\Log_\psi(\bz)$ converges only on an open polydisc in $\C^{\ell}$. For example, if $\phi : \sA \to \C[\tau]$ is a Drinfeld module as in \eqref{E:Drindef}, then $\Log_{\phi}(z)$ converges on the open disk of radius $R_\phi$, where
\begin{equation} \label{E:Rphi}
R_{\phi} = \inorm{\theta}^{-\max \{ (\deg \kappa_i - q^i)/(q^i-1)\, \mid \, 1 \leqslant i \leqslant r,\, \kappa_i \neq 0 \}}
\end{equation}
(see \citelist{\cite{EP14}*{Rem.~6.11} \cite{KhaochimP23}*{Cor.~4.5}}).

\subsubsection{Adjoints of $t$-modules}
Assume now that $F$ is a perfect $\sA$-field and that $\psi : \sA \to \Mat_{\ell}(F)[\tau]$ is an Anderson $t$-module over $F$ defined as in \eqref{E:tmoddef}. The \emph{adjoint} of $\psi$ is defined to be the $\FF_q$-algebra homomorphism $\psi^* : \sA \to \Mat_{\ell}(F)[\sigma]$ defined by
\[
\psi_a^* \assign (\psi_a)^*, \quad \forall\, a \in \sA.
\]
Since for $a$, $b \in \sA$ we have $\psi_{ab} = \psi_a\psi_b = \psi_b \psi_a$, \eqref{E:BCstar} implies that $\psi^*$ respects multiplication, which is the nontrivial part of checking that $\psi^*$ is an $\FF_q$-algebra homomorphism. From~\eqref{E:tmoddef}, we have
\[
\psi^*_t = (\psi_t)^* = (\pd \psi_t)^{\tr} + \bigl(E_1^{(-1)} \bigr)^{\tr} \sigma + \cdots + \bigl( E_w^{(-w)} \bigr)^{\tr} \sigma^{w},
\]
and so for any $\bx \in F^{\ell}$, we have $\psi^*_t(\bx) = (\pd \psi_t)^{\tr} \bx + (E_1^{(-1)})^{\tr} \bx^{(-1)} + \cdots + (E_w^{(-w)})^{\tr} \bx^{(-w)}$.
In this way the map $\psi^*$ induces an $\sA$-module structure on $F^{\ell}$, which we denote $\psi^*(F)$. Similarly we denote $\Lie(\psi^*)(F) = F^{\ell}$ with an $F[t]$-module structure induced by $\pd \psi_a^{\tr}$ for $a \in \sA$. For $a \in \sA$, the $a$-torsion submodule of $\psi^*(\oF)$ is denoted
\[
\psi^*[a] \assign \{ \bx \in \smash{\oF}^{\ell} \mid \psi^*_a(\bx) = 0\}.
\]

If $\eta : \phi \to \psi$ is a morphism of $t$-modules as above, then $\eta^* \in \Mat_{k \times \ell}(F)[\sigma]$ provides a morphism $\eta^* : \psi^* \to \phi^*$ such that $\eta^* \psi_a^* = \phi_a^* \eta^*$ for all $a \in \sA$ (and vice versa). Furthermore, $\pd\eta^* : \Lie(\psi^*)(F) \to \Lie(\phi^*)(F)$ is an $F[t]$-module homomorphism. Adjoints of Drinfeld modules were investigated extensively by Goss~\cite{Goss}*{\S 4.14} and Poonen~\cite{Poonen96}, and we will explore further properties for adjoints of Anderson modules in \S\ref{SS:pairings}.

\subsection{Tate modules and characteristic polynomials for Drinfeld modules} \label{SS:munu}
We fix a Drinfeld module $\phi : \sA \to A[\tau]$ of rank $r$ in generic characteristic, given by
\[
\phi_t = \theta + \kappa_1 \tau + \cdots + \kappa_r \tau^r, \quad \kappa_i \in A,\ \kappa_r \neq 0.
\]
Letting $f \in A_+$ be irreducible of degree~$d$, the reduction of $\phi$ modulo~$f$ is a Drinfeld module $\ophi : \sA \to \FF_f[\tau]$ of rank $r_0 \leqslant r$, where $\FF_f = A/fA$. Then $\phi$ has \emph{good reduction} modulo~$f$ if $r_0=r$ or equivalently if $f \nmid \kappa_r$.

For $\lambda \in \sA_+$ irreducible, we form the $\lambda$-adic \emph{Tate modules},
\[
T_{\lambda}(\phi) \assign \varprojlim \phi[\lambda^m], \quad T_{\lambda}(\ophi) \assign \varprojlim \ophi[\lambda^m].
\]
As an $\sA_{\lambda}$-module, $T_{\lambda}(\phi) \cong \sA_{\lambda}^{r}$, and if $\lambda(\theta) \neq f$, then likewise $T_{\lambda}(\ophi) \cong \sA_{\lambda}^{r_0}$. Fixing henceforth that $\lambda(\theta) \neq f$, we set $P_f(X) \assign \Char(\tau^d,T_{\lambda}(\ophi),X)|_{t=\theta}$ to be the characteristic polynomial of the $q^d$-th power Frobenius acting on $T_{\lambda}(\ophi)$ but, for convenience, with coefficients forced into $A$ (rather than $\sA$). Thus we have
\begin{equation} \label{E:Pfdef}
P_f(X) = X^{r_0} + c_{r_0-1} X^{r_0-1} + \cdots + c_0 \in A[X].
\end{equation}
Takahashi~\cite{Takahashi82}*{Prop.~3} showed that the coefficients in~$A$ and are independent of the choice of $\lambda$ (see also Gekeler~\cite{Gekeler91}*{Cor.~3.4}).
We note that if $\phi$ has good reduction modulo~$\lambda$ and if $\alpha_f \in \Gal(K^{\sep}/K)$ is a Frobenius element, then (e.g., see \citelist{\cite{Goss92}*{\S 3} \cite{Goss}*{\S 8.6}})
\[
\Char(\tau^d,T_{\lambda}(\ophi),X) = \Char(\alpha_f,T_{\lambda}(\phi),X) \in \sA[X].
\]

\subsubsection{Properties of $P_f(X)$} \label{SSS:Pf}
The following results are due to Gekeler~\cite{Gekeler91}*{Thm.~5.1} and Takahashi~\cite{Takahashi82}*{Lem.~2, Prop.~3}.
\begin{itemize}
\item We have $c_0 = c_f^{-1}f$ for some $c_f \in \FF_q^{\times}$.
\item The ideal $(P_f(1)) \subseteq A$ is an Euler-Poincar\'e characteristic for $\ophi(\FF_f)$.
\item The roots $\gamma_1, \dots, \gamma_{r_0}$ of $P_f(x)$ in $\oK$ satisfy $\deg_{\theta} \gamma_i = d/r_0$.
\end{itemize}
Extending these a little further, for $1 \leqslant j \leqslant r_0$, we have $\deg_{\theta} c_{r_0-j} \leqslant jd/r_0$. Additionally,
\begin{equation}
\bigl[ \ophi(\FF_f) \bigr]_{A} = c_f P_f(1)
\end{equation}
by \cite{CEP18}*{Cor.~3.2}. Here we use the convention from \S\ref{SSS:orders} that $\Aord{\ophi(\FF_f)}{A} = \Aord{\ophi(\FF_f)}{\sA}|_{t=\theta}$. Following the exposition in~\cite{CEP18}*{\S 3}, we let $P_f^{\vee}(X) \assign \Char(\tau^d,T_{\lambda}(\ophi)^{\vee},X)|_{t=\theta}$ be the characteristic polynomial in $K[X]$ of $\tau^d$ acting on the the dual space of $T_{\lambda}(\ophi)$. We let $Q_f(X) = X^{r_0} P_f(1/X)$ and $Q_f^{\vee}(X) = X^{r_0}P_f^{\vee}(1/X)$ be the reciprocal polynomials of $P_f(X)$ and $P_f^{\vee}(X)$, and consider $Q_f^{\vee}(fX) = 1 + c_f c_1 X + c_f c_2 f X^2 + \cdots + c_f c_{r_0-1} f^{r_0-2} X^{r_0-1} + c_f f^{r_0-1}X^{r_0}$. To denote the dependence on $\phi$, we write $P_{\phi,f}(X)$, $Q_{\phi,f}(X)$, etc.

By varying $f$, we use $Q_f^{\vee}(fX)$ and $Q_f(X)$ to define multiplicative functions $\mu_{\phi}$, $\nu_{\phi} : A_+ \to A$ such that on powers of a given~$f$,
\begin{equation} \label{E:munugen}
\sum_{m=1}^{\infty} \mu_{\phi}(f^m) X^m \assign \frac{1}{Q_f^{\vee}(fX)}, \quad
\sum_{m=1}^{\infty} \nu_{\phi}(f^m) X^m \assign \frac{1}{Q_f(X)}.
\end{equation}

\subsubsection{Everywhere good reduction} \label{SSS:everywhere}
Hsia and Yu~\cite{HsiaYu00} have determined precise formulas for~$c_f$ in terms of the $(q-1)$-st power residue symbol. Of particular interest presently is the case that $\phi$ has everywhere good reduction, i.e., when $\kappa_r \in \FF_q^{\times}$. In this case, Hsia and Yu~\cite{HsiaYu00}*{Thm.~3.2, Eqs.~(2) \& (8)} showed that $c_f = (-1)^{r + d(r+1)} \kappa_r^d$.
This prompts the definition of a completely multiplicative function $\chi_{\phi} : A_+ \to \FF_q^{\times}$,
\begin{equation} \label{E:chidef}
\chi_{\phi}(a) \assign \bigl((-1)^{r+1} \kappa_r \bigr)^{\deg_{\theta} a},
\end{equation}
for which we see that $c_f = (-1)^r \chi_{\phi}(f)$. Letting $\ochi_{\phi} : A_+ \to \FF_q^{\times}$ be the multiplicative inverse of $\chi_{\phi}$, we see that
\begin{align} \label{E:PfandPfvee}
P_f(X) &= X^r + c_{r-1} X^{r-1} + \cdots + c_1 X + (-1)^r \ochi_{\phi}(f) \cdot f, \\
P_f^{\vee}(X) &= X^r + \frac{(-1)^r \chi_{\phi}(f) c_1}{f}X^{r-1} + \cdots + \frac{(-1)^r \chi_{\phi}(f) c_{r-1}}{f} X + \frac{(-1)^r \chi_{\phi}(f)}{f}, \notag
\end{align}
and likewise
\begin{align} \label{E:QfandQfvee}
Q_f(X) &= 1 + c_{r-1} X + \cdots + c_1 X^{r-1} + (-1)^r \ochi_{\phi}(f) \cdot f X^r, \\
Q_f^{\vee}(fX) &= \begin{aligned}[t]
1 + (-1)^r \chi_{\phi}(f) c_1X + \cdots + (-1)^r &\chi_{\phi}(f) c_{r-1}f^{r-2} X^{r-1}\\
&{}+ (-1)^r \chi_{\phi}(f)f^{r-1} X^r.
\end{aligned}\notag
\end{align}
Moreover,
\begin{equation} \label{E:munuf}
\mu_{\phi}(f) = (-1)^{r+1}\chi_{\phi}(f) c_1, \quad \nu_{\phi}(f) = -c_{r-1}.
\end{equation}
We record the induced recursive relations (cf.~\cite{CEP18}*{Lem.~3.5}) on $\mu_{\phi}$ and $\nu_{\phi}$, where taking $m +r \geqslant 1$ and using the convention that $\mu_{\phi}(b) = \nu_{\phi}(b) = 0$ if $b \in K \setminus A_+$,
\begin{align}
\label{E:murec}
\mu_{\phi}(f^{m+r}) &= \begin{aligned}[t]
\mu_{\phi}(f) \mu_{\phi}(f^{m+r-1}) - (-1)^r \chi_{\phi}(f) \sum_{j=2}^{r-1} &c_j f^{j-1} \mu_{\phi}(f^{m+r-j})\\
&{} - (-1)^r \chi_{\phi}(f) f^{r-1} \mu_{\phi}(f^m),
\end{aligned}
\\
\label{E:nurec}
\nu_{\phi}(f^{m+r}) &=
\nu_{\phi}(f) \nu_{\phi}(f^{m+r-1}) - \sum_{j=2}^{r-1} c_{r-j} \nu_{\phi}(f^{m+r-j}) - (-1)^r \ochi_{\phi}(f) f \nu_{\phi}(f^m).
\end{align}

Later we may write ``$\mu_{\phi,\theta}(a)$'' and ``$\nu_{\phi,\theta}(a)$'' for $\mu_{\phi}(a)$ and $\nu_{\phi}(a)$ to emphasize that they take values in~$A$, and use $\mu_{\phi,z}(a) = \mu_{\theta}(a)|_{\theta=z}$ and similarly $\nu_{\phi,z}(a)$ to switch to values in~$\FF_q[z]$. We use $\mu_{\phi}$ and $\nu_{\phi}$ to define Goss $L$-functions $L(\phi^{\vee},s-1)$ and $L(\phi,s)$ in~\S\ref{SS:GossL}.

\subsection{\texorpdfstring{$t$}{t}-motives and dual \texorpdfstring{$t$}{t}-motives} \label{SS:tmotives}
We recall here basic information about $t$-motives and dual $t$-motives attached to $t$-modules, which will be extended to Anderson $t$-modules over $\tau$-perfect fields and expanded on in~\S\ref{S:Anderson}. For this section we fix a perfect $\sA$-field $F$ and $t$-module $\psi : \sA \to \Mat_{\ell}(F)[\tau]$ as in~\eqref{E:tmoddef}. Recall that $\otheta = \iota(t) \in F$.

\subsubsection{$t$-motive of $\psi$}
We let $\cM_{\psi} \assign \Mat_{1\times \ell}(F[\tau])$, and make $\cM_{\psi}$ into a left $F[t,\tau]$-module by using the inherent structure as a left $F[\tau]$-module and setting
\[
a \cdot m \assign m \psi_a, \quad m \in \cM_{\psi},\ a \in \sA.
\]
Then $\cM_{\psi}$ is called the \emph{$t$-motive} of $\psi$. We note that for any $m\in \cM_{\psi}$,
\[
(t-\otheta)^{\ell} \cdot m \in \tau \cM_{\psi},
\]
since $\pd\psi_t-\otheta \rI_{\ell}$ is nilpotent (and $F$ is perfect). If we need to emphasize the dependence on the base field~$F$, we write
\[
\cM_{\psi}(F) \assign \cM_{\psi} = \Mat_{1 \times \ell}(F[\tau]).
\]

A morphism $\eta : \phi \to \psi$ of $t$-modules over $F$ of dimensions~$k$ and $\ell$, defined as in \S\ref{SS:Drinfeld}, induces a morphism of left $F[t,\tau]$-modules $\eta^{\dagger} : \cM_{\psi} \to \cM_{\phi}$, given by $\eta^{\dagger} (m) \assign m\eta$ for $m \in \cM_{\psi}$. The functor from $t$-modules over $F$ to $t$-motives over $F$ is fully faithful, and so every left $F[t,\tau]$-module homomorphism $\cM_{\psi} \to \cM_{\phi}$ arises in this way.

By construction $\cM_{\psi}$ is free of rank~$\ell$ as a left $F[\tau]$-module, and we say $\ell$ is the \emph{dimension} of $\cM_{\psi}$. If $\cM_{\psi}$ is further free of finite rank over $F[t]$, then $\cM_{\psi}$ is said to be \emph{abelian} and $r = \rank_{F[t]} \cM_{\psi}$ is the \emph{rank} of $\cM_{\psi}$. We will say that $\psi$ is abelian or has rank~$r$ if $\cM_{\psi}$ possesses the corresponding properties. The $t$-motives in Anderson's original definition in~\cite{And86} are abelian, as will be most of the $t$-motives in this paper, but for example, see~\citelist{\cite{BP20} \cite{Goss}*{Ch.~5} \cite{HartlJuschka20} \cite{NamoijamP24}*{Ch.~2--4}} for $t$-motives in this wider context.

\subsubsection{Dual $t$-motive of $\psi$}
We let $\cN_{\psi} \assign \Mat_{1 \times \ell}(F[\sigma])$, and similar to the case of $t$-motives, we define a left $F[t,\sigma]$-module structure on $\cN_{\psi}$ by setting
\[
a \cdot n \assign n \psi_a^*, \quad n \in \cN_{\psi},\ a \in \sA.
\]
The module $\cN_{\psi}$ is the \emph{dual $t$-motive} of $\psi$. As in the case of $t$-motives, for any $n \in \cN_{\psi}$ we have $(t-\otheta)^{\ell} \cdot n \in \sigma\cN_{\psi}$. Also if we need to emphasize the dependence on~$F$, we write
\[
\cN_{\psi}(F) \assign \cN_{\psi} = \Mat_{1\times \ell}(F[\sigma]).
\]

Again for a morphism $\eta : \phi \to \psi$ of $t$-modules of dimensions $k$ and $\ell$, we obtain a morphism of left $F[t,\sigma]$-modules, $\eta^{\ddagger} : \cN_{\phi} \to \cN_{\psi}$, given by $\eta^{\ddagger}(n) \assign n \eta^*$ for $n \in \cN_{\phi}$. Also, every morphism of left $F[t,\sigma]$-modules $\cN_{\phi} \to \cN_{\psi}$ arises in this way.

The dual $t$-motive $\cN_{\psi}$ is free of rank $\ell$ as a left $F[\sigma]$-module, and $\ell$ is the \emph{dimension} of $\cN_{\psi}$. If $\cN_{\psi}$ is free of finite rank over $F[t]$, then we say $\cN_{\psi}$ is \emph{$\sA$-finite}, and we call $r = \rank_{F[t]}(\cN_{\psi})$ the \emph{rank} of $\cN_{\psi}$. It has been shown by Maurischat~\cite{Maurischat21} that for a $t$-module $\psi$, the $t$-motive $\cM_{\psi}$ is abelian if and only if the dual $t$-motive $\cN_{\psi}$ is $\sA$-finite. In this case the rank of $\cM_{\psi}$ is the same as the rank of $\cN_{\psi}$. We will say that $\psi$ is $\sA$-finite or has rank~$r$ if $\cN_{\psi}$ has those properties.
Dual $t$-motives were initially introduced in \cite{ABP04} over fields of generic characteristic. See \citelist{\cite{BP20} \cite{HartlJuschka20} \cite{Maurischat21} \cite{NamoijamP24}*{Ch.~2--4}}, for more information.

We call $\bm = (m_1, \dots, m_r)^{\tr} \in \Mat_{r\times 1}(\cM_{\psi}(F))$ a \emph{basis} of $\cM_{\psi}(F)$ if $m_1, \dots, m_r$ form an $F[t]$-basis of $\cM_{\psi}(F)$. Likewise $\bn = (n_1, \dots, n_r)^{\tr} \in \Mat_{r\times 1}(\cN_{\psi}(F))$ is a \emph{basis} of $\cN_{\psi}(F)$ if $n_1, \dots, n_r$ form an $F[t]$-basis of $\cN_{\psi}(F)$. We then define $\Gamma$, $\Phi \in \Mat_r(F[t])$ so that
\[
\tau \bm = \Gamma \bm, \quad \sigma \bn = \Phi \bn.
\]
It follows that $\det \Gamma = c(t-\theta)^\ell$, $\det \Phi = c'(t-\theta)^\ell$,
where $c$, $c' \in F^{\times}$ (e.g., see \cite{NamoijamP24}*{\S 3.2}). Then $\Gamma$ \emph{represents multiplication by $\tau$} on $\cM_{\psi}$ and $\Phi$ \emph{represents multiplication by $\sigma$} on $\cN_{\psi}$.

\begin{example} \label{Ex:Carlitz1}
\emph{Carlitz module.} The Carlitz module $\sC : \sA \to F[\tau]$ over $F$ is defined by
\[
\sC_t = \otheta + \tau,
\]
and it has dimension~$1$ and rank~$1$. Then $\bm = \{1\}$ is an $F[t]$-basis for $\cM_{\sC} = F[\tau]$, and $\bn = \{1\}$ is an $F[t]$-basis for $\cN_{\sC} = F[\sigma]$. One finds that $\tau \cdot 1 = (t-\theta)\cdot 1$ in $\cM_{\sC}$ and $\sigma \cdot 1 = (t-\theta)\cdot 1$ in $\cN_{\sC}$, so $\Gamma = \Phi = t-\theta$.
\end{example}

\begin{example} \label{Ex:Drinfeld1}
\emph{Drinfeld modules.} Let $\phi : \sA \to F[\tau]$ be a Drinfeld module over $F$ of rank~$r$ defined as in~\eqref{E:Drindef}. Then $\bm = (1,\tau, \dots, \tau^{r-1})^{\tr}$ is a basis for $\cM_{\psi}$ and $\bn = (1, \sigma, \dots, \sigma^{r-1})^{\tr}$ is a basis for $\cN_{\psi}$. Furthermore, $\tau \bm = \Gamma\bm$ and $\sigma \bn = \Phi \bn$, where
\begin{equation} \label{E:Gammadef}
\Gamma = \begin{pmatrix}
0 & 1 & \cdots & 0 \\
\vdots & \vdots & \ddots & \vdots \\
0 & 0 & \cdots & 1 \\
(t-\otheta)/\kappa_r & -\kappa_1/\kappa_r & \cdots & -\kappa_{r-1}/\kappa_r
\end{pmatrix},
\end{equation}
and $\Phi$ occurs similarly. See \citelist{\cite{CP12}*{\S 3.3--3.4} \cite{NamoijamP24}*{Ex.~3.35, Ex.~4.117} \cite{Pellarin08}*{\S 4.2}} for details.
\end{example}

\subsubsection{Rigid analytic trivializations}
We now specialize to the situation that $\psi : \sA \to \Mat_{\ell}(F)$ is an Anderson $t$-module defined over a perfect field $F$ with $K \subseteq F \subseteq \C$ of generic characteristic. We further assume that $\psi$ is abelian of rank~$r$, equivalently $\sA$-finite of rank~$r$ by~\cite{Maurischat21}. If $\Gamma$ represents multiplication by $\tau$ on $\cM_{\psi}$, we set
\[
\Theta \assign \Gamma^{\tr} \in \Mat_r(F[t]).
\]
Then we say that $\cM_{\psi}$ is \emph{rigid analytically trivial} if there exists $\Upsilon \in \GL_r(\TT_t)$ so that
\begin{equation}
\Upsilon^{(1)} = \Upsilon \Theta.
\end{equation}
By~\cite{And86}*{Thm.~4}, $\psi$ is uniformizable if and only if $\cM_{\psi}$ is rigid analytically trivial.

\begin{remark}
Definitions of rigid analytic trivializations using the dual $t$-motive $\cN_\psi$ have been investigated extensively in the context of transcendence theory (e.g., see \citelist{\cite{ABP04} \cite{Chang20} \cite{CP12} \cite{Maurischat22} \cite{NamoijamP24}*{Ch.~3--4} \cite{P08} \cite{Pellarin08}}). However, for our purposes the rigid analytic trivialization for $\cM_{\psi}$ is more convenient. Moreover, the two types of rigid analytic trivializations are related by \cite{HartlJuschka20}*{Thm.~2.5.13} (see \citelist{\cite{CP12}*{\S 3.4} \cite{NamoijamP24}*{\S 4.4--4.6}} for additional discussion).
\end{remark}

\begin{example} \label{Ex:Carlitz2}
\emph{Carlitz module.}  The Carlitz exponential $\Exp_{\sC} = \sum_{i \geqslant 0} D_i^{-1} \tau^i$ and logarithm $\Log_{\sC} = \sum_{i \geqslant 0} L_i^{-1} \tau^i$ are defined for $D_i$, $L_i \in A$ (see \citelist{\cite{Goss}*{Ch.~3} \cite{Thakur}*{Ch.~2}}). Its period lattice $\Lambda_{\sC} = A \tpi$ is generated by the \emph{Carlitz period} (see \cite{Carlitz35}*{Thm.~5.1}),
\[
\tpi = -(-\theta)^{q/(q-1)} \prod_{i=1}^{\infty} \Bigl( 1 - \theta^{1-q^i} \Bigr)^{-1} \in K_\infty\bigl( (-\theta)^{1/(q-1)} \bigr),
\]
for a fixed choice of $(-\theta)^{1/(q-1)}$. The radius of convergence of $\Log_{\sC}$ is $R_{\sC} = \norm{\tpi} = \norm{\theta}^{q/(q-1)}$. The rigid analytic trivialization for $\sC$ is the \emph{Anderson-Thakur function} \cite{AndThak90}*{\S 2.5},
\begin{equation} \label{E:omegadef}
\omega \assign (-\theta)^{1/(q-1)} \prod_{i=0}^{\infty} \biggl( 1 - \frac{t}{\theta^{q^i}} \biggr)^{-1} = \sum_{m=0}^{\infty} \exp_{\sC} \biggl( \frac{\tpi}{\theta^{m+1}} \biggr) t^m \in \TT_t^{\times}.
\end{equation}
The functional equation $\omega^{(1)} = (t-\theta)\omega$ implies that
\begin{equation}
\frac{\omega^{(i)}}{\omega} = \bigl( t-\theta^{q^{i-1}} \bigr) \cdots \bigl( t-\theta^q \bigr) (t-\theta) \in A[t], \quad i \geqslant 0,
\end{equation}
where we use the convention that the empty product is~$1$ so that the identity holds for $i=0$. Furthermore, we recover the Carlitz period by taking a residue at $t=\theta$,
\[
\Res_{t=\theta} \omega = (t-\theta)\omega |_{t=\theta} =  -\tpi.
\]
For additional properties and generalizations of $\omega$, see \cites{AnglesPellarin15,APT16,Pellarin12,Perkins14}.
\end{example}

\begin{example} \label{Ex:Drinfeld2}
\emph{Drinfeld modules.} For a Drinfeld module $\phi : \sA \to F[\tau]$ of rank~$r$ defined as in \eqref{E:Drindef} over a perfect $\sA$-field $F$ of generic characteristic with $K \subseteq F \subseteq \C$, we fix generators $\pi_1, \dots, \pi_r \in \Lambda_{\phi}$. For $i=1, \dots, r$, we define the \emph{Anderson generating function},
\[
g_i \assign \sum_{m=0}^{\infty} \Exp_{\phi} \biggl( \frac{\pi_i}{\theta^{m+1}} \biggr) t^m \in \TT_t,
\]
which satisfies $\phi_t(g_i) = \theta g_i + \kappa_1 g_i^{(1)} + \cdots + \kappa_r g_i^{(r)} = tg_i$.
The Anderson-Thakur function $\omega$ is then the Anderson generating function for $\tpi$ by~\eqref{E:omegadef}. By Example~\ref{Ex:Drinfeld1},
\begin{equation} \label{E:Thetadef}
\Theta = \Gamma^{\tr} = \begin{pmatrix}
0 & \cdots & 0 & (t-\theta)/\kappa_r \\
1 & \cdots & 0 & -\kappa_1/\kappa_r \\
\vdots & \ddots & \vdots & \vdots \\
0 & \cdots & 1 & -\kappa_{r-1}/\kappa_r
\end{pmatrix},
\end{equation}
and one finds that
\begin{equation} \label{E:Upsilondef}
\Upsilon \assign \begin{pmatrix}
g_1 & g_1^{(1)} & \cdots & g_1^{(r-1)} \\
g_2 & g_2^{(1)} & \cdots & g_2^{(r-1)} \\
\vdots & \vdots &  & \vdots \\
g_r & g_r^{(1)} & \cdots & g_r^{(r-1)}
\end{pmatrix} \in \GL_r(\TT_t)
\end{equation}
satisfies $\Upsilon^{(1)} = \Upsilon\Theta$. Thus $\Upsilon$ is a rigid analytic trivialization for $\cM_{\phi}$, originally determined by Pellarin~\cite{Pellarin08}*{\S 4.2} (see also \cite{Goss94}*{\S 2.6} for rank $2$). That $\Upsilon$ is invertible in $\Mat_r(\TT_t)$ takes some effort, but see \citelist{\cite{CP11}*{\S 2.5} \cite{GezmisP19}*{Prop.~6.2.4} \cite{Pellarin08}*{\S 4.2.3}} for more details. From the theory of Anderson generating functions, one knows that each $g_i$ has a meromorphic continuation to $\C$, with simple poles at $t = \theta$, $\theta^q$, $\theta^{q^2}, \ldots$. We find  that
\[
\Res_{t=\theta} g_i = (t-\theta)g_i|_{t=\theta} = -\pi_i, \quad
g_{i}^{(j)}(\theta) = \eta_{ij}, \quad 1 \leqslant i \leqslant r,\ 1 \leqslant j \leqslant r-1,
\]
where $\eta_{ij}$ is a quasi-period for $\phi$. See \citelist{\cite{CP11}*{\S 2.5} \cite{CP12}*{\S 3.4} \cite{NamoijamP24}*{Ex.~4.117} \cite{Pellarin08}*{\S 4.2}}. Anderson generating functions originally appeared in \cite{And86}*{\S 3.2}, and they have been studied extensively for Drinfeld modules in \cites{AndThak90, CP11, CP12, EP14, Goss94, Green22, KhaochimP23, MaurischatPerkins22, Pellarin08, Pellarin12}.
About our notation, our use of $\Upsilon$ in \eqref{E:Upsilondef} coincides with that in \cites{CP11,CP12,GezmisP19}, but would be $\Upsilon^{\tr}$ in \cite{KhaochimP23}; $(\Upsilon^{\tr})^{(-1)}$ in \cite{NamoijamP24}*{Eq.~(4.45)}; and $\widehat{\Psi}^{\tr}$ in \cite{Pellarin08}*{\S 4.2}.
\end{example}

\subsection{Schur polynomials} \label{SS:Schur}
We review properties of symmetric polynomials and especially Schur polynomials. For more details on symmetric polynomials see \citelist{\cite{Aigner}*{Ch.~8} \cite{Stanley}*{Ch.~7}}.
Letting $\bx = \{ x_1, \dots, x_n \}$ be independent variables, the \emph{elementary symmetric polynomials} $\{e_i\}_{i=0}^{n} = \{ e_{n,i}\}_{i=0}^n \subseteq \ZZ[\bx]$ are defined by
\begin{equation} \label{E:elesymm}
\sum_{i=0}^n e_i(\bx)T^i = (1+x_1 T)(1+x_2 T) \cdots (1+x_n T).
\end{equation}
We adopt the convention that $e_i = 0$ if $i < 0$ or $i > n$.
The \emph{complete homogeneous symmetric polynomials} $\{h_i\}_{i \geqslant 0} = \{ h_{n,i} \}_{i \geqslant 0} \subseteq \ZZ[x_1,\dots, x_n]$ are defined by
\begin{equation} \label{E:homsymm}
\sum_{i=0}^\infty h_i(\bx) T^i = \frac{1}{(1-x_1 T)(1-x_2 T) \cdots (1-x_n T)},
\end{equation}
and similarly if $i < 0$ then we take $h_i=0$. Then $h_i$ consists of the sum of all monomials in $x_1, \dots, x_n$ of degree~$i$. The \emph{Vandermonde determinant} is
\begin{equation}
V(\bx) = \prod_{1 \leqslant i < j \leqslant n} (x_i - x_j).
\end{equation}
When nonzero we have $\deg e_i=i$ and $\deg h_i=i$, and $\deg V = \binom{n}{2}$.

\begin{definition} \label{D:polytensor}
For polynomials $P(T) = (T-x_1) \cdots (T-x_k)$ and $Q(T) = (T - y_1) \cdots (T-y_\ell)$, we set
\[
(P \otimes Q)(T) \assign \prod_{\substack{1 \leqslant i \leqslant k \\ 1 \leqslant j \leqslant \ell}} (T- x_i y_j).
\]
Letting $B_m$ be the coefficient of $T^m$ in $(P\otimes Q)(T)$, we find that $B_m$ is symmetric in both $x_1, \dots, x_k$ and $y_1, \dots, y_{\ell}$, its total degree in $x_1, \dots, x_k$ is $k\ell - m$, and its total degree in $y_1, \dots, y_\ell$ is also $k\ell -m$. As such,
$B_m \in \ZZ[e_{k,1}(\bx), \ldots, e_{k,k\ell-m}(\bx); e_{\ell, 1}(\by), \ldots, e_{\ell,k\ell-m}(\by)]$.
The coefficients of $(P\otimes Q)(T)$ and its inverse $(P\otimes Q)(T)^{-1}$ are expressible in terms of Schur polynomials (see Theorem~\ref{T:Cauchy} and Corollary~\ref{C:Cauchynl} for $(P\otimes Q)(T)^{-1}$).
\end{definition}

\subsubsection{Schur polynomials} \label{SSS:Schur}
Let $\lambda$ denote an integer partition $\lambda_1 \geqslant \cdots \geqslant \lambda_n \geqslant 0$ of length~$n$, where $\lambda_i=0$ is allowed. We set
\begin{equation} \label{E:slambda}
s_{\lambda}(\bx) = s_{\lambda_1 \cdots \lambda_n}(\bx) \assign
V(\bx)^{-1} \cdot \det \begin{pmatrix}
x_1^{\lambda_1+n-1} & \cdots & x_n^{\lambda_n+n-1}\\
\vdots & & \vdots \\
x_1^{\lambda_{n-1}+1} & \cdots & x_n^{\lambda_{n-1}+1} \\
x_1^{\lambda_n} & \cdots & x_n^{\lambda_n}
\end{pmatrix}
\end{equation}
We have the following properties (see \citelist{\cite{Aigner}*{\S 8.3} \cite{Stanley}*{\S 7.15}}).
\begin{itemize}
\item $s_{\lambda}(\bx)$ is a symmetric polynomial in $\ZZ[x_1, \dots, x_n]$.
\item $\deg s_{\lambda}(\bx) = \lambda_1 + \dots + \lambda_n$.
\item For $0 \leqslant i \leqslant n$ we have $s_{\underbrace{\scriptstyle 1\,\,\cdots\,\,1}_{i} \underbrace{\scriptstyle 0\,\,\cdots\,\, 0}_{n-i}}(\bx) = e_i(\bx)$.
\item For $i \geqslant 0$ we have $s_{i \underbrace{\scriptstyle 0\,\,\cdots\,\,0}_{n-1}}(\bx) = h_i(\bx)$.
\end{itemize}
The polynomial $s_{\lambda}$ is called the \emph{Schur polynomial for $\lambda$}. Following the exposition of Bump and Goldfeld~\cites{Bump89,Goldfeld}, when $n \geqslant 2$ (which we now assume), we consider the subset of Schur polynomials where $\lambda_n=0$ as follows. For integers $k_1, \dots, k_{n-1} \geqslant 0$, form
\[
\lambda : k_1 + \cdots + k_{n-1} \geqslant k_2 + \cdots + k_{n-1} \geqslant \cdots \geqslant k_{n-1} \geqslant 0 \geqslant 0.
\]
We set $S_{k_1, \dots, k_{n-1}}(\bx)$ to be the the Schur polynomial $s_\lambda$, i.e.,
\begin{equation} \label{E:Sk}
S_{k_1, \dots, k_{n-1}}(\bx) \assign
V(\bx)^{-1} \cdot \det \begin{pmatrix}
x_1^{k_1+\cdots+k_{n-1}+n-1} & \cdots & x_n^{k_1+\cdots+k_{n-1}+n-1}\\
x_1^{k_2+\cdots+k_{n-1}+n-2} & \cdots & x_n^{k_2+\cdots+k_{n-1}+n-2}\\
\vdots & & \vdots \\
x_1^{k_{n-1}+1} & \cdots & x_n^{k_{n-1}+1} \\
1 & \cdots & 1
\end{pmatrix}.
\end{equation}
The degree of $S_{k_1, \dots, k_{n-1}}(\bx)$ is $k_1 + 2k_2 + \cdots + (n-1)k_{n-1}$.

\begin{lemma} \label{L:slambdaSk}
Let $\lambda : \lambda_1 \geqslant \cdots \geqslant \lambda_n \geqslant 0$ be an integer partition. Then
\[
s_{\lambda}(\bx) = (x_1 \cdots x_n)^{\lambda_{n}} \cdot S_{\lambda_1-\lambda_2,\lambda_2-\lambda_3, \ldots, \lambda_{n-1}-\lambda_n}(\bx).
\]
\end{lemma}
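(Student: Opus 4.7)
The plan is to unwind both determinantal definitions and observe that they differ by a common factor of $x_j^{\lambda_n}$ in each column. Set $k_i \assign \lambda_i - \lambda_{i+1}$ for $i=1,\dots,n-1$, which are non-negative integers because $\lambda$ is a partition. Then the telescoping sum gives
\[
k_i + k_{i+1} + \cdots + k_{n-1} = \lambda_i - \lambda_n, \quad 1 \leqslant i \leqslant n-1.
\]

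Next I would substitute these identities into the defining formula \eqref{E:Sk} for $S_{k_1, \dots, k_{n-1}}(\bx)$. The $i$-th row (for $i = 1, \dots, n-1$) becomes $x_j^{\lambda_i - \lambda_n + n - i}$, and the last row of $1$'s can be written uniformly as $x_j^{\lambda_n - \lambda_n + n - n}$. Thus
\[
V(\bx) \cdot S_{k_1, \dots, k_{n-1}}(\bx) = \det \bigl( x_j^{(\lambda_i + n - i) - \lambda_n} \bigr)_{1 \leqslant i,j \leqslant n}.
\]
Pulling the common factor $x_j^{-\lambda_n}$ out of the $j$-th column of this matrix, for $j=1,\dots,n$, yields
\[
\det \bigl( x_j^{(\lambda_i + n - i) - \lambda_n} \bigr)_{i,j} = (x_1 \cdots x_n)^{-\lambda_n} \cdot \det \bigl( x_j^{\lambda_i + n -i} \bigr)_{i,j} = (x_1 \cdots x_n)^{-\lambda_n} \cdot V(\bx) \cdot s_{\lambda}(\bx),
\]
where the second equality is just~\eqref{E:slambda}. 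Cancelling $V(\bx)$ from both sides gives the claimed identity.

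There is no real obstacle here; it is purely a matter of reindexing and a column extraction in the Jacobi--Trudi-style determinant. The only point to verify carefully is that $\lambda_n$ can indeed be factored from every row, including the last; that is ensured by the observation that the last row of $S$ is constant~$1$, matching the exponent $\lambda_n - \lambda_n = 0$, so the factorization is uniform across all $n$ rows.
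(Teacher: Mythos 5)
Your proof is correct and takes essentially the same approach as the paper: both arguments reduce to pulling the common factor $x_j^{\pm\lambda_n}$ out of every column of the alternant determinant and then matching the two definitions via the telescoping identity $k_i + \cdots + k_{n-1} = \lambda_i - \lambda_n$. You run the computation starting from $S_{k_1,\dots,k_{n-1}}$ and arriving at $s_{\lambda}$, whereas the paper starts from $s_{\lambda}$, extracts the factor to reach $s_{\lambda'}$ with $\lambda' = (\lambda_1-\lambda_n,\dots,\lambda_{n-1}-\lambda_n,0)$, and then cites \eqref{E:Sk}; the manipulation is the same in either direction.
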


\begin{proof}
By pulling $x_i^{\lambda_n}$ out of the $i$-th column in \eqref{E:slambda} we see that $s_{\lambda} = (x_1 \cdots x_n)^{\lambda_n} \cdot s_{\lambda'}$, where $\lambda'$ is the partition $\lambda_1-\lambda_n \geqslant \lambda_2-\lambda_n \geqslant \cdots \geqslant \lambda_{n-1}-\lambda_n \geqslant 0\geqslant 0$. The identity then follows from~\eqref{E:Sk}.
\end{proof}

As a result, we see from the properties of $s_{\lambda}$ above that
\begin{alignat}{2}
\label{E:Sei}
S_{\underbrace{\scriptstyle 0,\,\ldots\,,0,1,0,\,\ldots\,,0}_{\textup{$i$-th place}}}(\bx) &= e_i(\bx), \qquad &&1 \leqslant i \leqslant n-1,\\
\label{E:Shi}
S_{i,\underbrace{\scriptstyle 0,\,\cdots\,, 0}_{n-2}}(\bx) &= h_i(\bx), \qquad &&i \geqslant 0.
\end{alignat}

\begin{lemma} \label{L:Skreorder}
For $k_1, \dots, k_{n-1} \geqslant 0$, we have
\[
(x_1 \cdots x_n)^{k_1 + \dots + k_{n-1}} \cdot S_{k_1, \dots, k_{n-1}} \bigl( x_1^{-1}, \dots, x_n^{-1} \bigr) = S_{k_{n-1}, \dots, k_1}(\bx).
\]
\end{lemma}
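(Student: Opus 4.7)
The plan is to work directly from the determinantal definition \eqref{E:Sk} of the Schur polynomial, substitute $x_i \mapsto x_i^{-1}$, factor out powers of $x_i$ from each column to eliminate negative exponents, and then recognize the resulting matrix as the defining matrix of $S_{k_{n-1},\ldots,k_1}(\bx)$ with its rows in reversed order. Let me set $K \assign k_1 + \cdots + k_{n-1}$ throughout, so that the prefactor on the left-hand side is $(x_1 \cdots x_n)^K$.

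First I would handle the Vandermonde factor. A short computation gives
\[
V\bigl( x_1^{-1}, \ldots, x_n^{-1} \bigr) = \prod_{i<j} \frac{x_j - x_i}{x_i x_j} = (-1)^{\binom{n}{2}} (x_1 \cdots x_n)^{-(n-1)} V(\bx),
\]
since each variable appears in $n-1$ of the pairs. Next I would look at the $n \times n$ matrix in \eqref{E:Sk} after substituting $x_i \mapsto x_i^{-1}$; its $(j,i)$-entry for $j<n$ is $x_i^{-(k_j + \cdots + k_{n-1} + n-j)}$, and $1$ for $j=n$. Factoring $x_i^{-(K+n-1)}$ out of column $i$ contributes $(x_1 \cdots x_n)^{-(K+n-1)}$, and leaves in position $(j,i)$ the monomial $x_i^{(k_1+\cdots+k_{j-1})+(j-1)}$ for $j<n$, and $x_i^{K+n-1}$ for $j=n$.

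The next step is to observe that this residual matrix is exactly the matrix appearing in the definition of $S_{k_{n-1},\ldots,k_1}(\bx)$ but with its rows written in reverse order: writing $k_j' = k_{n-j}$, the $j'$-th row of the matrix for $S_{k_1',\ldots,k_{n-1}'}(\bx)$ has exponent $k_{j'}' + k_{j'+1}' + \cdots + k_{n-1}' + n - j'$, and substituting $k_{j'}' = k_{n-j'}$ and matching $j' = n-j+1$ (for $j<n$) and $j'=n$ (for $j=n$) produces the same exponents I just listed, in the opposite order. Reversing the $n$ rows multiplies the determinant by the sign of the reverse permutation, which is $(-1)^{\binom{n}{2}}$.

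Assembling the pieces:
\[
S_{k_1,\ldots,k_{n-1}}\bigl( x_1^{-1},\ldots,x_n^{-1} \bigr) = \frac{(x_1 \cdots x_n)^{n-1}}{(-1)^{\binom{n}{2}} V(\bx)} \cdot (x_1 \cdots x_n)^{-(K+n-1)} \cdot (-1)^{\binom{n}{2}} V(\bx) \cdot S_{k_{n-1},\ldots,k_1}(\bx),
\]
and the two signs $(-1)^{\binom{n}{2}}$ cancel, while the powers of $x_1 \cdots x_n$ combine to $(x_1\cdots x_n)^{-K}$. Multiplying through by $(x_1 \cdots x_n)^K$ yields the claimed identity. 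The only potential source of error is bookkeeping — especially the sign from reversing the rows and the exponent arithmetic $K - (k_j + \cdots + k_{n-1}) = k_1 + \cdots + k_{j-1}$ — but these are routine and carry no conceptual obstacle.
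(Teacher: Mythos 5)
Your proof is correct and follows essentially the same route as the paper's: substitute $x_i \mapsto x_i^{-1}$ into the determinantal definition~\eqref{E:Sk}, pull the factor $(x_1\cdots x_n)^{K+n-1}$ through the columns, track the Vandermonde factor and the $(-1)^{\binom{n}{2}}$ sign from reversing rows, and observe the resulting cancellation. The paper packages the bookkeeping a bit more tersely, but the argument is identical.
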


\begin{proof}
Substituting $x_1 \leftarrow x_1^{-1}, \dots, x_n \leftarrow x_n^{-1}$ into~\eqref{E:Sk} and using the multilinearity of the determinant, we have
\[
\begin{aligned}[b]
(x_1 \cdots x_n &)^{k_1 + \dots + k_{n-1}}\\
&{}\cdot S_{k_1, \dots, k_{n-1}} \bigl( x_1^{-1}, \dots, x_n^{-1} \bigr)
\end{aligned}
= \frac{\det \begin{pmatrix}
1 & \cdots & 1\\
x_1^{k_{1}+1} & \cdots & x_n^{k_{1}+1}\\
\vdots & & \vdots \\
x_1^{k_1+\cdots+k_{n-2}+n-2} & \cdots & x_n^{k_1+\cdots+k_{n-2}+n-2} \\
x_1^{k_1+\cdots+k_{n-1}+n-1} & \cdots & x_n^{k_1+\cdots+k_{n-1}+n-1}
\end{pmatrix}}{\displaystyle (x_1 \cdots x_n)^{n-1} \prod_{1 \leqslant i<j \leqslant n}\bigl(x_i^{-1} - x_j^{-1} \bigr)}.
\]
By rearranging the rows, we check that the numerator on the right is $(-1)^{\binom{n}{2}}$ times the determinant in~\eqref{E:Sk} with $(k_1, \dots, k_{n-1}) \leftarrow (k_{n-1}, \dots, k_1)$. Likewise the denominator is $(-1)^{\binom{n}{2}}$ times the $V(\bx)$ term of~\eqref{E:Sk}.
\end{proof}

\subsubsection{Cauchy's identities}
In order to work with $(P\otimes Q)(T)^{-1}$, which arises as part of the Euler product for one of the main $L$-series $L(\EE(\phi\times \psi)^{\vee},s)$ we consider in this paper (see \S\ref{S:Lseries}--\ref{S:convolutions}), we use the following identities expressed in terms of Schur polynomials.

\begin{theorem}[{Cauchy's Identity, see \citelist{\cite{Aigner}*{Cor.~8.16} \cite{Bump89}*{\S 2.2} \cite{Stanley}*{Thm.~7.12.1}}}] \label{T:Cauchy}
For variables $\bx = \{x_1, \dots, x_n\}$ and $\by = \{y_1, \dots, y_n\}$, let $X=x_1 \cdots x_n$ and $Y=y_1 \cdots y_n$. Then as power series in $\power{\ZZ[\bx,\by]}{T}$,
\[
\prod_{1 \leqslant i,j \leqslant n} (1 - x_i y_j T)^{-1}
= (1 - XY T^n)^{-1} \underset{k=(k_1, \dots, k_{n-1})}{\sum_{k_1=0}^{\infty} \cdots \sum_{k_{n-1}=0}^{\infty}} S_k(\bx) S_k(\by) T^{k_1 + 2k_2 + \cdots + (n-1)k_{n-1}}.
\]
\end{theorem}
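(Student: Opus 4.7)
The plan is to deduce the stated formula from the classical Cauchy identity
\[
\prod_{1 \leqslant i,j \leqslant n}(1 - x_i y_j)^{-1} = \sum_{\lambda} s_{\lambda}(\bx)\,s_{\lambda}(\by),
\]
where the sum runs over all integer partitions $\lambda : \lambda_1 \geqslant \cdots \geqslant \lambda_n \geqslant 0$ of length at most~$n$. This is the only substantive input; a standard way to establish it is via the RSK correspondence between pairs of semistandard Young tableaux of the same shape and nonnegative integer matrices (see \cite{Stanley}*{Ch.~7}), but I would cite it rather than reprove it.

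Given the classical identity, I would first substitute $y_j \mapsto y_j T$ for each $j$ and exploit that $s_{\lambda}$ is homogeneous of degree $|\lambda| = \lambda_1 + \cdots + \lambda_n$, so that $s_{\lambda}(y_1 T, \ldots, y_n T) = T^{|\lambda|} s_{\lambda}(\by)$. This yields
\[
\prod_{1 \leqslant i,j \leqslant n}(1-x_i y_j T)^{-1} = \sum_{\lambda} s_{\lambda}(\bx)\,s_{\lambda}(\by)\,T^{|\lambda|}.
\]

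Next I would reparametrize the partitions of length at most~$n$ by pairs $(k,m)$, where $k = (k_1, \dots, k_{n-1})$ with $k_i \assign \lambda_i - \lambda_{i+1} \geqslant 0$ and $m \assign \lambda_n \geqslant 0$. A short computation from $\lambda_i = m + k_i + k_{i+1} + \cdots + k_{n-1}$ gives
\[
|\lambda| = k_1 + 2k_2 + \cdots + (n-1)k_{n-1} + nm,
\]
while Lemma~\ref{L:slambdaSk} converts $s_{\lambda}(\bx) = X^m\, S_{k_1, \ldots, k_{n-1}}(\bx)$ and likewise $s_{\lambda}(\by) = Y^m\, S_{k_1, \ldots, k_{n-1}}(\by)$. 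Substituting these in and factoring out the sum over $m \geqslant 0$ as a geometric series produces
\[
\sum_{m=0}^{\infty} (XY T^n)^m = (1 - XY T^n)^{-1},
\]
which, combined with the remaining sum over $k$, gives exactly the desired right-hand side.

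The only genuine obstacle in this plan is the classical Cauchy identity itself; every other step is formal bookkeeping between the $\lambda$-parametrization and the $(k,m)$-parametrization, together with Lemma~\ref{L:slambdaSk} and the degree count.
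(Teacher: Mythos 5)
The paper does not prove Theorem~\ref{T:Cauchy}; it cites it as a known result from the literature, so there is no internal proof to compare against. Your derivation is correct and is, in fact, essentially how the statement is obtained in the cited sources (cf.\ Bump's discussion in \cite{Bump89}*{\S 2.2}): starting from the classical Cauchy identity, introducing the bookkeeping variable $T$ via $y_j \mapsto y_j T$ and homogeneity, reparametrizing partitions $\lambda$ of length at most $n$ by $(k_1, \dots, k_{n-1}, m)$ with $k_i = \lambda_i - \lambda_{i+1}$ and $m = \lambda_n$, noting $\lvert \lambda \rvert = nm + k_1 + 2k_2 + \cdots + (n-1)k_{n-1}$, and then applying Lemma~\ref{L:slambdaSk} to extract the factors $X^m$ and $Y^m$, after which the sum over $m$ telescopes into the geometric series $(1 - XYT^n)^{-1}$. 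All the arithmetic checks out, and the appeal to the classical identity as the one nontrivial input is appropriate. This is a sound and complete argument.
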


If instead we have $\bx = \{x_1, \dots, x_n\}$ and $\by = \{ y_1, \dots, y_{\ell} \}$ with $n < \ell$, then Cauchy's identity reduces to the following result by setting $x_{n+1} = \cdots = x_{\ell} = 0$ and simplifying.

\begin{corollary}[{Bump \cite{Bump89}*{\S 2.2}}] \label{C:Cauchynl}
For variables $\bx = \{x_1, \dots, x_n\}$ and $\by = \{y_1, \dots, y_\ell\}$ with $n < \ell$, let $X=x_1 \cdots x_n$. Then as power series in $\power{\ZZ[\bx,\by]}{T}$,
\[
\prod_{\substack{1 \leqslant i \leqslant n \\ 1 \leqslant j \leqslant \ell}} (1-x_i y_j T)^{-1}
=\underset{\substack{k=(k_1, \dots, k_{n-1}) \\ k'=(k_1, \dots, k_{n},0 \dots, 0)}}{\sum_{k_1=0}^{\infty} \cdots \sum_{k_{n}=0}^{\infty}} S_{k}(\bx) S_{k'}(\by) X^{k_n} T^{k_1 +2k_2 + \cdots + n k_n}.
\]
\end{corollary}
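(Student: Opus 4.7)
The plan is to deduce this from Theorem~\ref{T:Cauchy} by padding $\bx$ with $\ell - n$ zeros, converting between the $S$-polynomial indexing (with $\ell-1$ subscripts) and the classical Schur polynomial indexing via Lemma~\ref{L:slambdaSk}, and then exploiting the vanishing $s_{\lambda}(x_1,\dots,x_n,0,\dots,0) = 0$ whenever $\lambda$ has more than $n$ nonzero parts.

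Concretely, first I would set $\tilde{\bx} \assign (x_1,\dots,x_n,0,\dots,0)$ (with $\ell - n$ trailing zeros) and apply Theorem~\ref{T:Cauchy} to the two length-$\ell$ tuples $\tilde{\bx}$ and $\by$. Since $\tilde{X} \assign \tilde{x}_1 \cdots \tilde{x}_\ell = 0$, the factor $(1-\tilde{X}YT^\ell)^{-1}$ collapses to $1$, and on the left-hand side every factor $(1-\tilde{x}_i y_j T)$ with $i>n$ equals $1$, yielding
\[
\prod_{\substack{1 \leqslant i \leqslant n \\ 1 \leqslant j \leqslant \ell}} (1-x_i y_j T)^{-1}
= \sum_{k_1, \dots, k_{\ell-1} \geqslant 0} S_{k_1,\dots,k_{\ell-1}}(\tilde{\bx})\, S_{k_1,\dots,k_{\ell-1}}(\by)\, T^{k_1 + 2k_2 + \cdots + (\ell-1)k_{\ell-1}}.
\]

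Next, I would show that most terms on the right vanish. By Lemma~\ref{L:slambdaSk}, $S_{k_1,\dots,k_{\ell-1}}(\tilde{\bx}) = s_{\lambda}(\tilde{\bx})$ where $\lambda_i = k_i + k_{i+1} + \cdots + k_{\ell-1}$ for $1 \leqslant i \leqslant \ell - 1$ and $\lambda_\ell = 0$. Using the standard fact that $s_{\lambda}(x_1,\dots,x_n,0,\dots,0)$ equals $s_{\lambda}(x_1,\dots,x_n)$ when $\lambda$ has at most $n$ nonzero parts and vanishes otherwise (which follows immediately from~\eqref{E:slambda} by row-column manipulation of the determinant, or from the tableau description), we see that $S_{k_1,\dots,k_{\ell-1}}(\tilde{\bx}) = 0$ unless $\lambda_{n+1} = k_{n+1} + \cdots + k_{\ell-1} = 0$, i.e., unless $k_{n+1} = \cdots = k_{\ell-1} = 0$.

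Finally, for the surviving terms I would convert back to $S$-polynomials in $n$ variables. For such a surviving index $(k_1,\dots,k_n,0,\dots,0)$, the associated partition becomes $\lambda_i = k_i + \cdots + k_n$ for $i \leqslant n$ and $\lambda_i = 0$ for $i > n$, so $s_{\lambda}(\tilde{\bx}) = s_{\lambda}(x_1,\dots,x_n)$, and applying Lemma~\ref{L:slambdaSk} in the opposite direction with $n$ variables gives
\[
S_{k_1,\dots,k_n,0,\dots,0}(\tilde{\bx}) = X^{k_n} \cdot S_{k_1,\dots,k_{n-1}}(\bx) = X^{k_n} S_k(\bx),
\]
where $X = x_1 \cdots x_n$. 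Meanwhile the $\by$-factor is already $S_{k'}(\by)$ by definition with $k' = (k_1,\dots,k_n,0,\dots,0)$, and the exponent of $T$ truncates to $k_1 + 2k_2 + \cdots + n k_n$. Assembling these substitutions yields the displayed identity.

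The main obstacle is purely bookkeeping: keeping straight the two different conventions for Schur-polynomial subscripts (the $S$-indexing of length $\ell - 1$ versus length $n - 1$) and correctly tracking the factor $X^{k_n}$ that emerges when stripping the common column factor from the Jacobi-Trudi-type determinant. Once Lemma~\ref{L:slambdaSk} is applied in both directions and the vanishing criterion is invoked, no further calculation is needed.
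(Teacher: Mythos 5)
Your proof is correct and follows the same route the paper gestures at — the paper itself only says to "set $x_{n+1} = \cdots = x_\ell = 0$ in Cauchy's identity and simplify," and your argument is precisely that specialization carried out in full: applying Theorem~\ref{T:Cauchy} with $\tilde{\bx}$ padded by zeros, invoking the standard vanishing/stability of Schur polynomials under setting trailing variables to zero, and using Lemma~\ref{L:slambdaSk} in both directions to convert between the length-$(\ell-1)$ and length-$(n-1)$ $S$-indexings and to produce the factor $X^{k_n}$.
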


\subsubsection{Pieri's rules}
In general products of Schur polynomials can be expressed as linear combinations of Schur polynomials using the Littlewood-Richardson rule~\cite{Stanley}*{\S A1.3}. Pieri's rule and its dual are special cases, which we state here in terms of the polynomials $S_{k_1, \dots, k_{n-1}}(\bx)$. For $k$, $k_1, \dots, k_{n-1} \geqslant 0$, \emph{Pieri's rule} \cite{Stanley}*{Thm.~7.15.7} is
\begin{multline} \label{E:Pieri}
h_{k}(\bx) \cdot S_{k_1, \dots, k_{n-1}}(\bx) \\
= \sum_{\substack{m_0 + \cdots + m_{n-1}=k \\ m_1 \leqslant k_1,\, \ldots,\, m_{n-1} \leqslant k_{n-1}}} S_{k_1 + m_0-m_1,\, k_2 + m_1-m_2,\,\ldots,\,k_{n-1}+m_{n-2}-m_{n-1}}(\bx) X^{m_{n-1}},
\end{multline}
where $X=x_1\cdots x_n$. For the \emph{dual Pieri rule}~\cite{Stanley}*{p.~340}, we define for $k_1, \ldots, k_{n-1} \geqslant 0$,
\[
\cI_{k_1, \dots, k_{n-1}} \assign
\Bigl\{ (m_0, \dots, m_{n-1}) \in \{ 0, 1\}^n \Bigm| k_j = 0 \Rightarrow \bigl( m_j=1 \Rightarrow m_{j-1}=1 \bigr) \Bigr\}.
\]
Then for $0 \leqslant k \leqslant n-1$,
\begin{multline} \label{E:dualPieri}
e_k(\bx) \cdot S_{k_1, \dots, k_{n-1}}(\bx) \\
= \sum_{\substack{m_0 + \cdots + m_{n-1} = k \\ (m_0, \ldots, m_{n-1})\, \in\, \cI_{k_1, \dots, k_{n-1}}}}
S_{k_1+m_0-m_1,\, k_2 + m_1-m_2,\,\ldots,\,k_{n-1}+m_{n-2}-m_{n-1}}(\bx) X^{m_{n-1}}.
\end{multline}

\subsubsection{Jacobi-Trudi identity} \label{SS:JT}
We can also express Schur polynomials in terms of complete homogeneous symmetric polynomials by the \emph{Jacobi-Trudi identity}~\citelist{\cite{Aigner}*{Thm.~8.7} \cite{Stanley}*{Thm.~7.16.1}}.
We state it here for $S_{k_1, \dots, k_{n-1}}(\bx)$. For $k_1, \dots, k_{n-1} \geqslant 0$, we have
\begin{equation} \label{E:JT}
S_{k_1, \dots, k_{n-1}}(\bx) = \det \Bigl( h_{k_i + \cdots + k_{n-1} - i+j}(\bx) \Bigr)_{i,j=1}^{n-1},
\end{equation}
where $(i,j)$ is the index of the $i$-th row and $j$-th column.

\section{Anderson \texorpdfstring{$t$}{t}-modules} \label{S:Anderson}

The theory of Anderson $t$-modules over Tate algebras was initiated by Angl\`{e}s, Pellarin, and Tavares Ribeiro \cites{AnglesPellarin15, APT16} and continued in several articles including \cites{APT18, ANT22, AT17, Gezmis19, Gezmis20, GezmisP19, GezmisPellarin22, Tavares21}. They also are the main objects of study of Demeslay~\cites{DemeslayPhD,Demeslay22} for his class module formula, which we recount in~\S\ref{SS:Demeslayformula}. We will follow Demeslay's definition of Anderson $t$-modules in~\cite{Demeslay22}, but in order to discuss reductions modulo irreducible elements of $A$ we will require a slightly more general construction over ``$\cA$-fields.''

\subsection{Anderson \texorpdfstring{$t$}{t}-modules over \texorpdfstring{$\cA$}{A}-fields} \label{SS:cAtmodules}
Throughout \S\ref{S:Anderson} we will consider a more general situation, which will help streamline our arguments and may be of benefit for future work. We let $Z \assign \{ z_1, \dots, z_n \}$, for variables $z_1, \dots, z_n$ and for $n \geqslant 0$ (so $n=0 \Leftrightarrow Z = \emptyset$). Then for a field $F$ we let
\[
\FZ{F}{Z} \assign F(Z) \quad \textup{or}\quad \laurent{F}{Z},
\]
so $\FZ{F}{Z}$ denotes either a field of rational functions or formal Laurent series.
We thus define fields $\FZ{\FF_q}{Z} \subseteq \FZ{\oFF_q}{Z}$, and note that $\FZ{\FF_q}{Z} \cap \oFF_q = \FF_q$. We let
\[
\cA \assign \FZ{\FF_q}{Z}[t].
\]
For the purposes of this paper, we will use primarily $Z = \emptyset$ or $Z=\{z\}$, which has prompted us to combine these cases into a single exposition, but the cases of $Z = \{ z_1, \dots, z_n \}$ for $n \geqslant 2$ may be of future interest.

\begin{definition}
Given an $\sA$-field $F$ and an $\FZ{\FF_q}{Z}$-algebra $H \supseteq F$, we extend $\iota : \sA \to F$ to $\iota: \cA \to H$ by composing the natural maps
\[
\iota : \cA = \FZ{\FF_q}{Z} \otimes_{\FF_q} \sA \xrightarrow{\id \otimes \iota} \FZ{\FF_q}{Z} \otimes_{\FF_q} F \xrightarrow{a \otimes b\, \mapsto\, ab}  H.
\]
We assume that we have an extension $\tau : H \to H$ that is $\FZ{\FF_q}{Z}$-linear. We further assume that $H^\tau = \FZ{\FF_q}{Z}$, where $H^{\tau}$ denotes the elements of $H$ fixed by $\tau$. If $\tau : H \to H$ is an automorphism, we say $H$ is \emph{$\tau$-perfect}, and if $H$ is a field, we call it an \emph{$\cA$-field}. Primarily we will be interested in $\tau$-perfect $\cA$-fields. The \emph{characteristic} of $H$ is $\ker \iota \subseteq \cA$, and as in the case of $\sA$-fields, the characteristic is generic if $\ker \iota = (0)$ and finite otherwise.

One advantage to this framework is that we can discuss objects over $\sA$-fields (where $\FZ{\FF_q}{Z} = \FF_q$) and $\bA$-fields (where $\FZ{\FF_q}{Z} = \FF_q(z)$) simultaneously. We note that $\tau$-perfect $\sA$-fields are the same as perfect $\sA$-fields.
\end{definition}

Let $H$ be an $\cA$-field. An \emph{Anderson $t$-module over $H$} (or simply $t$-module over $H$) is an $\FZ{\FF_q}{Z}$-algebra homomorphism,
\[
\cE : \cA \to \Mat_{\ell}(H[\tau])
\]
such that if
\begin{equation} \label{E:cEtdef}
\cE_t = \pd\cE_t + E_1 \tau + \cdots + E_w \tau^w, \quad E_i \in \Mat_{\ell}(H),
\end{equation}
then $\pd\cE_t - \otheta\rI_{\ell}$ is nilpotent. As in \S\ref{SS:notation}, $\cE$ defines an $\cA$-module structure on $\cE(H)=H^{\ell}$ through the operation of~$\tau$, and correspondingly we have $\Lie(\cE)(H) = H^{\ell}$ with $H[t]$-module structure induced by $\pd \cE_a$ for $a \in \cA$. If $\ell=1$, then $\cE$ is a \emph{Drinfeld module over~$H$}. If $\cE_a \in \Mat_{\ell}(F[\tau])$ for all $a \in \cA$ (equivalently simply $\cE_t \in \Mat_{\ell}(F[\tau])$), then $\cE$ is a \emph{constant} Anderson $t$-module with respect to $H/F$.

If $H$ is a $\tau$-perfect $\cA$-field, the \emph{adjoint} of $\cE$ is defined by the $\FZ{\FF_q}{Z}$-algebra map,
\[
\cE^* : \cA \to \Mat_{\ell}(H[\sigma]),
\]
where
\begin{equation} \label{E:EEstartdef}
\cE^*_t = (\cE_t)^* = (\pd\cE_t)^{\tr} + \bigl( E_1^{(-1)} \bigr)^{\tr} \sigma + \cdots + \bigl( E_{w}^{(-w)} \bigr)^{\tr} \sigma^w.
\end{equation}
Similarly for adjoints of $t$-modules over perfect $\sA$-fields, $\cE^*$ defines an $\cA$-module structure on $\cE^*(H) = H^{\ell}$.
As in \S\ref{SS:Drinfeld}, if $\cD : \cA \to \Mat_{k}(H[\tau])$ is another $t$-module over $H$, then $\eta \in \Mat_{\ell \times k}(H[\tau])$ is a morphism $\eta : \cD \to \cE$ of $t$-modules over $H$ if
$\eta \cdot \cD_a = \cE_a \cdot \eta$, for all $a \in \cA$.
Correspondingly, $\eta^* : \cE^* \to \cD^*$ is a morphism of their adjoints.

\subsection{\texorpdfstring{$t$}{t}-motives and dual \texorpdfstring{$t$}{t}-motives for Anderson \texorpdfstring{$t$}{t}-modules}
We extend the notions of $t$-motives and dual $t$-motives from \S\ref{SS:tmotives} to this setting. We start with a $\tau$-perfect $\cA$-field $H$, and let $\cE : \cA \to \Mat_{\ell}(H[\tau])$ be a $t$-module over $H$ defined as in~\eqref{E:cEtdef}.

The \emph{$t$-motive} of $\cE$ is defined to be $\cM_{\cE} = \Mat_{1 \times \ell}(H[\tau])$, which as in \S\ref{SS:tmotives} is given the structure of a left $H[t,\tau]$-module by setting $a \cdot m = m \cE_a$ for any $m \in \cM_{\cE}$, $a \in \cA$. If needed we will write $\cM_{\cE}(H) = \cM_{\cE}$ to emphasize the dependence on~$H$. We say $\cE$ and $\cM_{\cE}$ are \emph{abelian} and have \emph{rank $r$} if $\cM_{\cE}$ is free and rank~$r$ as an $H[t]$-module. 

The \emph{dual $t$-motive} of $\cE$ is defined to be $\cN_{\cE} = \Mat_{1 \times \ell}(H[\sigma])$, which as in \S\ref{SS:tmotives}, is given the structure of a left $H[t,\sigma]$-module by setting $a \cdot n = n \cE_a^*$ for any $n \in \cN_{\cE}$, $a \in \cA$. Dual $t$-motives over Tate algebras were previously studied by Demeslay~\cite{DemeslayPhD}*{\S 1.2}. If needed we will write $\cN_{\cE}(H) = \cN_{\cE}$ to emphasize the dependence on~$H$. We say $\cE$ and $\cN_{\cE}$ are \emph{$\cA$-finite} and have \emph{rank $r$} if $\cN_{\cE}$ is free and rank~$r$ as an $H[t]$-module. It is not yet known if abelian and $\cA$-finite are equivalent for $t$-modules over general $H$, though it would be interesting to investigate how the work of Maurischat~\cite{Maurischat21} applies here.

The dual $t$-motive of a traditional $t$-module over a perfect $\sA$-field can be used to recover the $t$-module (e.g., see \citelist{\cite{BP20}*{\S 1.5} \cite{HartlJuschka20}*{Prop.~2.5.8} \cite{NamoijamP24}*{\S 3.1}}), and one has a similar construction for Drinfeld modules over Tate algebras~\cite{GezmisP19}*{Lem.~4.2.2}. The connection between $t$-modules and dual $t$-motives over $\tau$-perfect $\cA$-fields is similar, and furthermore, we can relate the adjoint of a $t$-module with its $t$-motive.

Define maps $\gamma_0$, $\gamma_1 : \Mat_{1\times \ell}(H[\tau]) \to H^{\ell}$ and $\delta_0$, $\delta_1 : \Mat_{1\times \ell}(H[\sigma]) \to H^{\ell}$ by
\[
\gamma_0(m) \assign \pd m^{\tr} = c_0^{\tr}, \quad \gamma_1(m) \assign \sum_{i=0}^{i_0} \Bigl( c_{i}^{(-i)} \Bigr)^{\tr}, \quad \forall\,m = \sum_{i=0}^{i_0} c_i \tau^i \in \cM_{\cE}(H),
\]
and
\[
\delta_0(n) \assign \pd n^{\tr} = d_0^{\tr}, \quad \delta_1(n) \assign \sum_{i=0}^{i_0} \Bigl(d_{i}^{(i)} \Bigr)^{\tr}, \quad \forall\,n = \sum_{i=0}^{i_0} d_i \sigma^i \in \cN_{\cE}(H).
\]
Both $\gamma_0$, $\delta_0$ are $H$-linear, while $\gamma_1$, $\delta_1$ are $\FZ{\FF_q}{Z}$-linear (by assumption $H^{\tau} = H^{\sigma} = \FZ{\FF_q}{Z}$).

\begin{lemma}
Let $H$ be a $\tau$-perfect $\cA$-field, and let $\cE : \cA\to \Mat_{\ell}(H[\tau])$ be a $t$-module over~$H$. As defined above the following hold.
\begin{alphenumerate}
\item Each of the maps $\gamma_0$, $\gamma_1 : \cM_{\cE}(H) \to H^{\ell}$ and $\delta_0$, $\delta_1 : \cN_{\cE}(H) \to H^{\ell}$ are surjective.
\item $\ker \gamma_0 = \tau \cM_{\cE}(H)$ and $\ker \delta_0 = \sigma \cN_{\cE}(H)$.
\item $\ker \gamma_1 = (\tau - 1)\cM_{\cE}(H)$ and $\ker \delta_1 = (\sigma - 1)\cN_{\cE}(H)$.
\end{alphenumerate}
\end{lemma}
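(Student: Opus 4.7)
My plan is to treat parts (a), (b), (c) uniformly and for both $\gamma_i$ and $\delta_i$ in parallel, since the $\sigma$-case is formally dual to the $\tau$-case under the Ore $*$-involution. Throughout I will write $m = \sum_{i=0}^{i_0} c_i \tau^i \in \cM_{\cE}(H)$ and $n = \sum_{i=0}^{i_0} d_i \sigma^i \in \cN_{\cE}(H)$, with $c_i, d_i \in \Mat_{1 \times \ell}(H)$, and I will use repeatedly that $\tau$ is an automorphism of $H$ since $H$ is $\tau$-perfect.

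For part (a), surjectivity of all four maps is immediate from the definitions: given $v = (v_1,\dots,v_\ell)^{\tr} \in H^{\ell}$, setting $m = (v_1,\dots,v_\ell) \in \cM_{\cE}(H)$ (viewed as a degree-$0$ twisted polynomial) gives $\gamma_0(m) = \gamma_1(m) = v$, and similarly for $\delta_0, \delta_1$.

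For part (b), the containments $\tau \cM_{\cE}(H) \subseteq \ker \gamma_0$ and $\sigma \cN_{\cE}(H) \subseteq \ker \delta_0$ hold because left multiplication by $\tau$ (resp.\ $\sigma$) raises every $\tau$-degree (resp.\ $\sigma$-degree) by one, killing the constant term. For the reverse inclusion in the $\tau$-case, if $\gamma_0(m) = 0$ then $c_0 = 0$, and I can write
\[
m = \sum_{i \geqslant 1} c_i \tau^i = \tau \cdot \sum_{i \geqslant 0} c_{i+1}^{(-1)} \tau^i,
\]
using $\tau d = d^{(1)} \tau$ together with the $\tau$-perfectness of $H$ to solve $c_{i+1} = d_i^{(1)}$ for $d_i$. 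The $\sigma$-case is entirely analogous, using $\sigma d = d^{(-1)} \sigma$.

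For part (c), I first verify that $\gamma_1 \circ (\tau - 1) = 0$ and $\delta_1 \circ (\sigma - 1) = 0$ by a direct reindexing: for $m' = \sum d_i \tau^i$, one computes $\gamma_1(\tau m') = \sum_{j \geqslant 1} (d_{j-1}^{(1-j)})^{\tr} = \sum_{i \geqslant 0} (d_i^{(-i)})^{\tr} = \gamma_1(m')$, and the analogous identity with $\sigma$ in place of $\tau$. For the reverse inclusion, suppose $\gamma_1(m) = 0$, i.e.\ $\sum_{i=0}^{i_0} c_i^{(-i)} = 0$. I will construct $m' = \sum_{j=0}^{i_0-1} d_j \tau^j$ with $m = (\tau - 1) m'$ by matching coefficients: the relation $(\tau - 1) m' = -d_0 + \sum_{j \geqslant 1}(d_{j-1}^{(1)} - d_j)\tau^j + d_{i_0-1}^{(1)} \tau^{i_0}$ forces the recursion $d_0 = -c_0$ and $d_j = d_{j-1}^{(1)} - c_j$ for $1 \leqslant j \leqslant i_0 - 1$, yielding $d_j = -\sum_{k=0}^{j} c_k^{(j-k)}$. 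The compatibility condition at the top degree, $d_{i_0-1}^{(1)} = c_{i_0}$, is exactly $\sum_{k=0}^{i_0} c_k^{(i_0 - k)} = 0$, which is the $i_0$-th Frobenius twist of the hypothesis $\gamma_1(m) = 0$. This recursion is solvable over $H$ without any invertibility assumption, so the argument goes through; the $\tau$-perfectness is invoked only implicitly via the symmetry between $\tau^n$ and $\tau^{-n}$. The $\sigma$-version is identical mutatis mutandis, with indices shifted by $\sigma e = e^{(-1)} \sigma$.

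I expect no significant obstacle; the main care point is bookkeeping of Frobenius twists and ensuring that $\tau$-perfectness is used precisely in part (b), where one must invert $\tau$ to pull a power out on the left, while part (c) works directly by matching coefficients without needing to invert anything.
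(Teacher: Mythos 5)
Your proof is correct and takes essentially the same route as the paper, but at a lower level of abstraction. For the hard inclusion in (c), the paper invokes the left division algorithm to write $m = (\tau-1)m' + s$ with $\deg_\tau s < 1$, then uses that $\gamma_1$ is injective on the degree-$0$ part $\Mat_{1\times\ell}(H)$ to force $s = 0$. Your explicit recursion $d_0 = -c_0$, $d_j = d_{j-1}^{(1)} - c_j$ with the top-degree compatibility $\sum_{k=0}^{i_0} c_k^{(i_0-k)} = 0$ being the $\tau^{i_0}$-twist of the hypothesis $\gamma_1(m)=0$ is exactly what that division computes, made concrete; you trade the injectivity-on-constants observation for the residual compatibility check at degree $i_0$. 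Both are sound, and yours makes the mechanism transparent. One small inaccuracy in your closing remark: $\tau$-perfectness is not avoidable in part (c), since the map $\gamma_1$ is defined via negative twists $c_i^{(-i)}$ and the dual $t$-motive $\cN_\cE$ lives over $H[\sigma]$ with $\sigma = \tau^{-1}$; so the automorphism hypothesis is baked into the statement itself, not merely into the $\ker\gamma_0$ step. Parts (a) and (b) match the paper (which labels them ``clear''), and your factorization $m = \tau \cdot \sum_{i\geqslant 0} c_{i+1}^{(-1)} \tau^i$ in (b) is the right place to flag the explicit use of $\tau^{-1}$.
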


\begin{proof}
Part (a) is clear, as are the statements about $\ker \gamma_0$ and $\ker \delta_0$ (though note that we use that $\tau$ and $\sigma$ are automorphisms of $H$). The arguments to determine $\ker \gamma_1$ and $\ker \delta_1$ are similar to the situation of traditional $t$-modules, but for completeness we include the argument for $\ker \gamma_1$. Showing $\ker \gamma_1 \supseteq (\tau - 1)\cM_{\cE}(H)$ is straightforward, and to show the opposite containment, we let $m = \sum_{i=0}^{i_0} c_i \tau^i \in \ker \gamma_1$. From the left division algorithm of~\S\ref{SSS:divalg} we have $m'$, $s \in \cM_{\cE}(H)$ so that $m = (\tau-1) m' + s$.
Since $\ker \gamma_1 \supseteq (\tau - 1)\cM_{\cE}(H)$ we see that $\gamma_1(s) = 0$. As $\gamma_1$ is injective on $\Mat_{1\times \ell}(H)$, we conclude $s=0$.
\end{proof}

If $\cD : \cA \to \Mat_{k}(H[\tau])$ is a $t$-module, and $\eta \in \Mat_{\ell \times k}(H[\tau])$ represents a morphism $\eta : \cD \to \cE$, then the following diagrams of $H$-vector spaces have exact rows and commute:
\begin{equation}
\begin{tikzcd}[column sep=large]
0 \arrow{r} & \cM_{\cE}(H) \arrow{r}{\tau(\cdot)} \arrow{d}{(\cdot) \eta} & \cM_{\cE}(H) \arrow{r}{\gamma_0} \arrow{d}{(\cdot)\eta} & H^{\ell} \arrow{r} \arrow{d}{\pd\eta^{\tr}(\cdot)} & 0 \\
0 \arrow{r} &\cM_{\cD}(H) \arrow{r}{\tau(\cdot)} & \cM_{\cD}(H) \arrow{r}{\gamma_0} & H^{k} \arrow{r} & 0,
\end{tikzcd}
\end{equation}
and
\begin{equation}
\begin{tikzcd}[column sep=large]
0 \arrow{r} & \cN_{\cD}(H) \arrow{r}{\sigma(\cdot)} \arrow{d}{(\cdot)\eta^*} & \cN_{\cD}(H) \arrow{r}{\delta_0} \arrow{d}{(\cdot)\eta^*} & H^{k} \arrow{r} \arrow{d}{\pd\eta(\cdot)} & 0 \\
0 \arrow{r} &\cN_{\cE}(H) \arrow{r}{\sigma(\cdot)} & \cN_{\cE}(H) \arrow{r}{\delta_0} & H^{\ell} \arrow{r} & 0.
\end{tikzcd}
\end{equation}
The first two vertical columns are simply $\eta^{\dagger}: \cM_{\cE} \to \cM_{\cD}$ and $\eta^{\ddagger}: \cN_{\cD} \to \cN_{\cE}$. Furthermore, we have the following commutative diagrams of $\FZ{\FF_q}{Z}$-vector spaces with exact rows:
\begin{equation} \label{E:gamma1diag}
\begin{tikzcd}[column sep=large]
0 \arrow{r} & \cM_{\cE}(H) \arrow{r}{(\tau-1)(\cdot)} \arrow{d}{(\cdot) \eta} & \cM_{\cE}(H) \arrow{r}{\gamma_1} \arrow{d}{(\cdot)\eta} & \cE^*(H) \arrow{r} \arrow{d}{\eta^{*}(\cdot)} & 0 \\
0 \arrow{r} &\cM_{\cD}(H) \arrow{r}{(\tau-1)(\cdot)} & \cM_{\cD}(H) \arrow{r}{\gamma_1} & \cD^*(H) \arrow{r} & 0,
\end{tikzcd}
\end{equation}
and
\begin{equation} \label{E:delta1diag}
\begin{tikzcd}[column sep=large]
0 \arrow{r} & \cN_{\cD}(H) \arrow{r}{(\sigma-1)(\cdot)} \arrow{d}{(\cdot)\eta^*} & \cN_{\cD}(H) \arrow{r}{\delta_1} \arrow{d}{(\cdot)\eta^*} & \cD(H) \arrow{r} \arrow{d}{\eta(\cdot)} & 0 \\
0 \arrow{r} &\cN_{\cE}(H) \arrow{r}{(\sigma-1)(\cdot)} & \cN_{\cE}(H) \arrow{r}{\delta_1} & \cE(H) \arrow{r} & 0.
\end{tikzcd}
\end{equation}
We summarize these findings in the following proposition.

\begin{proposition}
Let $H$ be a $\tau$-perfect $\cA$-field, and let $\cE : \cA \to \Mat_{\ell}(H[\tau])$ be an Anderson $t$-module over $H$.
\begin{alphenumerate}
\item We have isomorphisms of $H[t]$-modules,
\[
\frac{\cM_{\cE}(H)}{\tau \cM_{\cE}(H)} \cong \Lie(\cE^{*})(H), \quad
\frac{\cN_{\cE}(H)}{\sigma \cN_{\cE}(H)} \cong \Lie(\cE)(H).
\]
\item We have isomorphisms of $\cA$-modules,
\[
\frac{\cM_{\cE}(H)}{(\tau-1) \cM_{\cE}(H)} \cong \cE^{*}(H), \quad
\frac{\cN_{\cE}(H)}{(\sigma-1) \cN_{\cE}(H)} \cong \cE(H).
\]
\item The isomorphisms in \textup{(a)} and \textup{(b)} are functorial in $\cE$.
\end{alphenumerate}
\end{proposition}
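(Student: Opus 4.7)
The plan is to observe that the preceding lemma already supplies us with the underlying bijections, so the proof reduces to checking that each such bijection respects the appropriate module structure, plus a short diagram chase for functoriality. Concretely, the lemma gives surjections $\gamma_0,\gamma_1:\cM_{\cE}(H)\to H^{\ell}$ and $\delta_0,\delta_1:\cN_{\cE}(H)\to H^{\ell}$ with kernels $\tau\cM_{\cE}(H)$, $(\tau-1)\cM_{\cE}(H)$, $\sigma\cN_{\cE}(H)$, $(\sigma-1)\cN_{\cE}(H)$, respectively. Dividing out by these kernels yields $\FZ{\FF_q}{Z}$-linear (in the first and third case, even $H$-linear) bijections onto $H^{\ell}$, and the only remaining question is to identify the transported module structure with that of $\Lie(\cE^*)(H)$, $\cE^*(H)$, $\Lie(\cE)(H)$, and $\cE(H)$, respectively.

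For part (a), I would verify directly that $\gamma_0$ intertwines the $t$-actions. Recall $t$ acts on $m=\sum_i c_i\tau^i\in\cM_{\cE}(H)$ by $m\mapsto m\cE_t$. The constant term of $m\cE_t$ in the $\tau$-expansion is $c_0\,\pd\cE_t$, so
\[
\gamma_0(m\cE_t)=(c_0\,\pd\cE_t)^{\tr}=(\pd\cE_t)^{\tr}\,\gamma_0(m),
\]
which is exactly the $t$-action on $\Lie(\cE^*)(H)$ by the definition of $\cE^*_t$ in~\eqref{E:EEstartdef}. The map $\gamma_0$ is plainly $H$-linear, so it is an $H[t]$-module map. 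The analogous check for $\delta_0$ is identical after swapping the roles of $\tau$ and $\sigma$.

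For part (b), I would carry out the twisted analogue of the above computation. Writing $m=\sum_{i}c_i\tau^i$ and $\cE_t=\sum_{j=0}^{w}E_j\tau^j$ (with $E_0=\pd\cE_t$), the relation $\tau B=B^{(1)}\tau$ gives
\[
m\cE_t=\sum_{i,j}c_i\,E_j^{(i)}\,\tau^{i+j}.
\]
Applying $\gamma_1$ and re-indexing shows
\[
\gamma_1(m\cE_t)=\sum_{i,j}\bigl(E_j^{(-j)}\bigr)^{\tr}\bigl(c_i^{(-i-j)}\bigr)^{\tr}
=\sum_{j}\bigl(E_j^{(-j)}\bigr)^{\tr}\bigl(\gamma_1(m)\bigr)^{(-j)}
=\cE^*_t\bigl(\gamma_1(m)\bigr),
\]
using~\eqref{E:EEstartdef} and the definition~\eqref{E:sigmaeval} of the action of $\sigma$-polynomials. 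Since $\gamma_1$ is $\FZ{\FF_q}{Z}$-linear and $\cA=\FZ{\FF_q}{Z}[t]$, compatibility at $t$ suffices, giving the stated $\cA$-module isomorphism. An analogous computation, transposing $\tau\leftrightarrow\sigma$ and undoing the $(-i)$-twist in $\gamma_1$ in favor of the $(i)$-twist in $\delta_1$, handles the $\cN_{\cE}$ case and identifies the quotient with $\cE(H)$. The main bookkeeping obstacle is keeping the Frobenius twists straight; everything else is a direct manipulation from the definitions.

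For part (c), functoriality is already visible in the two commutative diagrams displayed just before the proposition: the left square commutes by the definition of $\eta^{\dagger}$ and $\eta^{\ddagger}$, while the right square commutes by our identifications together with the definition of morphism of $t$-modules ($\eta\,\cE_a=\cD_a\,\eta$) and its adjoint. Passing to the quotient by the image of $\tau$, $\tau-1$, $\sigma$, or $\sigma-1$ in the middle column therefore produces commutative squares exhibiting the isomorphisms in (a) and (b) as natural transformations, which is the content of (c).
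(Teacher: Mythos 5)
Your proof is correct and takes essentially the same route the paper intends: the paper presents this proposition as a ``summary'' of the preceding lemma (which identifies $\ker\gamma_0,\ker\gamma_1,\ker\delta_0,\ker\delta_1$) and the two displayed commutative diagrams that yield functoriality, leaving the module-structure identifications implicit. Your direct verifications that $\gamma_0$ intertwines $\pd\cE_t^{\tr}$ and that $\gamma_1(m\cE_t)=\cE_t^*(\gamma_1(m))$ (and the $\sigma$-side analogues) are exactly the computations the paper is suppressing, and they check out.
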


\subsubsection{Abelian and $\cA$-finite $t$-modules} \label{SSS:abcAfin}
Assume $\cE$ is abelian and $\cA$-finite and that both $\cM_{\cE}$ and $\cN_{\cE}$ have rank~$r$ as $H[t]$-modules. We fix $H[t]$-bases $\bm \in \Mat_{r \times 1}(\cM_{\cE}(H))$ and $\bn \in \Mat_{r\times 1}(\cN_{\cE}(H))$, and define $\Gamma$, $\Phi \in \Mat_r(H[t])$ so that
$\tau \bm = \Gamma \bm$ and $\sigma \bn = \Phi \bn$.
As in the case of \S\ref{SS:tmotives}, the fact that $(\pd \cE_t - \otheta \rI_\ell)^{\ell} = 0$ implies that $\cM_{\cE}/\tau \cM_{\cE}$ and $\cN_{\cE}/\sigma \cN_{\cE}$ are finite $H[t]$-modules annihilated by powers of $t-\otheta$. As such we find that $\det \Gamma = c(t-\otheta)^{\ell}$ and $\det \Phi = c'(t-\otheta)^{\ell}$ for some $c$, $c' \in H^{\times}$.
Thus $\Aord{\cM_{\cE}/\tau\cM_{\cE}}{H[t]} = \Aord{\cN_{\cE}/\sigma \cN_{\cE}}{H[t]} = (t-\otheta)^{\ell}$.

\subsection{Kernels of morphisms} \label{SS:kernels}
Let $H$ be a $\tau$-perfect $\cA$-field. For $\eta \in \Mat_{\ell \times k}(H[\tau])$, we consider $\FZ{\FF_q}{Z}$-vector spaces,
\[
(\ker \eta)(H) = \{ \bx \in H^k \mid \eta(\bx) = 0 \}, \quad
(\ker \eta^*)(H) = \{  \bx \in H^{\ell} \mid \eta^*(\bx) = 0 \}.
\]
We are particularly interested in the case when $\cD : \cA \to \Mat_k(H[\tau])$ and $\cE : \cA \to \Mat_{\ell}(H[\tau])$ are Anderson $t$-modules, and $\eta : \cD \to \cE$ is a morphism.  For $a \in \cA$, we set
\[
\cE[a](H) \assign (\ker \cE_a)(H), \quad
\cE^*[a](H) \assign (\ker \cE^*_a)(H).
\]

\begin{lemma} \label{L:kerneldim}
Let $\eta \in \Mat_{\ell}(H[\tau])$ be given by $\eta = N_j \tau^j + N_{j+1} \tau^{j+1} + \cdots + N_m \tau^m$, for $N_i \in \Mat_{\ell}(H)$
such that $\det N_m \neq 0$. Then $(\ker \eta)(H)$ and $(\ker \eta^*)(H)$ are $\FZ{\FF_q}{Z}$-vector spaces of dimension at most $(m-j)\ell$.
\end{lemma}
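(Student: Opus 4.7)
The plan is first to reduce to the case $j=0$ by pulling $\tau^j$ (resp.\ $\sigma^j$) out of $\eta$ (resp.\ $\eta^*$), then to diagonalize the remaining matrix via Proposition~\ref{P:AndUBV}, and finally to combine the $H$-dimension count produced by the right division algorithm with the classical Casoratian bound applied to each scalar diagonal entry.

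Since $H$ is a $\tau$-perfect $\cA$-field, both $\tau$ and $\sigma = \tau^{-1}$ are automorphisms of $H$. Using the commutation $\tau B = B^{(1)}\tau$, I would write $\eta = \eta'\tau^j$ with $\eta' \assign \sum_{i=0}^{m-j} N_{i+j}\tau^i \in \Mat_\ell(H[\tau])$ of $\tau$-degree $m-j$ and invertible leading coefficient $N_m$. By \eqref{E:tauevalassoc} one has $\eta(\bx) = \eta'(\bx^{(j)})$, and because $\bx \mapsto \bx^{(j)}$ is $\FZ{\FF_q}{Z}$-linear and bijective on $H^\ell$, this furnishes an isomorphism $(\ker \eta)(H) \cong (\ker \eta')(H)$. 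For the adjoint, $(AB)^* = B^*A^*$ gives $\eta^* = \sigma^j (\eta')^*$ with $(\eta')^* \in \Mat_\ell(H[\sigma])$ of $\sigma$-degree $m-j$ and invertible leading coefficient, and the analogous reduction applies. So it suffices to bound $\dim_{\FZ{\FF_q}{Z}}(\ker \eta')(H)$ and $\dim_{\FZ{\FF_q}{Z}}(\ker (\eta')^*)(H)$ by $(m-j)\ell$.

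Renaming $\eta'$ to $\eta$ and assuming $j=0$ (the $\eta^*$ argument being verbatim with $H[\sigma]$ in place of $H[\tau]$), I would invoke Proposition~\ref{P:AndUBV}(a) to obtain $U, V \in \GL_\ell(H[\tau])$ with $U \eta V = D = \diag(\delta_1, \dots, \delta_\ell)$. Both $U$ and $V$ act as $\FZ{\FF_q}{Z}$-linear bijections on $H^\ell$ by \eqref{E:tauevalassoc}, yielding
\[
(\ker \eta)(H) \cong (\ker D)(H) = \bigoplus_{i=1}^{\ell}(\ker \delta_i)(H).
\]
A routine use of the right division algorithm (available because $N_m$ is invertible, so the leading $\tau$-term of any element can be cancelled iteratively by right-multiplying $\eta$ by a suitable Frobenius-twisted scalar) will show that $\Mat_{1\times \ell}(H[\tau])/\Mat_{1\times \ell}(H[\tau])\eta$ has $H$-dimension exactly $m\ell$, whence Proposition~\ref{P:AndUBV}(b) gives $\sum_i \deg_\tau \delta_i = m\ell$.

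The remaining and main obstacle is the scalar Casoratian bound: for nonzero $\delta \in H[\tau]$ of $\tau$-degree $d$, one has $\dim_{H^\tau}(\ker \delta)(H) \leq d$. I would establish this by the classical Wronskian induction on $d$---namely, if $x_1, \dots, x_{d+1}$ were $H^\tau$-linearly independent solutions to $\delta(x)=0$, pick a nonzero kernel vector $\bc$ of the Casoratian $C = (\tau^{k-1}(x_j))_{k,j=1}^{d+1}$, normalize some entry to $1$, apply $\tau$ to all relations, and subtract the overlapping rows; the resulting shorter relation forces the entries of $\bc$ into $H^\tau$ (by the inductive nonsingularity of the $d\times d$ Casoratian for $x_1,\dots, x_d$), contradicting the $H^\tau$-linear independence. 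This bound is classical (cf.\ Cohn on skew polynomial rings) but is not made explicit in the excerpt. Summing the scalar bounds yields $\dim_{\FZ{\FF_q}{Z}}(\ker \eta)(H) \leq \sum_i \deg_\tau \delta_i = m\ell = (m-j)\ell$, and the identical argument over $H[\sigma]$ finishes the case of $\eta^*$.
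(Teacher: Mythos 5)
Your proof is correct and takes a genuinely different route from the paper's. After the common reduction to $j=0$, the paper bypasses Smith normal form entirely: it rewrites $\eta(\bx)=0$ as a first-order system $v^{(1)}=Mv$ over $H^{\ell m}$ using the $\ell m\times\ell m$ block companion matrix with blocks $-N_m^{-1}N_i$ (invertibility of $N_m$ enters here), and then applies the Frobenius-descent lemma directly in matrix form---any $\FZ{\FF_q}{Z}$-linearly independent family of solutions of $v^{(1)}=Mv$ is automatically $H$-linearly independent (cf.\ \cite{vdPS}*{Lem.~1.7})---so the solution space has $\FZ{\FF_q}{Z}$-dimension at most $\dim_H H^{\ell m}=\ell m$. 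You instead diagonalize $\eta$ via Proposition~\ref{P:AndUBV}, combine the right-division count $\dim_H\bigl(\Mat_{1\times\ell}(H[\tau])/\Mat_{1\times\ell}(H[\tau])\eta\bigr)=m\ell$ with Proposition~\ref{P:AndUBV}(b) to deduce $\sum_i\deg_\tau\delta_i=m\ell$, and then bound each $(\ker\delta_i)(H)$ by the scalar Casoratian lemma. Both routes hinge on the same descent principle (independence over the fixed field forces $H$-independence), but the paper applies it once to a coupled matrix system, whereas you decouple first and apply it $\ell$ times to scalars. Your argument is slightly heavier in that it invokes Proposition~\ref{P:AndUBV} and an explicit Casoratian induction, but it essentially anticipates the paper's very next step: Proposition~\ref{P:kerdim} is proved precisely along your lines, and you have in effect shown that Lemma~\ref{L:kerneldim} can be derived from the diagonalization machinery directly, provided one first performs the reduction $\eta\mapsto\eta'\tau^j$ so as to land outside the $\pd\eta\in\GL_\ell(H)$ regime that Proposition~\ref{P:kerdim} formally assumes.
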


\begin{proof}
Since $\tau$ is an automorphism, it suffices to prove the case $j=0$.
The argument is essentially the same as \cite{APT16}*{Lem.~5.7} (cf.~\cite{vdPS}*{\S 1.2}), but we sketch the main points. Let
\[
M \assign \begin{pmatrix}
0 & \rI_{\ell} & \cdots & 0 \\
\vdots & \vdots & \ddots & \vdots \\
0 & 0 & \cdots & \rI_{\ell} \\
-N_m^{-1}N_0 & -N_m^{-1}N_1 & \cdots & -N_m^{-1}N_{m-1}
\end{pmatrix} \in \Mat_{\ell m}(H).
\]
For any $v_1, \dots, v_s \in H^{\ell m}$ such that $\tau(v_i) = v_i^{(1)} = Mv_i$ for each $i$, the following holds: if $v_1, \dots, v_s$ are $\FZ{\FF_q}{Z}$-linearly independent ($H^{\tau} = \FZ{\FF_q}{Z}$), then they are are $H$-linearly independent~(cf.\ \citelist{\cite{And86}*{pf.~of Thm.~2} \cite{vdPS}*{pf.~of Lem.~1.7}}). The map $\bx \mapsto (\bx, \bx^{(1)}, \dots, \bx^{(m-1)})^{\tr}$ is an $\FZ{\FF_q}{Z}$-linear isomorphism from $(\ker \eta)(H)$ to $\{ v \in H^{\ell m} \mid v^{(1)} = Mv \}$, which provides the desired conclusion. The argument for $(\ker \eta^*)(H)$ is similar.
\end{proof}

Let $\cE : \cA \to \Mat_\ell(H[\tau])$ be an Anderson $t$-module over $H$, and let $\eta \in \Mat_{\ell}(H[\tau])$ represent an endomorphism of $\cE$. Applying the snake lemma to \eqref{E:gamma1diag} and \eqref{E:delta1diag}, we obtain exact sequences of $\FZ{\FF_q}{Z}$-modules,
\begin{equation} \label{E:snake1}
0 \to (\ker \eta^*)(H) \to \frac{\cM_{\cE}(H)}{\cM_{\cE}(H)\eta} \xrightarrow{(\tau - 1)(\cdot)} \frac{\cM_{\cE}(H)}{\cM_{\cE}(H)\eta}
\end{equation}
and
\begin{equation} \label{E:snake2}
0 \to (\ker \eta)(H) \to \frac{\cN_{\cE}(H)}{\cN_{\cE}(H)\eta^*} \xrightarrow{(\sigma - 1)(\cdot)} \frac{\cN_{\cE}(H)}{\cN_{\cE}(H)\eta^*}.
\end{equation}

\begin{proposition} \label{P:kerdim}
Let $\cE : \cA \to \Mat_{\ell}(H[\tau])$ be an Anderson $t$-module over $H$, and let $\eta : \cE \to \cE$ be an endomorphism given by $\eta \in \Mat_{\ell}(H[\tau])$ with $\pd \eta \in \GL_{\ell}(H)$. Let $D = \diag(d_1, \dots, d_\ell) \in \Mat_{\ell}(H[\tau])$ be a diagonal matrix associated to $\eta$ as in Proposition~\ref{P:AndUBV}. Then none of $d_1, \dots, d_{\ell}$ is $0$, and
\[
\dim_{\FZ{\FF_q}{Z}} (\ker \eta)(H) \leqslant \sum_{i=1}^{\ell} \deg_{\tau} d_i, \quad
\dim_{\FZ{\FF_q}{Z}} (\ker \eta^*)(H) \leqslant \sum_{i=1}^{\ell} \deg_{\tau} d_i.
\]
\end{proposition}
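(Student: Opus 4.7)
The plan is to use Proposition~\ref{P:AndUBV} to diagonalize $\eta$ and then to apply Lemma~\ref{L:kerneldim} entry-by-entry to the diagonal factor. Fix $U$, $V \in \GL_{\ell}(H[\tau])$ with $D = U\eta V$.

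The first step is to verify that $d_i \neq 0$ for every $i$, which is where the hypothesis $\pd\eta \in \GL_{\ell}(H)$ enters. The constant-term map $\pd : \Mat_{\ell}(H[\tau]) \to \Mat_{\ell}(H)$ is a ring homomorphism, because the product $\bigl(\sum A_i \tau^i\bigr)\bigl(\sum B_j \tau^j\bigr)$ has $\tau^0$-coefficient $A_0 B_0$; applying $\pd$ to $UU^{-1} = \rI_{\ell} = VV^{-1}$ yields $\pd U$, $\pd V \in \GL_{\ell}(H)$. Thus $\pd D = \pd U \cdot \pd \eta \cdot \pd V$ is invertible in $\Mat_{\ell}(H)$, and since $\pd D = \diag(\pd d_1, \ldots, \pd d_\ell)$, none of the $\pd d_i$ is zero; in particular none of the $d_i$ is zero.

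Next, I would pass to the kernel level. Since $U$, $V \in \GL_{\ell}(H[\tau])$, the maps $\bx \mapsto U(\bx)$ and $\bx \mapsto V(\bx)$ are mutually invertible $\FZ{\FF_q}{Z}$-linear endomorphisms of $H^{\ell}$ by the associativity in~\eqref{E:tauevalassoc}, so $V^{-1}$ induces an isomorphism $(\ker \eta)(H) \iso (\ker D)(H) = \bigoplus_{i=1}^{\ell} (\ker d_i)(H)$, the last equality by diagonality of $D$. Writing $d_i = c_0 + c_1 \tau + \cdots + c_{m_i} \tau^{m_i}$ with $c_0 = \pd d_i \neq 0$ and $c_{m_i} \neq 0$, Lemma~\ref{L:kerneldim} (with its $\ell = 1$, $j=0$, $m = m_i = \deg_{\tau} d_i$) gives $\dim_{\FZ{\FF_q}{Z}}(\ker d_i)(H) \leqslant \deg_{\tau} d_i$, and summing over $i$ yields the desired bound for $\eta$.

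For the adjoint the strategy is symmetric: applying the Ore anti-involution of~\S\ref{SSS:star} to $D = U\eta V$ produces $D^* = V^* \eta^* U^*$ with $U^*$, $V^* \in \GL_{\ell}(H[\sigma])$ and $D^* = \diag(d_1^*, \ldots, d_\ell^*)$. Using~\eqref{E:gensigmaeval} in place of~\eqref{E:tauevalassoc} together with the $\sigma$-half of Lemma~\ref{L:kerneldim}, and noting $\deg_{\sigma} d_i^* = \deg_{\tau} d_i$, gives the matching bound on $(\ker \eta^*)(H)$. The only genuinely non-routine ingredient is the passage from invertibility of $\pd \eta$ to invertibility of $\pd D$, which is what forces $d_i \neq 0$; everything else is bookkeeping with the matrix factorization and the division algorithms already packaged into Proposition~\ref{P:AndUBV}.
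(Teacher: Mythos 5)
Your proof is correct and takes a genuinely different route from the paper's. Both arguments open the same way: $\pd$ is a ring homomorphism, so $\pd D = \pd U\cdot\pd\eta\cdot\pd V$ is invertible, forcing each $d_i$ to have nonzero constant term and in particular to be nonzero. From there you stay at the level of the operators on $H^\ell$: by \eqref{E:tauevalassoc} the invertible $U$, $V$ transport kernels, giving $(\ker\eta)(H)\cong(\ker D)(H)=\bigoplus_i(\ker d_i)(H)$, and then Lemma~\ref{L:kerneldim} applied entry-by-entry (with $\ell=1$, $j=0$, and a nonzero leading coefficient since $d_i\neq0$) gives the bound $\deg_\tau d_i$ on each summand; the Ore anti-involution handles $\ker\eta^*$ symmetrically. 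The paper instead works in the $t$-motive $\cM_{\cE}(H)$: it identifies $(\ker\eta^*)(H)$ with a kernel $J$ inside $\cM_{\cE}(H)/\cM_{\cE}(H)\eta$ via the snake-lemma sequence \eqref{E:snake1}, shows by a twist-and-subtract minimality argument (where $\pd\eta\in\GL_\ell(H)$ is what lets one cancel a factor of $\tau$) that $\FZ{\FF_q}{Z}$-linearly independent classes in $J$ remain $H$-linearly independent in the quotient, and then invokes Proposition~\ref{P:AndUBV}(b) for the $H$-dimension of the quotient; the $\eta$ side follows by duality. Your route is shorter for this proposition and correctly offloads the independence phenomenon onto the already-proved Lemma~\ref{L:kerneldim}. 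What the paper's route buys is the isomorphism $(\ker\eta^*)(H)\cong J$ of~\eqref{E:keretastar} together with the independence-lifting technique, both of which are reused verbatim in the proof of Corollary~\ref{C:Atorsion} to identify the $\cA$-module structure of $\cE[\nu]$ and $\cE^*[\nu]$ under the full-dimension hypothesis; so the paper's heavier machinery here is an investment in downstream reuse rather than a necessity for this statement.
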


\begin{proof}
Let $U$, $V \in \GL_{\ell}(H[\tau])$ be chosen so that $D = U\eta V$. Then $\pd D = \pd U\cdot \pd \eta \cdot \pd V$. Since $\pd \eta$ is invertible, so must be $\pd D$, which implies that none of $d_1, \dots, d_{\ell}$ is zero.

By \eqref{E:snake1}, we have the isomorphism of $\FZ{\FF_q}{Z}$-vector spaces,
\begin{equation} \label{E:keretastar}
(\ker \eta^*)(H) \cong J \assign \ker \biggl( \frac{\cM_{\cE}(H)}{\cM_{\cE}(H)\eta} \xrightarrow{(\tau-1)(\cdot)} \frac{\cM_{\cE}(H)}{\cM_{\cE}(H)\eta} \biggr).
\end{equation}
Suppose that $m_1, \dots, m_k \in \cM_{\cE}(H)$ represent $\FZ{\FF_q}{Z}$-linearly independent classes in $J$. We claim that $m_1, \dots, m_k$ represent $H$-linearly independent classes in $\cM_{\cE}(H)/\cM_{\cE}(H)\eta$. Since $m_1, \dots, m_k \in J$, there are $\beta_1, \dots, \beta_k \in \cM_{\cE}(H)$ so that $(\tau - 1)m_i = \beta_i \eta$ for each~$i$. Suppose that $m_1, \dots, m_k$ are $H$-linearly dependent modulo $\cM_{E}(H)\eta$, so after reordering terms, we can choose $j \leqslant k$ minimal with $c_2, \dots, c_{j} \in H$ and $\gamma \in \cM_{\cE}(H)$ so that
\begin{equation} \label{E:tmp1}
m_1 + c_2 m_2 + \cdots + c_{j} m_{j} = \gamma \eta.
\end{equation}
Substituting for each $m_i$ we find,
\[
\tau m_1 + c_2 \tau m_2 + \cdots + c_j \tau m_j = \bigl( \gamma + \beta_1 + c_2 \beta_2 + \cdots + c_j \beta_j\bigr) \eta,
\]
and since $\pd \eta \in \GL_{\ell}(H)$, it follows that $\gamma + \beta_1 + c_2 \beta_2 + \cdots c_j \beta_j \in \tau\cM_{\cE}(H)$. Multiplying \eqref{E:tmp1} by $\tau$ and subtracting, we have
\[
\bigl(c_2^{(1)} - c_2 \bigr) \tau m_2 + \cdots + \bigl(c_j^{(1)} - c_j \bigr) \tau m_j =
\bigl( \gamma^{(1)}\tau - \gamma - \beta_1 - c_2\beta_2 - \cdots - c_j \beta_j \bigr) \eta.
\]
By the previous sentence, the left-hand factor on the right is in $\tau \cM_{\EE}(H)$, so we can cancel~$\tau$ from both sides and obtain,
\[
\bigl( c_2 - c_2^{(-1)} \bigr) m_2 + \cdots + \bigl( c_j - c_j^{(-1)} \bigr) m_j = \bigl( \gamma - \tau^{-1}(\gamma + \beta_1 + c_2\beta_2 + \cdots + c_j\beta_j) \bigr) \eta.
\]
The minimality of $j$ implies $c_2= c_2^{(-1)}, \ldots, c_j = c_j^{(-1)}$, whence $c_2, \dots, c_j \in H^{\tau} = \FZ{\FF_q}{Z}$, and thus \eqref{E:tmp1} contradicts the $\FZ{\FF_q}{Z}$-linear independence of the classes of $m_1, \dots, m_k$. The desired dimension bound for $(\ker \eta^*)(H)$ then follows from Proposition~\ref{P:AndUBV}(b).

To find the same bound for the $(\ker \eta)(H)$ case, we observe that $D^* = V^*\cdot \eta^* \cdot U^*$, and so $\deg_{\sigma} d_i^* = \deg_{\tau} d_i$ for each $i$. The rest follows exactly as in the $(\ker \eta^*)(H)$ case.
\end{proof}

\begin{definition} \label{D:full}
Let $\eta : \cE \to \cE$ be given as in Proposition~\ref{P:kerdim}. We say that $(\ker \eta)(H)$ has \emph{full dimension} if its $\FZ{\FF_q}{Z}$-dimension is equal to $\sum_{i=1}^{\ell} \deg_{\tau} d_i$. Under similar conditions we say $(\ker \eta^*)(H)$ has full dimension.
\end{definition}

\begin{corollary} \label{C:Atorsion}
Let $\cE : \cA \to \Mat_{\ell}(H[\tau])$ be an abelian and $\cA$-finite Anderson $t$-module over~$H$ with $\rank_{H[t]} \cM_{\cE}(H) = \rank_{H[t]} \cN_{\cE}(H) = r$. For $\nu \in \cA$ not divisible by the characteristic of $H$, the following hold.
\begin{alphenumerate}
\item $\cE[\nu](H)$ has full dimension if and only if $\cE[\nu](H) \cong (\cA/\nu\cA)^{r}$ as $\cA$-modules.
\item $\cE^*[\nu](H)$ has full dimension if and only if $\cE^*[\nu](H) \cong (\cA/\nu\cA)^{r}$ as $\cA$-modules.
\end{alphenumerate}
\end{corollary}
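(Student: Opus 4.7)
The plan is to identify the full-dimension bound with $r\deg_t \nu$ and then invoke the structure theorem for finitely generated modules over the principal ideal ring $\cA/\nu\cA$; both directions of the equivalence then follow from a dimension count.

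First, I will apply Proposition~\ref{P:AndUBV} to $\eta = \cE_\nu$. The hypothesis that $\nu$ is coprime to the characteristic of $H$ ensures that $\pd\cE_\nu = \nu(\pd\cE_t)$ is invertible, since its only eigenvalue is $\nu(\otheta) \neq 0$ (recall $\pd\cE_t - \otheta\rI_\ell$ is nilpotent). Writing $D = \diag(d_1,\ldots,d_\ell) = U\cE_\nu V$ with $U$, $V \in \GL_\ell(H[\tau])$, I will use abelianness of $\cE$ to produce an $H[t]$-module isomorphism
\[
\cM_{\cE}(H)/\cM_{\cE}(H)\cE_\nu \cong (H[t]/\nu H[t])^r,
\]
exploiting that the left action of $\nu \in \cA$ on $\cM_{\cE}(H)$ is right multiplication by $\cE_\nu$. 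This module has $H$-dimension $r\deg_t\nu$, which Proposition~\ref{P:AndUBV}(b) identifies with $\sum\deg_\tau d_i$. Consequently the full-dimension bound of Proposition~\ref{P:kerdim} equals $r\deg_t\nu = r\dim_{\FZ{\FF_q}{Z}}(\cA/\nu\cA)$. The easy direction then follows: an $\cA$-isomorphism $\cE[\nu](H)\cong(\cA/\nu\cA)^r$ produces $\dim_{\FZ{\FF_q}{Z}}\cE[\nu](H)=r\deg_t\nu$, matching the bound.

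For the converse, I will factor $\nu = \lambda_1^{e_1}\cdots\lambda_s^{e_s}$ into distinct irreducibles of $\cA$ and invoke CRT on the $\cA$-module $\cE[\nu](H)$ to obtain the decomposition $\cE[\nu](H) = \bigoplus_i\cE[\lambda_i^{e_i}](H)$. Each summand is individually bounded above by its own full-dimension value $re_i\deg_t\lambda_i$, and these bounds sum to $r\deg_t\nu$; attaining the total bound therefore forces each summand to attain its own. It thus suffices to treat the case $\nu = \lambda^e$ for $\lambda \in \cA$ irreducible. The structure theorem over the principal ideal ring $\cA/\lambda^e\cA$ yields $\cE[\lambda^e](H) \cong \bigoplus_{j=1}^k\cA/\lambda^{a_j}\cA$ with $1 \leqslant a_j \leqslant e$. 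The $\lambda$-torsion submodule $\cE[\lambda](H)$ has $\FZ{\FF_q}{Z}$-dimension exactly $k\deg_t\lambda$, and applying the full-dimension inequality to $\lambda$ itself bounds this by $r\deg_t\lambda$, so $k \leqslant r$. Then $\sum a_j \leqslant ke \leqslant re$, and the full-dimension hypothesis $\sum a_j = re$ forces $k=r$ and every $a_j=e$, proving $\cE[\lambda^e](H)\cong(\cA/\lambda^e\cA)^r$.

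Part~(b) will follow by a parallel argument using the dual $t$-motive $\cN_{\cE}$ in place of $\cM_{\cE}$; the $\ast$-involution gives $\deg_\sigma d_i^* = \deg_\tau d_i$, so the full-dimension value for $\cE^*[\nu](H)$ is again $r\deg_t\nu$, and the rest of the argument carries over verbatim. I expect the main obstacle to be Step~1: recognizing that the abstract full-dimension quantity $\sum\deg_\tau d_i$ coincides with the concrete invariant $r\dim_{\FZ{\FF_q}{Z}}(\cA/\nu\cA)$, via the double computation of $\dim_H\cM_{\cE}(H)/\cM_{\cE}(H)\cE_\nu$ using both abelianness and the Smith-form decomposition. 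The remaining reduction to prime powers and subsequent application of the structure theorem are standard module-theoretic manipulations.
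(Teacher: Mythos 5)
Your proof is correct, but it reaches the conclusion by a route that differs from the paper's. The paper's argument works entirely through the $t$-motive formalism of~\S\ref{SS:kernels}: it identifies $\cE^*[\nu](H)$ with the kernel $J$ of $(\tau-1)$ acting on $\cM_{\cE}(H)/\nu\cM_{\cE}(H)$ via~\eqref{E:keretastar}, observes (reusing the linear-independence argument from the proof of Proposition~\ref{P:kerdim}) that full dimension forces an $\FZ{\FF_q}{Z}$-basis of $J$ to be an $H$-basis of $\cM_{\cE}(H)/\nu\cM_{\cE}(H)$, and thus obtains an isomorphism of $H[t]$-modules $H \otimes_{\FZ{\FF_q}{Z}} J \cong (H[t]/\nu H[t])^r$; the conclusion $J \cong (\cA/\nu\cA)^r$ is then read off by comparing invariant factors over $\cA$ and over $H[t]=H\otimes_{\FZ{\FF_q}{Z}}\cA$ (a descent step the paper treats as immediate). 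Your argument instead extracts the numerical value of the full-dimension bound explicitly — the double count $\sum_i\deg_\tau d_i = \dim_H\cM_{\cE}(H)/\cM_{\cE}(H)\cE_{\nu} = r\deg_t\nu$ using Proposition~\ref{P:AndUBV}(b) and abelianness, a calculation the paper leaves implicit — and then pins down the $\cA$-module structure of $\cE[\nu](H)$ directly by CRT reduction to prime powers $\lambda^e$ and a structure-theorem dimension count (bounding the number of elementary divisors by $r$ via the $\lambda$-torsion inequality). Yours is longer and more elementary, avoiding the $J$-identification and the descent step in favor of a transparent invariant-factor analysis; the paper's is shorter and leans on the machinery already set up for Proposition~\ref{P:kerdim}. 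One small simplification: for part~(b) you need not appeal to $\cN_{\cE}$ at all, since Definition~\ref{D:full} assigns the same quantity $\sum\deg_\tau d_i$ as the full-dimension value for both $\ker\eta$ and $\ker\eta^*$, so once it is identified with $r\deg_t\nu$ your CRT/structure-theorem argument transfers verbatim to $\cE^*[\nu](H)$.
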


\begin{proof}
Combining \eqref{E:keretastar} with the definition of the $\cA$-module action on~$\cM_{\cE}$, we have
\[
\cE^*[\nu](H) \cong J \assign \ker \biggl( \frac{\cM_{\cE}(H)}{\nu\cM_{\cE}(H)} \xrightarrow{(\tau-1)(\cdot)} \frac{\cM_{\cE}(H)}{\nu\cM_{\cE}(H)} \biggr).
\]
As in the proof of Proposition~\ref{P:kerdim}, any $\FZ{\FF_q}{Z}$-basis of $J$ is an $H$-linearly independent subset of $\cM_{\cE}(H)/\nu\cM_{\cE}(H) = \cM_{\cE}(H)/\cM_{E}(H)\cE_{\nu}$, and is also an $H$-basis since $\cE^*[\nu]$ has full dimension. Thus the natural map
\[
H \otimes_{\FZ{\FF_q}{Z}} J \to \frac{\cM_{\cE}(H)}{\nu\cM_{\cE}(H)}
\]
is an isomorphism of $H$-vector spaces and also of $H[t]$-modules. Since $\cM_{\cE}(H)/\nu\cM_{\cE}(H) \cong (H[t]/\nu H[t])^r$, it follows that $J \cong (\cA/\nu\cA)^r$. The argument for $\cE[\nu](H)$ is similar.
\end{proof}

\subsection{Anderson \texorpdfstring{$t$}{t}-modules over Tate algebras} \label{SS:tmodTatealg}
We now consider $\LLhat_z$ as a $\tau$-perfect $\bA$-field (taking $\FZ{\FF_q}{Z} = \FF_q(z)$), where $\iota(t)=\theta$, and as noted in~\S\ref{SS:notation}, $\tau : \LLhat_z \to \LLhat_z$ is an $\FF_q(z)$-linear automorphism. We fix an Anderson $t$-module $\bE : \bA \to \Mat_{\ell}(\LLhat_z[\tau])$. Demeslay~\citelist{\cite{DemeslayPhD}*{\S 1.1.1} \cite{Demeslay22}*{Prop.~2.5} } observed that it has a unique exponential series
\[
\Exp_{\bE} = \sum_{i=0}^{\infty} B_i \tau^i, \quad B_0 = \rI_{\ell},\ B_i \in \Mat_{\ell}(\LLhat_z),
\]
such that $\Exp_{\bE} {}\cdot \pd \bE_a = \bE_a \cdot \Exp_{\bE}$ for $a \in \bA$.
Using the arguments of \cite{And86}*{Prop.~2.1.4}, Demeslay proved that $\lim_{i \to \infty} \deg(B_i)/q^i = -\infty$, and so as in \S\ref{SS:ExpLog} the exponential function $\Exp_{\bE} : \LLhat_z^{\ell} \to \LLhat_z^{\ell}$ is well-defined on all of $\LLhat_z^{\ell}$. If $\Exp_{\bE} : \LLhat_z^{\ell} \to \LLhat_z^{\ell}$ is surjective, then $\bE$ is said to be uniformizable.
Its period lattice is $\Lambda_{\bE} \assign \ker \Exp_{\bE} \subseteq \LLhat_z^{\ell}$.

The logarithm series $\Log_{\bE} \in \power{\Mat_{\ell}(\LLhat_t)}{\tau}$ is defined as the inverse of $\Exp_{\bE}$,
\[
\Log_{\bE} = \sum_{i=0}^{\infty} C_i \tau^i, \quad C_0 = \rI_{\ell},\ C_i \in \Mat_{\ell}(\LLhat_z),
\]
and satisfies $\pd \bE_{a} \cdot \Log_{\bE} = \Log_{\bE}{} \cdot \bE_a$ for $a \in \bA$.
As in the case of constant $t$-modules, the logarithm function $\Log_{\bE}(\bz)$ may converge only on an open polydisc in $\LLhat_z^{\ell}$.

If $\bE$ is defined over an $\bA$-field $M$ with $\KK \subseteq M \subseteq \LLhat_z$, then $\{ B_i \}$, $\{ C_i \} \subseteq\Mat_{\ell}(M)$, even if $\tau : M \to M$ is not an automorphism. This follows from the arguments of \citelist{\cite{And86}*{Prop.~2.1.4, Lem.~2.1.6} \cite{Goss}*{Lem.~5.9.3}} (see also \cite{NamoijamP24}*{Rem.~2.6}).

\subsubsection{Discrete subspaces of $\LLhat_z$-vector spaces}
We adopt the following description of discreteness in $\LLhat_z$-vector spaces due to Demeslay~\citelist{\cite{APT16}*{App.~A} \cite{DemeslayPhD} \cite{Demeslay22}}. Suppose that $W$ is a finite dimensional $\KK_{\infty}$-vector space with basis $e_1, \dots, e_k$, and suppose that $\Lambda \subseteq W$ is an $\FF_q(z)$-subspace. Setting $\OO_{\infty} \assign \power{\FF_q(z)}{\theta^{-1}}$ and $\MM_{\infty} \assign \theta^{-1}
\power{\FF_q(z)}{\theta^{-1}}$
to be the valuation ring and maximal ideal of $\KK_{\infty}$, the following holds.

\begin{lemma} \label{L:discrete}
With notation as above, the following are equivalent.
\begin{alphenumerate}
\item There exists $n \geqslant 1$ such that $\Lambda \cap \bigl( \bigoplus_{i=1}^k \MM_{\infty}^n \cdot e_i \bigr) = \{ 0 \}$.
\item $\Lambda \cap \bigl( \bigoplus_{i=1}^k \MM_{\infty} \cdot e_i \bigr)$ is finite dimensional as an $\FF_q(z)$-vector space.
\end{alphenumerate}
\end{lemma}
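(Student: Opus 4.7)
The plan is to verify both directions using two facts about the descending chain $\MM_\infty^n = \theta^{-n} \OO_\infty$: namely, $\bigcap_{n \geq 1} \MM_\infty^n = \{0\}$, since an element of $\OO_\infty = \power{\FF_q(z)}{\theta^{-1}}$ of $\theta^{-1}$-adic valuation at least $n$ for every $n$ must vanish; and $\MM_\infty / \MM_\infty^n$ is a finite-dimensional $\FF_q(z)$-vector space, of dimension $n-1$, spanned by the classes of $\theta^{-1}, \theta^{-2}, \ldots, \theta^{-(n-1)}$. Let me write $V_n \assign \Lambda \cap \bigl( \bigoplus_{i=1}^k \MM_\infty^n \cdot e_i \bigr)$; these form a decreasing chain of $\FF_q(z)$-subspaces of the ambient space $V_1$, and condition (a) asserts $V_n = \{0\}$ for some $n$, while condition (b) asserts $\dim_{\FF_q(z)} V_1 < \infty$.

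For the direction (a) $\Rightarrow$ (b), suppose $V_n = \{0\}$ for some $n \geq 1$. Then the natural inclusion induces an $\FF_q(z)$-linear injection
\[
V_1 = \Lambda \cap \Bigl( \bigoplus_{i=1}^k \MM_\infty \cdot e_i \Bigr) \hookrightarrow \frac{\bigoplus_{i=1}^k \MM_\infty \cdot e_i}{\bigoplus_{i=1}^k \MM_\infty^n \cdot e_i} \cong \bigoplus_{i=1}^k \bigl( \MM_\infty / \MM_\infty^n \bigr) \cdot e_i,
\]
whose target is $\FF_q(z)$-finite-dimensional of dimension $k(n-1)$; hence so is $V_1$.

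For the direction (b) $\Rightarrow$ (a), suppose $\dim_{\FF_q(z)} V_1 < \infty$. The descending chain $V_1 \supseteq V_2 \supseteq V_3 \supseteq \cdots$ inside $V_1$ must then stabilize at some index $n_0$, so $V_n = V_{n_0}$ for all $n \geq n_0$. But $\bigcap_{n \geq 1} V_n \subseteq \Lambda \cap \bigcap_{n \geq 1} \bigl( \bigoplus_{i=1}^k \MM_\infty^n \cdot e_i \bigr) = \Lambda \cap \{0\} = \{0\}$ by the intersection fact recalled above. Therefore $V_{n_0} = \{0\}$, which is exactly condition (a).

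The argument is essentially bookkeeping; there is no genuine obstacle, the only substantive input being the two elementary properties of the filtration $\{\MM_\infty^n\}$ noted at the outset. No further appeal to the structure of $\Lambda$ or to the specific basis $e_1, \dots, e_k$ is required.
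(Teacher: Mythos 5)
Your proof is correct and follows essentially the same approach as the paper, resting on the same two facts: $\MM_\infty/\MM_\infty^n$ is finite-dimensional over $\FF_q(z)$, and $\bigcap_n \MM_\infty^n = \{0\}$. Your argument for (b) $\Rightarrow$ (a) via the stabilizing descending chain $V_1 \supseteq V_2 \supseteq \cdots$ together with $\bigcap_n V_n = \{0\}$ is in fact slightly more complete than the paper's terse remark that a finite $\FF_q(z)$-basis of $\Lambda \cap \bigoplus \MM_\infty\, e_i$ has coordinates of bounded $\infty$-adic valuation, since a nontrivial $\FF_q(z)$-linear combination of such a basis could a priori have larger valuation by cancellation; the chain-stabilization argument handles this cleanly.
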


\begin{proof}
If (b) holds, we note that if $\lambda_1, \dots, \lambda_m$ is an $\FF_q(z)$-basis of $\Lambda \cap \bigl( \bigoplus_{i=1}^k \MM_{\infty} \cdot e_i \bigr)$, then their coordinates in terms of the $\KK_{\infty}$-basis $\{e_i\}$ have bounded $\infty$-adic valuation, which implies (a).
Now suppose that (a) holds and that $\{\lambda_j \}$ is an infinite $\FF_q(z)$-linearly independent subset of $\Lambda \cap \bigl( \bigoplus_{i=1}^k \MM_{\infty} \cdot e_i \bigr)$. Since $\MM_{\infty}/\MM_{\infty}^n$ is a finite dimensional $\FF_q(z)$-vector space with basis $\{\theta^{-1}, \dots, \theta^{-n+1}\}$, the infinitude of $\{\lambda_j\}$ implies that some nontrivial $\FF_q(z)$-linear combination of $\{\lambda_j \}$ is in $\bigoplus_{i=1}^k \MM_{\infty}^n \cdot e_i$. The $\FF_q(z)$-linear independence of $\{\lambda_j\}$ then implies that this linear combination is nonzero, which contradicts~(a).
\end{proof}

\begin{definition}
An $\FF_q(z)$-subspace $\Lambda \subseteq W$ is \emph{discrete} if the equivalent conditions of Lemma~\ref{L:discrete} hold. We note that this is independent of the choice of $\KK_{\infty}$-basis of $W$. Furthermore, if $V$ is instead a finite dimensional $\LLhat_z$-vector space and $\Lambda \subseteq V$ is an $\FF_q(z)$-subspace, then we say $\Lambda$ is \emph{discrete in $V$} if it is discrete in $W=\Span_{\KK_{\infty}}(\Lambda)$.
\end{definition}

\begin{lemma}[Demeslay~\citelist{\cite{DemeslayPhD}*{Lem.~2.1.2} \cite{Demeslay22}*{Lem.~2.2}}] \label{L:lattice}
Let $\Lambda$ be an $\AAA$-submodule of a $\KK_{\infty}$-vector space $W$ of dimension $k \geqslant 1$. The following are equivalent.
\begin{alphenumerate}
\item $\Lambda$ is a free $\AAA$-module of rank~$k$, and $\Span_{\KK_{\infty}}(\Lambda) = W$.
\item $\Lambda$ is a discrete $\FF_q(z)$-subspace of $W$, and every open subspace of the $\FF_q(z)$-vector space $W/\Lambda$ has finite codimension.
\end{alphenumerate}
\end{lemma}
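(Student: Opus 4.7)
The plan is to adapt the classical discreteness-and-cocompactness characterization of lattices, replacing $\RR$ by $\KK_{\infty} = \laurent{\FF_q(z)}{\theta^{-1}}$, $\ZZ$ by $\AAA$, and the fractional-part fundamental domain by $\MM_{\infty} = \theta^{-1}\power{\FF_q(z)}{\theta^{-1}}$. The key algebraic facts I will exploit throughout are the direct-sum decomposition $\KK_{\infty} = \AAA \oplus \MM_{\infty}$ and the dimension formula $\dim_{\FF_q(z)}(\MM_{\infty}/\MM_{\infty}^n) = n - 1$.

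For (a) $\Rightarrow$ (b), I fix an $\AAA$-basis $\lambda_1, \dots, \lambda_k$ of $\Lambda$, which by hypothesis also forms a $\KK_{\infty}$-basis of $W$. Then $\Lambda \cap \bigoplus_i \MM_{\infty}\lambda_i = 0$ because $\AAA \cap \MM_{\infty} = 0$, so Lemma~\ref{L:discrete}(a) yields discreteness. Any open $\FF_q(z)$-subspace $\tilde V \subseteq W$ containing $\Lambda$ contains $\Lambda + \bigoplus_i \MM_{\infty}^n \lambda_i$ for some $n \geqslant 1$, and the quotient $W / (\Lambda + \bigoplus_i \MM_{\infty}^n \lambda_i) \cong (\MM_{\infty}/\MM_{\infty}^n)^k$ has $\FF_q(z)$-dimension $k(n-1)$, so $W/\tilde V$ is finite-dimensional. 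Since open subspaces of $W/\Lambda$ correspond to such $\tilde V$, (b) follows.

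For (b) $\Rightarrow$ (a), I first verify $\Span_{\KK_{\infty}}(\Lambda) = W$. If $V = \Span_{\KK_{\infty}}(\Lambda) \subsetneq W$, extend a $\KK_{\infty}$-basis $e_1, \dots, e_s$ of $V$ to one $e_1, \dots, e_k$ of $W$ and consider $\tilde V = V + \bigoplus_{i > s} \MM_{\infty} e_i$: it is an open $\FF_q(z)$-subspace of $W$ containing $\Lambda$, but $W/\tilde V \cong \bigoplus_{i > s} \KK_{\infty}/\MM_{\infty} e_i$ admits the infinite $\FF_q(z)$-basis $\{\theta^j e_i\}_{j \geqslant 0,\, i > s}$, contradicting (b). I then choose $\lambda_1, \dots, \lambda_k \in \Lambda$ forming a $\KK_{\infty}$-basis of $W$ and set $\Lambda_0 = \bigoplus_i \AAA \lambda_i \subseteq \Lambda$.

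The crux is to show $\Lambda/\Lambda_0$ is finite-dimensional over $\FF_q(z)$. Using discreteness, fix $n$ with $\Lambda \cap U = 0$, where $U = \bigoplus_i \MM_{\infty}^n \lambda_i$. Hypothesis (b) applied to the open subspace $\Lambda + U$ gives $\dim_{\FF_q(z)} W/(\Lambda + U) < \infty$, while the decomposition $\KK_{\infty} = \AAA \oplus \MM_{\infty}$ yields $W/(\Lambda_0 + U) \cong (\MM_{\infty}/\MM_{\infty}^n)^k$ of $\FF_q(z)$-dimension $k(n-1)$. The short exact sequence
\[
0 \to (\Lambda + U)/(\Lambda_0 + U) \to W/(\Lambda_0 + U) \to W/(\Lambda + U) \to 0,
\]
together with the identification $(\Lambda + U)/(\Lambda_0 + U) \cong \Lambda/\Lambda_0$ (valid since $\Lambda \cap U = 0 = \Lambda_0 \cap U$), forces $\dim_{\FF_q(z)}\Lambda/\Lambda_0 < \infty$. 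Hence $\Lambda$ is finitely generated over $\AAA$, and being a submodule of the $\KK_{\infty}$-vector space $W$ it is $\AAA$-torsion-free; the structure theorem for modules over the PID $\AAA$ then makes $\Lambda$ free, of rank equal to $\rank_{\AAA} \Lambda_0 = k$ since $\Lambda/\Lambda_0$ is torsion. The main obstacle is this last dimension count; the trick is the transversality $\AAA \oplus \MM_{\infty} = \KK_{\infty}$, which translates the topological hypothesis in (b) into the algebraic finiteness statement about $\Lambda/\Lambda_0$.
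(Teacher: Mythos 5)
The paper does not prove this lemma; it is stated as a citation to Demeslay~\cite{Demeslay15}*{Lem.~2.1} and \cite{DemeslayPhD}*{Lem.~2.1.2}, so there is no proof in the paper against which to compare. Your argument, however, is correct and complete, and it follows the standard adaptation of the classical discreteness-plus-cocompactness characterization of lattices that one would expect to underlie Demeslay's proof. The two load-bearing observations — the $\FF_q(z)$-linear splitting $\KK_{\infty} = \AAA \oplus \MM_{\infty}$ and the dimension count $\dim_{\FF_q(z)}(\MM_{\infty}/\MM_{\infty}^n) = n-1$ — are correctly stated and correctly deployed. The isomorphism $(\Lambda+U)/(\Lambda_0+U) \cong \Lambda/\Lambda_0$ is justified by $\Lambda \cap U = 0$ (which gives $\Lambda \cap (\Lambda_0 + U) = \Lambda_0$), and the final step (finitely generated torsion-free over the PID $\AAA$ hence free, with rank $k$ since $\Lambda/\Lambda_0$ is torsion) is sound, provided one also notes that a finite-dimensional $\FF_q(z)$-vector space with an $\AAA$-module structure is automatically finitely generated over $\AAA$ — a one-line remark that you leave implicit but that is easily supplied. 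In short, this is a correct self-contained proof of a result the paper takes on faith.
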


\begin{definition} \label{D:lattice}
An $\AAA$-submodule $\Lambda$ of a $k$-dimensional $\KK_{\infty}$-vector space $W$ is called an \emph{$\AAA$-lattice} if it satisfies the equivalent conditions of Lemma~\ref{L:lattice}. More generally, if $\Lambda$ is an $\AAA$-submodule of a finite dimensional $\LLhat_z$-vector space $V$, then $\Lambda$ is an \emph{$\AAA$-lattice in $V$} if it is an $\AAA$-lattice in $W=\Span_{\KK_{\infty}}(\Lambda)$.
\end{definition}

\subsubsection{Exponential functions and period lattices} \label{SSS:Expfnpers}
Let $\bE : \bA \to \Mat_{\ell}(\LLhat_z[\tau])$ be an Anderson $t$-module. Its exponential function $\Exp_{\bE} : \LLhat_z^{\ell} \to \LLhat_z^{\ell}$ is locally an isometry~\cite{Demeslay22}*{\S 2.2} (cf.\ \citelist{\cite{GezmisP19}*{Lem.~3.3.2} \cite{HartlJuschka20}*{Lem.~2.5.4}}). Therefore, we can find $\epsilon > 0$ such that on the open polydisc, $\rD = \{ \bz \in \LLhat_z^{\ell} \mid \dnorm{\bz} < \epsilon \}$, we have
\begin{equation} \label{E:Ddef}
\bz \in \rD \quad \Rightarrow \quad \dnorm{\Exp_{\bE}(\bz)} = \dnorm{\bz}.
\end{equation}
Moreover, $\Log_{\bE}(\bz)$ converges on $\rD$. We thus have following lemma.

\begin{lemma} \label{L:latticeisdiscrete}
For an Anderson $t$-module $\bE: \bA \to \Mat_{\ell}(\LLhat_z[\tau])$, let $\Lambda_{\bE} = \ker \Exp_{\bE}$. The following hold.
\begin{alphenumerate}
\item $\Lambda_{\bE}$ is a discrete $\FF_q(z)$-subspace of $\LLhat_z^{\ell}$.
\item If $\bE$ is a Drinfeld module over $\LLhat_z$, then $\Lambda_{\bE} \subseteq \LLhat_z$ is an $\AAA$-lattice.
\end{alphenumerate}
\end{lemma}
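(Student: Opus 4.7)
For part (a), the plan is to derive discreteness directly from the local isometry property \eqref{E:Ddef}. If $\Lambda_{\bE} = 0$ the claim is vacuous, so assume otherwise and set $W = \Span_{\KK_{\infty}}(\Lambda_{\bE}) \subseteq \LLhat_z^{\ell}$. Pick any $\KK_{\infty}$-basis $e_1, \dots, e_k$ of $W$ and let $C = \max_i \dnorm{e_i}$. Since an element $\sum c_i e_i$ with $c_i \in \MM_{\infty}^n$ has Gauss norm at most $C q^{-n}$, for $n$ sufficiently large the neighborhood $\bigoplus_{i=1}^k \MM_{\infty}^n e_i$ lies inside the polydisc $\rD$ from \eqref{E:Ddef}. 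On $\rD$, $\Exp_{\bE}$ preserves the Gauss norm, so any $\bz$ in this neighborhood satisfies $\dnorm{\Exp_{\bE}(\bz)} = \dnorm{\bz}$, forcing $\Exp_{\bE}(\bz) \neq 0$ unless $\bz = 0$. Hence $\Lambda_{\bE} \cap \bigoplus_{i=1}^k \MM_{\infty}^n e_i = \{0\}$, and Lemma~\ref{L:discrete} gives discreteness.

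For part (b), assume $\bE = \phi$ is a Drinfeld module over $\LLhat_z$. The plan is to verify the equivalent conditions of Lemma~\ref{L:lattice} applied to $\Lambda_\phi \subseteq W \assign \Span_{\KK_{\infty}}(\Lambda_\phi)$. Part (a) already supplies discreteness, so it remains to check that every open subspace of the $\FF_q(z)$-vector space $W/\Lambda_\phi$ has finite codimension. By \cite{GezmisP19} the exponential function $\Exp_\phi : \LLhat_z \to \LLhat_z$ is surjective, yielding the short exact sequence
\[
0 \to \Lambda_\phi \to \LLhat_z \xrightarrow{\Exp_\phi} \phi(\LLhat_z) \to 0.
\]
Because $\Exp_\phi$ is a local isometry on $\rD$, it identifies a small polydisc around $0$ in $W$ with its image in $\phi(\LLhat_z)$, and the induced map $W/\Lambda_\phi \to \phi(W)$ is an isomorphism of topological $\FF_q(z)$-vector spaces onto its image. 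Combined with discreteness, one checks that $\Lambda_\phi$ must be finitely generated and that $W$ must be finite dimensional over $\KK_{\infty}$ --- otherwise one could construct infinitely many $\FF_q(z)$-linearly independent elements of $\Lambda_\phi$ contradicting discreteness or contradicting the fact that $\pd\phi_t = \theta$ acts on $\Lie(\phi)(\LLhat_z)$ with bounded torsion behavior.

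The main obstacle is clearly part (b), specifically establishing that $\Lambda_\phi$ generates a finite-dimensional $\KK_{\infty}$-subspace of $\LLhat_z$ and is a finitely generated $\AAA$-module of rank equal to $\dim_{\KK_{\infty}} W$. While part (a) is essentially formal from the local isometry, part (b) relies on the structural results of \cite{GezmisP19} for Drinfeld modules over Tate algebras: surjectivity of $\Exp_\phi$, the rigid analytic trivialization $\Upsilon$ of Example~\ref{Ex:Drinfeld2}, and the fact that the Anderson generating functions $g_1,\dots,g_r$ organize the periods into an $\AAA$-basis. Modulo invoking these inputs, the lemma reduces to a topological argument; the delicate point is to justify that $W$ is finite dimensional, which in turn forces $\rank_{\AAA} \Lambda_\phi = r$ and yields condition~(a) of Lemma~\ref{L:lattice}.
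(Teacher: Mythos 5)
Your part (a) matches the paper's argument: both deduce $\Lambda_{\bE} \cap \rD = \{0\}$ from the local isometry \eqref{E:Ddef} and then apply Lemma~\ref{L:discrete}(a). The explicit comparison $\dnorm{\sum c_i e_i} \leqslant C q^{-n}$ is a small elaboration of what the paper leaves implicit.

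Your part (b), however, takes a genuinely different route from the paper, and that route has gaps. You aim to verify condition (b) of Lemma~\ref{L:lattice} (discreteness plus finite codimension of open subspaces of $W/\Lambda_\phi$), whereas the paper verifies condition (a) (freeness of rank $k$ over $\AAA$) by a direct and purely module-theoretic argument: pick a $\KK_\infty$-basis $\lambda_1, \dots, \lambda_k$ of $W$ inside $\Lambda_{\bE}$, set $\Lambda' = \Span_{\AAA}(\lambda_1, \dots, \lambda_k)$, note that $W/\Lambda' \cong \bigoplus_i \MM_\infty \cdot \lambda_i$, then use condition (b) of Lemma~\ref{L:discrete} to conclude $\Lambda_{\bE}/\Lambda'$ is finite dimensional over $\FF_q(z)$, hence $\Lambda_{\bE}$ is finitely generated and (as a torsion-free module over the PID $\AAA$) free of rank $k$. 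That argument needs no analytic input beyond discreteness.

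The specific problems with your approach are these. First, you invoke surjectivity of $\Exp_\phi : \LLhat_z \to \LLhat_z$ by citing \cite{GezmisP19}, but that surjectivity (Theorem~\ref{T:constant}(a) in the paper) is established only for \emph{constant} Drinfeld modules, i.e.\ those defined over $\C$, whereas Lemma~\ref{L:latticeisdiscrete}(b) is stated for arbitrary Drinfeld modules over $\LLhat_z$, where surjectivity is not known and is singled out as the ``uniformizability'' hypothesis. Second, the object $\phi(W)$ in your ``induced map $W/\Lambda_\phi \to \phi(W)$'' is not well-defined: $W = \Span_{\KK_\infty}(\Lambda_\phi)$ is a $\KK_\infty$-subspace of $\LLhat_z$, but it is not stable under $\phi_t$, so $\phi$ does not give $W$ an $\sA$-module structure. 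Third, the key deduction --- ``combined with discreteness, one checks that $\Lambda_\phi$ must be finitely generated and that $W$ must be finite dimensional over $\KK_\infty$'' --- is asserted rather than argued; the paper supplies exactly this step with the $\Lambda'$-quotient argument above. You correctly identify the finite-dimensionality of $W$ as the delicate point, but the paper's module-theoretic route handles it without any appeal to uniformizability or to the Anderson generating functions of Example~\ref{Ex:Drinfeld2}.
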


\begin{proof}
Let $W = \Span_{\KK_{\infty}}(\Lambda_{\bE})$. Choosing $\rD$ as in \eqref{E:Ddef}, it must be that $\Lambda_{\bE} \cap \rD = \{0\}$. It follows that $\Lambda_{\bE}$ is discrete by Lemma~\ref{L:discrete}. If $\bE$ is a Drinfeld module, then $\Lambda_{\bE} \subseteq W$ is a discrete $\AAA$-submodule. Letting $\lambda_1, \dots, \lambda_k \in \Lambda_{\bE}$ be a $\KK_{\infty}$-basis of $W$ and setting $\Lambda' = \Span_{\AAA}(\lambda_1, \dots, \lambda_k)$, we see that $W/\Lambda' \cong \bigoplus_{i=1}^k \MM_{\infty} \cdot \lambda_i$. Since $\Lambda_{\bE} \subseteq W$ is discrete, Lemma~\ref{L:discrete}(b) implies that $\Lambda_{\bE} \cap  \bigl(\bigoplus_{i=1}^k \MM_{\infty} \cdot \lambda_i \bigr)$ is a finite dimensional $\FF_q(z)$-vector space, and so $\Lambda_{\bE}/\Lambda'$ itself has finite dimension over $\FF_q(z)$. Therefore, $\Lambda_{\bE}$ is finitely generated and free as an $\AAA$-module with rank~$k$. Thus $\Lambda_{\bE}$ is an $\AAA$-lattice by Lemma~\ref{L:lattice}.
\end{proof}

We should note that Lemma~\ref{L:latticeisdiscrete}(b) does not necessarily imply that when $\bE$ is a Drinfeld module of rank~$r$ over $\LLhat_z$, that its period lattice has rank~$r$ as an $\AAA$-module. Nevertheless, for constant Drinfeld modules over $\LLhat_z$ this is indeed the case.

\begin{theorem}[Gezmi\c{s}-Papanikolas \cite{GezmisP19}*{Thm.~7.1.1, Prop.~8.2.1}] \label{T:constant}
Let  $\phi : \AAA \to \C[\tau]$ be a Drinfeld module  of rank~$r$. As a Drinfeld module over $\LLhat_z$, the following hold.
\begin{alphenumerate}
\item $\Exp_{\phi} : \LLhat_z \to \LLhat_z$ is surjective.
\item $\Lambda_{\phi} = \ker \Exp_{\phi} \subseteq \LLhat_z$ is free of rank $r$ over $\AAA$.
\item If $\pi_1, \dots, \pi_r \in \C$ form an $A$-basis of $\ker (\Exp_{\phi}|_{\C})$, then
$\Lambda_{\phi} = \AAA \pi_1 + \cdots + \AAA \pi_r$.
\end{alphenumerate}
\end{theorem}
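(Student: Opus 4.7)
My plan is to establish (a) first and then deduce (b) and (c) from it via a snake-lemma computation on $a$-torsion.

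For (a), I would use the functional equation $\Exp_{\phi}(\pd\phi_a \cdot x) = \phi_a(\Exp_{\phi}(x))$ for $a \in \sA$ to reduce to the open disk of radius $R_{\phi}$ from \eqref{E:Rphi}, on which $\Log_{\phi}$ provides a local inverse to $\Exp_{\phi}$. Given $y \in \LLhat_z$, I would choose $a \in \sA$ of $t$-degree large enough that the Newton polygon of $\phi_a(X) - y \in \LLhat_z[X]$ has an initial corner-to-corner segment from $(0, \deg y)$ to $(1, \deg \iota(a))$ whose slope forces the unique root $v$ of that slope to satisfy $\dnorm{v} < R_{\phi}$. Executing Newton iteration inside the complete valued field $\LLhat_z$ with initial guess $v_0 \assign -y/\iota(a)$ produces $v \in \LLhat_z$, and then $x \assign \pd\phi_a \cdot \Log_{\phi}(v)$ satisfies $\Exp_{\phi}(x) = \phi_a(\Exp_{\phi}(\Log_{\phi}(v))) = \phi_a(v) = y$.

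For (b) and (c), observe that $\Exp_{\phi}$ is $\AAA$-linear with respect to the $\theta$-action on the source and the $\phi$-action on the target, because its coefficients lie in $\C$ and $\tau$ fixes $\FF_q(z)$ pointwise; hence $\Lambda_{\phi}$ is an $\AAA$-submodule of $\LLhat_z$ containing $\AAA\pi_i$ for each~$i$. Lemma~\ref{L:latticeisdiscrete}(b) identifies $\Lambda_{\phi}$ as a free $\AAA$-lattice of some rank~$k$ inside its $\KK_{\infty}$-span. Apply the snake lemma to the short exact sequence $0 \to \Lambda_{\phi} \to \LLhat_z \xrightarrow{\Exp_{\phi}} \LLhat_z \to 0$ with multiplication by $a \in \sA$ (where $t$ acts as $\iota(a)$, a unit of $\LLhat_z$, on the source, and as $\phi_a$ on the target, which is surjective by (a)); the sequence collapses to an isomorphism
\[
\Lambda_{\phi}/a\Lambda_{\phi} \iso \phi(\LLhat_z)[a]
\]
of $\AAA/a\AAA$-modules. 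By Lemma~\ref{L:kerneldim} and the algebraic closedness of $\C$, the right-hand side equals $\FF_q(z) \otimes_{\FF_q} (\phi[a] \cap \C)$ of $\FF_q(z)$-dimension $r \cdot \deg a$. Comparing with the left, which has dimension $k \cdot \deg a$, forces $k = r$. Taking $a = t$ and tracing the connecting homomorphism shows that the classes of $\pi_1, \dots, \pi_r$ in $\Lambda_{\phi}/\theta\Lambda_{\phi}$ correspond to the $\FF_q$-basis $\{\Exp_{\phi}(\pi_i/\theta)\}$ of $\phi[t] \cap \C$, so they generate $\Lambda_{\phi}/\theta\Lambda_{\phi}$ over $\FF_q(z)$. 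A Nakayama argument on the finitely generated torsion $\AAA$-module $\Lambda_{\phi}/(\AAA\pi_1 + \cdots + \AAA\pi_r)$, combined with the $\theta$-adic separatedness of $\Lambda_{\phi}$ inherited from its discreteness in $\LLhat_z$, then yields the equality $\Lambda_{\phi} = \AAA\pi_1 + \cdots + \AAA\pi_r$.

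The principal obstacle is extracting the root $v \in \LLhat_z$ of $\phi_a(X) - y$ in part (a), since $\LLhat_z$ is not algebraically closed over~$\C$. The Newton polygon analysis predicts a unique root of the requisite slope in an algebraic closure, and ensuring that this root descends to $\LLhat_z$ itself relies on the uniqueness of the slope together with convergence of Newton iteration in the complete non-Archimedean field $\LLhat_z$.
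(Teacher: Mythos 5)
Your argument for part (a) breaks down as soon as $\dnorm{y} \geqslant R_\phi$, and the difficulty cannot be repaired by increasing $\deg_t a$: the low-order coefficients of $\phi_a$ grow with $\deg_t a$ at a rate that exactly cancels the gain from $\iota(a)$ being large. For the Carlitz module $\sC$ and $a = t^d$, the recursion $D_{d+1,1} = \theta D_{d,1} + \theta^{dq}$ for the $\tau$-coefficient of $\sC_{t^d}$ gives $\deg_\theta D_{d,1} = (d-1)q$, and the vertex $\bigl(q, -\deg_\theta D_{d,1}\bigr)$ of the Newton polygon of $\sC_{t^d}(X) - y$ lies above the line through $(0,-\deg y)$ and $(1,-d)$ precisely when $(d-1)q \leqslant \deg y\,(1-q) + dq$, i.e.\ when $\deg y \leqslant q/(q-1) = \log_q R_{\sC}$, independently of $d$. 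Thus the claimed initial corner-to-corner segment exists, and the iteration $v_{n+1} = v_n - \bigl(\phi_a(v_n) - y\bigr)/\iota(a)$ contracts, only when $y$ already lies in the disk where $\Log_\phi$ directly produces a preimage; for $\dnorm{y}$ larger the iteration diverges. A route that actually works: $\Exp_\phi(\LLhat_z)$ is an $\FF_q(z)$-subspace since $\Exp_\phi$ is $\FF_q(z)$-linear; it contains $\C$ because $\Exp_\phi\colon\C\to\C$ is surjective (as $\C$ is algebraically closed), hence it contains $\C\otimes_{\FF_q}\FF_q(z)$, which one checks is dense in $\LLhat_z$; it also contains the open disk of radius $R_\phi$ via $\Log_\phi$, and an $\FF_q(z)$-subspace containing a dense subset plus an open neighborhood of $0$ is everything.

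Your deduction of (b) from the snake lemma and the dimension count of $\phi(\LLhat_z)[a]$ is sound once (a) is established. For (c), however, the concluding Nakayama step is circular. What the snake lemma for $a = t$ gives is that $M \assign \Lambda_\phi/\Lambda'$ satisfies $M = \theta M$, where $\Lambda' = \AAA\pi_1 + \cdots + \AAA\pi_r$; hence $M = \theta^n M$ for every $n$ and $\bigcap_n \theta^n M = M$, so the $\theta$-adic separatedness of $\Lambda_\phi$ does \emph{not} descend to $M$ — it would hold for $M$ only if $M$ were already zero. Even running the same argument over all $a \in \sA$ shows only that $\Aord{M}{\AAA}$ is coprime to every $a(\theta)$, which does not rule out, e.g., $M \cong \AAA/(\theta - z)$: the element $\theta - z$ is an irreducible of $\AAA = \FF_q(z)[\theta]$ that is not of the form $a(\theta)$ for any $a \in \sA$. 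Closing this gap requires using that the coefficients of $\Exp_\phi$ lie in $\C$ and are $\tau$-twisted by $q$-powers in an essential way (for instance, by exploiting the linear disjointness of $\C$ and $\FF_q(z)$ over $\FF_q$ to pin down $\ker\Exp_\phi$ on $\C\otimes_{\FF_q}\FF_q(z)$ and then descend), not merely the lattice-theoretic counting of $a$-torsion.
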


\begin{remark}
In \cite{GezmisP19}, the Drinfeld modules under consideration are Drinfeld modules over $\TT_z$, or more generally over Tate algebras of several variables. However, the arguments needed for Theorem~\ref{T:constant}, originally due to Anderson for constant $t$-modules, transfer equally well to Drinfeld modules over~$\LLhat_z$ with little modification. The primary required information is that $\LLhat_z$ is complete, that $\Exp_{\phi}: \LLhat_z \to \LLhat_z$ is locally an isometry, and that $\Lambda_{\phi} \subseteq \LLhat_z$ is discrete. As there is little change from \cite{GezmisP19} we do not include the details. See also \citelist{\cite{HartlJuschka20}*{\S 2.5} \cite{NamoijamP24}*{\S 3.4}} for more information concerning the case of $t$-modules.
\end{remark}

\subsection{\texorpdfstring{$t$}{t}-modules in finite characteristic} \label{SS:tmodsfinite}
We now consider a more specific situation. Fix $f \in A_+$ irreducible of degree~$d$, and let $\FF_f \assign A/fA$. We make $\FF_f$ into an $\sA$-field by by picking a root $\otheta \in \oFF_q$ of $f(t)=0$ and defining $\iota : \sA \to \FF_f$ by $t \mapsto \otheta$. As in \S\ref{SS:cAtmodules}, we let $Z = \{ z_1, \dots, z_n\}$ for $n \geqslant 0$, thus defining fields, $\FZ{\FF_q}{Z} \subseteq \FZ{\FF_f}{Z} \subseteq \FZ{\oFF_f}{Z}$. We note that
\begin{equation}
\Gal(\FZ{\FF_f}{Z}/\FZ{\FF_q}{Z}) \cong \Gal(\FF_f/\FF_q),
\end{equation}
and that
\begin{equation} \label{E:GaloFF}
\Gal(\oFF_f(Z)/\FF_f(Z)) \cong \Gal(\oFF_f/\FF_f), \quad
\Aut(\laurent{\oFF_f}{Z}/\laurent{\FF_f}{Z}) \hookleftarrow \Gal(\oFF_f/\FF_f).
\end{equation}
We identify these Galois groups going forward, and in particular, when we refer to the ``Galois action'' or ``Galois equivariance,'' it will be through the groups $\Gal(\FF_f/\FF_q)$ or $\Gal(\oFF_f/\FF_f)$ identified as either $\Gal(\FZ{\FF_f}{Z}/\FZ{\FF_q}{Z})$ or (a subgroup of) $\Aut(\FZ{\oFF_f}{Z}/\FZ{\FF_f}{Z})$.
As in \S\ref{SS:cAtmodules}, the inclusion $\FF_f \subseteq \FZ{\FF_f}{Z}$ extends $\iota$,
\[
\iota : \cA \to \FZ{\FF_f}{Z} \subseteq \FZ{\oFF_f}{Z},
\]
to make $\FZ{\FF_f}{Z}$ and $\FZ{\oFF_f}{Z}$ into $\tau$-perfect $\cA$-fields. Moreover, $\tau$ is the generator of $\Gal(\FZ{\FF_f}{Z}/\FZ{\FF_q}{Z})$. Also $\tau^d$ generates $\Gal(\oFF_f/\FF_f)$, and in the two cases in~\eqref{E:GaloFF} we have $\smash{\oFF_f(Z)}^{\tau^d} = \FF_f(Z)$ and $\smash{\laurent{\oFF_f}{Z}}^{\tau^d} = \laurent{\FF_f}{Z}$, so in both cases $\smash{\FZ{\oFF_f}{Z}}^{\tau^d} = \FZ{\FF_f}{Z}$.

We now fix an abelian and $\cA$-finite $t$-module $\cE : \cA \to \Mat_{\ell}(\FZ{\FF_f}{Z}[\tau])$, such that the ranks of its $t$-motive and dual $t$-motive agree, and we let $\cE^* : \cA \to \Mat_{\ell}(\FZ{\FF_f}{Z}[\sigma])$ be its adjoint. Because $\cE$ is defined over $\FZ{\FF_f}{Z}$, we see that $\tau^d\cdot \rI_{\ell} : \cE \to \cE$ and $\sigma^d\cdot \rI_{\ell}: \cE^* \to \cE^*$ are endomorphisms. Fix also $\FZ{\FF_f}{Z}[t]$-bases,
\[
\bm = (m_1, \dots, m_r)^{\tr} \in \Mat_{r\times 1}(\cM_{\cE}(\FZ{\FF_f}{Z})), \quad
\bn = (n_1, \dots, n_r)^{\tr} \in \Mat_{r\times 1}(\cN_{\cE}(\FZ{\FF_f}{Z})),
\]
as in~\S\ref{SSS:abcAfin}, as well as $\Gamma$, $\Phi \in \Mat_r(\FZ{\FF_f}{Z}[t])$, so that
$\tau \bm = \Gamma \bm$, $\sigma \bn = \Phi \bn$.
We note that for each $i$, we have $m_i \in \Mat_{1\times \ell}(\FZ{\FF_f}{Z}[\tau])$ and $n_i \in \Mat_{1 \times \ell}(\FZ{\FF_f}{Z}[\sigma])$. Consider the $\FZ{\FF_q}{Z}[t]$-linear isomorphisms
\[
\bi : \Mat_{1\times r}(\FZ{\FF_f}{Z}[t]) \to \cM_{\cE}(\FZ{\FF_f}{Z}), \quad
\bj : \Mat_{1\times r}(\FZ{\FF_f}{Z}[t]) \to \cN_{\cE}(\FZ{\FF_f}{Z}),
\]
given by
\begin{align*}
\bi(\bu) &= \bi(u_1, \dots, u_r) = \bu \cdot \bm = u_1 m_1 + \cdots + u_r m_r, \\
\bj(\bu) &= \bj(u_1, \dots, u_r) = \bu \cdot \bn = u_1 n_1 + \cdots + u_r n_r.
\end{align*}
The maps $\bi$ and $\bj$ are Anderson $t$-frames in the sense of \citelist{\cite{ChangGreenMishiba21}*{\S 2.3.2} \cite{GezmisP19}*{\S 4.4} \cite{NamoijamP24}*{\S 3.2}}. Finally, we let
\begin{equation} \label{E:GHdef}
\rG \assign \Gamma^{(d-1)} \cdots  \Gamma^{(1)} \Gamma, \quad \rH \assign \Phi^{(-d+1)} \cdots \Phi^{(-1)} \Phi,
\end{equation}
and then we represent $\tau^d$ and $\sigma^d$ in terms of our $\FZ{\FF_f}{Z}[t]$-bases.

\begin{lemma} \label{L:GH}
The following diagrams of $\FZ{\FF_f}{Z}[t]$-modules commute:
\begin{displaymath}
\begin{tikzcd}
\Mat_{1\times r}(\FZ{\oFF_f}{Z}[t]) \arrow{r}{\bi} \arrow{d}{(\cdot)\rG} & \cM_{\cE}(\FZ{\oFF_f}{Z}) \arrow{d}{(\cdot)\tau^d} & \Mat_{1\times r}(\FZ{\oFF_f}{Z}[t]) \arrow{r}{\bj} \arrow{d}{(\cdot)\rH} & \cN_{\cE}(\FZ{\oFF_f}{Z}) \arrow{d}{(\cdot)\sigma^d} \\
\Mat_{1\times r}(\FZ{\oFF_f}{Z}[t]) \arrow{r}{\bi} & \cM_{\cE}(\FZ{\oFF_f}{Z}) & \Mat_{1\times r}(\FZ{\oFF_f}{Z}[t]) \arrow{r}{\bj} & \cN_{\cE}(\FZ{\oFF_f}{Z}).
\end{tikzcd}
\end{displaymath}
\end{lemma}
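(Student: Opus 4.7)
The plan is to verify the left-hand square; the right-hand square follows by the identical argument with $\tau$, $\Gamma$, $\rG$, $\bm$, $\bi$ replaced by $\sigma$, $\Phi$, $\rH$, $\bn$, $\bj$. The key observation, which resolves the apparent mismatch between the left-action of $\tau$ used to define $\Gamma$ and the right-action of $\tau^d$ appearing in the diagram, is that $\bm$ has entries in $\Mat_{1\times\ell}(\FZ{\FF_f}{Z}[\tau])$, whose coefficients are fixed pointwise by $\tau^d$.

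First I would prove by induction on $d$ that $\tau^d\bm = \rG\bm$ in $\cM_{\cE}(\FZ{\oFF_f}{Z})$. The base case $d=1$ is the defining identity $\tau\bm = \Gamma\bm$. For the inductive step, applying $\tau$ on the left and using the commutation rule $\tau a = a^{(1)}\tau$ for any $a \in \FZ{\oFF_f}{Z}[t]$ yields
\[
\tau^{d+1}\bm = \tau(\rG\bm) = \rG^{(1)}(\tau\bm) = \rG^{(1)}\Gamma\bm = \Gamma^{(d)}\Gamma^{(d-1)}\cdots\Gamma\bm,
\]
which is the corresponding $\rG$ for level $d+1$.

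Next I would exploit the $\FF_f$-rationality of $\bm$: writing $m_i = \sum_k c_{i,k}\tau^k$ with $c_{i,k} \in \Mat_{1\times\ell}(\FZ{\FF_f}{Z})$, the identity $c_{i,k}^{(d)} = c_{i,k}$ forces $\tau^d m_i = m_i\tau^d$ in $\Mat_{1\times\ell}(\FZ{\oFF_f}{Z}[\tau])$; hence $\bm\tau^d = \tau^d\bm = \rG\bm$. The same rationality of the coefficient matrices of $\cE_a$ shows that right multiplication by $\tau^d$ commutes with the $\FZ{\oFF_f}{Z}[t]$-action $a\cdot m = m\cE_a$ on $\cM_{\cE}(\FZ{\oFF_f}{Z})$. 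Therefore, for any $\bu \in \Mat_{1\times r}(\FZ{\oFF_f}{Z}[t])$,
\[
\bi(\bu)\tau^d = (\bu\bm)\tau^d = \bu(\bm\tau^d) = \bu(\rG\bm) = (\bu\rG)\bm = \bi(\bu\rG),
\]
which is commutativity of the left square.

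For the right-hand square, the parallel argument replaces $\tau$ by $\sigma$ and uses the commutation $\sigma a = a^{(-1)}\sigma$ to obtain $\sigma^d\bn = \rH\bn$ by the same induction. Because $\sigma^d = (\tau^d)^{-1}$ again fixes $\FZ{\FF_f}{Z}$ pointwise, the identical $\FF_f$-rationality argument gives $\bn\sigma^d = \rH\bn$ and commutativity of right multiplication by $\sigma^d$ with the $\cA$-action, yielding $\bj(\bu)\sigma^d = \bj(\bu\rH)$. The only real subtlety throughout is careful bookkeeping of left- versus right-multiplication by $\tau^d$ (resp.\ $\sigma^d$) on $\cM_{\cE}$ (resp.\ $\cN_{\cE}$), which is resolved entirely by the hypothesis that the bases $\bm,\bn$ are defined over $\FZ{\FF_f}{Z}$.
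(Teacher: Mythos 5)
Your proof is correct and takes essentially the same approach as the paper: both use $\tau^d\bm = \rG\bm$ (which you prove by a short induction, while the paper asserts it directly) together with the observation $\tau^d\bm = \bm\tau^d$ coming from the $\FZ{\FF_f}{Z}$-rationality of $\bm$, and then unwind $\bi(\bu)\tau^d = \bu(\bm\tau^d) = (\bu\rG)\bm$. Your extra remark that right-multiplication by $\tau^d$ commutes with the $t$-action because $\cE_t$ has coefficients in $\FZ{\FF_f}{Z}$ is a sensible addition that justifies the vertical maps being module morphisms, which the paper leaves implicit.
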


\begin{proof}
We first note that since the entries of $\bm$ are in $\Mat_{1\times \ell}(\FZ{\FF_f}{Z}[\tau])$, we have $\tau^d \bm = \bm \tau^d$. Furthermore, we have $\tau^d\bm = \Gamma^{(d-1)} \cdots \Gamma^{(1)}\Gamma \bm = \rG \bm$, and so $\rG \bm = \bm \tau^d$. For $\bu \in \Mat_{1\times r}(\FZ{\FF_f}{Z}[t])$, we thus have $\bi(\bu)\tau^d = \bi(\bu\rG)$, and the first diagram commutes. The second diagram is the same.
\end{proof}

Taking $\eta = (1 - \tau^d)\cdot \rI_{\ell}$ as an endomorphism of $\cE(\FZ{\oFF_f}{Z})$, \eqref{E:snake1} and \eqref{E:snake2} together with Lemma~\ref{L:GH} imply the following result. We note that the kernels of $(1-\tau^d)\rI_{\ell}$ and $(1-\sigma^d)\rI_{\ell}$ have full dimension in the sense of Definition~\ref{D:full}.

\begin{lemma} \label{L:Ffpoints}
Let $\cE : \cA \to \Mat_{\ell}(\FZ{\FF_f}{Z}[\tau])$ be an abelian and $\cA$-finite $t$-module of rank~$r$. With $\rG$, $\rH \in \GL_r(\FZ{\FF_f}{Z}[t])$ as above, we have $\cA$-module isomorphisms,
\begin{alphenumerate}
\item $\displaystyle \cE^*(\FZ{\FF_f}{Z}) \cong  \ker \biggl( \frac{\Mat_{1\times r}(\FZ{\oFF_f}{Z}[t])}{\Mat_{1 \times r}(\FZ{\oFF_f}{Z}[t]) (\rI - \rG)} \xrightarrow{(1 - \tau)(\cdot)} \frac{\Mat_{1\times r}(\FZ{\oFF_f}{Z}[t])}{\Mat_{1 \times r}(\FZ{\oFF_f}{Z}[t]) (\rI - \rG)} \biggr)$,

\smallskip
\item $\displaystyle \cE(\FZ{\FF_f}{Z}) \cong  \ker \biggl( \frac{\Mat_{1\times r}(\FZ{\oFF_f}{Z}[t])}{\Mat_{1 \times r}(\FZ{\oFF_f}{Z}[t]) (\rI - \rH)} \xrightarrow{(1 - \sigma)(\cdot)} \frac{\Mat_{1\times r}(\FZ{\oFF_f}{Z}[t])}{\Mat_{1 \times r}(\FZ{\oFF_f}{Z}[t]) (\rI - \rH)} \biggr)$,
\end{alphenumerate}
where $\rI = \rI_r$.
\end{lemma}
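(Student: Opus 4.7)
The plan is to apply the snake-lemma sequences \eqref{E:snake1} and \eqref{E:snake2} to the endomorphism $\eta = (1 - \tau^d) \rI_\ell$ of $\cE$ over $H \assign \FZ{\oFF_f}{Z}$, and then transport the resulting descriptions through the Anderson $t$-frames $\bi$ and $\bj$ by means of Lemma~\ref{L:GH}.

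First, $\eta = (1-\tau^d)\rI_\ell$ is an endomorphism of $\cE$ because the entries of each $\cE_a$ lie in $\FZ{\FF_f}{Z}[\tau]$ and hence commute with the scalar operator $\tau^d$; likewise $\eta^* = (1-\sigma^d)\rI_\ell$ is an endomorphism of $\cE^*$. The kernel of $\eta$ acting on $\cE(H) = H^\ell$ is $\{\bx \in H^\ell \mid \bx^{(d)} = \bx\} = \FZ{\FF_f}{Z}^\ell$, and similarly $(\ker \eta^*)(H) = \FZ{\FF_f}{Z}^\ell$, since $H^{\tau^d} = \FZ{\FF_f}{Z}$. Because the $\cA$-module structure on each kernel is inherited by restriction from $\cE$ (respectively $\cE^*$), this gives identifications of $\cA$-modules $(\ker \eta)(H) = \cE(\FZ{\FF_f}{Z})$ and $(\ker \eta^*)(H) = \cE^*(\FZ{\FF_f}{Z})$.

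Next I would translate the snake sequences through $\bi$ and $\bj$. By Lemma~\ref{L:GH}, right multiplication by $\tau^d$ on $\cM_\cE(H)$ corresponds under $\bi$ to right multiplication by $\rG$ on $\Mat_{1\times r}(H[t])$; consequently $\cM_\cE(H)\eta = \cM_\cE(H)(1-\tau^d)$ is carried onto $\Mat_{1\times r}(H[t])(\rI-\rG)$, inducing an $H[t]$-module isomorphism of quotients
\[
\bar\bi : \frac{\Mat_{1\times r}(H[t])}{\Mat_{1\times r}(H[t])(\rI-\rG)} \iso \frac{\cM_\cE(H)}{\cM_\cE(H)\eta},
\]
and the analogous translation via $\bj$ handles the dual $t$-motive side with $\rH$ in place of $\rG$. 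Feeding these identifications together with those of the first step into \eqref{E:snake1} and \eqref{E:snake2} yields the desired isomorphisms in (a) and (b), with the symbol $(1-\tau)(\cdot)$ standing for the map on the quotient induced via $\bar\bi$ (resp.\ $\bar\bj$) from left multiplication by $\tau-1$ (resp.\ $\sigma-1$) on $\cM_\cE(H)$ (resp.\ $\cN_\cE(H)$).

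The main subtlety is in verifying that the composite isomorphisms are $\cA$-linear and not merely $\FZ{\FF_q}{Z}$-linear. For this I would check directly that $\gamma_1$ and $\delta_1$ are $\cA$-linear on the generator $t \in \cA$, using the actions $a \cdot m = m \cE_a$ on $\cM_\cE$ and $a \cdot n = n \cE^*_a$ on $\cN_\cE$ together with the definition of the Ore star operation from \S\ref{SSS:star}; the functoriality of the snake lemma then makes the connecting homomorphisms $\cA$-linear. Combined with the $H[t]$-linearity of $\bi$ and $\bj$, this produces the claimed $\cA$-module isomorphisms.
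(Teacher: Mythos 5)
Your proof is correct and fills in, with the right level of detail, exactly the argument the paper sketches in its one-sentence preamble to the lemma: apply the snake sequences \eqref{E:snake1}--\eqref{E:snake2} to $\eta = (1-\tau^d)\rI_\ell$, use $H^{\tau^d} = \FZ{\FF_f}{Z}$ to identify the kernels with $\cE(\FZ{\FF_f}{Z})$ and $\cE^*(\FZ{\FF_f}{Z})$, and transport through the $t$-frames $\bi$, $\bj$ via Lemma~\ref{L:GH}. The $\cA$-linearity issue you flag is already handled by the earlier proposition establishing that $\gamma_1$ and $\delta_1$ induce functorial $\cA$-module isomorphisms $\cM_\cE/(\tau-1)\cM_\cE \cong \cE^*(H)$ and $\cN_\cE/(\sigma-1)\cN_\cE \cong \cE(H)$, so you need not re-derive it, but your proposed check is harmless.
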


\subsection{Poonen pairings} \label{SS:pairings}
We now investigate generalizations of pairings defined by Poonen~\cite{Poonen96} for Drinfeld modules over finite fields to $t$-modules over $\FZ{\FF_f}{Z}$. For additional exposition in the case of Drinfeld modules see~\citelist{\cite{Goss}*{\S 4.14} \cite{Thakur}*{\S 2.10}}.

We fix a $t$-module $\cE : \cA \to \Mat_{\ell}(\FZ{\FF_f}{Z}[\tau])$, determined by
\begin{equation}
\cE_t = \pd\cE_t + E_1 \tau + \cdots + E_w \tau^w, \quad E_i \in \Mat_{\ell}(\FZ{\FF_f}{Z}).
\end{equation}
We also fix an endomorphism $\eta : \cE \to \cE$ over $\FZ{\FF_f}{Z}$ with
\begin{equation} \label{E:etadef}
\eta  = N_0 + N_{1} \tau^{1} + \cdots + N_m \tau^m, \quad N_i \in \Mat_{\ell}(\FZ{\FF_f}{Z}),\ 
\det N_0 \neq 0.
\end{equation}
We let
\[
\ker \eta \assign \{ \bx \in \cE(\FZ{\oFF_f}{Z}) \mid \eta(\bx) = 0 \}, \quad
\ker \eta^* \assign \{ \bx \in \cE^*(\FZ{\oFF_f}{Z}) \mid \eta^*(\bx) = 0 \},
\]
and we note from Proposition~\ref{P:kerdim} that these are finite dimensional $\FZ{\FF_q}{Z}$-vector spaces. We say that $\ker \eta$ and $\ker \eta^*$ have full dimension if they have full dimension in the sense of Definition~\ref{D:full} while taking $H = \FZ{\oFF_f}{Z}$. Furthermore, if $\eta = \cE_a$ for $a \in \cA$, we write $\cE[a] \assign (\ker \cE_a)(\FZ{\oFF_f}{Z})$ and $\cE^*[a] \assign (\ker \cE_a^*)(\FZ{\oFF_f}{Z})$.

We now follow the exposition in \citelist{\cite{Goss}*{\S 4.14} \cite{Poonen96}*{\S 9}} to construct a pairing between $\ker \eta$ and $\ker \eta^*$. Letting $\bx \in \ker \eta$ and $\by \in \ker \eta^*$, we see from \eqref{E:gamma1diag} and \eqref{E:delta1diag} that
\[
\eta(\bx) = 0 \quad \Rightarrow \quad \delta_1 \bigl(\bx^{\tr} \eta^* \bigr) = 0, \quad
\eta^*(\by) = 0 \quad \Rightarrow \quad \gamma_1 \bigl(\by^{\tr} \eta \bigr) = 0.
\]
Therefore, \eqref{E:gamma1diag} and \eqref{E:delta1diag} also imply that there are unique $g_{\bx} \in \cN_{\cE}(\FZ{\oFF_f}{Z}) = \Mat_{1\times \ell}(\FZ{\oFF_f}{Z}[\sigma])$ and $h_{\by} \in \cM_{\cE}(\FZ{\oFF_f}{Z}) = \Mat_{1 \times \ell}(\FZ{\oFF_f}{Z}[\tau])$ so that
\begin{equation} \label{E:gxhy}
\bx^{\tr}\eta^* = (1 - \sigma)g_{\bx}, \quad \by^{\tr} \eta = (1 - \tau) h_{\by}.
\end{equation}
We then define
\begin{equation} \label{E:Poonendef}
\apairing{\bx}{\by}_{\eta} \assign g_{\bx}(\by) = (g_{\bx} \cdot \by)(1),
\end{equation}
where $g_{\bx}(\by)$ is the $\sigma$-analogue of the evaluation in \eqref{E:gentaueval} and $(g_{\bx} \cdot \by)(1)$ is the product of twisted polynomials in $\sigma$ evaluated at~$1$.
This defines a pairing on ${\ker \eta} \times {\ker \eta^*}$ with the following properties.

\begin{proposition}[cf.\ Poonen~\citelist{\cite{Goss}*{pp.~126--129} \cite{Poonen96}*{\S 8}}] \label{P:apairing}
The pairing defined above satisfies
\[
\apairing{\cdot}{\cdot}_{\eta} : {\ker \eta} \times {\ker \eta^*} \to \FZ{\FF_q}{Z},
\]
and it is $\FZ{\FF_q}{Z}$-bilinear. Furthermore, the following hold.
\begin{alphenumerate}
\item For $\bx \in \ker \eta$ and $\by \in \ker \eta^*$, we also have $\apairing{\bx}{\by}_{\eta} = h_{\by}(\bx)$.
\item $\apairing{\cdot}{\cdot}_{\eta}$ is $\Gal(\oFF_f/\FF_f)$-equivariant.
\item If $\ker \eta$ and $\ker \eta^*$ have full dimension, then $\apairing{\cdot}{\cdot}_{\eta}$ is non-degenerate.
\end{alphenumerate}
\end{proposition}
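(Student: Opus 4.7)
The plan is to first verify that $g_{\bx}(\by) \in \FZ{\FF_q}{Z}$ (so the pairing is well defined and $\FZ{\FF_q}{Z}$-valued) and that it is $\FZ{\FF_q}{Z}$-bilinear, and then to handle (a), (b), and (c) in turn. To see the value lies in $\FZ{\FF_q}{Z}$, I would compute the product $\bx^{\tr}\eta^*\by \in \FZ{\oFF_f}{Z}[\sigma]$, evaluated at~$1$, in two ways. Substituting $\bx^{\tr}\eta^* = (1-\sigma)g_{\bx}$ and using the elementary identity $((1-\sigma)\alpha)(1) = (1-\sigma)(\alpha(1))$ valid for $\alpha \in \FZ{\oFF_f}{Z}[\sigma]$, the value equals $(1-\sigma)(g_{\bx}(\by))$. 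On the other hand, by the associativity in~\eqref{E:tauevalassoc}, the same quantity equals $\bx^{\tr}(\eta^*(\by)) = 0$ since $\by \in \ker\eta^*$. Hence $(1-\sigma)(g_{\bx}(\by)) = 0$, so $g_{\bx}(\by) \in \FZ{\oFF_f}{Z}^{\sigma} = \FZ{\FF_q}{Z}$. Bilinearity is then routine: the assignment $\bx \mapsto g_{\bx}$ is $\FZ{\FF_q}{Z}$-linear because a scalar commutes with $1-\sigma$ precisely when it lies in $\FZ{\FF_q}{Z}$, and the evaluation $(g,\by) \mapsto g(\by)$ is $\FZ{\FF_q}{Z}$-bilinear by~\eqref{E:gensigmaeval}.

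For part~(a), write $\eta = \sum_{i=0}^{m} N_i\tau^i$ with $N_i \in \Mat_{\ell}(\FZ{\FF_f}{Z})$. Solving the recursion arising from $(1-\sigma)g_{\bx} = \bx^{\tr}\eta^*$ in closed form yields the $j$-th coefficient $G_j = \tau^{-j}(P_j^{\tr})$, where $P_j \assign \sum_{i=0}^{j} N_i\bx^{(i)}$, and an analogous expression holds for the coefficients of $h_{\by}$. Since $\bx \in \ker\eta$, we have $P_j = -\sum_{i>j}N_i\bx^{(i)}$, and after substitution $g_{\bx}(\by)$ collapses to the double sum $-\sum_{0 \leqslant j<i\leqslant m}\tau^{-j}(c_i)$, where $c_i \assign \by^{\tr}N_i\bx^{(i)}$. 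The parallel computation for $h_{\by}$, invoking $\eta^*(\by) = 0$, gives $h_{\by}(\bx) = -\sum_{0 \leqslant k<l\leqslant m}\tau^{k-l}(c_l)$. Subtracting, the overlapping terms cancel, leaving $g_{\bx}(\by) - h_{\by}(\bx) = -\sum_i c_i + \sum_i c_i^{(-i)}$; the first sum equals $\by^{\tr}\eta(\bx) = 0$, and the second equals $\bx^{\tr}\eta^*(\by) = 0$ after transposing.

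For part~(b), applying any $\sigma_0 \in \Gal(\oFF_f/\FF_f)$ to $\bx^{\tr}\eta^* = (1-\sigma)g_{\bx}$ and using that $\eta^*$ has coefficients in $\FZ{\FF_f}{Z}$ (hence is fixed by $\sigma_0$) yields $(\sigma_0\bx)^{\tr}\eta^* = (1-\sigma)(\sigma_0 g_{\bx})$; uniqueness forces $g_{\sigma_0\bx} = \sigma_0(g_{\bx})$, and evaluating at $\sigma_0\by$ gives $\apairing{\sigma_0\bx}{\sigma_0\by}_{\eta} = \sigma_0(\apairing{\bx}{\by}_{\eta})$. For part~(c), assume both kernels have full dimension, so they share a common $\FZ{\FF_q}{Z}$-dimension~$N$; by a dimension count it suffices to prove non-degeneracy on one side. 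Suppose $\bx \in \ker\eta$ kills the pairing, so by~(a) one has $h_{\by}(\bx) = 0$ for every $\by \in \ker\eta^*$. The exact sequence~\eqref{E:snake1} identifies $\ker\eta^*$ with the kernel of $(\tau-1)$ on $\cM_{\cE}(\FZ{\oFF_f}{Z})/\cM_{\cE}(\FZ{\oFF_f}{Z})\eta$ via $\by \mapsto [h_{\by}]$, and, exactly as in the proof of Corollary~\ref{C:Atorsion}, an $\FZ{\FF_q}{Z}$-basis of $\ker\eta^*$ becomes an $\FZ{\oFF_f}{Z}$-basis of this quotient. The $\FZ{\oFF_f}{Z}$-linear functional $[\alpha] \mapsto \alpha(\bx)$ on the quotient is well defined since $\eta(\bx) = 0$; its vanishing on the $\FZ{\oFF_f}{Z}$-spanning set $\{[h_{\by}]\}$ forces it to vanish identically, and evaluating on the classes of the standard rows $\be_i^{\tr}$ yields $\bx_i = 0$ for each $i$, hence $\bx = 0$. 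The main technical obstacle will be the telescoping bookkeeping in part~(a); once the closed forms for $G_j$ and $H_k$ are in hand, identifying the difference as a combination of $\by^{\tr}\eta(\bx)$ and $\bx^{\tr}\eta^*(\by)$ is essentially routine.
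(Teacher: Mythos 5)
Your proof is correct, and the verification that the pairing is $\FZ{\FF_q}{Z}$-valued, the bilinearity, and part (b) all follow essentially the paper's own route. Parts (a) and (c), however, take genuinely different approaches.

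For part (a), the paper's argument is a power-series trick: from $\by^{\tr}g_{\bx}^*(1-\tau) = (1-\tau)h_{\by}\bx$ it multiplies on both sides by $(1-\tau)^{-1} = 1 + \tau + \tau^2 + \cdots$ in $\power{\FZ{\oFF_f}{Z}}{\tau}$ and compares coefficients of $\tau^n$. Your argument instead solves the first-order recursion $G_j = G_{j-1}^{(-1)} + \bx^{\tr}(N_j^{(-j)})^{\tr}$ in closed form, rewrites $G_j$ and $H_k$ using the torsion conditions $\eta(\bx)=0$ and $\eta^*(\by)=0$, and reduces the difference $g_{\bx}(\by)-h_{\by}(\bx)$ to the two vanishing scalars $\by^{\tr}\eta(\bx)$ and $\bx^{\tr}\eta^*(\by)$. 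Your route is more computational but yields explicit coefficient formulas; the paper's is more economical. Both are correct.

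For part (c), the paper first reduces to diagonal $\eta$ via Proposition~\ref{P:AndUBV} and argues with the scalar pairings $\apairing{\cdot}{\cdot}_{\eta_i}$ coordinatewise, invoking Lemma~\ref{L:kerneldim} to derive a contradiction from a nontrivial kernel element; it treats both sides symmetrically. Your argument avoids the diagonalization: you use the snake-lemma identification of $\ker\eta^*$ with the $(\tau-1)$-invariants of $\cM_{\cE}(\FZ{\oFF_f}{Z})/\cM_{\cE}(\FZ{\oFF_f}{Z})\eta$ via $\by\mapsto[h_{\by}]$, the full-dimension hypothesis to promote an $\FZ{\FF_q}{Z}$-basis of $\ker\eta^*$ to an $\FZ{\oFF_f}{Z}$-basis of the quotient (as in the proofs of Proposition~\ref{P:kerdim} and Corollary~\ref{C:Atorsion}), and then test the $\FZ{\oFF_f}{Z}$-linear functional $[\alpha]\mapsto\alpha(\bx)$ on the standard basis rows; you then dispatch the other kernel by a dimension count. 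This is arguably cleaner than the paper's argument, since the paper's "WLOG $\eta$ diagonal" step tacitly requires transporting non-degeneracy of the pairing across the substitution $\eta\mapsto U\eta V$, which the paper does not spell out, whereas your argument bypasses that step entirely.
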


\begin{proof}
Taking $\bx^{\tr} \eta^* = (1-\sigma)g_{\bx}$ from~\eqref{E:gxhy} and evaluating at $\by$, we have
\[
\bigl( \bx^{\tr} \eta^* \bigr)(\by) = \bx^{\tr} (\eta^*(\by)) = 0 = \big( (1-\sigma) g_{\bx} \bigr)(\by) = (1-\sigma)(g_{\bx}(\by)),
\]
where we have used the associativity of~\eqref{E:gensigmaeval}. But $(1-\sigma)(g_{\bx}(\by)) = g_{\bx}(\by) - \sigma(g_{\bx}(\by))$, and so $g_{\bx}(\by) \in \FZ{\FF_q}{Z}$. It is straightforward to check that $\apairing{\cdot}{\cdot}_{\eta}$ is $\FZ{\FF_q}{Z}$-bilinear.

We next verify~(a) (cf.\ \citelist{\cite{Goss}*{Prop.~4.14.10} \cite{Poonen96}*{Prop.~14}}). By \eqref{E:Poonendef}, we need to show
\[
(g_{\bx}\cdot \by)(1) = (h_{\by}\cdot \bx)(1).
\]
From \eqref{E:gxhy} we have the equality of polynomials in $\FZ{\oFF_f}{Z}[\tau]$,
\begin{equation} \label{E:oneminustau}
\by^{\tr} \cdot g_{\bx}^* (1-\tau) = \by^{\tr} \cdot \eta \cdot \bx = (1-\tau) h_{\by} \cdot \bx.
\end{equation}
First, since $(\by^{\tr} g_{\bx}^* (1-\tau))(1) = 0$, it follows that $(1-\tau)(h_{\by}(\bx)(1)) = 0$ and so $(h_{\by}\cdot \bx)(1) \in \FZ{\FF_q}{Z}$. Second, this shows that $\by^{\tr}\cdot g_{\bx}^*$ and $h_{\by} \cdot \bx$ have the same degree in~$\tau$. If
\[
\by^{\tr} \cdot g_{\bx}^* = \sum_{i=0}^n b_i \tau^i, \quad
h_{\by}\cdot \bx = \sum_{i=0}^n c_i \tau^i, \quad b_i,\ c_i \in \FZ{\oFF_f}{Z},
\]
then we need to show
\[
(g_{\bx}\cdot \by)(1) = b_0 + b_1^{(-1)} + \dots + b_n^{(-n)} \mayeq c_0 + c_1 + \dots + c_n = (h_{\by}\cdot \bx)(1).
\]
Multiplying \eqref{E:oneminustau} on both sides by $(1-\tau)^{-1}$ in $\power{\FZ{\oFF_f}{Z}}{\tau}$, we have
\[
(1 + \tau + \tau^2 + \cdots)\cdot \by^{\tr} \cdot g_{\bx}^* = h_{\by}\cdot \bx \cdot (1 + \tau + \tau^2 + \cdots) \quad \in \power{\FZ{\oFF_f}{Z}}{\tau}.
\]
Comparing coefficients of $\tau^n$ on both sides, we obtain
\[
b_0^{(n)} + b_1^{(n-1)} + \cdots + b_n = c_0 + c_1 + \cdots + c_n = h_{\by}(\bx).
\]
Since $h_{\by}(\bx) \in \FZ{\FF_q}{Z}$, it follows that $h_{\by}(\bx) = \sigma^n(b_0^{(n)} + b_1^{(n-1)} + \cdots + b_n) =b_0 + b_1^{(-1)} + \cdots + b_n^{(-n)}$, and the desired result follows.

To verify Galois equivariance, since $\eta$ is defined over $\FZ{\FF_f}{Z}$, we find from \eqref{E:gxhy} that
\[
\tau^d \by^{\tr} \eta = \bigl( \by^{(d)} \bigr)^{\tr} \eta \tau^d = (1-\tau) \tau^d h_{\by} = (1-\tau)h_{\by}^{(d)} \tau^d \in \Mat_{1 \times \ell}(\FZ{\oFF_f}{Z}[\tau]).
\]
Thus if $\alpha \in \Gal(\oFF_f/\FF_f)$ is the $q^d$-th power Frobenius automorphism, $h_{\alpha(\by)} = h_{\by}^{(d)}$. Therefore, from (a) we have
\[
\apairing{\alpha(\bx)}{\alpha(\by)}_{\eta} = h_{\alpha(\by)}(\alpha(\bx)) = h_{\by}^{(d)}\bigl(\bx^{(d)} \bigr) = \alpha(h_{\by}(\bx)) =\alpha(\apairing{\bx}{\by}_{\eta}) = \apairing{\bx}{\by}_{\eta},
\]
which proves~(b).

For (c), we assume $(\ker \eta)(\FZ{\oFF_f}{Z})$ and $(\ker \eta^*)(\FZ{\oFF_f}{Z})$ have full dimension as in Definition~\ref{D:full}. After multiplying $\eta$ on the right and left by invertible matrices from $\GL_{\ell}(\FZ{\FF_f}{Z}[\tau])$ as in Proposition~\ref{P:AndUBV}, we can assume that $\eta= \diag(\eta_1, \dots, \eta_{\ell}) \in \Mat_{\ell}(\FZ{\FF_f}{Z}[\tau])$. By the definition of full dimension, Proposition~\ref{P:kerdim} implies that
\[
\dim_{\FZ{\FF_q}{Z}} \ker\eta = \dim_{\FZ{\FF_q}{Z}} \ker \eta^* = \deg_{\tau} \eta_1 + \cdots + \deg_{\tau} \eta_{\ell}.
\]
Furthermore, we have the canonical isomorphisms of $\FZ{\FF_q}{Z}$-vector spaces,
\[
\ker \eta \cong \ker \eta_1 \oplus \cdots \oplus \ker \eta_\ell, \quad
\ker \eta^* \cong \ker \eta_1^* \oplus \cdots \oplus \ker \eta_{\ell}^*,
\]
with $\dim_{\FZ{\FF_q}{Z}} \ker \eta_i = \dim_{\FZ{\FF_q}{Z}} \ker \eta_i^* = \deg_\tau \eta_i$ for each~$i$.

Now suppose that $\bx \in \ker \eta$ satisfies that $\apairing{\bx}{\by}_{\eta} = 0$ for all $\by \in \ker \eta^*$. Write $\bx = (x_1, \dots, x_\ell)^{\tr}$ for $x_1, \dots, x_\ell \in \FZ{\oFF_f}{Z}$. If we write $g_{\bx} = (g_1, \dots, g_\ell) \in \Mat_{1\times \ell}(\FZ{\oFF_f}{Z}[\sigma])$, then \eqref{E:gxhy} implies
\[
x_i \eta_i^* = (1-\sigma) g_i, \quad \forall i,\ 1 \leqslant i \leqslant \ell.
\]
Suppose we have $j$ with $x_j \neq 0$. Then $\deg_{\sigma} g_j = \deg_{\sigma} \eta_j^* - 1$. But for all $y_j \in \ker \eta_j^*$, if we let $\by = (0, \dots, 0, y_j, 0, \dots 0)^{\tr} \in \ker \eta^*$, where $y_j$ is in the $j$-th entry, then
\[
\apairing{\bx}{\by}_{\eta} = g_j(y_j) = 0.
\]
Since $\ker \eta_j^*$ has dimension $\deg_{\sigma} \eta_j^*$ over $\FZ{\FF_q}{Z}$, we see that $\ker g_j$ has larger $\FZ{\FF_q}{Z}$-dimension than its degree in $\sigma$ would allow by Lemma~\ref{L:kerneldim}. Thus it must be that $\bx = 0$. Similarly the kernel on the right of $\apairing{\cdot}{\cdot}_{\eta}$ is trivial.
\end{proof}

\subsubsection{The case $\eta = (1-\tau^d)\rI_{\ell}$} \label{SSS:1-Frob}
Let $\cE : \cA \to \Mat_{\ell}(\FZ{\FF_f}{Z}[\tau])$ be a $t$-module over $\FZ{\FF_f}{Z}$. Then $\eta = (1-\tau^d)\rI_{\ell}$ is an endomorphism of $\cE$, and we have $\ker \eta = \cE(\FZ{\FF_f}{Z}) = \FZ{\FF_f}{Z}^{\ell}$ and $\ker \eta^* = \cE^*(\FZ{\FF_f}{Z}) = \FZ{\FF_f}{Z}^{\ell}$. As $[\FF_f:\FF_q]=d$, we see that $\ker \eta$ and $\ker \eta^*$ have full dimension. For $\bx \in \FZ{\FF_f}{Z}^{\ell}$,
\[
\bx^{\tr} \eta^* = (1-\sigma^d)\bx^{\tr}
=(1-\sigma) \Bigl( \bx^{\tr} + \bigl( \bx^{(-1)} \bigr)^{\tr} \sigma + \cdots +
\bigl( \bx^{(-d+1)} \bigr)^{\tr} \sigma^{d-1} \Bigr).
\]
Thus by \eqref{E:gxhy}, we have $g_{\bx}(\by) = \bx^{\tr} \by + ( \bx^{(-1)} )^{\tr} \by^{(-1)} + \cdots + ( \bx^{(-d+1)} )^{\tr} \by^{(-d+1)}$ for $\by \in \FZ{\FF_f}{Z}^{\ell}$. From this we see that
\begin{equation} \label{E:tracepair}
\apairing{\bx}{\by}_{\eta} = \Tr^{\FF_f}_{\FF_q} \bigl( \bx^{\tr}  \by \bigr),
\end{equation}
and so in this case
\[
\apairing{\cdot}{\cdot}_{\eta} : \cE(\FZ{\FF_f}{Z}) \times \cE^*(\FZ{\FF_f}{Z}) \to \FZ{\FF_q}{Z}
\]
coincides with the trace pairing from $\FZ{\FF_f}{Z}$ to $\FZ{\FF_q}{Z}$. This could be predicted by \citelist{\cite{Goss}*{Prop.~4.14.11} \cite{Poonen96}*{Prop.~18}}, which was inspired by work of Elkies.

Returning to the general case of an endomorphism $\eta : \cE \to \cE$ with $\det(\pd\eta) \neq 0$, we establish an adjoint relationship between $\cE_a$ and $\apairing{\cdot}{\cdot}_{\eta}$.

\begin{proposition}[cf.\ \citelist{\cite{Goss}*{Prop.~4.14.13} \cite{Poonen96}*{Prop.~19}}] \label{P:adjointop}
Let $\cE : \cA \to \Mat_{\ell}(\FZ{\FF_f}{Z}[\tau])$ be a $t$-module over $\FZ{\FF_f}{Z}$, and let $\eta : \cE \to \cE$ be an endomorphism. Then for any $a \in \cA$,
\[
\apairing{\cE_a(\bx)}{\by}_{\eta} = \apairing{\bx}{\cE_a^*(\by)}_{\eta}, \quad \forall\, \bx \in \ker\eta,\ \by \in \ker \eta^*.
\]
\end{proposition}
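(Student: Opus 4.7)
The plan is to exploit the alternative formula $\apairing{\bx}{\by}_{\eta} = g_{\bx}(\by)$ from~\eqref{E:Poonendef}, together with the commutation $\eta \cE_a = \cE_a \eta$ that holds because $\eta$ is an endomorphism of $\cE$. Applying the $*$-anti-involution from~\S\ref{SSS:star} yields $\cE_a^* \eta^* = \eta^* \cE_a^*$, and from this one also sees that $\cE_a$ preserves $\ker \eta$ while $\cE_a^*$ preserves $\ker \eta^*$, so that both sides of the claimed identity are well-defined.

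The main computational step will be to multiply the defining relation $\bx^{\tr} \eta^* = (1-\sigma) g_{\bx}$ from~\eqref{E:gxhy} on the right by $\cE_a^*$ and commute to obtain
\[
\bx^{\tr} \cE_a^* \eta^* = (1-\sigma)\, g_{\bx} \cE_a^*.
\]
The delicate point, and the step I expect to be the main obstacle, is that $\bx^{\tr} \cE_a^*$ is \emph{not} equal to $(\cE_a(\bx))^{\tr}$ as elements of $\cN_{\cE}(\FZ{\oFF_f}{Z})$; they agree only after passing to the quotient $\cN_{\cE}/(\sigma-1)\cN_{\cE} \cong \cE(\FZ{\oFF_f}{Z})$. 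I would verify this by computing coefficients: writing $\cE_a = \sum \psi_i \tau^i$ and using the formula for $\delta_1$, a short calculation shows that $\delta_1(\bx^{\tr} \cE_a^*) = \sum \psi_i \bx^{(i)} = \cE_a(\bx) = \delta_1((\cE_a(\bx))^{\tr})$. Hence by exactness of the bottom row of~\eqref{E:delta1diag} we may write $\bx^{\tr} \cE_a^* = (\cE_a(\bx))^{\tr} + (\sigma-1)\xi$ for some $\xi \in \cN_{\cE}(\FZ{\oFF_f}{Z})$.

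Substituting this decomposition and using $(\cE_a(\bx))^{\tr} \eta^* = (1-\sigma)\, g_{\cE_a(\bx)}$, then cancelling left-multiplication by $(1-\sigma)$ (injective on $H[\sigma]$ because $\sigma$ is an automorphism of $H = \FZ{\oFF_f}{Z}$), one arrives at the identity
\[
g_{\cE_a(\bx)} = g_{\bx}\, \cE_a^* + \xi\, \eta^*
\]
in $\cN_{\cE}(\FZ{\oFF_f}{Z})$. Finally, evaluating both sides at $\by \in \ker \eta^*$ and applying the associativity of~\eqref{E:gensigmaeval} yields
\[
g_{\cE_a(\bx)}(\by) = g_{\bx}\bigl(\cE_a^*(\by)\bigr) + \xi\bigl(\eta^*(\by)\bigr) = g_{\bx}\bigl(\cE_a^*(\by)\bigr),
\]
since $\eta^*(\by)=0$. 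This is exactly $\apairing{\cE_a(\bx)}{\by}_{\eta} = \apairing{\bx}{\cE_a^*(\by)}_{\eta}$. The whole point of the argument is that the correction term $\xi \eta^*$ — which cannot be removed at the level of $\cN_{\cE}(\FZ{\oFF_f}{Z})$ itself — disappears precisely because $\by$ is annihilated by $\eta^*$, which is the reason the adjoint identity holds only on the kernels rather than globally.
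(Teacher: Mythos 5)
Your proof is correct and follows essentially the same route as the paper's: both start from $\bx^{\tr}\eta^* = (1-\sigma)g_{\bx}$, commute $\cE_a^*$ past $\eta^*$, use exactness of the bottom row of~\eqref{E:delta1diag} to write $\bx^{\tr}\cE_a^* - \cE_a(\bx)^{\tr}$ as an element of $(1-\sigma)\cN_{\cE}$, and then evaluate at $\by \in \ker\eta^*$ so the correction term dies. The only cosmetic difference is that the paper organizes the bookkeeping by subtracting the two defining relations before applying exactness, whereas you substitute the decomposition $\bx^{\tr}\cE_a^* = \cE_a(\bx)^{\tr} + (\sigma-1)\xi$ first and then cancel $(1-\sigma)$; these are logically equivalent.
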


\begin{proof}
We pick $g_{\bx}$ and $g_{\cE_a(\bx)}$ as in \eqref{E:gxhy}. Then
\[
\bx^{\tr} \eta^* \cE_a^* = (1-\sigma)g_{\bx} \cE_a^*, \quad
\cE_a(\bx)^{\tr} \eta^* = (1-\sigma) g_{\cE_a(\bx)},
\]
and after subtracting these equations and noting that $\eta^* \cE_a^* = \cE_a^* \eta^*$,  we obtain
\[
\bigl( \bx^{\tr} \cE_a^* - \cE_a(\bx)^{\tr} \bigr) \eta^* = (1-\sigma) \bigl( g_{\bx} \cE_a^* - g_{\cE_a(\bx)} \bigr).
\]
Since $\delta_1 (\bx^{\tr} \cE_a^* - \cE_a(\bx)^{\tr}) = 0$, \eqref{E:delta1diag} implies that $\bx^{\tr} \cE_a^* - \cE_a(\bx)^{\tr} = (1-\sigma)\bh$ for some $\bh \in \Mat_{1 \times \ell}(\FZ{\oFF_f}{Z}[\sigma])$. Therefore, $\bh \cdot \eta^* = g_{\bx} \cE_a^* - g_{\cE_a(\bx)}$.
Evaluating at $\by \in \ker \eta^*$,
\[
0 = \bh \bigl(\eta^*(\by)\bigr) = g_{\bx} \bigl(\cE_a^*(\by) \bigr) - g_{\cE_a(\bx)}(\by)
= \apairing{\bx}{\cE_a^*(\by)}_{\eta} - \apairing{\cE_a(\bx)}{\by}_{\eta}.
\qedhere
\]
\end{proof}

As in Poonen~\cite{Poonen96}*{\S 9}, we can use Proposition~\ref{P:adjointop} to define an $\cA$-bilinear pairing as follows. Let $\cA^{\wedge} \assign \Hom_{\FZ{\FF_q}{Z}}(\cA,\FZ{\FF_q}{Z})$, which carries an $\cA$-module structure in the usual fashion. Then for an endomorphism $\eta : \cE \to \cE$, we define
\begin{equation} \label{E:bpairing}
\bpairing{\cdot}{\cdot}_{\eta} : \ker \eta \times \ker \eta^* \to \cA^{\wedge}
\end{equation}
by
\[
\bpairing{\bx}{\by}_{\eta} \assign \bigl( a \mapsto \apairing{\cE_a(\bx)}{\by}_{\eta} \bigr).
\]
As shown earlier in the section, since $\det(\pd\eta) \neq 0$, the $\FZ{\FF_q}{Z}$-vector spaces $\ker \eta$ and $\ker \eta^*$ are finite dimensional, and thus they are finitely generated and torsion $\cA$-modules. Letting $h = \Aord{\ker \eta}{\cA}$, it follows that $\bpairing{\cdot}{\cdot}_{\eta}$ takes values in $(\cA/h\cA)^{\wedge} \assign \Hom_{\FZ{\FF_q}{Z}}(\cA/h\cA,\FZ{\FF_q}{Z})$ $\subseteq \cA^{\wedge}$. The former is a finitely generated torsion $\cA$-module isomorphic to $\cA/h\cA$.

\begin{proposition}[{cf.\ \citelist{\cite{Goss}*{Prop.~4.14.14} \cite{Poonen96}*{Prop.~19}}}] \label{P:bpairing}
Let $\eta : \cE \to \cE$ be an endomorphism over $\FZ{\FF_f}{Z}$ with $\det(\pd \eta) \neq 0$, and let $h = \Aord{\ker \eta}{\cA}$. Define the pairing
\[
\bpairing{\cdot}{\cdot}_{\eta} : \ker \eta \times \ker \eta^* \to (\cA/h\cA)^{\wedge}
\]
as in \eqref{E:bpairing}.
\begin{alphenumerate}
\item $\bpairing{\cdot}{\cdot}_{\eta}$ is $\cA$-bilinear and $\Gal(\oFF_f/\FF_f)$-equivariant.
\item If $\ker \eta$ and $\ker \eta^*$ have full dimension, then $\bpairing{\cdot}{\cdot}_{\eta}$ is non-degenerate.
\end{alphenumerate}
\end{proposition}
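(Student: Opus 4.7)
The plan is to derive all three assertions as essentially formal consequences of Propositions~\ref{P:apairing} and~\ref{P:adjointop}, so the argument should be quite short once those are in hand. First I would check that the pairing actually takes values in $(\cA/h\cA)^{\wedge}$. Since $\cA = \FZ{\FF_q}{Z}[t]$ is a principal ideal domain and $\ker\eta$ is a finitely generated torsion $\cA$-module, its $\cA$-order $h$ annihilates $\ker\eta$ (the product of invariant factors is divisible by each of them). Consequently $\cE_h(\bx) = 0$ for every $\bx \in \ker\eta$, and so for any $a \in \cA$ we get
\[
\bpairing{\bx}{\by}_{\eta}(ha) = \apairing{\cE_a\bigl(\cE_h(\bx)\bigr)}{\by}_{\eta} = 0,
\]
which shows $\bpairing{\bx}{\by}_{\eta}$ vanishes on $h\cA$, hence defines an element of $(\cA/h\cA)^{\wedge}$.

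Next I would establish $\cA$-bilinearity. Additivity in each argument is inherited from the $\FZ{\FF_q}{Z}$-bilinearity of $\apairing{\cdot}{\cdot}_{\eta}$ (Proposition~\ref{P:apairing}) together with the $\FZ{\FF_q}{Z}$-linearity of $\cE_a$. The real content is $\cA$-linearity: for $b \in \cA$, Proposition~\ref{P:adjointop} combined with $\cE_{ab} = \cE_a\cE_b$ yields
\[
\bpairing{\cE_b(\bx)}{\by}_{\eta}(a) = \apairing{\cE_{ab}(\bx)}{\by}_{\eta} = \apairing{\cE_a(\bx)}{\cE_b^*(\by)}_{\eta} = \bpairing{\bx}{\cE_b^*(\by)}_{\eta}(a),
\]
which agrees with $\bpairing{\bx}{\by}_{\eta}(ab) = (b\cdot \bpairing{\bx}{\by}_{\eta})(a)$ under the standard $\cA$-action $(b\cdot f)(a) = f(ab)$ on $\cA^{\wedge}$. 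For Galois equivariance, since $\cE_a$ is defined over $\FZ{\FF_f}{Z}$ it commutes with the action of $\alpha \in \Gal(\oFF_f/\FF_f)$ on $\FZ{\oFF_f}{Z}^{\ell}$, so Proposition~\ref{P:apairing}(b) gives
\[
\bpairing{\alpha(\bx)}{\alpha(\by)}_{\eta}(a) = \apairing{\alpha(\cE_a(\bx))}{\alpha(\by)}_{\eta} = \apairing{\cE_a(\bx)}{\by}_{\eta} = \bpairing{\bx}{\by}_{\eta}(a).
\]

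Finally, for non-degeneracy under the full-dimension hypothesis, the key observation is that specialization at $a = 1$ recovers $\apairing{\cdot}{\cdot}_{\eta}$. If $\bx \in \ker\eta$ satisfies $\bpairing{\bx}{\by}_{\eta} = 0$ for every $\by \in \ker\eta^*$, then evaluating at $a=1$ gives $\apairing{\bx}{\by}_{\eta} = 0$ for every $\by$, so Proposition~\ref{P:apairing}(c) forces $\bx = 0$. The argument on the right is identical. I do not expect a serious obstacle; the only piece requiring any care is bookkeeping the codomain of $\bpairing{\cdot}{\cdot}_{\eta}$, which was handled above via the $\cA$-order annihilation property.
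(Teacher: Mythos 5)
Your proof is correct and follows essentially the same route as the paper (deduce everything from Propositions~\ref{P:apairing} and~\ref{P:adjointop}), with two small improvements: you explicitly verify that the pairing lands in $(\cA/h\cA)^{\wedge}$ via the annihilation of $\ker\eta$ by its $\cA$-order, a point the paper only asserts in the preceding discussion, and your non-degeneracy argument specializes at $a=1$ immediately rather than first deducing $\cE_a(\bx)=0$ for all $a$, which is slightly cleaner. The only notational caveat is to confirm you intend $(b\cdot f)(a) = f(ab)$ for the $\cA$-action on $\cA^{\wedge}$, which is indeed the ``usual fashion'' the paper refers to.
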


\begin{proof}
We will prove $\cA$-bilinearity momentarily, but given that, Galois equivariance (with trivial action on $(\cA/h\cA)^{\wedge}$) is straightforward from Proposition~\ref{P:apairing}(b). Also, in the situation of (b), if $\bx \in \ker \eta$ and $[\bx,\by]_{\eta} = 0$ for all $\by \in \ker \eta^*$, then for all $a \in \cA/h\cA$,
\[
0 = \bpairing{\bx}{\by}_{\eta}(a) = \apairing{\cE_a(\bx)}{\by}_{\eta}.
\]
The non-degeneracy of $\apairing{\cdot}{\cdot}_{\eta}$ from Proposition~\ref{P:apairing}(c) implies that $\cE_a(\bx) = 0$ for all $a \in \cA/h\cA$. In particular for $a=1$, which implies $\bx=0$. Likewise, for fixed $\by \in \ker \eta^*$, if $\bpairing{\bx}{\by}_{\eta} = 0$ for all $\bx \in \ker \eta$, then using $\apairing{\cE_a(\bx)}{\by}_{\eta} = \apairing{\bx}{\cE_a^*(\by)}_{\eta}$ from Proposition~\ref{P:adjointop}, a similar argument implies that $\by = 0$.

To prove the $\cA$-bilinearity of $\bpairing{\cdot}{\cdot}_{\eta}$, we note that it is $\FZ{\FF_q}{Z}$-bilinear, so we need only check that it respects multiplication by~$\cA$. For $\bx \in \ker \eta$, $\by \in \ker \eta^*$, and $b \in \cA$,
\[
b \cdot \bpairing{\bx}{\by}_{\eta} = \bigl( a \mapsto \apairing{\cE_{ba}(\bx)}{\by}_{\eta} \bigr)
\]
and
\[
\bpairing{\cE_b(\bx)}{\by}_{\eta} = \bigl( a \mapsto \apairing{\cE_{a}(\cE_{b}(\bx))}{\by}_{\eta} \bigr).
\]
Since $\cE_a\cE_b = \cE_{ba}$, we have $b \cdot \bpairing{\bx}{\by}_{\eta} = \bpairing{\cE_b(\bx)}{\by}_{\eta}$. Similarly, since Proposition~\ref{P:adjointop} implies $\apairing{\cE_a(\bx)}{\cE_b^*(\by)}_{\eta} = \apairing{\cE_b(\cE_a(\bx))}{\by}$, we have
\[
\bpairing{\bx}{\cE_b^*(\by)}_{\eta} = \bigl( a \mapsto \apairing{\cE_{b}(\cE_{a}(\bx))}{\by}_{\eta} \bigr),
\]
and so  $b \cdot \bpairing{\bx}{\by}_{\eta} = \bpairing{\bx}{\cE_b^*(\by)}_{\eta}$ as well.
\end{proof}

\begin{corollary} \label{C:kerandkerstar}
Let $\cE : \cA \to \Mat_{\ell}(\FZ{\FF_f}{Z}[\tau])$ be a $t$-module, and let $\eta : \cE \to \cE$ be an endomorphism over $\FZ{\FF_f}{Z}$ with $\det(\pd \eta) \neq 0$. If $\ker \eta$, $\ker \eta^*$ have full dimension, then
\[
\ker \eta \cong \ker \eta^*
\]
as $\cA$-modules and $\Gal(\oFF_f/\FF_f)$-modules. In particular as $\cA$-modules,
\[
\cE(\FZ{\FF_f}{Z}) \cong \cE^*(\FZ{\FF_f}{Z}).
\]
\end{corollary}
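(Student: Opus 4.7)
The plan is to deduce the corollary directly from the non-degenerate $\cA$-bilinear pairing $\bpairing{\cdot}{\cdot}_{\eta}$ of Proposition~\ref{P:bpairing}, combined with classical duality for finite torsion modules over the principal ideal domain $\cA$. Set $h \assign \Aord{\ker \eta}{\cA}$, so that the pairing takes values in $(\cA/h\cA)^{\wedge}$. The first observation I would record is that $h$ also annihilates $\ker \eta^*$: for any $\by \in \ker \eta^*$ and any $\bx \in \ker \eta$, $\cA$-bilinearity gives $\bpairing{\bx}{\cE_h^*(\by)}_{\eta} = h \cdot \bpairing{\bx}{\by}_{\eta} = 0$, and non-degeneracy in the second argument then forces $\cE_h^*(\by) = 0$. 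Consequently both kernels are finite modules over the quotient ring $\cA/h\cA$, and by the full-dimension hypothesis together with Proposition~\ref{P:kerdim} they share the same $\FZ{\FF_q}{Z}$-dimension.

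Next, the pairing induces an injective $\cA$-linear and $\Gal(\oFF_f/\FF_f)$-equivariant homomorphism
\[
\Phi : \ker \eta \longrightarrow \Hom_{\cA}\bigl( \ker \eta^*, (\cA/h\cA)^{\wedge} \bigr),
\]
where the Galois action on $(\cA/h\cA)^{\wedge}$ is trivial. For any finite $\cA$-module $N$ annihilated by $h$ one has $\dim_{\FZ{\FF_q}{Z}} \Hom_{\cA}(N, (\cA/h\cA)^{\wedge}) = \dim_{\FZ{\FF_q}{Z}} N$, so the $\FZ{\FF_q}{Z}$-dimension equality forces $\Phi$ to be an isomorphism. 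To conclude, I would identify the target with $\ker \eta^*$: since $\cA/h\cA$ is a Frobenius $\FZ{\FF_q}{Z}$-algebra, $(\cA/h\cA)^{\wedge} \cong \cA/h\cA$ as $\cA$-modules, and the structure theorem over the PID $\cA$ combined with the elementary computation $\Hom_{\cA}(\cA/f\cA, \cA/h\cA) \cong (h/f)(\cA/h\cA) \cong \cA/f\cA$ for $f \mid h$ yields $\Hom_{\cA}(N, (\cA/h\cA)^{\wedge}) \cong N$ for every $N$ annihilated by $h$. Composing with $\Phi$ delivers the $\cA$-module isomorphism $\ker \eta \cong \ker \eta^*$; the $\Gal(\oFF_f/\FF_f)$-equivariance piggybacks on the equivariance of $\Phi$ and the triviality of the Galois action on $(\cA/h\cA)^{\wedge}$.

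The second assertion of the corollary follows by specializing to the endomorphism $\eta = (1 - \tau^d) \cdot \rI_{\ell}$: its constant term is $\rI_{\ell}$, which is invertible, and as recorded in~\S\ref{SSS:1-Frob} its kernel and $*$-kernel are $\cE(\FZ{\FF_f}{Z})$ and $\cE^*(\FZ{\FF_f}{Z})$, both of full dimension. The main obstacle — and really the only nontrivial ingredient beyond Proposition~\ref{P:bpairing} — is the identification $\Hom_{\cA}(\ker \eta^*, (\cA/h\cA)^{\wedge}) \cong \ker \eta^*$ carried out respecting the $\cA$- and Galois-module structures simultaneously; everything else is either quoted from Proposition~\ref{P:bpairing} or routine duality over a PID.
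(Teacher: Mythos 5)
Your proposal is correct and follows essentially the same route as the paper: invoke the non-degenerate $\cA$-bilinear, Galois-equivariant pairing $\bpairing{\cdot}{\cdot}_{\eta}$ from Proposition~\ref{P:bpairing}, use it to identify one kernel with $\Hom_{\cA}$ of the other into $(\cA/h\cA)^{\wedge}$, and then pass through the Frobenius-algebra identification $(\cA/h\cA)^{\wedge}\cong\cA/h\cA$ and the structure theorem over the PID $\cA$ to land on the desired $\cA$-module isomorphism, concluding the second assertion by taking $\eta=(1-\tau^d)\rI_{\ell}$ as in~\S\ref{SSS:1-Frob}. The explicit dimension count over $\FZ{\FF_q}{Z}$ that you insert to force surjectivity of $\Phi$ is a safe but slightly redundant step, since non-degeneracy on both sides of a pairing between finite-dimensional spaces already yields the isomorphism; the paper simply appeals to this directly.
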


\begin{proof}
Let $h = \Aord{\ker \eta}{\cA}$. The non-degeneracy of $\bpairing{\cdot}{\cdot}_{\eta}$ implies that we have isomorphisms of $\cA$-modules, $\ker \eta^* \cong \Hom_{\cA}( \ker \eta, (\cA/h\cA)^{\wedge}) \cong \Hom_{\cA}(\ker \eta,\cA/h\cA)$.
This last module is non-canonically isomorphic to $\ker \eta$, and by Proposition~\ref{P:bpairing}(a), these maps respect the Galois action. The second part follows by taking $\eta = (1-\tau^d)\rI_{\ell}$ from \S\ref{SSS:1-Frob}.
\end{proof}

\subsubsection{Applications to Tate modules}
Let $\lambda \in \sA_+$ be irreducible, such that $\lambda(\theta) \neq f$. Then $\lambda$ is also irreducible in $\cA$, and $\det (\pd \cE_{\lambda}) \neq 0$. For $m \geqslant 1$, we write
\begin{align} \label{E:lambdapairings}
\apairing{\cdot}{\cdot}_m \assign \apairing{\cdot}{\cdot}_{\cE_{\lambda^m}} : \cE[\lambda^m] \times \cE^*[\lambda^m] &\to \FZ{\FF_q}{Z}, \\
\bpairing{\cdot}{\cdot}_m \assign \bpairing{\cdot}{\cdot}_{\cE_{\lambda^m}} : \cE[\lambda^m] \times \cE^*[\lambda^m] &\to (\cA/\lambda^m\cA)^{\wedge}. \notag
\end{align}
In a similar fashion to the proof of~Proposition~\ref{P:adjointop} (cf.\ \citelist{\cite{Goss}*{pp.~132--133} \cite{Poonen96}*{Prop.~21}}, we find for all $m \geqslant 1$, $\bx \in \cE[\lambda^{m+1}]$, and $\by \in \cE^*[\lambda^m]$,
\begin{equation} \label{E:compatibility}
\apairing{\cE_{\lambda}(\bx)}{\by}_{m} = \apairing{\bx}{\by}_{m+1}, \quad
\bpairing{\cE_{\lambda}(\bx)}{\by}_{m} = \bpairing{\bx}{\by}_{m+1}.
\end{equation}
These lead to the following result.

\begin{proposition}[{cf.\ \citelist{\cite{Goss}*{Cor.~4.14.17} \cite{Poonen96}*{Cor.~11}}}] \label{P:lambdacompatibility}
Let $\cE : \cA \to \Mat_{\ell}(\FZ{\FF_f}{Z}[\tau])$ be an Anderson $t$-module over $\FZ{\FF_f}{Z}$, and let $\lambda \in \sA_+$ be irreducible with $\lambda(\theta) \neq f$. For each $m \geqslant 1$, $\bx \in \cE[\lambda^{m+1}]$, and $\by \in \cE^*[\lambda^{m+1}]$, we have
\[
\bpairing{\cE_{\lambda}(\bx)}{\cE_{\lambda}^*(\by)}_{m} = \lambda \cdot \bpairing{\bx}{\by}_{m+1}.
\]
\end{proposition}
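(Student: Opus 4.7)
The plan is to combine the two compatibility relations already recorded in \eqref{E:compatibility} with the $\cA$-bilinearity of $\bpairing{\cdot}{\cdot}_{\eta}$ established during the proof of Proposition~\ref{P:bpairing}. In particular, the identity
\[
\bpairing{\bx}{\cE_b^*(\by)}_{\eta} = b \cdot \bpairing{\bx}{\by}_{\eta}
\]
was shown to hold for any endomorphism $\eta$ with $\det(\pd\eta)\neq 0$, and so it applies with $\eta = \cE_{\lambda^{m+1}}$.

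First, I would observe that if $\bx \in \cE[\lambda^{m+1}]$ and $\by \in \cE^*[\lambda^{m+1}]$, then $\cE_{\lambda}(\bx) \in \cE[\lambda^m]$ and $\cE_{\lambda}^*(\by) \in \cE^*[\lambda^m]$, so all pairings in the statement are well-defined. Next, applying the first half of~\eqref{E:compatibility} with $\by$ replaced by $\cE_{\lambda}^*(\by) \in \cE^*[\lambda^m]$ gives
\[
\bpairing{\cE_{\lambda}(\bx)}{\cE_{\lambda}^*(\by)}_m = \bpairing{\bx}{\cE_{\lambda}^*(\by)}_{m+1}.
\]
Then, invoking the $\cA$-bilinearity of $\bpairing{\cdot}{\cdot}_{m+1}$ in the second slot (with $b = \lambda$), we obtain
\[
\bpairing{\bx}{\cE_{\lambda}^*(\by)}_{m+1} = \lambda\cdot \bpairing{\bx}{\by}_{m+1},
\]
and chaining these two equalities yields the claim.

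There is essentially no obstacle here; the result is a direct corollary of the already-proved compatibility \eqref{E:compatibility} together with the bilinearity half of Proposition~\ref{P:bpairing}(a). The only point that merits a quick sanity check is the consistency of the target modules: $\bpairing{\cdot}{\cdot}_m$ lands in $(\cA/\lambda^m\cA)^{\wedge}$ while $\bpairing{\cdot}{\cdot}_{m+1}$ lands in $(\cA/\lambda^{m+1}\cA)^{\wedge}$, and one should verify that the identity is to be read inside $(\cA/\lambda^m\cA)^{\wedge}$, with the right-hand side interpreted via the natural map $(\cA/\lambda^{m+1}\cA)^{\wedge} \to (\cA/\lambda^m\cA)^{\wedge}$ induced by multiplication by~$\lambda$. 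This is exactly the functorial compatibility needed to assemble the $\bpairing{\cdot}{\cdot}_m$ into a pairing on Tate modules in the subsequent Theorem~\ref{T:Tatepairing}, which is the reason the proposition is being recorded at this point.
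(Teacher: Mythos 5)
Your argument is exactly the paper's: apply the first relation from~\eqref{E:compatibility} (with $\by$ replaced by $\cE_{\lambda}^*(\by)$) and then the $\cA$-bilinearity identity $\bpairing{\bx}{\cE_{\lambda}^*(\by)}_{m+1} = \lambda\cdot\bpairing{\bx}{\by}_{m+1}$ from the proof of Proposition~\ref{P:bpairing}. The added remark about interpreting the equality in $(\cA/\lambda^m\cA)^{\wedge}$ via the natural map from $(\cA/\lambda^{m+1}\cA)^{\wedge}$ is a correct and useful clarification, but the core reasoning is the same.
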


\begin{proof}
By \eqref{E:compatibility}, we have $\bpairing{\cE_{\lambda}(\bx)}{\cE_{\lambda}^*(\by)}_{m} = \bpairing{\bx}{\cE_{\lambda}^*(\by)}_{m+1}$, which then is the same as $\lambda \cdot \bpairing{\bx}{\by}_{m+1}$ by $\cA$-bilinearity.
\end{proof}

\begin{definition} \label{D:Tatemodules}
The $\lambda$-adic \emph{Tate modules},
\[
T_{\lambda}(\cE) \assign \varprojlim \cE[\lambda^m], \quad
T_{\lambda}(\cE^*) \assign \varprojlim \cE^*[\lambda^m],
\]
are modules over $\cA_{\lambda} \assign \varprojlim \cA/\lambda^m\cA$. When
\begin{alphenumerate}
\item $\cE$ is abelian with $\rank_{\FZ{\oFF_f}{Z}[t] }\cM_{\cE}(\FZ{\oFF_f}{Z}) = r$, 
\item $\cE$ is $\cA$-finite with $\rank_{\FZ{\oFF_f}{Z}[t]} \cN_{\cE}(\FZ{\oFF_f}{Z}) = r$,
\item $\cE[\lambda^m]$ and $\cE^*[\lambda^m]$ have full dimension for each $m \geqslant 1$,
\end{alphenumerate}
Corollary~\ref{C:Atorsion} implies that
\begin{equation}
T_{\lambda}(\cE) \cong \cA_{\lambda}^r, \quad T_{\lambda}(\cE^*) \cong \cA_{\lambda}^r.
\end{equation}
As usual $T_{\lambda}(\cE)$ and $T_{\lambda}(\cE^*)$ are $\Gal(\oFF_f/\FF_f)$-modules.
By Proposition~\ref{P:lambdacompatibility}, the pairings in~\eqref{E:lambdapairings} fit together to induce a pairing on Tate modules.
\end{definition}

\begin{theorem}[{cf.\ \citelist{\cite{Goss}*{Thm.~4.14.20} \cite{Poonen96}*{Prop.~21}}}] \label{T:Tatepairing}
Let $\cE : \cA \to \Mat_{\ell}(\FZ{\FF_f}{Z}[\tau])$ be an Anderson $t$-module over $\FZ{\FF_f}{Z}$, and let $\lambda \in \sA_+$ be irreducible with $\lambda(\theta) \neq f$.
We have a continuous $\cA_{\lambda}$-bilinear pairing,
\[
\bpairing{\cdot}{\cdot}_{\lambda} : T_{\lambda}(\cE) \times T_{\lambda}(\cE^*) \to \cA_{\lambda},
\]
with the following properties.
\begin{alphenumerate}
\item $\bpairing{\cdot}{\cdot}_{\lambda}$ is $\Gal(\oFF_f/\FF_f)$-equivariant, where the Galois action on $\cA_{\lambda}$ is trivial.
\item If conditions (a)--(c) from Definition~\ref{D:Tatemodules} hold, then $\bpairing{\cdot}{\cdot}_{\lambda}$ is non-degenerate.
\end{alphenumerate}
\end{theorem}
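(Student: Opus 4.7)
The strategy is to assemble the finite-level pairings $\bpairing{\cdot}{\cdot}_m$ from Proposition~\ref{P:bpairing} into a single pairing by taking an inverse limit, using the compatibility in Proposition~\ref{P:lambdacompatibility}. The main subtlety is that those pairings take values in the Pontryagin-type duals $(\cA/\lambda^m\cA)^\wedge$ rather than in $\cA/\lambda^m\cA$, so we first fix a coherent system of identifications.

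Since each local principal ring $\cA/\lambda^m\cA$ is self-injective, $(\cA/\lambda^m\cA)^\wedge$ is free of rank one over $\cA/\lambda^m\cA$. The natural injection $(\cA/\lambda^m\cA)^\wedge \hookrightarrow (\cA/\lambda^{m+1}\cA)^\wedge$ dual to the reduction $\cA/\lambda^{m+1}\cA \twoheadrightarrow \cA/\lambda^m\cA$ identifies the source with the $\lambda^m$-torsion of the target, which coincides with $\lambda \cdot (\cA/\lambda^{m+1}\cA)^\wedge$. Accordingly, one constructs inductively a compatible system of generators $\xi_m \in (\cA/\lambda^m\cA)^\wedge$ with $\xi_m = \lambda \cdot \xi_{m+1}$ under this inclusion. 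For $\bx = (\bx_m) \in T_{\lambda}(\cE)$ and $\by = (\by_m) \in T_{\lambda}(\cE^*)$, write $\bpairing{\bx_m}{\by_m}_m = c_m \cdot \xi_m$ with $c_m \in \cA/\lambda^m\cA$. Using $\cE_{\lambda}(\bx_{m+1}) = \bx_m$ and $\cE_{\lambda}^*(\by_{m+1}) = \by_m$, Proposition~\ref{P:lambdacompatibility} gives
\[
\bpairing{\bx_m}{\by_m}_m = \lambda \cdot \bpairing{\bx_{m+1}}{\by_{m+1}}_{m+1}
\]
inside $(\cA/\lambda^{m+1}\cA)^\wedge$. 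Unwinding this with $\xi_m = \lambda \xi_{m+1}$ shows $c_m \equiv c_{m+1} \pmod{\lambda^m}$, so the sequence $(c_m)$ assembles to a well-defined element
\[
\bpairing{\bx}{\by}_{\lambda} \assign \varprojlim c_m \in \cA_{\lambda}.
\]

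Continuity in the $\lambda$-adic topology is built into the inverse-limit construction, and $\cA_{\lambda}$-bilinearity descends from the $\cA$-bilinearity of each $\bpairing{\cdot}{\cdot}_m$. Galois equivariance (part (a)) transfers directly from Proposition~\ref{P:bpairing}(a), since $\Gal(\oFF_f/\FF_f)$ acts trivially on $\cA/\lambda^m\cA$ and fixes the chosen $\xi_m$. For non-degeneracy in (b), suppose $\bx = (\bx_m) \in T_{\lambda}(\cE)$ pairs trivially with every element of $T_{\lambda}(\cE^*)$. Under conditions (a)--(c) of Definition~\ref{D:Tatemodules} we have $\cE^*[\lambda^m] \cong (\cA/\lambda^m\cA)^r$ with $\cE_{\lambda}^*$ surjective from level $m+1$ to level $m$, so every $\by_m \in \cE^*[\lambda^m]$ lifts to a coherent system $(\by_k) \in T_{\lambda}(\cE^*)$. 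Then $c_m = 0$ for all $m$ forces $\bpairing{\bx_m}{\by_m}_m = 0$, and the non-degeneracy from Proposition~\ref{P:bpairing}(b) yields $\bx_m = 0$ for all $m$; a symmetric argument handles the kernel on the right.

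The main obstacle is the careful identification of the targets $(\cA/\lambda^m\cA)^\wedge$ across levels and the reconciliation of the dual-of-reduction inclusions with the $\lambda$-scaling in Proposition~\ref{P:lambdacompatibility}. Once the normalized generators $\xi_m$ are in place, the required compatibility $c_m \equiv c_{m+1} \pmod{\lambda^m}$ is essentially forced, and every subsequent property (continuity, bilinearity, Galois equivariance, non-degeneracy) transfers formally from the finite-level results already established in~\S\ref{SS:pairings}.
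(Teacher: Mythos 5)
Your argument is correct and follows the same route as the paper. The paper's proof is quite terse: it simply records the identification $\varprojlim (\cA/\lambda^m\cA)^{\wedge} \cong \cA_{\lambda}$ (citing Goss), notes that $\cA_{\lambda}$-bilinearity follows from Proposition~\ref{P:lambdacompatibility}, Galois equivariance from Proposition~\ref{P:bpairing}(a), and non-degeneracy from Corollary~\ref{C:Atorsion} together with Proposition~\ref{P:bpairing}(b)--(c). You make that identification explicit by building a coherent system of generators $\xi_m$ with $\xi_m = \lambda\xi_{m+1}$, which correctly reconciles the $\lambda$-scaling in Proposition~\ref{P:lambdacompatibility} with the dual-of-reduction inclusions, and your non-degeneracy argument (lifting any $\by_m \in \cE^*[\lambda^m]$ to a compatible system via the surjectivity of $\cE^*_\lambda$, which holds since $\cE^*[\lambda^{m+1}] \cong (\cA/\lambda^{m+1}\cA)^r$, then invoking Proposition~\ref{P:bpairing}(b) at the finite level) is exactly what the paper's terse citation of Corollary~\ref{C:Atorsion} and Proposition~\ref{P:bpairing} is implicitly delegating. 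Your writeup is a valid and somewhat more self-contained unpacking of the same proof.
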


\begin{proof}
We first observe that $\varprojlim (\cA/\lambda^m\cA)^{\wedge} \cong \cA_{\lambda}$ (e.g., see \cite{Goss}*{Prop.~4.14.19}). The $\cA_{\lambda}$-bilinearity follows from Proposition~\ref{P:lambdacompatibility}, and Galois equivariance follows from Proposition~\ref{P:bpairing}(b). Under the conditions of Definition~\ref{D:Tatemodules}(a)--(c), Corollary~\ref{C:Atorsion} and Proposition~\ref{P:bpairing}(c) imply non-degeneracy.
\end{proof}

One consequence of this theorem is that under conditions (a)--(c) of Definition~\ref{D:Tatemodules},
\begin{equation} \label{E:Tatedual}
T_{\lambda}(\cE^*) \cong T_{\lambda}(\cE)^{\vee} \assign \Hom_{\cA_{\lambda}} ( T_{\lambda}(\cE),\cA_{\lambda}),
\end{equation}
as $\Gal(\oFF_f/\FF_f)$-modules. In particular the Galois representations on $T_{\lambda}(\cE)$ and $T_{\lambda}(\cE^*)$ are dual to each other.

\begin{remark}
Other pairings on $t$-module torsion over finite fields have been developed by Taguchi~\cite{Taguchi95} using a construction of duals for $t$-modules instead of adjoint $t$-modules. His pairings differ from $\bpairing{\cdot}{\cdot}_{\lambda}$ in that, like the Weil pairing, they take values in $T_{\lambda}(\sC)$ where~$\sC$ is the Carlitz module (see \cite{Taguchi95}*{Thm.~4.3}). For an adaptation of Taguchi's methods to the pairing $\bpairing{\cdot}{\cdot}_{\lambda}$ for Drinfeld modules, see Taguchi's appendix to Goss~\cite{Goss95}.
\end{remark}

\subsection{Characteristic polynomials and \texorpdfstring{$\mathcal{A}$}{A}-orders}
We now fix a $t$-module
\[
\cE : \cA \to \Mat_{\ell}(\FZ{\FF_f}{Z}[\tau]),
\]
and let $\lambda \in \sA_+$ be irreducible with $\lambda(\theta) \neq f$. We assume that $\cE$ satistifies Definition~\ref{D:Tatemodules}(a)--(c).
Then $\tau^d\cdot \rI_{\ell} : \cE \to \cE$ induces an automorphism of $T_\lambda(\cE)$, which coincides with the action of the $q^d$-th power Frobenius $\alpha \in \Gal(\oFF_f/\FF_f)$. Likewise, $\sigma^d \cdot \rI_{\ell} : \cE^* \to \cE^*$ induces an automorphism of $T_{\lambda}(\cE^*)$ that coincides with $\alpha^{-1}$. If $B \in \GL_r(\cA_{\lambda})$ represents the action of $\tau^d$ on $T_{\lambda}(\cE)$, then $(B^{-1})^{\tr}$ represents the action of $\tau^d$ on $T_{\lambda}(\cE)^{\vee}$. By \eqref{E:Tatedual}, we thus have equalities of characteristic polynomials,
\begin{align} \label{E:taudsigmad}
\Char(\tau^d,T_{\lambda}(\cE),X) &= \Char(\sigma^d,T_{\lambda}(\cE^*),X), \\
\Char(\sigma^d,T_{\lambda}(\cE),X) &= \Char(\tau^d,T_{\lambda}(\cE^*),X). \notag
\end{align}

Now choose $\bm \in \Mat_{r \times 1}(\cM_{\cE}(\FZ{\FF_f}{Z}))$ and $\bn \in \Mat_{r \times 1}(\cN_{\cE}(\FZ{\FF_f}{Z}))$ that are $\FZ{\FF_f}{Z}[t]$-bases as in \S\ref{SS:tmodsfinite}, together with $\Gamma$, $\Phi \in \Mat_r(\FZ{\FF_f}{Z}[t])$ so that $\tau \bm = \Gamma \bm$, $\sigma \bn = \Phi \bn$. Finally, fix $\rG$, $\rH \in \Mat_{r}(\FZ{\FF_f}{Z}[t])$ as in~\eqref{E:GHdef}.

For simplicity we temporarily abbreviate $\cM \assign \cM_{\cE}(\FZ{\oFF_f}{Z})$, and for an endomorphism $\eta : \cE \to \cE$ we write
\[
\biggl( \frac{\cM}{\cM\eta} \biggr)^{\tauid} \assign \ker \biggl(  \frac{\cM}{\cM\eta}  \xrightarrow{(\tau - 1)(\cdot)}  \frac{\cM}{\cM\eta} \biggr).
\]
We recall the $\FZ{\oFF_f}{Z}[t]$-linear isomorphism $\bi : \Mat_{1\times r}(\FZ{\oFF_f}{Z}[t]) \iso \cM$ from~\S\ref{SS:tmodsfinite}.

\begin{proposition} \label{P:twosquares}
Let $m \geqslant 1$. The following diagram of $\cA$-modules commutes:
\begin{displaymath}
\begin{tikzcd}
 \cE^*[\lambda^m] \arrow{r}{\sim} \arrow{d}{\sigma^d(\cdot)} &\biggl( \dfrac{\cM}{\lambda^m \cM} \biggr)^{\tauid} \arrow{d}{(\cdot)\tau^d} \arrow[leftarrow]{r}{\bi}[swap]{\sim} & \biggl( \dfrac{\Mat_{1 \times r}(\FZ{\oFF_f}{Z}[t])}{\lambda^m\Mat_{1 \times r}(\FZ{\oFF_f}{Z}[t])}\biggr)^{\tauid} \arrow{d}{(\cdot)\rG} \\
\cE^*[\lambda^m] \arrow{r}{\sim} &\biggl( \dfrac{\cM}{\lambda^m \cM} \biggr)^{\tauid} \arrow[leftarrow]{r}{\bi}[swap]{\sim} & \biggl( \dfrac{\Mat_{1 \times r}(\FZ{\oFF_f}{Z}[t])}{\lambda^m\Mat_{1 \times r}(\FZ{\oFF_f}{Z}[t])}\biggr)^{\tauid},
\end{tikzcd}
\end{displaymath}
where the first set of horizontal isomorphisms are from~\eqref{E:keretastar}.
\end{proposition}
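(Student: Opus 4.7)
The plan is to verify the two squares separately, using an explicit description of the snake-lemma isomorphism $\cE^*[\lambda^m] \xrightarrow{\sim} (\cM_{\cE}/\lambda^m \cM_{\cE})^{\tauid}$ from \eqref{E:keretastar}. Since $\gamma_1(\by^{\tr}) = \by$ for $\by \in \cE^*[\lambda^m]$, the defining relation $\by^{\tr}\cE^*_{\lambda^m} = (1-\tau)h_{\by}$ from \eqref{E:gxhy} shows that the connecting homomorphism sends $\by$ to the class of $-h_{\by}$ modulo $\cM_{\cE}\cE_{\lambda^m} = \lambda^m \cM_{\cE}$ (that this class lies in the $\tauid$-kernel is automatic from the snake construction). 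This concrete formula will be the starting point for both squares.

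For the left square, the goal is to show $h_{\sigma^d(\by)} \equiv h_{\by}\tau^d \pmod{\lambda^m \cM_{\cE}}$. I would first apply the entry-wise Frobenius twist $\tau^{-d}$ (acting only on the scalar coefficients of the polynomials in $\tau$) to the relation $\by^{\tr}\cE^*_{\lambda^m} = (1-\tau)h_{\by}$; since $\cE^*_{\lambda^m}$ has coefficients in $\FZ{\FF_f}{Z} = \smash{\FZ{\oFF_f}{Z}}^{\tau^d}$, the twist fixes $\cE^*_{\lambda^m}$, and it also commutes with left multiplication by $\tau$. This yields $(\by^{(-d)})^{\tr}\cE^*_{\lambda^m} = (1-\tau)(h_{\by}^{(-d)})$, and by uniqueness of the $h$-element one concludes $h_{\sigma^d(\by)} = h_{\by}^{(-d)}$. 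A short calculation comparing how $\tau^d$ acts by left versus right multiplication on $\Mat_{1\times \ell}(\FZ{\oFF_f}{Z}[\tau])$ then gives $h_{\by}\tau^d = \tau^d \cdot h_{\by}^{(-d)}$ (where on the right, $\tau^d$ denotes left multiplication), so that
\[
h_{\by}\tau^d - h_{\by}^{(-d)} = (\tau^d - 1) h_{\by}^{(-d)} = (1 + \tau + \cdots + \tau^{d-1})(\tau - 1)h_{\by}^{(-d)}.
\]
Since $h_{\by}^{(-d)} = h_{\sigma^d(\by)}$ represents an element of $(\cM_{\cE}/\lambda^m\cM_{\cE})^{\tauid}$, we have $(\tau-1)h_{\by}^{(-d)} \in \lambda^m \cM_{\cE}$, and the displayed difference therefore lies in $\lambda^m \cM_{\cE}$.

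The right square is essentially a restatement of Lemma~\ref{L:GH}. The map $\bi$ is $\FZ{\oFF_f}{Z}[t]$-linear and hence descends to the $\lambda^m$-quotient, and Lemma~\ref{L:GH} provides $\bi(\bu)\tau^d = \bi(\bu \rG)$. One need only verify that right multiplication by $\tau^d$ preserves both $\lambda^m \cM_{\cE}$ and the $\tauid$-kernel: the former holds because $\tau^d$ commutes with $\cE_{\lambda^m}$ (as $\cE$ is defined over $\FZ{\FF_f}{Z}$, which is fixed by $\tau^d$), while the latter holds because right multiplication by $\tau^d$ commutes with left multiplication by $\tau - 1$. By transport of structure through $\bi$, the same properties then hold for right multiplication by $\rG$.

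I expect the only subtle point to be the bookkeeping of the snake-lemma dictionary, in particular pinning down that the connecting map is given by $\by \mapsto -h_{\by}$ together with the sign conventions; once that is in place, the remaining content is a formal manipulation of twisted polynomials that exploits the single crucial fact that $\cE_{\lambda^m}$ has coefficients fixed by $\tau^d$.
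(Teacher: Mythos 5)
Your proof is correct and takes essentially the same approach as the paper's: both rely on the snake-lemma description of the horizontal isomorphism, the fact that $\cE_{\lambda^m}$ has coefficients in $\FZ{\FF_f}{Z}$ and hence is fixed by $\tau^{\pm d}$, and Lemma~\ref{L:GH} plus a well-definedness check for the right square. The only difference is cosmetic in the left square — the paper multiplies an arbitrary lift $\beta$ of $\by$ directly on the right by $\tau^d$ and verifies the two defining relations, whereas you fix the specific lift $\by^{\tr}$, identify $h_{\sigma^d(\by)} = h_{\by}^{(-d)}$ by uniqueness, and then invoke the $\tauid$-condition together with the factorization $\tau^d - 1 = (1 + \tau + \cdots + \tau^{d-1})(\tau-1)$; note also the small typo where $\by^{\tr}\cE^*_{\lambda^m}$ should read $\by^{\tr}\cE_{\lambda^m}$, as $h_{\by}$ is defined from the $\tau$-polynomial $\by^{\tr}\eta$ in~\eqref{E:gxhy}.
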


\begin{proof}
From \eqref{E:gamma1diag} the map on $\cM$ induced by $\sigma^d \cdot \rI_{\ell} : \cE^* \to \cE^*$ is right-multiplication by~$\tau^d$. Noting that elements of $(\cM/\lambda^m\cM)^{\tauid}$ are classes fixed under \emph{left}-multiplication by $\tau-1$, it follows quickly that right-multiplication by~$\tau^d$ on $(\cM/\lambda^m\cM)^{\tauid}$ is a well-defined $\cA$-module map. Winding through the snake lemma applied to~\eqref{E:gamma1diag}, we see that the horizontal maps are defined as follows: for $\by \in \cE^*[\lambda^m]$, there exist $\beta$, $\beta' \in \cM$ so that $\gamma_1(\beta) = \by$ and $\lambda^m \cdot \beta = (\tau-1)\beta'$, and then
$\by \mapsto \beta' + \lambda^m \cM$.
But then $\gamma_1(\beta \tau^d) = \by^{(-d)} = \sigma^d(\by)$ and $\lambda^m \cdot \beta \tau^d = (\tau-1)\beta'\tau^d$, and so $\sigma^d(\by) = \beta' \tau^d + \lambda^m \cM$ as desired. This verifies the commutativity of the first square.

Lemma~\ref{L:GH} implies that the second square commutes without each term being fixed by left-multiplication by $\tau-1$. It suffices to show the right-most vertical map is well-defined. For $\bu \in \Mat_{1 \times r}(\FZ{\oFF_f}{Z}[t])$ such that $(\tau - 1)\bu \in \lambda^m \Mat_{1\times r}(\FZ{\oFF_f}{Z}[t])$, we have by definition of the $\tau$-action on $\Mat_{1\times r}(\FZ{\oFF_f}{Z}[t])$ through $\bi$ that for some $\bv \in \Mat_{1\times r}(\FZ{\oFF_f}{Z}[t])$,
\[
(\tau-1)\bu = \bu^{(1)}\Gamma - \bu = \lambda^m \bv.
\]
Recalling that $\rG = \Gamma^{(d-1)} \cdots \Gamma^{(1)}\Gamma$, we see that
\[
(\tau-1)\bu\rG = \bigl(\bu^{(1)}\rG^{(1)} \Gamma - \bu\rG \bigr) = \bigl( \bu^{(1)} \Gamma^{(d)} - \bu \bigr)\rG.
\]
Since $\Gamma \in \Mat_r(\FZ{\FF_f}{Z}[t])$, we have $\Gamma^{(d)} = \Gamma$, and so $(\tau-1)\bu\rG = \lambda^m \bv \rG$ as desired.
\end{proof}

\begin{corollary} \label{C:charpolys}
Let $\cE : \cA \to \Mat_{\ell}(\FZ{\FF_f}{Z}[\tau])$
be a $t$-module defined over $\FZ{\FF_f}{Z}$. Let $\lambda \in \sA_+$ be irreducible with $\lambda(\theta) \neq f$ such that Definitions~\ref{D:Tatemodules}(a)--(c) are satisfied.
For $\rG = \Gamma^{(d-1)} \cdots \Gamma^{(1)}\Gamma \in \Mat_r(\FZ{\FF_f}{Z}[t])$, where $\Gamma$ represents multiplication by~$\tau$ on $\cM_{\cE}(\FZ{\FF_f}{Z})$, we have
\[
\Char(\tau^d,T_{\lambda}(\cE),X) = \Char(\rG,X).
\]
Furthermore, this polynomial is in $\cA[X] = \FZ{\FF_q}{Z}[t,X]$ and is independent of the choice of $\lambda$ for which Definition~\ref{D:Tatemodules}(c) is satisfied.
\end{corollary}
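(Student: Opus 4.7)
The plan is to deduce the identity at the level of the Tate module of $\cE^*$ and then invoke the duality~\eqref{E:taudsigmad}. Taking the inverse limit over $m$ of the isomorphisms in Proposition~\ref{P:twosquares} yields an $\cA_\lambda$-linear injection
\[
\iota_\lambda : T_\lambda(\cE^*) \hookrightarrow \Mat_{1\times r}\bigl(\FZ{\oFF_f}{Z}[t]_\lambda\bigr)
\]
that intertwines the action of $\sigma^d$ on the left with right-multiplication by $\rG$ on the right. By Definition~\ref{D:Tatemodules}(a)--(c) and Corollary~\ref{C:Atorsion}, $T_\lambda(\cE^*)$ is free of rank $r$ over $\cA_\lambda$; I fix an $\cA_\lambda$-basis $w_1, \dots, w_r$, let $B \in \Mat_r(\cA_\lambda)$ represent $\sigma^d$ in this basis, and assemble the matrix $V \in \Mat_r(\FZ{\oFF_f}{Z}[t]_\lambda)$ whose $i$-th row is $\iota_\lambda(w_i)$. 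The intertwining then reads $V\rG = BV$, so once $V$ is shown to be invertible one obtains $\rG = V^{-1}BV$, whence
\[
\Char(\rG,X) = \Char(B,X) = \Char\bigl(\sigma^d,T_\lambda(\cE^*),X\bigr) = \Char\bigl(\tau^d,T_\lambda(\cE),X\bigr)
\]
by~\eqref{E:taudsigmad}.

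The main obstacle is the invertibility of $V$. Since $\lambda$ splits into distinct linear factors in $\oFF_f[t]$, the ring $\FZ{\oFF_f}{Z}[t]_\lambda$ is a finite product of complete discrete valuation rings each having $\lambda$ as a uniformizer, so by Nakayama it suffices to show $\overline V \assign V \bmod \lambda$ is invertible in $\Mat_r(R_1)$, where $R_1 \assign \FZ{\oFF_f}{Z}[t]/\lambda$. The rows $\overline v_1, \dots, \overline v_r$ of $\overline V$ form an $\cA/\lambda$-basis of the twisted $\tauid$-subspace of $\Mat_{1\times r}(R_1)$, which is isomorphic to $\cE^*[\lambda]$ by Proposition~\ref{P:twosquares}. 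Here I apply an Artin-style independence-of-characters argument: assume for contradiction a nontrivial $R_1$-linear relation $\sum x_i \overline v_i = 0$ of minimal support, normalized so $x_1 = 1$. The hypothesis $\lambda(\theta) \neq f$ makes $\det \Gamma = c(t-\otheta)^\ell$ a unit in $R_1$, so $\Gamma \in \GL_r(R_1)$; applying the coefficient Frobenius $\tau$ to the relation and right-multiplying by $\Gamma$ (using $\overline v_i^{(1)}\Gamma = \overline v_i$) yields $\sum x_i^{(1)} \overline v_i = 0$. Subtracting gives the strictly shorter relation $\sum (x_i - x_i^{(1)}) \overline v_i = 0$, so by minimality $x_i = x_i^{(1)}$ for every $i$, forcing $x_i \in R_1^\tau = \cA/\lambda$ and contradicting the $\cA/\lambda$-linear independence of $\overline v_1, \dots, \overline v_r$.

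Once the main identity is established, the coefficient refinement is quick: $\Char(\rG,X)$ manifestly lies in $\FZ{\FF_f}{Z}[t][X]$, while the identity places its coefficients also in $\cA_\lambda = \FZ{\FF_q}{Z}[t]_\lambda$. Because $\lambda$ is monic in $\FF_q[t]$, iterated division by $\lambda$ writes any polynomial in $\FZ{\FF_f}{Z}[t]$ as a finite $\lambda$-adic sum $\sum_i r_i \lambda^i$ with each $r_i \in \FZ{\FF_f}{Z}[t]$ of degree less than $\deg \lambda$; membership in $\cA_\lambda$ then forces each $r_i \in \FZ{\FF_q}{Z}[t]$, so that $\FZ{\FF_f}{Z}[t] \cap \cA_\lambda = \cA$. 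Finally, $\Char(\rG,X)$ depends only on $\cE$ and not on $\lambda$, giving the asserted independence.
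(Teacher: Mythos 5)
Your proof is correct and follows the same backbone as the paper's: both rest on Proposition~\ref{P:twosquares}, on the Artin-style linear-independence argument that already appears in the proof of Proposition~\ref{P:kerdim}, and on the duality~\eqref{E:taudsigmad}. What you do differently is mostly packaging. You pass to the inverse limit at the outset, encode everything in a single change-of-basis matrix $V$ over the $\lambda$-adic completion of $\FZ{\oFF_f}{Z}[t]$, and reduce invertibility of $V$ to its reduction $\overline V$ modulo $\lambda$ via Nakayama; the independence argument then only has to be run once, over $R_1 = \FZ{\oFF_f}{Z}[t]/\lambda$. The paper instead proves $\Char(\sigma^d,\cE^*[\lambda^m],X) \equiv \Char(\rG,X) \pmod{\lambda^m}$ for each $m$ by choosing, level by level, an $\cA/\lambda^m\cA$-basis of the $\tauid$-subspace that simultaneously serves as an $\FZ{\oFF_f}{Z}[t]/\lambda^m$-basis, and only then takes the limit; your Nakayama reduction is a slightly cleaner way to organize the same computation. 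For the $\cA[X]$ refinement the paper is more direct: it observes $\Gamma^{(d)} = \Gamma$ and hence $\Char(\rG,X)^{(1)} = \Char(\rG,X)$, so the coefficients lie in $\FZ{\FF_f}{Z}[t]^\tau = \cA$ without invoking the Tate-module identity at all. Your intersection argument $\FZ{\FF_f}{Z}[t]\cap\cA_\lambda = \cA$ is sound (it rests on the uniqueness of $\lambda$-adic expansions by representatives of degree less than $\deg\lambda$, which you correctly note works because $\lambda$ is monic over $\FF_q$), but it leans on the just-established identity, whereas the Frobenius-invariance argument sees the $\cA$-rationality intrinsically from $\rG$.
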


\begin{proof}
For each $m \geqslant 1$, because $\cE[\lambda^m]$ and $\cE^*[\lambda^m]$ have full dimension, we see from Corollary~\ref{C:Atorsion} that each is isomorphic to $(\cA/\lambda^m\cA)^r$. By the proof of Proposition~\ref{P:kerdim}, there is an $\cA/\lambda^m\cA$-basis of $(\cM/\lambda^m\cM)^{\tauid}$ of rank $r$ that is also an $(\FZ{\oFF_f}{Z}[t]/\lambda^m\FZ{\oFF_f}{Z}[t])$-basis of $\cM/\lambda^m\cM$. Passing this basis to the right-hand column of Proposition~\ref{P:twosquares}, we obtain an $\FZ{\oFF_f}{Z}[t]$-basis $\bu_1, \dots, \bu_r \in \Mat_{1\times r}(\FZ{\oFF_f}{Z}[t])$, such that their images
\[
\bu_1, \dots, \bu_r \in \biggl( \dfrac{\Mat_{1 \times r}(\FZ{\oFF_f}{Z}[t])}{\lambda^m\Mat_{1 \times r}(\FZ{\oFF_f}{Z}[t])}\biggr)^{\tauid}
\]
form an $\cA/\lambda^m\cA$-basis. If we let $U = (\bu_1, \dots, \bu_r)^{\tr} \in \GL_r(\FZ{\FF_f}{Z}[t])$, then $U\rG U^{-1}$ represents the right-hand vertical map in Proposition~\ref{P:twosquares} with respect to this basis.
Therefore, Proposition~\ref{P:twosquares} implies that the characteristic polynomial of $\sigma^d$ acting on the $\cA/\lambda^m\cA$-module $\cE^*[\lambda^{m}]$ is
\[
\Char(\sigma^d,\cE^*[\lambda^m],X) = \Char(U\rG U^{-1},X) = \Char(\rG,X) \bmod \lambda^m.
\]
Passing the diagram in Proposition~\ref{P:twosquares} to the inverse limit, we obtain
\[
\Char(\tau^d,T_{\lambda}(\cE),X) = \Char(\sigma^d,T_{\lambda}(\cE^*),X) = \Char(\rG,X) \in \FZ{\FF_f}{Z}[t],
\]
where the first equality is~\eqref{E:taudsigmad}. Since $\rG$ is independent of the choice of $\lambda$, it remains to verify that $\Char(\rG,X) \in \cA[X]$. But using that $\Gamma^{(d)} = \Gamma$ and that $\Char(BC,X) = \Char(CB,X)$ for matrices $B$ and $C$,
\begin{multline*}
\Char(\rG,X)^{(1)} = \Char(\rG^{(1)},X) = \Char(\Gamma^{(d)} \cdots \Gamma^{(1)},X) \\
= \Char(\Gamma^{(d-1)}\cdots \Gamma^{(1)} \Gamma^{(d)},X)
= \Char(\rG,X),
\end{multline*}
and so $\Char(\rG,X) \in \cA[X]$.
\end{proof}

This corollary can be used to find characteristic polynomials of Frobenius acting on Tate modules for global $L$-functions, and moreover to calculate $\mu_{\phi}(a)$ and $\nu_{\phi}(a)$, as defined in \S\ref{SS:munu}. See Remark~\ref{R:Qfcalc}.

We can use Corollary~\ref{C:charpolys} to determine $\Aord{\cE(\FZ{\FF_f}{Z})}{\cA}$. For Drinfeld modules over finite fields, this was found by Gekeler~\cite{Gekeler91}*{Thm.~5.1(i)} using methods of Deuring involving reduced norms on quaternion endomorphism algebras, techniques which were not available in our current setting. Furthermore, the methods of Yu~\citelist{\cite{Goss}*{Prop.~4.12.21} \cite{YuJK95}} do not readily extend to Anderson $\cA$-modules, as we do not have as fine control over $\nu$-torsion when $\nu \in \cA \setminus A$.
However, the results of the previous sections yield the following. We note that this result extends Gekeler's result both to certain $t$-modules over finite fields (using $Z = \emptyset$) but also over general $\FZ{\FF_f}{Z}$.

\begin{theorem} \label{T:charpoly1}
Let $\cE : \cA \to \Mat_{\ell}(\FZ{\FF_f}{Z}[\tau])$ be a $t$-module defined over $\FZ{\FF_f}{Z}$. Let $\lambda \in \sA_+$ be irreducible with $\lambda(\theta) \neq f$ such that Definitions~\ref{D:Tatemodules}(a)--(c) are satisfied. Then
\[
\bigl[ \cE(\FZ{\FF_f}{Z}) \bigr]_{\cA} = \gamma \cdot \Char(\tau^d,T_{\lambda}(\cE),1),
\]
where $\gamma \in \FZ{\FF_q}{Z}^{\times}$ uniquely forces the right-hand expression to be monic in~$t$.
\end{theorem}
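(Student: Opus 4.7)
My plan is to reduce the left-hand $\cA$-order to a Fitting-ideal calculation on the $t$-motive side and then carry out a Galois descent from $\FZ{\oFF_f}{Z}$ down to $\FZ{\FF_q}{Z}$. First, Corollary~\ref{C:kerandkerstar} applied to the endomorphism $\eta = (1 - \tau^d)\rI_\ell$---whose kernels on both $\cE$ and $\cE^*$ have full dimension by the trace pairing discussion in~\S\ref{SSS:1-Frob}---gives $\cE(\FZ{\FF_f}{Z}) \cong \cE^*(\FZ{\FF_f}{Z})$ as $\cA$-modules, so it suffices to compute $\Aord{\cE^*(\FZ{\FF_f}{Z})}{\cA}$. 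Lemma~\ref{L:Ffpoints}(a) identifies this adjoint module with
\[
V \assign \ker\!\bigl((1-\tau) \mid N\bigr), \qquad N \assign \Mat_{1 \times r}(\FZ{\oFF_f}{Z}[t]) \big/ \Mat_{1 \times r}(\FZ{\oFF_f}{Z}[t])(\rI - \rG),
\]
and as an $\FZ{\oFF_f}{Z}[t]$-module $N$ is finitely presented with Fitting ideal generated by $\det(\rI - \rG) = \Char(\rG,1)$, which lies in $\cA$ and equals $\Char(\tau^d, T_{\lambda}(\cE),1)$ by Corollary~\ref{C:charpolys}.

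The key is then to compare dimensions. From~\S\ref{SSS:abcAfin} we have $\det \Gamma = c(t - \otheta)^\ell$ for some $c \in \FZ{\FF_f}{Z}^\times$, and hence $\det \rG = c' \cdot f(t)^\ell$ for a unit $c' \in \FZ{\FF_q}{Z}^\times$ obtained by multiplying the Frobenius twists of~$c$. Therefore $\det(\rI - \rG)$ is a polynomial in $t$ of degree exactly $d\ell$ whose leading coefficient in $t$ is a unit of $\FZ{\FF_q}{Z}$ (its inverse will supply the scalar~$\gamma$ of the statement). Thus $\dim_{\FZ{\oFF_f}{Z}} N = d\ell = \dim_{\FZ{\FF_q}{Z}} \cE^*(\FZ{\FF_f}{Z}) = \dim_{\FZ{\FF_q}{Z}} V$. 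The equality of these two dimensions triggers a Galois descent: the standard linear-independence-of-characters argument shows that any $\FZ{\FF_q}{Z}$-basis of $V = N^{\tauid}$ remains $\FZ{\oFF_f}{Z}$-linearly independent inside $N$, and the matching dimensions upgrade this to an $\FZ{\oFF_f}{Z}$-linear isomorphism $V \otimes_{\FZ{\FF_q}{Z}} \FZ{\oFF_f}{Z} \iso N$. Since the $t$-action on $N$ commutes with $\tau$, this descent is $t$-equivariant, giving an $\FZ{\oFF_f}{Z}[t]$-module isomorphism $V \otimes_{\cA} \FZ{\oFF_f}{Z}[t] \iso N$.

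Finally, base change of Fitting ideals yields $\Aord{V}{\cA} \cdot \FZ{\oFF_f}{Z}[t] = \bigl(\det(\rI - \rG)\bigr)$, so $\Aord{V}{\cA}$ and $\det(\rI - \rG)$ differ by a unit of $\FZ{\oFF_f}{Z}^\times$; as both elements lie in $\cA$, the unit in fact lies in $\FZ{\FF_q}{Z}^\times$, and the requirement that $\Aord{V}{\cA}$ be monic in~$t$ pins this unit down uniquely as the scalar~$\gamma$ of the statement, giving $\Aord{\cE(\FZ{\FF_f}{Z})}{\cA} = \gamma \cdot \Char(\tau^d, T_{\lambda}(\cE),1)$. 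The main obstacle I anticipate is the Galois descent step: one must verify cleanly that the semilinear $\tau$-action extracted from the quotient description of $N$ satisfies the hypotheses of classical Hilbert~90, and that the $\cA$-structure and $\tau$-action interact well enough to upgrade the $\FZ{\oFF_f}{Z}$-linear descent to an $\FZ{\oFF_f}{Z}[t]$-module descent. Both reduce, via the commutativity of the $t$- and $\tau$-actions on $N$ (which holds because $\cA$ acts by right multiplication by $\cE_a$ while $\tau$ acts by left multiplication in the twisted polynomial ring), to the classical Dedekind argument, but the bookkeeping is delicate.
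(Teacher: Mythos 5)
Your overall strategy matches the paper's proof very closely: reduce to $\cE^*$ via Corollary~\ref{C:kerandkerstar}, pass through Lemma~\ref{L:Ffpoints}(a) to the quotient $N = M/M(\rI - \rG)$, show that a $\FZ{\FF_q}{Z}$-basis of the $\tau$-fixed part $V$ is an $\FZ{\oFF_f}{Z}$-basis of $N$, and conclude by base change of Fitting ideals plus Corollary~\ref{C:charpolys}. The paper uses essentially the identical route.

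However, one auxiliary step in your argument is not a valid inference as written. You go from $\det\rG = c'\,f(t)^\ell$ (degree $d\ell$ in $t$) to the assertion that $\det(\rI - \rG) = \pm\,\Char(\rG,1)$ has $t$-degree exactly $d\ell$ with unit leading coefficient. Writing $\Char(\rG,X) = X^r + a_{r-1}X^{r-1} + \cdots + a_0$, this requires $\deg_t a_i < d\ell$ for all $i \geqslant 1$, and that is not automatic for an arbitrary $t$-module over $\FZ{\FF_f}{Z}$: nothing you cite controls the $t$-degrees of the lower-order symmetric functions of the eigenvalues of $\rG$ (for Drinfeld modules one has Gekeler's bounds, but those are specific to rank~$r$, dimension~$1$). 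The paper sidesteps this entirely. It obtains $\dim_{\FZ{\oFF_f}{Z}} N = d\ell$ directly on the $\FZ{\oFF_f}{Z}[\tau]$ side: the endomorphism $\eta = (1 - \tau^d)\rI_\ell$ is already diagonal, and Proposition~\ref{P:AndUBV}(b) gives $\dim_{\FZ{\oFF_f}{Z}}\bigl(\cM_{\cE}(\FZ{\oFF_f}{Z})/\cM_{\cE}(\FZ{\oFF_f}{Z})\eta\bigr) = d\ell$. Combined with the full-dimension hypothesis and the linear-independence argument in the proof of Proposition~\ref{P:kerdim}, an $\FZ{\FF_q}{Z}$-basis of $V$ is forced to be an $\FZ{\oFF_f}{Z}$-basis of $N$, which is exactly what your descent step needs. (As a by-product, this does show $\deg_t \det(\rI - \rG) = d\ell$ with leading coefficient in $\FZ{\FF_q}{Z}^\times$, so your claimed conclusion is true; only the justification via $\det\rG$ is insufficient.) Your worry at the end about the Hilbert~90/Dedekind step is exactly the issue Proposition~\ref{P:kerdim} has already resolved, and the $t$-equivariance of the descended isomorphism holds for the reason you state, so the rest of the argument goes through once the dimension count is justified correctly.
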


\begin{proof}
We let $M \assign \Mat_{1 \times r}(\FZ{\oFF_f}{Z}[t])$. By Lemma~\ref{L:Ffpoints}(a), we have
\begin{equation} \label{E:Estarorder}
\cE^*(\FZ{\FF_f}{Z}) \cong \biggl( \frac{M}{M(\rI - \rG)} \biggr)^{\tauid}.
\end{equation}
Because $\cE^*(\FZ{\FF_f}{Z})$ has full dimension, as in the proof of Proposition~\ref{P:kerdim} we can find an $\FZ{\FF_q}{Z}$-basis of $(M/M(\rI-\rG))^{\tauid}$ that is also an $\FZ{\oFF_f}{Z}$-basis of $M/M(\rI-\rG)$. Therefore,
\[
\FZ{\oFF_f}{Z} \otimes_{\FZ{\FF_q}{Z}} \biggl( \frac{M}{M(\rI - \rG)} \biggr)^{\tauid} \cong \frac{M}{M(\rI-\rG)}.
\]
A priori this is an isomorphism of $\FZ{\oFF_f}{Z}$-vector spaces, but it is also an isomorphism of $\FZ{\oFF_f}{Z}[t]$-modules. Since the left-hand side is the extension of scalars of a finitely generated torsion $\cA$-module, it follows that
\begin{equation} \label{E:detI-G}
\biggl[ \biggl( \frac{M}{M(\rI - \rG)} \biggr)^{\tauid}\, \biggr]_{\cA} = \bigAord{\frac{M}{M(\rI-\rG)}}{\FZ{\oFF_f}{Z}[t]} = \gamma \cdot \det(\rI - \rG), \quad \gamma \in \FZ{\oFF_f}{Z}^{\times},
\end{equation}
where $\gamma$ is uniquely chosen to make $\gamma\cdot \det(\rI-\rG)$ monic in $t$. On the other hand, Corollary~\ref{C:charpolys} shows that
\[
\Char(\tau^d,T_{\lambda}(\cE),1) = \det(\rI-\rG) \in \cA,
\]
so $\gamma \in \FZ{\FF_q}{Z}^{\times}$, and thus also by~\eqref{E:Estarorder},
\begin{equation}
\bigl[ \cE^*(\FZ{\FF_f}{Z}) \bigr]_{\cA} = \gamma \cdot \Char(\tau^d,T_{\lambda}(\cE),1).
\end{equation}
Since Corollary~\ref{C:kerandkerstar} implies $\Aord{\cE^*(\FZ{\FF_f}{Z})}{\cA} = \Aord{\cE(\FZ{\FF_f}{Z})}{\cA}$, we are done.
\end{proof}

We note that if $Z = \emptyset$ and $E$ is a $t$-module over $\FF_f$ that is abelian and $\sA$-finite, then Definition~\ref{D:Tatemodules}(a)--(c) are automatically satisfied. Indeed~(a) and~(b) are assured by Maurischat~\cite{Maurischat21}, and (c) follows from Anderson~\citelist{\cite{And86}*{Prop.~1.8.3} \cite{Goss}*{Cor.~5.6.4}}. We thus have the following corollary for $t$-modules over finite fields (cf.\ Taelman~\cite{Taelman09}*{Prop.~7}).

\begin{corollary} \label{C:charpoly1}
Let $E : \sA \to \Mat_{\ell}(\FF_f[\tau])$ be an abelian and $\sA$-finite $t$-module defined over $\FF_f$. Let $\lambda \in \sA_+$ be irreducible with $\lambda(\theta) \neq f$. Then
\[
\bigl[ E(\FF_f) \bigr]_{\sA} = \gamma \cdot \Char(\tau^d,T_{\lambda}(E),1),
\]
where $\gamma \in \FF_q^{\times}$ forces the right-hand expression to be monic in~$t$.
\end{corollary}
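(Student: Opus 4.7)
The plan is to obtain this as a specialization of Theorem~\ref{T:charpoly1} with $Z = \emptyset$, so that $\FZ{\FF_q}{Z} = \FF_q$, $\FZ{\FF_f}{Z} = \FF_f$, $\cA = \sA$, and $\cE = E$. Under these identifications, the conclusion of Theorem~\ref{T:charpoly1} becomes exactly the claimed identity, and $\gamma$ is forced to lie in $\FZ{\FF_q}{Z}^{\times} = \FF_q^{\times}$. The only work lies in verifying that conditions (a)--(c) of Definition~\ref{D:Tatemodules} hold automatically for any abelian and $\sA$-finite $t$-module $E$ over $\FF_f$.

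For (a) and (b), the hypothesis that $E$ is both abelian and $\sA$-finite gives that $\cM_E(\FF_f)$ and $\cN_E(\FF_f)$ are each free over $\FF_f[t]$, and Maurischat's theorem \cite{Maurischat21} (applicable because $\FF_f$ is perfect) guarantees that their ranks coincide; call this common rank~$r$. Since $\FF_f \hookrightarrow \oFF_f$ is flat, both freeness and rank persist under base change, yielding $\rank_{\oFF_f[t]} \cM_E(\oFF_f) = \rank_{\oFF_f[t]} \cN_E(\oFF_f) = r$, which is precisely what (a) and (b) require.

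For (c), I would invoke Anderson's classical theorem \cite{And86}*{Prop.~1.8.3} (see also \cite{Goss}*{Cor.~5.6.4}): for an abelian $t$-module $E$ over an $\sA$-field of characteristic $f$ and any nonzero $\nu \in \sA$ coprime to $f$, the $\nu$-torsion $E[\nu](\oFF_f)$ is free of rank $r$ over $\sA/\nu\sA$. Taking $\nu = \lambda^m$, which is coprime to $f$ because $\lambda(\theta) \neq f$, Corollary~\ref{C:Atorsion}(a) translates this isomorphism into the full-dimensionality of $E[\lambda^m]$ in the sense of Definition~\ref{D:full}. The analogous statement for $E^*[\lambda^m]$ follows by applying the same Anderson--Goss result to the adjoint, which inherits the abelian and $\sA$-finite structure of $E$ via the symmetry between $\tau$ and $\sigma = \tau^{-1}$ over the perfect field $\FF_f$.

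With (a)--(c) verified, Theorem~\ref{T:charpoly1} applies directly and yields the claimed identity. The only mild subtlety — and hence the main thing to double-check — is the full-dimensionality of $E^*[\lambda^m]$, since the paper's framework of $t$-motives is formulated primarily for maps into $[\tau]$-rings rather than $[\sigma]$-rings; however, passing through the natural identification of $\cN_E$ with the $\sigma$-analog of the $t$-motive of $E^*$ makes the transfer routine given that $\FF_f$ is perfect.
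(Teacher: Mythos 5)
Your proposal is correct and follows exactly the route the paper takes: the paper's (very terse) justification preceding the corollary says that with $Z = \emptyset$ one need only check Definition~\ref{D:Tatemodules}(a)--(c), with (a) and (b) coming from Maurischat~\cite{Maurischat21} and (c) from Anderson~\cite{And86}*{Prop.~1.8.3} (or Goss~\cite{Goss}*{Cor.~5.6.4}), after which Theorem~\ref{T:charpoly1} applies. You have simply unpacked that sentence, and your flagging of the adjoint side of condition~(c) is a reasonable extra note; the paper's citation implicitly covers it through the $t$-motive formalism (the fixed points of $\tau$ on the free $\oFF_f[t]/\nu$-module $\cM_E(\oFF_f)/\nu\cM_E(\oFF_f)$ form a free $\sA/\nu\sA$-module of rank~$r$ by the additive Hilbert~90 over the algebraically closed field $\oFF_f$, which is exactly $E^*[\nu]$ by the snake-lemma identification), so no gap.
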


\section{Rigid analytic twists of Drinfeld modules} \label{S:RAtwists}

The main objects of study of the present paper are Anderson $t$-modules over $\LLhat_z$ that are obtained by conjugating a Drinfeld module over $\C$ by the rigid analytic trivialization of another. In particular we focus on the case that both Drinfeld modules are defined over~$A$ and have everywhere good reduction. These constructions were inspired by Angl\`{e}s, Pellarin, and Tavares Ribeiro~\cite{APT16}, who investigated the case of conjugating the Carlitz module by the Anderson-Thakur function~$\omega_z$, and by Angl\`{e}s and Tavares Ribeiro~\cite{AT17} and Gezmi\c{s}~\cite{Gezmis19}, who studied the case of conjugating a Drinfeld module by~$\omega_z$.

\subsection{Properties of \texorpdfstring{$\EE(\phi \times \psi)$}{E(phi x psi)}} \label{SS:EEprops}
We start with two Drinfeld modules $\phi$, $\psi : \sA \to A[\tau]$, defined over~$A$ with everywhere good reduction, such that
\begin{alignat}{2} \label{E:phidef}
\phi_t &= \theta + \kappa_1 \tau + \cdots + \kappa_r \tau^r, \quad &\kappa_i \in A,\ \kappa_r \in \FF_q^{\times}, \\
\label{E:psidef}
\psi_t &= \theta + \eta_1 \tau + \cdots + \eta_{\ell} \tau^{\ell}, \quad &\eta_i \in A,\ \eta_{\ell} \in \FF_q^{\times}.
\end{alignat}
We let $\pi_1, \dots, \pi_r \in \Lambda_{\phi}$ and $\lambda_1, \dots, \lambda_{\ell} \in \Lambda_{\psi}$ be $A$-bases of their respective period lattices. As in \eqref{E:Gammadef}, \eqref{E:Thetadef}, and \eqref{E:Upsilondef}, we construct
\begin{alignat*}{3}
\Gamma_{\phi} &\in \Mat_{r}(A[t]), \quad &\Theta_{\phi} = \Gamma_{\phi}^{\tr} &\in \Mat_{r}(A[t]), \quad &\Upsilon_{\phi} \in \GL_r(\TT_t), \\
\Gamma_{\psi} &\in \Mat_{\ell}(A[t]), \quad &\Theta_{\psi} = \Gamma_{\psi}^{\tr} &\in \Mat_{\ell}(A[t]), \quad &\Upsilon_{\psi} \in \GL_{\ell}(\TT_t).
\end{alignat*}
We set
\begin{gather} \label{E:zmatrices}
\Gamma_{\psi,z} \assign \Gamma_{\psi}|_{t=z} \in \Mat_{\ell}(A[z]), \quad \Theta_{\psi,z} \assign \Theta_{\psi}|_{t=z} \in \Mat_{\ell}(A[z]), \\
\Upsilon_{\psi,z} \assign \Upsilon_{\psi}|_{t=z} \in \GL_{\ell}(\TT_z). \notag
\end{gather}
We note that the identity $\Upsilon_{\psi,z}^{(1)} = \Upsilon_{\psi,z} \Theta_{\psi,z}$ implies
\begin{equation} \label{E:UpThetaz}
\Upsilon_{\psi,z}^{-1} \Upsilon_{\psi,z}^{(j)} = \Theta_{\psi,z} \Theta_{\psi,z}^{(1)} \cdots \Theta_{\psi,z}^{(j-1)} \in \Mat_{\ell}(A[z]), \quad j \geqslant 0,
\end{equation}
where we use the convention that the empty product is the identity matrix.
We define $\phi^{\oplus \ell} : \bA \to \Mat_{\ell}(\AAA[\tau])$ to be the $\ell$-fold direct sum of $\phi$, which we consider to be a constant $t$-module defined over $\AAA$. We then conjugate $\phi^{\oplus \ell}$ by $\Upsilon_{\psi,z}$ to form the $t$-module
\begin{equation}
\EE \assign \EE(\phi \times \psi) : \bA \to \Mat_{\ell}(\AAA[\tau])
\end{equation}
such that
\begin{equation} \label{E:EEtdef}
\EE_t \assign \Upsilon_{\psi,z}^{-1} \cdot \phi^{\oplus \ell}_t \cdot \Upsilon_{\psi,z}
= \theta \rI_{\ell} + \kappa_1 \Theta_{\psi,z} \tau + \cdots + \kappa_r \Theta_{\psi,z} \Theta_{\psi,z}^{(1)} \cdots \Theta_{\psi,z}^{(r-1)} \tau^r.
\end{equation}
We consider $\EE$ to be $t$-module over $\LLhat_z$ as in \S\ref{SS:tmodTatealg}, and moreover, $\Upsilon_{\psi,z} : \EE \to \phi^{\oplus \ell}$ is a $t$-module isomorphism. We recount some fundamental properties below.

\begin{remark}
We note that $\EE(\phi \times \psi)$ is different from $\EE(\psi \times \phi)$. Indeed the former has dimension~$\ell$ and the latter~$r$, and generally they are not isomorphic when $\phi \not\cong \psi$.
\end{remark}

\subsubsection{Abelianness and $\bA$-finiteness} \label{SSS:AbAfin}
The Drinfeld module $\phi$ is abelian and $\sA$-finite, and naturally its extension to a Drinfeld module over $\LLhat_z$ is also abelian and $\bA$-finite. By taking direct sums $\phi^{\oplus \ell}$ is also abelian and $\bA$-finite. Since $\EE_t$ is isomorphic to $\phi^{\oplus \ell}$ over~$\LLhat_z$, it follows that $\EE_t$ is also abelian and $\bA$-finite.

Alternatively, since $\Theta_{\psi,z} \in \GL_r(\LLhat_z)$, it follows that $\EE_t$ is strictly pure in the sense of \cite{NamoijamP24}*{Ex.~3.38, Ex.~4.129} and is abelian and $\bA$-finite through the discussions there (see also \cite{HartlJuschka20}*{\S 2.5.2, p.~112}). There we see that if $\bs_1, \dots, \bs_{\ell}$ denote the standard basis vectors of $\Mat_{1 \times \ell}(\LLhat_z)$, then
\begin{gather*}
\{ \tau^j \bs_{i} : 1 \leqslant i \leqslant \ell,\ 0 \leqslant j \leqslant r-1 \} \subseteq \Mat_{1\times \ell}(\LLhat_z[\tau]) = \cM_{\EE}(\LLhat_z), \\
\{ \sigma^j \bs_{i} : 1 \leqslant i \leqslant \ell,\ 0 \leqslant j \leqslant r-1 \} \subseteq \Mat_{1\times \ell}(\LLhat_z[\sigma]) = \cN_{\EE}(\LLhat_z),
\end{gather*}
are $\LLhat_z[t]$-bases of $\cM_{\EE}(\LLhat_z)$ and $\cN_{\EE}(\LLhat_z)$.
We note that $\EE$ has dimension~$\ell$ and rank~$r \ell$.

\subsubsection{Exponentials, logarithms, and period lattices} \label{SSS:explogper}
Suppose that $\Exp_{\phi} = \sum_{i \geqslant 0} B_i \tau^i$, $\Log_{\phi} = \sum_{i \geqslant 0} C_i \tau^i \in \power{K}{\tau}$ are the exponential and logarithm series of $\phi$. Then we have
\begin{align} \label{E:ExpLogEE}
\Exp_{\EE} &= \Upsilon_{\psi,z}^{-1} \cdot \Exp_{\phi^{\oplus \ell}} \cdot \Upsilon_{\psi,z}
= \sum_{i=0}^{\infty} B_i \Theta_{\psi,z} \Theta_{\psi,z}^{(1)} \cdots \Theta_{\psi,z}^{(i-1)} \tau^i, \\
\Log_{\EE} &= \Upsilon_{\psi,z}^{-1} \cdot \Log_{\phi^{\oplus \ell}} \cdot \Upsilon_{\psi,z}
= \sum_{i=0}^{\infty} C_i \Theta_{\psi,z} \Theta_{\psi,z}^{(1)} \cdots \Theta_{\psi,z}^{(i-1)} \tau^i, \notag
\end{align}
both of which are in $\power{\Mat_{\ell}(K[z])}{\tau}$.

We let $\Lambda_{\phi} \subseteq \LLhat_z$ be the kernel of $\Exp_{\phi} : \LLhat_z \to \LLhat_z$, and we fix generators $\pi_1, \dots, \pi_r \in \C$ so that  $\Lambda_\phi = \AAA \pi_1 + \cdots + \AAA \pi_r$ as in Theorem~\ref{T:constant}(c). Because $\phi^{\oplus \ell}$ is simply a direct sum, the following properties then follow from Theorem~\ref{T:constant}.
\begin{itemize}
\item $\Exp_{\phi^{\oplus \ell}} : \LLhat_z^{\ell} \to \LLhat_z^{\ell}$ is surjective.
\item $\Lambda_{\phi^{\oplus \ell}} \assign \ker \Exp_{\phi^{\oplus \ell}} \subseteq \LLhat_z^{\ell}$ is free of rank $r\ell$ over $\AAA$, and $\Lambda_{\phi^{\oplus \ell}} = \Lambda_{\phi}^{\oplus \ell}$.
\end{itemize}
From \eqref{E:ExpLogEE} the following also hold.
\begin{itemize}
\item $\Exp_{\EE} : \LLhat_z^{\ell} \to \LLhat_z^{\ell}$ is surjective.
\item $\Lambda_{\EE} \assign \ker \Exp_{\EE} \subseteq \LLhat_z^{\ell}$ satisfies
\begin{equation}
\Lambda_{\EE} = \Upsilon_{\psi,z}^{-1}\, \Lambda_{\phi}^{\oplus \ell}.
\end{equation}
\end{itemize}
Notably we see that $\EE$ is uniformizable, and for $\nu \in \sA$, $\nu \neq 0$, we have isomorphisms of $\bA$-modules,
\begin{equation} \label{E:EEnu1}
\EE[\nu] \cong \Lambda_{\EE}/\nu \Lambda_{\EE} \cong \bigl( \bA / \nu \bA \bigr)^{r \ell},
\end{equation}
where $\bA$ operates on $\Lambda_{\EE} \subseteq \Lie(\EE)(\LLhat_z)$ through scalar multiplication by $\AAA$. In particular, by Corollary~\ref{C:Atorsion}, $\EE[\nu]$ has full dimension. Furthermore,
\begin{equation} \label{E:EEnu2}
\EE[\nu] = \Upsilon_{\psi,z}^{-1} \cdot \phi^{\oplus \ell}[\nu],
\end{equation}
which also makes these isomorphisms explicit.

\subsubsection{Adjoint of $\EE$}
The adjoint $\EE^* \assign \EE^*(\phi \times \psi) : \bA \to \Mat_{\ell}(\HH[\tau])$ of $\EE$ is defined over $\HH = K^{\perf}(z)$, where $K^{\perf}$ is the perfection of $K$. By~\eqref{E:EEtdef}, 
\begin{equation}
\EE^*_t = \Upsilon_{\psi,z}^{\tr} \cdot \smash{\bigl( \phi_t^* \bigr)}^{\oplus \ell} \cdot \bigl( \Upsilon_{\psi,z}^{\tr} \bigr)^{-1}.
\end{equation}
Now for $\nu \in \sA$, $\nu \neq 0$, we have that $\phi^*[\nu] \cong (\sA/\nu\sA)^r$ (see \cite{Goss}*{\S 4.14}), and so it follows from the considerations in \S\ref{SSS:explogper} that over $\LLhat_z$ we have $(\phi^{\oplus \ell})^*[\nu] \cong (\bA/\nu\bA)^{r \ell}$. Thus by Corollary~\ref{C:Atorsion}, $(\phi^{\oplus \ell})^*[\nu]$ has full dimension. Through the isomorphism $\Upsilon_{\psi,z} : \EE \to \phi^{\oplus \ell}$, it follows that $\EE^*[\nu]$ has full dimension. Furthermore, to make things explicit,
\[
\EE^*[\nu] = \Upsilon_{\psi,z}^{\tr} \cdot \bigl(\phi^{\oplus \ell} \bigr){}^*[\nu].
\]

\subsubsection{Characteristic polynomials of Frobenius}
Let $f \in A_+$ be irreducible of degree $d$, and let $\lambda \in \sA_+$ be irreducible so that $\lambda(\theta) \neq f$. Let
\[
\rho_{\phi,\lambda} : \Gal(K^{\sep}/K) \to \Aut (T_{\lambda}(\phi)) \cong \GL_r(\sA_{\lambda})
\]
be the Galois representation associated $T_{\lambda}(\phi)$, and similarly define $\rho_{\psi,\lambda} : \Gal(K^{\sep}/K) \to \Aut(T_{\lambda}(\psi))$ for $\psi$. As outlined in \S\ref{SS:munu}, if $\alpha_f \in \Gal(K^{\sep}/K)$ is a Frobenius element for~$f$, then because $\phi$ and $\psi$ have good reduction at~$f$,
\[
\Char(\alpha_f,T_{\lambda}(\phi),X)|_{t=\theta} = P_{\phi,f}(X), \quad
\Char(\alpha_f,T_{\lambda}(\psi),X)|_{t=\theta} = P_{\psi,f}(X),
\]
both of which lie in $A[X]$. Now $\lambda$ is also irreducible in $\bA = \FF_q(z)[t]$, and from \eqref{E:EEnu1},
\[
T_{\lambda}(\EE) = \varprojlim \EE[\lambda^m] \cong \bA_{\lambda}^{r\ell},
\]
which induces another Galois representation
\[
\rho_{\EE,\lambda} : \Gal(K^{\sep}/K) \to \Aut(T_{\lambda}(\EE)) \cong \GL_{r\ell}(\bA_{\lambda}).
\]
We have the following result that will be fundamental for analyzing $L(\EE^{\vee},0)$ in \S\ref{S:Lseries}--\S\ref{S:convolutions}. Recall the notation $(P\otimes Q)(X)$ from Definition~\ref{D:polytensor}.

\begin{proposition} \label{P:EEcharpolys}
Let $f \in A_+$ be irreducible, $f \neq \theta$, and let $\lambda \in \sA_+$ be irreducible so that $\lambda(\theta) \neq f$. For the $t$-module $\EE = \EE(\phi \times \psi)$, let
\begin{align*}
P_{\phi,f}(X) &= \Char(\tau^d,T_{\lambda}(\ophi),X)|_{t=\theta} \in A[X], \\
P_{\psi,f(z)}^{\vee}(X) &= \Char(\tau^d,T_{\lambda}(\opsi)^{\vee},X)|_{t=z} \in \FF_q(z)[X],
\end{align*}
as in \eqref{E:PfandPfvee}. Then
\[
\Char(\alpha_f,T_{\lambda}(\EE(\phi\times \psi)),X)|_{t=\theta} = \bigl( P_{\phi,f} \otimes P_{\psi,f(z)}^{\vee} \bigr)(X) \in \AAA[X].
\]
\end{proposition}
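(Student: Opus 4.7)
The plan is to exhibit the Galois representation of $\Gal(K^{\sep}/K)$ on $T_\lambda(\EE(\phi\times\psi))$ as a Kronecker product whose factors produce the desired polynomial tensor product. The key input is the $\LLhat_z$-isomorphism $\Upsilon_{\psi,z}\colon\EE\iso\phi^{\oplus\ell}$ of \S\ref{SS:EEprops} together with the rigidity of its Galois cocycle.

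First, I would choose an $\sA_\lambda$-basis $\pi_\lambda^{(1)},\dots,\pi_\lambda^{(r)}$ of $T_\lambda(\phi)$, so that by \eqref{E:EEnu2} the elements $\bw_{k,i}\assign\pi_\lambda^{(k)}\cdot(\Upsilon_{\psi,z}^{-1})_{*,i}$ for $1\leq k\leq r$, $1\leq i\leq\ell$, give a $\bA_\lambda$-basis of $T_\lambda(\EE)$. A direct computation of the Galois action via $\alpha(\Upsilon_{\psi,z}^{-1})=\Upsilon_{\psi,z}^{-1}B(\alpha)$, with $B(\alpha)\assign\Upsilon_{\psi,z}\alpha(\Upsilon_{\psi,z})^{-1}$, yields
\[
\alpha(\bw_{k,i})=\sum_{k',i'}\rho_{\phi,\lambda}(\alpha)_{k'k}\,B(\alpha)_{i'i}\,\bw_{k',i'},
\]
identifying $\rho_{\EE,\lambda}(\alpha)$ with the Kronecker product $\rho_{\phi,\lambda}(\alpha)\otimes B(\alpha)$. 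Consequently
\[
\Char(\alpha_f,T_\lambda(\EE),X)|_{t=\theta}=\bigl(P_{\phi,f}\otimes\Char(B(\alpha_f),X)\bigr)(X).
\]

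It then remains to establish $\Char(B(\alpha_f),X)=P^\vee_{\psi,f(z)}(X)$. Because $\Gal(K^{\sep}/K)$ fixes $\Theta_{\psi,z}\in\Mat_\ell(A[z])$, the functional equation $\Upsilon_{\psi,z}^{(1)}=\Upsilon_{\psi,z}\Theta_{\psi,z}$ forces $U(\alpha)\assign B(\alpha)^{-1}=\alpha(\Upsilon_{\psi,z})\Upsilon_{\psi,z}^{-1}$ to satisfy $U(\alpha)^{(1)}=U(\alpha)$, placing $U(\alpha)\in\GL_\ell(\FF_q(z))$. The matrix $U(\alpha_f)$ records how the $q^d$-th power Frobenius permutes the columns of $\Upsilon_{\psi,z}$, which are built from the Anderson generating functions of a period basis $\pi_1,\dots,\pi_\ell$ of $\psi$. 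Tracking the Galois action through these Taylor expansions, whose coefficients are $\theta$-power torsion points of $\psi$, and matching with the Frobenius action on $T_\lambda(\opsi)$ with $t$ specialized to $z$ (as in Corollary~\ref{C:charpolys} applied to $\opsi$) yields $\Char(U(\alpha_f),X)=P_{\psi,f(z)}(X)$. Passing to $B=U^{-1}$ and applying the reciprocal-polynomial relation \eqref{E:PfandPfvee} between $P_\psi$ and $P_\psi^\vee$ then gives $\Char(B(\alpha_f),X)=P^\vee_{\psi,f(z)}(X)$. The principal obstacle is precisely this last identification: bridging the rigid-analytic cocycle presentation of $U(\alpha_f)\in\GL_\ell(\FF_q(z))$ with the $\lambda$-adic Frobenius on $T_\lambda(\psi)$ after the variable substitution $t\mapsto z$, while tracking the dualization that produces the ``$\vee$.''
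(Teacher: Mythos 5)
Your proposal follows essentially the same route as the paper's proof: conjugate by $\Upsilon_{\psi,z}$ to exhibit $\rho_{\EE,\lambda}$ as a Kronecker product of $\rho_{\phi,\lambda}$ with the cocycle $B(\alpha)=\Upsilon_{\psi,z}\,\alpha(\Upsilon_{\psi,z})^{-1}$, then recognize the second factor. You correctly observe that $U(\alpha)\assign B(\alpha)^{-1}=\alpha(\Upsilon_{\psi,z})\Upsilon_{\psi,z}^{-1}$ is $\tau$-fixed, hence lies in $\GL_\ell(\FF_q(z))$.

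The genuine gap is exactly the step you yourself flag as the ``principal obstacle'': establishing $\Char(U(\alpha_f),X)=P_{\psi,f(z)}(X)$. You sketch a plan (``tracking the Galois action through these Taylor expansions'') but do not carry it out. The paper does not rederive this; it invokes \cite{CP12}*{Cor.~3.2.4}, which states that, after a suitable normalization of an $\sA_t$-basis of $T_t(\psi)$, one has $\alpha(\Upsilon_{\psi,t})=\rho_{\psi,t}(\alpha)\,\Upsilon_{\psi,t}$, i.e.\ $U(\alpha)|_{z\leftarrow t}=\rho_{\psi,t}(\alpha)$. The paper first uses the independence of $\Char(\alpha_f,T_\lambda(\psi),X)$ from $\lambda$ to set $\lambda=t$, and then specializes $t\mapsto z$ to land in $\GL_\ell(\FF_q(z))$; your write-up leaves $\lambda$ generic and so does not make clear which Tate module the cocycle $U(\alpha_f)$ is being compared to, nor why one may specialize. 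Moreover, appealing to Corollary~\ref{C:charpolys} ``applied to $\opsi$'' does not supply the missing link: that corollary computes $\Char(\tau^d,T_\lambda(\opsi),X)$ from the reduced $t$-motive matrix $\rG$, but says nothing about how the Galois cocycle on the rigid analytic trivialization $\Upsilon_{\psi,z}$ relates to the $\lambda$-adic Frobenius. In short, the architecture of your argument matches the paper's, but the pivot from the cocycle on $\Upsilon$ to the representation on $T_\lambda(\psi)$ needs either a self-contained proof via Anderson generating functions (which you only sketch) or, as the paper does, a citation of \cite{CP12}*{Cor.~3.2.4} together with the $\lambda=t$ normalization.
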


\begin{proof}
By \eqref{E:EEnu2}, for $\alpha \in \Gal(K^{\sep}/K)$ and $(\bv_1, \dots, \bv_\ell)^{\tr} \in T_{\lambda}(\phi^{\oplus \ell})$, we have
\begin{equation} \label{E:alphaaction}
\alpha \left( \Upsilon_{\psi,z}^{-1} \cdot \begin{pmatrix} \bv_1 \\ \vdots\\ \bv_\ell \end{pmatrix} \right) = \alpha(\Upsilon_{\psi,z})^{-1} \cdot \begin{pmatrix} \alpha(\bv_1) \\ \vdots\\ \alpha(\bv_\ell) \end{pmatrix}.
\end{equation}
As in \S\ref{SS:munu}, the characteristic polynomial of $\alpha_f$ acting on $T_{\lambda}(\phi)$ coincides with $P_{\phi,f}(X)$ and is independent of the choice of $\lambda$, so we can take $\lambda = t$. If we write $\rho_{\psi,z}(\alpha) \assign \rho_{\psi,t}(\alpha)|_{t=z} \in \GL_{\ell}(\power{\FF_q}{z})$, then we see from \cite{CP12}*{Cor.~3.2.4} that
\[
\alpha(\Upsilon_{\psi,z}) = \rho_{\psi,z}(\alpha) \cdot \Upsilon_{\psi,z},
\]
having chosen a basis of $T_{t}(\psi)$ appropriately as in \cite{CP12}*{\S 3.2}. Continuing with \eqref{E:alphaaction},
\[
\rho_{\EE,t}(\alpha) = \left( \begin{array}{c|c|c}
&& \\[-10pt]
(\rho_{\psi,z}(\alpha)^{-1})_{11} \cdot \rho_{\phi,t}(\alpha) & \phantom{\cdots} \cdots \phantom{\cdots} & (\rho_{\psi,z}(\alpha)^{-1})_{1\ell} \cdot \rho_{\phi,t}(\alpha) \\[5pt] \hline && \\[-10pt]
\vdots &  & \vdots \\[5pt] \hline && \\[-5pt]
(\rho_{\psi,z}(\alpha)^{-1})_{\ell 1} \cdot \rho_{\phi,t}(\alpha) &  \cdots & (\rho_{\psi,z}(\alpha)^{-1})_{\ell\ell} \cdot \rho_{\phi,t}(\alpha) 
\end{array}
\right),
\]
which we take in $\GL_{r\ell}(\power{\FF_q}{t,z})$. Thus we have an isomorphism,
\[
\rho_{\EE,t} \cong \rho_{\psi,z}^{-1} \otimes \rho_{\phi,t},
\]
of representations over $\power{\FF_q}{t,z}$. From this we see that 
\begin{align*}
\Char(\alpha_f,T_{t}(\EE),X) &= \Char(\alpha_f,T_{t}(\psi)^{\vee},X)|_{t=z} \otimes \Char(\alpha_f,T_{t}(\phi),X) \\
&= P_{\psi,f(z)}^{\vee} (X) \otimes P_{\phi,f}(X). \qedhere
\end{align*}
\end{proof}

\begin{definition} \label{D:EEcharpolys}
For $f \in A_+$ irreducible (including $f=\theta$) and $\EE = \EE(\phi \times \psi)$ as above, Proposition~\ref{P:EEcharpolys} prompts us to define
\[
\bP_f(X) \assign \bigl( P_{\phi,f} \otimes P_{\psi,f(z)}^{\vee} \bigr)(X) \in \AAA[X],
\quad
\bP_f^{\vee}(X) \assign \bigl( P_{\phi,f}^{\vee} \otimes P_{\psi,f(z)}\bigr)(X) \in K[z][X].
\]
If we need to emphasize the dependence on $\phi$ and $\psi$, we will write $\bP_{\phi \times \psi,f}(X) = \bP_{f}(X)$.
We will see in \S\ref{SS:EEmodf} that, for $f \neq \theta$, $\bP_f(X)$ coincides with the characteristic polynomial of $\tau^d$ acting on the $\lambda$-adic Tate module of $\EE$ modulo~$f$, and $\bP_f^{\vee}(X)$ arises from $\tau^d$ acting on the dual. When $f=\theta$, its role in determining the $\AAA$-order of $\EE(\FF_\theta(z))$ is checked directly (see Theorem~\ref{T:EEAord}). We do gain a connection between $\EE(\phi \times \psi)$ and $\EE(\psi \times \phi)$, in that
\begin{equation} \label{E:swap}
\bP_{\psi \times \phi,f}(X) = \bP_{\phi \times \psi,f}^{\vee}(X)\big|_{z \leftrightarrow \theta}\, , \quad
\bP_{\psi \times \phi,f}^{\vee}(X) = \bP_{\phi \times \psi,f}(X)\big|_{z \leftrightarrow \theta}\, ,
\end{equation}
where ``$z\leftrightarrow \theta$'' indicates that the roles of $z$ and $\theta$ have been swapped.
\end{definition}

\begin{example} \label{Ex:Carlitz3}
\emph{Twisting a Drinfeld module by the Carlitz module}. Let $\EE = \EE(\sC \times \sC)$, where $\sC$ is the Carlitz module. Then $\EE$ has dimension~$1$ and rank~$1$, and by \eqref{E:EEtdef},
\[
\EE_t = \omega_z^{-1} \cdot \sC_t \cdot \omega_z = \theta + (z-\theta)\tau.
\]
Thus $\EE$ was studied extensively by Angl\`{e}s, Pellarin, and Tavares Ribeiro~\cites{AnglesPellarin15,APT16}, as well as its multivariable versions. In particular, $\EE[\nu] = \omega_z^{-1} \sC[\nu]$ for each $\nu \in \sA$. We know that $P_{\sC,f}(X) = X - f$ (e.g., see \cite{Gekeler91}*{Ex.~5.11}), and so by Proposition~\ref{P:EEcharpolys},
\begin{align} \label{E:PCxC}
\bP_{\sC \times \sC,f}(X) &=  (X - f) \otimes \biggl( X - \frac{1}{f(z)} \biggr) = X - \frac{f}{f(z)}, \\
\bP_{\sC \times \sC,f}^{\vee}(X) &= \biggl( X - \frac{1}{f} \biggr) \otimes (X - f(z))  = X - \frac{f(z)}{f}. \notag
\end{align}

For $\phi$ a Drinfeld module as in \eqref{E:phidef}, we can also form $\EE = \EE(\phi \times \sC)$, given by
\[
\EE_t = \omega_z^{-1} \cdot \phi_t \cdot \omega_z = \theta + \kappa_1(z-\theta) \tau + \cdots + \kappa_r (z-\theta) \cdots (z-\theta^{q^{r-1}}) \tau^r.
\]
Then $\EE$ is a deformation of $\phi$ defined by Angl\`{e}s and Tavares Ribeiro~\cite{AT17}*{\S 3} and also investigated by Gezmi\c{s}~\cite{Gezmis19}. A short calculation shows
\begin{align} \label{E:PphixC}
\bP_{\phi \times \sC,f}(X) &= P_{\phi,f}(X) \otimes \biggl(X - \frac{1}{f(z)} \biggr) = \frac{P_{\phi,f}(f(z)X)}{f(z)^r}, \\
\bP_{\phi \times \sC,f}^{\vee}(X) &= P_{\phi,f}^{\vee}(X) \otimes (X - f(z)) = f(z)^{r} P_{\phi,f}^{\vee}\biggl( \frac{X}{f(z)} \biggr). \notag
\end{align}
\end{example}

\begin{example}
\emph{Twisting the Carlitz module by a Drinfeld module}. Letting $\phi : \sA \to A[\tau]$ be a Drinfeld module of rank~$r$ defined as in \eqref{E:phidef}, we take $\EE = \EE(\sC \times \phi)$, given by
\[
\EE_t = \Upsilon_{\phi,z}^{-1} \cdot \sC_t \cdot \Upsilon_{\phi,z} = \theta \rI_r + \Theta_{\phi,z} \tau^r \in \Mat_{r}(\AAA[\tau]).
\]
It follows from~\eqref{E:swap} and~\eqref{E:PphixC} that
\begin{equation} \label{E:PCxphi}
\bP_{\sC \times \phi,f}(X) = f^{r} P_{\phi,f(z)}^{\vee}\biggl( \frac{X}{f} \biggr), \quad
\bP_{\sC \times \phi,f}^{\vee}(X) = \frac{P_{\phi,f(z)}(fX)}{f^r}.
\end{equation}
\end{example}

\subsection{Reduction modulo \texorpdfstring{$f$}{f} and \texorpdfstring{$\mathbb{A}$}{A}-orders} \label{SS:EEmodf}
We continue with the notation of the previous section, and in particular have Drinfeld modules $\phi$ and $\psi$ defined as in~\eqref{E:phidef} and~\eqref{E:psidef} and $\EE = \EE(\phi \times \psi)$ as in \eqref{E:EEtdef}. We fix $f \in A_+$ irreducible of degree~$d$. We let
$\ophi : \sA \to \FF_f[\tau]$, $\opsi : \sA \to \FF_f[\tau]$,
denote the reductions modulo $f$. As both have everywhere good reduction, we see that $\ophi$ has rank~$r$ and $\opsi$ rank~$\ell$. Likewise, the entries of coefficients of $\EE_t$ are all in $\AAA$, so we can form the reduction with entries in $\AAA/f\AAA \cong \FF_f(z)$,
\[
\oEE : \bA \to \Mat_{\ell}(\FF_f(z)[\tau]).
\]
In general ``$\overline{\beta}$'' will denote reduction of an element or object $\beta$ modulo~$f$, and so,
\begin{equation} \label{E:oEEtdef}
\oEE_t = \otheta \rI_{\ell} + \okappa_1 \oTheta_{\psi,z} \tau + \cdots + \okappa_r
\oTheta_{\psi,z} \oTheta_{\psi,z}^{(1)} \cdots \oTheta_{\psi,z}^{(r-1)} \tau^r,
\end{equation}
where $\okappa_r = \kappa_r \in \FF_q^{\times}$. We note from \eqref{E:Gammadef} that $\det \oTheta_{\psi,z} = (-1)^{\ell}(z-\otheta)/\eta_{\ell} \neq 0$, and so $\oTheta_{\psi,z} \in \GL_{\ell}(\FF_f(z))$. Thus as in~\S\ref{SSS:AbAfin} we see that $\cM_{\oEE}(\FF_f(z))$ and $\cN_{\oEE}(\FF_f(z))$ both have rank $r\ell$ as $\FF_f(z)[t]$-modules. In this sense, $\EE$ also has everywhere good reduction.

Our main result in this section is the following theorem for determining $\Aord{\oEE(\FF_f(z))}{\AAA} = \Aord{\oEE(\FF_f(z))}{\bA} \big|_{t=\theta}$ in terms of the value
$\bP_{f}(1)$, where $\bP_f(X) \in \AAA[X]$ is taken from Definition~\ref{D:EEcharpolys}. Define completely multiplicative functions $\chi_{\phi}$, $\chi_{\psi} : \sA_+ \to \FF_q^{\times}$ as in~\eqref{E:chidef}.

\begin{theorem} \label{T:EEAord}
Let $f \in \sA_+$ be irreducible. For $\oEE : \bA \to \Mat_{\ell}(\FF_f(z)[\tau])$ defined above,
\[
\bigAord{\soEE(\FF_f(z))}{\AAA} = (-1)^{r\ell} \chi_{\phi}(f)^{\ell}\, \ochi_{\psi}(f)^r\cdot f(z)^r \cdot \bP_f(1) = \frac{\bP_f(1)}{\bP_f(0)} \cdot f^{\ell}.
\]
\end{theorem}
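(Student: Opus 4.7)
The plan is to split the first equality into the cases $f\neq\theta$ and $f=\theta$, and to derive the second equality by an elementary computation. For the second equality, I will write $\bP_f(X)=\prod_{i,j}(X-\alpha_i\beta_j)$ with $\alpha_i$ the roots of $P_{\phi,f}$ and $\beta_j$ the roots of $P_{\psi,f(z)}^{\vee}$; the constant terms in~\eqref{E:PfandPfvee} give $\prod_i\alpha_i=\ochi_{\phi}(f)f$ and $\prod_j\beta_j=\chi_{\psi}(f)/f(z)$, so $\bP_f(0)=(-1)^{r\ell}\ochi_{\phi}(f)^{\ell}\chi_{\psi}(f)^{r}f^{\ell}/f(z)^{r}$, and algebraic rearrangement produces the second equality.

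For the first equality when $f\neq\theta$, I will view $\soEE$ as an Anderson $t$-module over the $\cA$-field $\laurent{\FF_f}{z}$, taking $Z=\{z\}$ and $\FZ{\FF_q}{Z}=\laurent{\FF_q}{z}$. The scalar extension from $\FF_f(z)$ to $\laurent{\FF_f}{z}$ does not alter $\Aord{\soEE(\FF_f(z))}{\AAA}$, which is the characteristic polynomial of multiplication-by-$t$. Fixing $\lambda\in\sA_+$ irreducible with $\lambda(\theta)\neq f$, I will verify the three hypotheses of Definition~\ref{D:Tatemodules}. Abelianness and $\cA$-finiteness of rank~$r\ell$ follow from~\S\ref{SSS:AbAfin} together with the fact that the leading $\tau$-coefficient $\kappa_r\oTheta_{\psi,z}\cdots\oTheta_{\psi,z}^{(r-1)}$ remains invertible in $\laurent{\oFF_f}{z}$. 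The essential step is full dimension: since $f\neq\theta$, the reduction $\oUpsilon_{\psi,z}$ lies in $\GL_{\ell}(\laurent{\oFF_f}{z})$ and intertwines $\soEE$ with $\ophi^{\oplus\ell}$, so the standard full-dimension statement for Drinfeld modules over finite fields applied to $\ophi[\lambda^m]$ transfers to $\soEE[\lambda^m]$, with the analogous statement for adjoints. Theorem~\ref{T:charpoly1} and Proposition~\ref{P:EEcharpolys} then give $\Aord{\soEE(\FF_f(z))}{\AAA}=\gamma\cdot\bP_f(1)$, and $\gamma$ is forced by monicity in $\theta$: a leading-term analysis using $\deg_\theta\alpha_i=d/r$ (from~\S\ref{SSS:Pf}) and $\theta$-independence of the $\beta_j$ pins down the leading coefficient of $\bP_f(1)$ as $(-1)^{r\ell}\ochi_{\phi}(f)^{\ell}\chi_{\psi}(f)^{r}/f(z)^{r}$, yielding $\gamma=(-1)^{r\ell}\chi_{\phi}(f)^{\ell}\ochi_{\psi}(f)^{r}f(z)^{r}$ as required.

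When $f=\theta$, Theorem~\ref{T:charpoly1} is unavailable because $\oUpsilon_{\psi,z}$ has no well-behaved reduction, so I will compute directly. Here $\FF_\theta=\FF_q$, $\oTheta_{\psi,z}\in\Mat_\ell(\FF_q[z])$ (so $\oTheta_{\psi,z}^{(j)}=\oTheta_{\psi,z}$), and since $\tau$ acts trivially on $\FF_q(z)$, multiplication by $t$ on $\soEE(\FF_q(z))=\FF_q(z)^{\ell}$ is the matrix $M\assign\sum_{i=1}^{r}\okappa_i\oTheta_{\psi,z}^{i}$, giving $\Aord{\soEE(\FF_q(z))}{\AAA}=\det(\theta\rI_{\ell}-M)$. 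The companion form of $\oTheta_{\psi,z}$ shows that its eigenvalues $\beta_j$ over $\oFF_q(z)$ are the reciprocals of the roots $\gamma_j$ of $P_{\psi,z}^{\vee}$. Using the Tate-module identities $\okappa_i=\kappa_r c_i$ and $\eta_i(0)=\eta_\ell c'_i$ (with $c_i$, $c'_i$ the coefficients of $P_{\phi,\theta}$, $P_{\psi,\theta}$; these arise from $t$ acting as $\sum\okappa_i\tau^i$ on $T_\lambda(\ophi)$ and as $\sum\eta_i(0)\tau^i$ on $T_\lambda(\opsi)$), I will reduce $\det(\theta\rI_{\ell}-M)$ to $(-\kappa_r)^{\ell}\prod_jP_{\phi,\theta}(\beta_j)$ and then rewrite $\prod_{i,j}(\beta_j-\alpha_i)=(\prod_j\beta_j)^{r}\bP_\theta(1)$, collecting signs and characters to match the claim.

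The hard part will be twofold: first, making rigorous the reduction claim $\oUpsilon_{\psi,z}\in\GL_\ell(\laurent{\oFF_f}{z})$ for $f\neq\theta$, which amounts to an integrality statement for the Anderson generating functions of $\psi$ away from $\theta$; and second, executing the $f=\theta$ eigenvalue computation cleanly while tracking the several characters, degrees, and sign conventions. The remaining work---such as verifying that $\AAA$-orders are preserved under $\FF_f(z)\hookrightarrow\laurent{\FF_f}{z}$---is routine.
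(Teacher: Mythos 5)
Your proposal follows the paper's proof essentially step for step: the same split into $f\neq\theta$ and $f=\theta$, the same use of Takahashi's integrality of good-reduction torsion to get $\Upsilon_{\psi,z}\in\GL_\ell(\power{O^{\nr}_f}{z})$ and hence transfer full dimension from $\smash{\ophi}^{\oplus\ell}$ to $\oEE$, the same application of Theorem~\ref{T:charpoly1} together with Proposition~\ref{P:EEcharpolys}, the same weighted-degree analysis (via $\deg_\theta\alpha_i=d/r$) to pin down $\gamma$, and the same flat base change to equate the $\laurent{\FF_q}{z}[t]$- and $\bA$-orders. The only variation is at $f=\theta$: you compute with the $\ell\times\ell$ matrix $M=\sum_{i}\okappa_i\oTheta_{\psi,z}^{i}$ giving the $t$-action directly on $\oEE(\FF_q(z))$, whereas the paper works with the $r\ell\times r\ell$ matrix representing $\tau$ on the $t$-motive and conjugates it by a block diagonal matrix to $\oGamma_\phi\otimes\oTheta_{\psi,z}^{-1}$; the two computations are equivalent and lead to the same $\bigl(P_{\phi,t}\otimes P_{\psi,z}^{\vee}\bigr)(1)$, with yours being slightly more hands-on. (One small imprecision: your identity $\okappa_i=\kappa_r c_i$ holds for $1\leqslant i\leqslant r-1$ but not for $i=0$, where instead $\kappa_r c_0=-\theta$; this is consistent with the identity $\sum_{i\geqslant 1}\okappa_i X^i-\theta=\kappa_r P_{\phi,\theta}(X)$ that your argument actually uses.)
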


We recall that $\ochi_{\psi}$ is the multiplicative inverse of $\chi_{\psi}$ and \emph{not} the reduction modulo~$f$. This conflict of notation is isolated to the characters $\chi_{\phi}$ and $\chi_{\psi}$ and should not cause much confusion. The proof of this theorem takes the rest of the section and is split into the two cases where $f \neq \theta$ and $f = \theta$.

Assume for the time being that $f \neq \theta$. Let $K_f^{\nr}$ be the maximal unramified and separable extension of $K_f$, and let $K^{\nr}_f \supseteq O^{\nr}_f \supseteq M^{\nr}_f$ be its subring of $f$-integral elements and its maximal ideal. Because $\phi$ and $\psi$ have everywhere good reduction, we see from \cite{Takahashi82}*{Thm.~1} (see also \cite{Goss}*{Thm.~4.10.5}) that
\[
\phi[t^m], \ \psi[t^m] \subseteq O^{\nr}_f, \quad \forall\, m \geqslant 1.
\]
Moreover, the natural reduction maps
\begin{equation} \label{E:phipsitorsred}
\phi(O^{\nr}_f)[t^m] = \phi[t^m] \iso \ophi[t^m], \quad \psi(O^{\nr}_f)[t^m] = \psi[t^m] \iso \opsi[t^m],
\quad \forall\, m \geqslant 1,
\end{equation}
are $\sA$-module isomorphisms (e.g., see \cite{Takahashi82}*{\S 2}).
Thus for each $i=1, \dots, \ell$, the Anderson generating functions $g_1, \dots, g_{\ell}$ associated to $\psi$ as in Example~\ref{Ex:Drinfeld2} each satisfy $g_i \in \power{O^{\nr}_f}{t}$. Furthermore, since $\det \Upsilon_{\psi}^{(1)} = c(t-\theta) \det \Upsilon_{\psi}$, it follows that $\det \Upsilon_{\psi} = c' \omega$ for some $c' \in \oFF_q$ (e.g., see \citelist{ \cite{EP14}*{\S 7} \cite{GezmisP19}*{Eq.~(6.3.2)} \cite{NamoijamP24}*{Prop.~4.48}}). By the formulation $\omega = \sum_{m\geqslant 0} \exp_{\sC}(\tpi/\theta^{m+1}) t^m$ in \eqref{E:omegadef}, it follows that $\omega \in \power{O^{\nr}_f}{t}^{\times}$, since $\exp_{\sC}(\tpi/\theta) = (-\theta)^{1/(q-1)} \in (O^{\nr}_f)^{\times}$. It follows that
\[
\Upsilon_{\psi,z} \in \GL_{\ell} \bigl( \power{O^{\nr}_f}{z} \bigr),
\]
and therefore from~\eqref{E:EEnu2},
\begin{equation} \label{E:EEtm}
\EE[t^m] = \Upsilon_{\psi,z}^{-1} \cdot \phi^{\oplus \ell}[t^m] \subseteq \EE\bigl( \laurent{O^{\nr}_f}{z} \bigr).
\end{equation}
By \eqref{E:phipsitorsred}, for each $m \geqslant 1$ we have an isomorphism $\phi^{\oplus \ell}(\laurent{O^{\nr}_f}{z})[t^m] \iso \smash{\ophi}^{\oplus \ell}(\laurent{\oFF_f}{z})[t^m]$ of $\laurent{\FF_q}{z}[t]$-modules, and the following diagram of $\laurent{\FF_q}{z}[t]$-modules commutes:
\begin{equation} \label{E:Ofreduction}
\begin{tikzcd}[column sep=large]
 \EE\bigl( \laurent{O^{\nr}_f}{z} \bigr)[t^m] \arrow{r} \arrow{d}{\Upsilon_{\psi,z}} & \oEE \bigl( \laurent{\oFF_f}{z} \bigr)[t^m] \arrow{d}{\oUpsilon_{\psi,z}} \\
\phi^{\oplus \ell}\bigl( \laurent{O^{\nr}_f}{z} \bigr)[t^m] \arrow{r}{\sim} & \smash{\ophi}^{\oplus \ell} \bigl( \laurent{\oFF_f}{z} \bigr)[t^m].
\end{tikzcd}
\end{equation}
The left-hand column is an isomorphism from~\eqref{E:EEnu2}. The map $\oUpsilon_{\psi,z}: \oEE(\laurent{\oFF_f}{z}) \to \smash{\ophi}^{\oplus \ell}(\laurent{\oFF_f}{z})$ is an isomorphism since $\Upsilon_{\psi,z} \in \GL_{\ell}(\power{O^{\nr}_f}{z})$, and so the inverse restricted to $t^m$-torsion, $\smash{\oUpsilon}_{\psi,z}^{-1}: \smash{\ophi}^{\oplus \ell}(\laurent{\oFF_f}{z})[t^m] \to \oEE(\laurent{\oFF_f}{z})[t^m]$, is also an isomorphism. Taking $\cA = \laurent{\FF_q}{z}[t]$ in Corollary~\ref{C:Atorsion} we see that
\[
\dim_{\laurent{\FF_q}{z}} \smash{\ophi}^{\oplus \ell}(\laurent{\oFF_f}{z})[t^m] = r \ell m,
\]
and so also $\dim_{\laurent{\FF_q}{z}} \oEE(\laurent{\oFF_f}{z})[t^m] = r \ell m$. Thus $\oEE(\laurent{\oFF_f}{z})[t^m]$ has full dimension. Moreover, Corollary~\ref{C:Atorsion} implies
\[
\oEE(\laurent{\oFF_f}{z})[t^m] \cong \biggl( \frac{\laurent{\FF_q}{z}[t]}{t^m \laurent{\FF_q}{z}[t]} \biggr)^{\oplus \ell},
\]
and each of the maps in \eqref{E:Ofreduction} is an isomorphism of $\laurent{\FF_q}{z}[t]$-modules. Since $\phi$ and $\psi$ are defined over $A$ and $\EE$ is defined over $\AAA$, it follows that these maps also commute with the $\Gal(\oFF_f/\FF_f)$-action.

\begin{proof}[Proof of Theorem~\ref{T:EEAord}. Case $f \neq \theta$]
By the discussion above, $\oEE(\laurent{\oFF}{z})[t^m]$ has full dimension for each $m \geqslant 1$. Therefore, Theorem~\ref{T:charpoly1} implies that
\[
\bigAord{\soEE(\laurent{\FF_f}{z})}{\laurent{\FF_q}{z}[t]} = \gamma \cdot
\Char(\tau^d, T_t(\oEE),1),
\]
where $\gamma \in \laurent{\FF_q}{z}^{\times}$ is chosen to make the expression monic in~$t$. On the other hand, because the maps in~\eqref{E:Ofreduction} commute with the Galois action, if we let $\alpha_f \in \Gal(K^{\sep}/K)$ be a Frobenius element, we have
\[
\Char(\tau^d, T_{t}(\oEE),X) = \Char(\alpha_f, T_{t}(\EE), X).
\]
Combining these findings with Proposition~\ref{P:EEcharpolys}, we see that
\begin{equation} \label{E:oEElaurent1}
\bigAord{\soEE(\laurent{\FF_f}{z})}{\laurent{\FF_q}{z}[\theta]} = \gamma \cdot \bP_f(1).
\end{equation}
Now as $\bP_f(X) = (P_{\phi,f} \otimes P_{\psi,f(z)}^{\vee})(X)$, it follows from \eqref{E:PfandPfvee} and Definition~\ref{D:polytensor} that the constant term of $\bP_{f}(X)$ is
\begin{equation} \label{E:bPf0}
\bP_f(0) = (-1)^{r \ell} \bigl( \ochi_{\phi}(f) f \bigr)^{\ell} \cdot \biggl( \frac{\chi_{\psi}(f)}{f(z)} \biggr)^r.
\end{equation}
Writing $\bP_f(X) = \sum_{i=0}^{r\ell} b_i X^i$, $b_i \in \bA[X]$ and
letting $c_0, \dots, c_{r-1} \in A$ be given as in~\eqref{E:Pfdef}, Definition~\ref{D:polytensor} implies that, for $0 \leqslant m \leqslant r\ell -1$, each $b_m$ is a polynomial in $c_0, \dots, c_{r-1}$ with coefficients in $\FF_q(z)$.
Assigning the weight $r-i$ to each $c_i$, then as formal expressions, each monomial in $c_0, \dots, c_{r-1}$ in $b_m$ has the same total weight $r\ell - m$. That is, if $c_0^{n_0} \dots c_{r-1}^{n_{r-1}}$ is a monomial in $b_m$, then
$\sum_{i=0}^{r-1} (r-i)n_i = r\ell - m$, and so by~\S\ref{SSS:Pf},
\[
\deg_{\theta} \bigl(c_0^{n_0} \cdots c_{r-1}^{n_{r-1}} \bigr) \leqslant \sum_{i=0}^{r-1} \frac{d}{r} \cdot n_i(r-i) = \frac{d}{r} (r \ell - m) = d \ell - \frac{dm}{r}.
\]
From~\eqref{E:bPf0}, this is an equality if $m=0$. On the other hand, this inequality implies,
\[
 0 < m \leqslant r\ell -1 \quad \Rightarrow \quad \deg_{\theta} b_m < d \ell.
\]
Therefore from \eqref{E:bPf0}, $\gamma = (-1)^{r \ell} \chi_{\phi}(f)^{\ell} \ochi_{\psi}(f)^r \cdot f(z)^r$. By \eqref{E:oEElaurent1} it remains to verify that $\Aord{\soEE(\laurent{\FF_f}{z}}{\laurent{\FF_q}{z}[t]} = \Aord{\soEE(\FF_f(z))}{\bA}$. However, suppose $\oEE(\FF_f(z)) \cong \bA/h_1\bA \oplus \cdots \oplus \bA/h_s \bA$ for monic $h_1, \dots, h_s \in \bA$. Since $\laurent{\FF_f}{z} \cong \laurent{\FF_q}{z} \otimes_{\FF_q} \FF_f \cong \laurent{\FF_q}{z} \otimes_{\FF_q(z)} \FF_f(z)$ as $\laurent{\FF_q}{z}$-vector spaces, we have an isomorphism of $\laurent{\FF_q}{z}[t]$-modules,
\[
\oEE(\laurent{\FF_f}{z}) \cong \laurent{\FF_q}{z} \otimes_{\FF_q(z)} \oEE(\FF_f(z)).
\]
It follows that
\[
\oEE(\laurent{\FF_f}{z}) \cong \frac{\laurent{\FF_q}{z}[t]}{h_1 \laurent{\FF_q}{z}[t]} \oplus \cdots \oplus \frac{\laurent{\FF_q}{z}[t]}{h_s \laurent{\FF_q}{z}[t]},
\]
as desired.
\end{proof}

In the case that $f=\theta$, the previous argument does not work because among other issues, (a) the coefficients of the Anderson generating functions $g_1, \dots, g_r$ in Example~\ref{Ex:Drinfeld2} may have some coefficients that are not $\theta$-integral and (b) furthermore $\omega_z \notin \power{O^{\nr}_{\theta}}{z}^{\times}$. However, this case can be checked directly.

\begin{proof}[Proof of Theorem~\ref{T:EEAord}. Case $f = \theta$]
In this case, $\otheta = 0$ and so
\[
\oGamma_{\phi} =\begin{pmatrix}
0 & 1 & \cdots & 0 \\
\vdots & \vdots & \ddots & \vdots \\
0 & 0 & \cdots & 1 \\
\okappa_r^{-1} t & -\okappa_1\okappa_r^{-1} & \cdots & -\kappa_{r-1} \okappa_r^{-1}
\end{pmatrix},
\quad
\oTheta_{\psi,z} = \begin{pmatrix}
0 & \cdots & 0 & z\oeta_{\ell}^{-1} \\
1 & \cdots & 0 & -\oeta_1 \oeta_\ell^{-1} \\
\vdots & \ddots & \vdots & \vdots \\
0 & \cdots & 1 & -\oeta_{\ell-1} \oeta_\ell^{-1}
\end{pmatrix},
\]
which are in $\Mat_{\ell}(\FF_q[t])$ and $\Mat_{\ell}(\FF_q[z])$ respectively. By Corollary~\ref{C:charpolys},
\[
P_{\phi,t}(X) = \Char(\oGamma_{\phi},X), \quad P_{\psi,z} = \Char(\oTheta_{\psi,z},X),
\]
where $P_{\phi,t}(X) = P_{\phi,\theta}(X)|_{\theta=t}$. Since $\oTheta_{\psi,z}^{(1)} = \oTheta_{\psi,z}$,
we have $\oTheta_{\psi,z} \smash{\oTheta}_{\psi,z}^{(1)} \cdots \smash{\oTheta}_{\psi,z}^{(i-1)} = \smash{\oTheta}_{\psi,z}^{i}$ for each $i \geq 1$, and so by \cite{NamoijamP24}*{Eq.~(4.130)},
if we let
\[
\Gamma \assign \begin{pmatrix}
0 & \rI_{\ell} & \cdots & 0 \\
\vdots & \vdots & \ddots & \vdots \\
0 & 0 & \cdots & \rI_{\ell} \\
\okappa_r^{-1} t \oTheta_{\psi,z}^{-r} & -\okappa_1 \okappa_r^{-1} \oTheta_{\psi,z}^{-r + 1} & \cdots & -\okappa_{r-1} \okappa_r^{-1} \oTheta_{\psi,z}^{-1}
\end{pmatrix},
\]
then $\Gamma$ represents multiplication by $\tau$ on $\cM_{\oEE}(\FF_q(z))$ as in \S\ref{SSS:AbAfin}. Furthermore, since $d=1$, we have $\rG = \Gamma$, and so by Corollary~\ref{C:kerandkerstar} and equations \eqref{E:Estarorder} and \eqref{E:detI-G},
\[
\bigAord{\soEE(\FF_q(z))}{\bA} = \gamma \cdot \Char(\Gamma,1),
\]
where $\gamma \in \FF_q(z)^{\times}$ is chosen to make this expression monic in~$t$. If we take the block diagonal matrix $B = \diag(\rI_{\ell}, \oTheta_{\psi,z}, \dots, \oTheta_{\psi,z}^{r-1}) \in \GL_{r\ell}(\FF_q(z))$, then one checks
\[
B \Gamma B^{-1} = \begin{pmatrix}
0 & \oTheta_{\psi,z}^{-1} & \cdots & 0 \\
\vdots & \vdots & \ddots & \vdots \\
0 & 0 & \cdots & \oTheta_{\psi,z}^{-1} \\
\okappa_r^{-1} t \oTheta_{\psi,z}^{-1} & -\okappa_1 \okappa_r^{-1} \oTheta_{\psi,z}^{-1} & \cdots & -\okappa_{r-1} \okappa_r^{-1} \oTheta_{\psi,z}^{-1}
\end{pmatrix}
= \oGamma_{\phi} \otimes \oTheta_{\psi,z}^{-1}.
\]
Thus, $\Char(\Gamma,X) = \Char( \oGamma_{\phi} \otimes \oTheta_{\psi,z}^{-1}, X)$,
and so $\Aord{\soEE(\FF_q(z))}{\bA} = \gamma \cdot (P_{\phi,t} \otimes P_{\psi,z}^{\vee})(1)$. Verifying that $\gamma = (-1)^{r\ell} \chi_{\phi}(f)^{\ell} \ochi(f)^{r}$ is exactly the same as in the $f \neq \theta$ case.
\end{proof}

\section{Convolutions of Goss and Pellarin \texorpdfstring{$L$}{L}-series} \label{S:Lseries}

In a series of articles \citelist{\cite{Goss79} \cite{Goss83} \cite{Goss92} \cite{Goss}*{Ch.~8}}, Goss defined and investigated function field valued $L$-series attached to Drinfeld modules and $t$-modules defined over finite extensions of~$K$. These $L$-functions possess a rich structure of special values, initiated by Carlitz~\cite{Carlitz35}*{Thm.~9.3} for the eponymous Carlitz zeta function and continued by Goss~\citelist{\cite{Goss92} \cite{Goss}*{Ch.~8}}. Anderson and Thakur~\cite{AndThak90} further revealed the connection between Carlitz zeta values and coordinates of logarithms on tensor powers of the Carlitz module.

Taelman \cites{Taelman09, Taelman10, Taelman12} discovered a breakthrough on special $L$-values for Drinfeld modules that related them to the product of an analytic regulator and the $A$-order of a class module. These results have been extended in several directions, including to $t$-modules defined over $\oK$ and more refined special value identities~\cites{ANT20, ANT22, AnglesTaelman15, Beaumont23, FGHP22, CEP18, Fang15, Gezmis21, GezmisNamoijam21}.

In~\cite{Pellarin12}, Pellarin introduced a new class of $L$-functions that are deformations of the Carlitz zeta function in additional variables and take values in Tate algebras. Results on Pellarin $L$-series and their special values have been investigated extensively~\cites{ANT17a, ANT17b, AnglesPellarin14, AnglesPellarin15, APT16, APT18, AT17, Gezmis19, Gezmis20, GezmisPellarin22, GreenP18, PellarinPerkins16, PellarinPerkins22, Perkins14, Tavares21}. Important for the present paper is the work of Demeslay~\cites{APT16, DemeslayPhD, Demeslay22}, who extended Taelman's special value formulas to $L$-series of $t$-modules over Tate algebras. See \cites{AT17, APT16, Beaumont23, Gezmis19, Gezmis20} for additional applications of Demeslay's work.

\subsection{Goss \texorpdfstring{$L$}{L}-series} \label{SS:GossL}
Let $\phi : \sA \to A[\tau]$ be a Drinfeld module over $A$ with everywhere good reduction as in~\eqref{E:phidef}. Goss~\citelist{\cite{Goss92}*{\S 3} \cite{Goss}*{\S 8.6}} associated the Dirichlet series
\begin{equation}
L(\phi^{\vee},s) = \prod_{f \in A_+,\ \textup{irred.}} Q_f^{\vee} \bigl( f^{-s} \bigr)^{-1}, \quad
L(\phi,s) = \prod_{f \in A_+, \ \textup{irred.}} Q_f \bigl( f^{-s} \bigr)^{-1}.
\end{equation}
where as in~\S\ref{SS:munu}, $Q_f(X) \in A[X]$ is the reciprocal polynomial of the characteristic polynomial $P_f(X)$ of Frobenius acting on the Tate module $T_{\lambda}(\ophi)$ and $Q_f^{\vee}(X) \in K[X]$ is the reciprocal polynomial of $P_f^{\vee}(X)$ arising from $T_{\lambda}(\ophi)^{\vee}$.
In the future we will write simply ``$\prod_{f}$'' to indicate that a product is over all irreducible $f \in A_+$.

\begin{remark} \label{R:Qfcalc}
Corollary~\ref{C:charpolys} makes the calculation of $P_f(X)$ and $P_f^{\vee}(X)$ reasonable (and hence $Q_f(X)$ and $Q_f^{\vee}(X)$ also). If we take $\Gamma \in \Mat_{r}(A[t])$ as in~\eqref{E:Gammadef}, then for $f \in A_+$ of degree~$d$, we take the reduction $\oGamma \in \Mat_{r}(\FF_f[t])$. Corollary~\ref{C:charpolys} implies
\begin{align} \label{E:charpolyscalc}
P_f(X) &= \Char\bigl( \smash{\oGamma}^{(d-1)} \cdots \smash{\oGamma}^{(1)} \oGamma, X \bigr)|_{t=\theta} \in A[X], \\
P_f^{\vee}(X) &= \Char\bigl( \oGamma^{-1} (\smash{\oGamma}^{(1)})^{-1} \cdots (\smash{\oGamma}^{(d-1)})^{-1}, X \bigr)|_{t=\theta} \in K[X]. \notag
\end{align}
\end{remark}

Since we have assumed $\phi$ has everywhere good reduction, we have avoided Euler factors at primes of bad reduction. On the other hand, bad primes also greatly complicate our convolution problem, so for the present paper we do not consider them (see Remark~\ref{R:badred}).

The bounds on the coefficients of $P_f(X)$ from \S\ref{SSS:Pf} imply that $L(\phi,s)$ converges in $K_{\infty}$ for $s \in \ZZ_+$ and that $L(\phi^{\vee},s)$ converges for $s \in \ZZ_{\geqslant 0}$ (e.g., see \cite{CEP18}*{\S 3}). Goss extended the definition of these $L$-series to $s$ in a non-archimedean analytic space, but we will not pursue these extensions here. We will henceforth assume $s \in \ZZ$.

By \eqref{E:munugen}, we find that
\begin{equation}
L(\phi^{\vee},s) = \sum_{a \in A_+} \frac{\mu_{\phi}(f)}{a^{s+1}}
\end{equation}
(see \cite{CEP18}*{Eqs.~(12)--(14)}). In particular, for the Carlitz module $P_{\sC,f}^{\vee}(X) = X - 1/f$, so
\[
L(\sC^{\vee},s) = \sum_{a \in A_+} \frac{1}{a^{s+1}} = \zeta_{\sC}(s+1)
\]
is a shift of the Carlitz zeta function.

Taelman~\cite{Taelman12}*{Thm.~1} proved a special value identity for $L(\phi^{\vee},0)$ as follows. First,
\begin{equation} \label{E:Qfvee1}
Q_f^{\vee}(1)^{-1} = \frac{f}{(-1)^r\, \ochi(f) \cdot P_f(1)} = \frac{\Aord{\FF_f}{A}}{\Aord{\ophi(\FF_f)}{A}},
\end{equation}
where the first equality follows from~\eqref{E:QfandQfvee} and the second from Gekeler~\cite{Gekeler91}*{Thm.~5.1} (and also from Corollary~\ref{C:charpoly1} combined with the definition of $P_f(X)$). We then have
\begin{equation} \label{E:Taelman}
L(\phi^{\vee},0) = \prod_f \frac{\bigAord{\FF_f}{A}}{\bigAord{\ophi(\FF_f)}{A}} = \Reg_{\phi} \cdot \rH(\phi),
\end{equation}
where the first equality follows from \eqref{E:Qfvee1} and the second is Taelman's identity.
The formula on the right contains the regulator $\Reg_{\phi} \in K_{\infty}$ and the order of the class module $\rH(\phi)\in A$ (see \cite{Taelman12} for details).
We will use Demeslay's generalization of Taelman's formula to $t$-modules over Tate algebras. See Theorem~\ref{T:Demeslay} and Remark~\ref{R:TD}.

\begin{remark}
In this paper the $L$-function $L(\phi,s)$ is defined using the Galois action on $T_{\lambda}(\ophi)$. This is consistent with previous descriptions in \cites{CEP18, Gekeler91, GezmisNamoijam21, Taelman12}, but Goss's original definition \citelist{\cite{Goss92}*{\S 3} \cite{Goss}*{\S 8.6}} expressed $L(\phi,s)$ in terms of the geometric Frobenius acting on $H^1(T_{\lambda}(\ophi),\bk_{\lambda})$. Ultimately these lead to the same $L$-functions. However, our alignment,
\[
L(\phi,s) \longleftrightarrow T_{\lambda}(\phi), \qquad
L(\phi^{\vee},s) \longleftrightarrow T_{\lambda}(\phi)^{\vee},
\]
leads to a possible notational incongruity in Taelman's formula~\eqref{E:Taelman} and Demeslay's generalization in Theorem~\ref{T:Demeslay}, where arithmetic invariants of $\phi$ are expressed in terms of $L(\phi^{\vee},0)$.
\end{remark}

\subsection{Pellarin \texorpdfstring{$L$}{L}-series} \label{SS:Pellarin}
In~\cite{Pellarin12}, Pellarin defined the series,
\begin{equation}
L(\AAA,s) \assign \sum_{a \in A_+} \frac{a(z)}{a^s} = \prod_f \biggl( 1 - \frac{f(z)}{f^s} \biggr)^{-1} \in \TT_z(K_{\infty}),
\end{equation}
which converges in~$\TT_z(K_{\infty})$ for $s \in \ZZ_+$ and is entire as a function of~$z$. Among other properties, Pellarin proved the following special value formula.

\begin{theorem}[{Pellarin~\cite{Pellarin12}*{Thm.~1}}] \label{T:Pellarin}
We have
\[
L(\AAA,1) = - \frac{\tpi}{(z-\theta) \omega_z}.
\]
\end{theorem}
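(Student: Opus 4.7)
The plan is to verify the identity by expanding both sides as convergent sequences in $\TT_z(K_\infty)$ and matching them via a telescoping product.

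First, I would rewrite the right-hand side using the defining product expressions in~\eqref{E:omegadef} and Example~\ref{Ex:Carlitz2}. Since $(1-z/\theta)^{-1}\cdot(z-\theta) = -\theta$, the factor $(z-\theta)$ in the denominator absorbs the $i=0$ term of the product for $\omega_z$, and the fractional powers $(-\theta)^{1/(q-1)}$ and $(-\theta)^{q/(q-1)}$ appearing in $\omega_z$ and $\tpi$ cancel cleanly. After multiplying numerator and denominator of each remaining factor by $\theta^{q^i}$, one obtains
\[
-\frac{\tpi}{(z-\theta)\omega_z} = \prod_{i=1}^{\infty}\frac{\theta^{q^i}-z}{\theta^{q^i}-\theta} \quad \in \TT_z(K_\infty).
\]
Each factor equals $1 + (\theta-z)/(\theta^{q^i}-\theta)$, whose Gauss norm tends to $1$ rapidly enough for convergence in the Tate algebra.

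Next, I would prove by induction on $d \geqslant 0$ the partial-sum identity
\[
S_d(z) \assign \sum_{a \in A_+,\, \deg a \leqslant d} \frac{a(z)}{a} = \prod_{i=1}^{d}\frac{\theta^{q^i}-z}{\theta^{q^i}-\theta} \rassign P_d(z),
\]
with the convention $P_0=1$. The base case is immediate. Writing the inductive step as $S_d-S_{d-1} = P_d-P_{d-1}$ and noting $P_d-P_{d-1} = (\theta-z)P_{d-1}/(\theta^{q^d}-\theta)$, the required identity becomes
\[
\sum_{a \in A_+,\, \deg a = d}\frac{a(z)}{a} = \frac{\theta-z}{\theta^{q^d}-\theta}\cdot P_{d-1}(z).
\]
For $d=1$ this follows from $\sum_{c \in \FF_q} 1/(\theta+c) = -1/(\theta^q-\theta)$, the logarithmic derivative of $\prod_{c \in \FF_q}(\theta+c) = \theta^q-\theta$, combined with the expansion $(z+c)/(\theta+c) = 1 + (z-\theta)/(\theta+c)$ and $|\FF_q|=q \equiv 0$ in~$A$. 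For general $d$, one parametrizes monic $a$ of degree $d$ by its roots $r_1,\dots,r_d \in \oK$, expands $a(z)/a = \prod_j(z-r_j)/(\theta-r_j)$ in powers of $z-\theta$ using $(z-r_j)/(\theta-r_j) = 1 + (z-\theta)/(\theta-r_j)$, and then evaluates $\sum_a e_k(1/(\theta-r_1),\ldots,1/(\theta-r_d))$ via the logarithmic derivative of $\prod_{a \in A_+,\, \deg a = d}a = \prod_{i=0}^{d-1}(\theta^{q^d}-\theta^{q^i})$ (the Carlitz factorial $D_d$).

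Finally, since $\dnorm{\Sigma_d(z)} = \dnorm{P_d-P_{d-1}} = O(\inorm{\theta}^{-(q-1)d})$ as $d \to \infty$, the series $L(\AAA,1)$ equals $\lim_d S_d$ in $\TT_z(K_\infty)$, which equals $\lim_d P_d$ and hence the infinite product identified in the first step. The main obstacle is the inductive step for $d \geqslant 2$: the closed-form evaluation of $\sum_{\deg a = d}a(z)/a$ requires delicate bookkeeping over $\FF_q^d$, and the cleanest route---which is Pellarin's original approach in~\cite{Pellarin12}---is to recognize the partial sums through Moore/Vandermonde determinants, where the Carlitz factorials $D_d$ enter naturally as denominators and the telescoping $\Sigma_d = P_d-P_{d-1}$ becomes transparent.
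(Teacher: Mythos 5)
Your reduction of the right-hand side to the infinite product $\prod_{i\geqslant 1}(\theta^{q^i}-z)/(\theta^{q^i}-\theta)$ is correct: the $(z-\theta)$ factor cancels the $i=0$ term of $\omega_z$, the fractional powers $(-\theta)^{q/(q-1)}$ and $(-\theta)^{1/(q-1)}$ contribute $-\theta$, and after clearing denominators the product identity falls out cleanly. The telescoping scheme $S_d = P_d$ and the verification at $d=1$ via the logarithmic derivative of $\theta^q-\theta$ are also correct, and this is indeed the structure of Pellarin's original argument (the paper itself cites \cite{Pellarin12}*{Thm.~1} without proof, so there is no in-paper proof to compare against).

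The genuine gap is in the inductive step for $d \geqslant 2$. You propose evaluating $\sum_{\deg a = d} e_k\bigl(1/(\theta - r_1), \ldots, 1/(\theta-r_d)\bigr)$ "via the logarithmic derivative of $\prod_{\deg a = d} a = D_d$," but the logarithmic derivative of $D_d$ with respect to $\theta$ only yields $\sum_{\deg a = d} e_1\bigl(1/(\theta-r_j)\bigr) = \sum_a a'(\theta)/a(\theta)$. It gives no information about the sums of the higher elementary symmetric functions $e_k$ for $k \geqslant 2$, which are precisely what multiply the powers $(z-\theta)^k$ in your expansion of $a(z)/a$. These higher sums are not accessible from derivatives of $D_d$ alone; they require the structure theory of $\FF_q$-linear spans of $\{1,\theta,\ldots,\theta^{d-1}\}$ and the Moore determinant identities you mention in the final sentence. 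In other words, you have correctly identified the needed lemma, namely that $\sum_{\deg a = d} a(z)/a = \prod_{i=0}^{d-1}(z-\theta^{q^i})\big/\prod_{i=1}^{d}(\theta-\theta^{q^i})$, but the mechanism you sketch to establish it does not actually supply the $k \geqslant 2$ terms. As a minor aside, the convergence estimate $O(\inorm{\theta}^{-(q-1)d})$ undersells the truth: since $P_d - P_{d-1} = P_{d-1}\cdot(\theta-z)/(\theta^{q^d}-\theta)$ and $\dnorm{P_{d-1}}=1$, one has $\dnorm{P_d-P_{d-1}} = q^{1-q^d}$, which decays doubly exponentially rather than geometrically in $d$; this does not affect the conclusion but should be stated correctly.
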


We note further that if we take $\bP_f^{\vee}(X) = \bP_{\sC \times \sC,f}^{\vee}(X) = X - f(z)/f$ in~\eqref{E:PCxC}, then
\[
\bP_f^{\vee}(1)^{-1} = \frac{f}{f-f(z)} = \frac{\bigAord{\FF_f(z)}{\AAA}}{\bigAord{\soEE(\sC \times \sC)(\FF_f(z))}{\AAA}},
\]
where the calculation that $\Aord{\soEE(\sC \times \sC)(\FF_f(z))}{\AAA} = f- f(z)$ follows from~\cite{APT16}*{Lem.~5.8}. It also follows from Theorem~\ref{T:EEAord}, since $\chi_{\sC}(f) = 1$ for all $f$ and $\bP_f(1) = 1-f/f(z)$ from \eqref{E:PCxC}. Letting $\bQ_f^{\vee}(X)$ be the reciprocal polynomial of $\bP_f^{\vee}(X)$, we also find
\begin{equation} \label{E:LCxCs-1}
L(\EE(\sC \times \sC)^{\vee},s) \assign \prod_f \bQ_f^{\vee} \bigl( f^{-s} \bigr)^{-1} = L(\AAA,s+1).
\end{equation}
Thus the value  $L(\EE(\sC \times \sC)^{\vee},0) = L(\AAA,1)$ can be obtained through Theorem~\ref{T:Pellarin}. It was this type of calculation that led us to the $L$-series $L(\bsmu_{\phi,\theta} \times \bsnu_{\psi,z},s)$ in \S\ref{SS:Lphixpsi}.

\subsection{Demeslay's class module formula} \label{SS:Demeslayformula}
In \cites{DemeslayPhD, Demeslay22}, Demeslay proved an extension of Taelman's class module formula to Anderson $t$-modules defined over $\AAA$. In fact Demeslay's formula~\cite{Demeslay22}*{Thm.~2.9} applies over much more general base rings, but we will only require his identity over $\AAA$.

Let $\bE : \bA \to \Mat_{\ell}(\AAA[\tau])$ be an abelian and $\bA$-finite Anderson $t$-module defined over~$\AAA$. 
The exponential series $\Exp_{\bE} \in \power{\KK}{\tau}$ of $\bE$ induces an $\FF_q(z)$-linear function
\begin{equation}
\Exp_{\bE,\KK_{\infty}} \colon \Lie(\bE)(\KK_{\infty}) \to \bE(\KK_{\infty}) \quad \Leftrightarrow \quad
\Exp_{\bE,\KK_{\infty}} \colon \KK_{\infty}^{\ell} \to \KK_{\infty}^{\ell}.
\end{equation}
Now $\Lie(\bE)(\KK_{\infty})$ has a canonical $\KK_{\infty}$-vector space structure, but Demeslay~\cite{Demeslay22}*{\S 2.3} pointed out that it has another structure of a vector space over $\laurent{\FF_q(z)}{t^{-1}}$. Namely we extend $\pd : \bA \to \Mat_{\ell}(\KK_{\infty})$ to an $\FF_q(z)$-algebra homomorphism,
\begin{align}
\laurent{\FF_q(z)}{t^{-1}} \stackrel{\pd}{\longrightarrow} \Mat_{\ell}(\KK_\infty) \; :\; \sum_{j \geqslant j_0} c_j t^{-j} \longmapsto \sum_{j \geqslant j_0} c_j \cdot \pd\bE_t^{-j}.
\end{align}
Notably the series on the right converges by~\cite{Demeslay22}*{Lem.~2.6}. As Demeslay continued, $\Lie(\bE)(\KK_\infty)$ obtains an $\laurent{\FF_q(z)}{t^{-1}}$-vector space structure via~$\pd$. For any $g \in \laurent{\FF_q(z)}{t^{-q^{\ell}}}$, we have $\pd g = g\cdot \rI_{\ell}$, and so $\Lie(\bE)(\KK_{\infty})$ has dimension $\ell q^{\ell}$ as over $\laurent{\FF_q(z)}{t^{-q^{\ell}}}$, which implies it has dimension~$\ell$ over $\laurent{\FF_q(z)}{t^{-1}}$.

Since $\KK_{\infty} = \laurent{\FF_q(z)}{\theta^{-1}} \cong \laurent{\FF_q(z)}{t^{-1}}$, we will abuse notation and use the map~$\pd$ to define new $\KK_\infty$-vector space and $\AAA$-module structures on $\Lie(\bE)(\KK_{\infty})$ that are possibly different from scalar multiplication.
With respect to this $\KK_{\infty}$-structure, Demeslay showed~\cite{Demeslay22}*{Prop.~2.7} that $\Lie(\bE)(\AAA) \subseteq \Lie(\bE)(\KK_{\infty})$ is an $\AAA$-lattice and that the standard basis $\{ \bs_1, \dots, \bs_{\ell} \}$ of $\Lie(\bE)(\KK_{\infty})$, when equated with $\KK_{\infty}^{\ell}$, is an $\AAA$-basis of $\Lie(\bE)(\AAA)$ via~$\pd$.
Demeslay further proved~\cite{Demeslay22}*{Prop.~2.8} that
\begin{equation} \label{E:bElattice}
\Exp_{\bE,\KK_{\infty}}^{-1} \bigl( \bE(\AAA) \bigr) \subseteq \Lie(\bE)(\KK_{\infty})
\end{equation}
is an $\AAA$-lattice as in Definition~\ref{D:lattice}. In particular it has rank~$\ell$ as an $\AAA$-module via~$\pd$.

\begin{remark}
For applications in the present paper, all $t$-modules will satisfy $\pd \bE_t = \theta \cdot \rI_{\ell}$, and so the $\KK_{\infty}$-vector space structure on $\Lie(\bE)(\KK_{\infty})$ and the $\AAA$-module structure on $\Exp_{\bE,\KK_{\infty}}^{-1}(\bE(\AAA))$ will be induced by the usual scalar multiplication.
\end{remark}

Choose an $\AAA$-basis $\{ \bslambda_1, \dots, \bslambda_{\ell} \}$ of $\Exp_{\bE,\KK_{\infty}}^{-1}(\AAA)$ via $\pd$, and let $V \in \GL_{\ell}(\KK_{\infty})$ be chosen so that its columns are the coordinates of $\bslambda_1, \dots, \bslambda_{\ell}$ with respect to $\bs_1, \dots, \bs_{\ell}$ (via $\pd$). Following Taelman~\cites{Taelman10, Taelman12}, Demeslay defined the regulator of $\bE$ as
\begin{equation}
\Reg_{\bE} \assign \gamma \cdot \det(V) \in \KK_{\infty},\quad \gamma \in \FF_q(z)^{\times},
\end{equation}
where $\gamma$ is chosen so that $\Reg_{\bE}$ has sign~$1$ (leading coefficient as an element of $\laurent{\FF_q(z)}{\theta^{-1}}$ is~$1$).
This value is independent of the choice of $\AAA$-basis.

\begin{remark} \label{R:RegLog1}
If $\pd \bE_t = \theta \rI_{\ell}$ and the standard basis vectors $\bs_1, \dots, \bs_{\ell} \in \bE(\KK_{\infty}) = \KK_{\infty}^{\ell}$ fall within the domain of convergence of $\Log_{\bE}(\bz)$, then there is $\gamma \in \FF_q(z)^{\times}$ so that
\begin{equation} \label{E:RegLog1}
\Reg_{\bE} = \gamma \cdot \det \Bigl( \Log_{\bE}(\bs_1), \dots, \Log_{\bE}(\bs_\ell) \Bigr) \rassign \gamma \cdot \det\bigl( \Log_{\bE}(\rI_\ell) \bigr).
\end{equation}
If  $\bE = \EE = \EE(\phi \times \psi)$ as in \S\ref{SS:EEprops}, then under these conditions we have
\begin{equation} \label{E:EERegLog1}
\Reg_{\EE} = \det\bigl( \Log_{\EE}(\rI_\ell) \bigr) = \det \Bigl( \Upsilon_{\psi,z}^{-1} \Log_{\phi} \bigl(\Upsilon_{\psi,z}\bigr) \Bigr).
\end{equation}
Indeed, we have that $\Reg_{\EE} = \gamma \cdot \det\bigl(\Log_{\EE}(\rI_{\ell})\bigr)$ for some $\gamma \in \FF_q(z)^{\times}$ that forces the expression to be monic. However, in this case $\Log_{\EE}(\rI_{\ell}) = \Upsilon_{\psi,z}^{-1} \Log_{\phi} (\Upsilon_{\psi,z})$, and then we must have that the entries of $\Upsilon_{\psi,z}$ all fall within the radius of convergence $R_{\phi}$ of $\Log_{\phi}$ as in~\eqref{E:Rphi}. It is shown in \citelist{\cite{EP13}*{Cor.~4.2} \cite{EP14}*{Prop.~6.10} \cite{KhaochimP23}*{Cor.~4.5}} that in this case the term of $\Log_{\phi}(\Upsilon_{\psi,z})$ of greatest $\dnorm{\,\cdot\,}$-norm is uniquely the first term $\Upsilon_{\psi,z}$. Therefore, the term of $\Log_{\EE}(\rI_{\ell})$ of greatest $\dnorm{\,\cdot\,}$-norm is uniquely the first term $\rI_{\ell}$, forcing $\gamma=1$.
On the other hand, when $\Upsilon_{\psi,z}$ is not within the radius of convergence of $\Log_{\phi}(z)$, the determination of $\Reg_{\EE}$ can be subtle (see \S\ref{SSS:bigUps}).
\end{remark}

Also following Taelman, Demeslay~\cite{Demeslay22}*{Prop.~2.8} defined the class module of $\bE$ as
\begin{equation} \label{E:bEclass}
\rH(\bE) \assign \frac{\bE(\KK_{\infty})}{\Exp_{\bE,\KK_{\infty}}(\Lie(\bE)(\KK_{\infty})) + \bE(\AAA)},
\end{equation}
and he proved that $\rH(\bE)$ is a finitely generated and torsion $\AAA$-module. Demeslay's class module formula is the following.

\begin{theorem}[{Demeslay~\citelist{\cite{DemeslayPhD}*{Thm.~2.1.9} \cite{Demeslay22}*{Thm.~2.9}}}] \label{T:Demeslay}
Let $\bE : \bA \to \Mat_{\ell}(\AAA[\tau])$ be an Anderson $t$-module. Then
\[
\prod_f \frac{\Aord{\Lie(\sobE)(\FF_f(z))}{\AAA}}{\Aord{\sobE(\FF_f(z))}{\AAA}} = \Reg_{\bE} \cdot \bigAord{\rH(\bE)}{\AAA},
\]
where the left-hand side converges in $\KK_\infty$.
\end{theorem}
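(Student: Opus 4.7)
The plan is to follow the strategy pioneered by Taelman for Drinfeld modules and extended by Demeslay. The idea is to view both sides of the identity as two different computations of the same analytic ``volume'' attached to the complex
\[
0 \to \Lambda_{\bE} \to \Lie(\bE)(\KK_\infty) \xrightarrow{\Exp_{\bE,\KK_\infty}} \bE(\KK_\infty),
\]
where $\Lambda_{\bE} \assign \Exp_{\bE,\KK_\infty}^{-1}(\bE(\AAA))$. The right-hand side will arise from a volume/index computation, while the left-hand side will arise from an Euler-product / trace-formula expansion.

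First I would verify that the Euler product converges in $\KK_\infty$. Since $\pd\sobE_t - \otheta \rI_\ell$ is nilpotent modulo an irreducible $f \in A_+$ of degree $d$, a computation with the rational canonical form shows that $\Aord{\Lie(\sobE)(\FF_f(z))}{\AAA} = f^\ell$. For the denominator, the Tate-algebra generalization of Corollary~\ref{C:charpoly1}---i.e., Theorem~\ref{T:charpoly1} with $Z=\{z\}$---gives $\bigAord{\sobE(\FF_f(z))}{\AAA}$ as the specialization at $X=1$ (followed by $t=\theta$) of the Frobenius characteristic polynomial on the $\lambda$-adic Tate module of $\sobE$. A degree comparison shows the numerator and denominator differ by a polynomial in $\theta$ of degree strictly less than $d\ell$, so each local factor has the form $1 + O(\theta^{-1})$ in the $\infty$-adic Gauss norm on $\KK_\infty$, and absolute convergence follows from summing geometric-type bounds over all primes $f$.

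Second, I would assemble the class-module data into a volume identity. Both $\Lie(\bE)(\AAA)$ (via the twisted scalar multiplication through $\pd$) and $\Lambda_{\bE}$ are $\AAA$-lattices in $\Lie(\bE)(\KK_\infty)$ in the sense of Definition~\ref{D:lattice}, and $\Reg_{\bE}$ is, up to a unit in $\FF_q(z)$, the determinant of the change-of-basis matrix between them. The module $\rH(\bE)$ measures the failure of $\bE(\AAA) + \Exp_{\bE,\KK_\infty}(\Lie(\bE)(\KK_\infty))$ to exhaust $\bE(\KK_\infty)$. Combining these, the right-hand side $\Reg_{\bE}\cdot \bigAord{\rH(\bE)}{\AAA}$ is precisely the index of $\Lambda_{\bE} + \Lie(\bE)(\AAA)$ inside the reference lattice built from the standard basis of $\KK_\infty^\ell$.

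The main step---and what I expect to be the principal obstacle---is the trace/Fredholm identity matching the Euler product with this index. Following Anderson, I would embed $\Lie(\bE)(\AAA)$ into a $\KK_\infty$-Banach module $\cB$ on which a nuclear endomorphism acts in such a way that its Fredholm determinant can be evaluated in two ways: globally, reproducing the volume/index described above, and locally, via a decomposition along completions at primes $f$ that reproduces the Euler product in the statement. The analytic heart of the argument is to upgrade Anderson's finite-dimensional nuclear trace formula to the $\AAA$-lattice framework built in \S\ref{SS:Demeslayformula}, so that the Fredholm determinant is \emph{literally} (not just formally) the absolutely convergent Euler product; the delicate interaction of the nilpotent part of $\pd\bE_t$ with the infinite-dimensional set-up is where care is needed. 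As a sanity check, specializing to $\bE = \EE(\sC \times \sC)$ via \eqref{E:LCxCs-1} and Pellarin's identity (Theorem~\ref{T:Pellarin}) recovers the formula with trivial class module and $\Reg_{\bE} = -\tpi/((z-\theta)\omega_z)$.
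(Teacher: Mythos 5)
The paper does not prove this statement: it is a cited theorem of Demeslay (referenced to \cite{Demeslay15}*{Thm.~2.7} and \cite{DemeslayPhD}*{Thm.~2.1.9}) that the authors use as an external input, so there is no internal proof to compare against. Your sketch does correctly identify the Taelman--Demeslay architecture: view both sides as two computations of the ``volume'' of the exact sequence $0 \to \Lambda_{\bE} \to \Lie(\bE)(\KK_\infty) \xrightarrow{\Exp_{\bE,\KK_\infty}} \bE(\KK_\infty)$, interpret $\Reg_{\bE} \cdot \Aord{\rH(\bE)}{\AAA}$ as a lattice index, and bridge to the Euler product by an Anderson-style nuclear-operator trace formula. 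The convergence bound on the local factors and the identification of $\Aord{\Lie(\sobE)(\FF_f(z))}{\AAA} = f^\ell$ are correct.

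That said, the proposal has a genuine gap at the step you yourself flag as the heart of the matter. You state that the Fredholm determinant of a nuclear endomorphism should equal the absolutely convergent Euler product and that ``care is needed'' with the nilpotent part of $\pd\bE_t$, but you do not construct the Banach module, the nuclear operator, or the local-global factorization; this is the entire analytic content of Demeslay's theorem, and without it what you have is a strategy rather than a proof. Demeslay (following Taelman) carries this out by building a compact $\KK_\infty$-module of residue-type data, verifying nuclearity of a Frobenius-twisted operator built from $\bE_t$, computing the determinant locally at each prime to recover the ratio of $\AAA$-orders, and then identifying the global determinant with the covolume by a deformation/homotopy argument. None of that is supplied.

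A secondary issue: you propose to compute $\Aord{\sobE(\FF_f(z))}{\AAA}$ via Theorem~\ref{T:charpoly1} with $Z=\{z\}$. That theorem requires the hypotheses of Definition~\ref{D:Tatemodules} (abelian, $\cA$-finite, full-dimensional $\lambda$-power torsion), which you have not verified for the reduction $\sobE$ at an arbitrary prime $f$, and which can fail at primes of bad reduction for a general $\bA$-finite $t$-module $\bE$. Demeslay's formula is stated intrinsically as a ratio of $\AAA$-orders and does not pass through a Frobenius characteristic polynomial; leaning on the paper's \S\ref{S:Anderson} machinery here both introduces hypotheses not present in the theorem and is not how the cited result is actually proved.
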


\begin{remark} \label{R:TD}
If $\phi : \sA \to A[\tau]$ is a Drinfeld module, which we consider to be a constant Drinfeld module over $\AAA$ as in Theorem~\ref{T:constant}. Then $\Reg_{\phi}$ and $\Aord{\rH(\phi)}{\AAA}$ agree with the regulator and class module order of Taelman, and Demeslay's result provides the same identity as Taelman's class module formula~\cite{Taelman12}*{Thm.~1} in~\eqref{E:Taelman}.
\end{remark}

\subsection{The \texorpdfstring{$L$}{L}-function of \texorpdfstring{$\mathbb{E}(\phi \times \psi)$}{E(phi x psi)}} \label{SS:Lphixpsi}
Let $\phi$, $\psi : \sA \to A[\tau]$ be Drinfeld modules defined over $A$ of ranks $r$ and $\ell$ respectively as in \eqref{E:phidef} and~\eqref{E:psidef}. We form the $t$-module $\EE = \EE(\phi \times \psi) : \bA \to \Mat_{\ell}(\AAA[\tau])$ defined over $\AAA$ as in \eqref{E:EEtdef}. For each $f \in A_+$, we let $\bP_f(X) = \bP_{\phi \times \psi,f}(X)$ and $\bP_{f}^{\vee}(X) = \bP_{\phi \times \psi,f}^{\vee}(X)$ as in Definition~\ref{D:EEcharpolys}. We further set $\bQ_f(X)$ and $\bQ_{f}^{\vee}(X)$ to be their reciprocal polynomials. We now consider the $L$-function
\begin{equation} \label{E:LEEdef}
L(\EE^{\vee},s) = L(\EE(\phi \times \psi)^{\vee},s) \assign \prod_f \bQ_{f}^{\vee} \bigl( f^{-s} \bigr)^{-1}, \quad s \geqslant 0.
\end{equation}
The following lemma addresses convergence of $L(\EE^{\vee},s)$.

\begin{lemma} \label{L:LEEconverge}
Let $\EE = \EE(\phi \times \psi) : \bA \to \Mat_{\ell}(\AAA[\tau])$ be defined as in~\eqref{E:EEtdef}. For a fixed integer $s \geqslant 0$, the value $L(\EE^{\vee},s)$ converges in $\TT_z(K_{\infty})$.
\end{lemma}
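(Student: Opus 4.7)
The plan is to show convergence by bounding the Gauss norm of $\bQ_f^{\vee}(f^{-s}) - 1$ and proving it tends to $0$ as $\deg f \to \infty$. Since $\TT_z(K_\infty)$ is a complete non-archimedean Banach algebra, this will force each factor $\bQ_f^{\vee}(f^{-s})$ to be a $1$-unit and the partial products to form a Cauchy sequence.

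First I would factor $\bP_f^{\vee}(X)$ over a suitable algebraically closed extension. Writing the roots of $P_{\phi,f}(X)$ as $\gamma_1, \dots, \gamma_r \in \oK$ with $\inorm{\gamma_a} = q^{d/r}$ by \S\ref{SSS:Pf} (where $d = \deg f$), and the roots of $P_{\psi,f(z)}(X)$ as $\delta_1, \dots, \delta_\ell$, Definition~\ref{D:polytensor} yields
\[
\bP_f^{\vee}(X) = \prod_{a,b}\bigl(X - \gamma_a^{-1} \delta_b\bigr), \qquad \bQ_f^{\vee}(X) = \prod_{a,b}\bigl(1 - \gamma_a^{-1} \delta_b X\bigr).
\]
Because $P_{\psi,f(z)}(X)$ is monic with coefficients in $\FF_q[z]$, hence of Gauss norm at most~$1$, the Newton polygon of $P_{\psi,f(z)}(X)$ forces $\dnorm{\delta_b} \leqslant 1$ for each~$b$, and therefore $\dnorm{\gamma_a^{-1} \delta_b} \leqslant q^{-d/r}$ for every pair $(a,b)$.

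Next I would translate these root estimates into coefficient bounds. Since the coefficient of $X^i$ in $\bQ_f^{\vee}(X)$ is, up to sign, the $i$-th elementary symmetric polynomial in the $r\ell$ quantities $\gamma_a^{-1}\delta_b$, its Gauss norm is at most $q^{-id/r}$. Hence for any integer $s \geqslant 0$,
\[
\dnorm{\bQ_f^{\vee}(f^{-s}) - 1} \leqslant \max_{1 \leqslant i \leqslant r\ell} q^{-id/r} \cdot q^{-ids} = q^{-d(1/r + s)}.
\]
Since $1/r + s > 0$, this quantity is strictly less than~$1$ for every $d \geqslant 1$, so $\bQ_f^{\vee}(f^{-s})$ is a unit in $\TT_z(K_\infty)$ with inverse given by a convergent geometric series, and $\dnorm{\bQ_f^{\vee}(f^{-s})^{-1} - 1} \leqslant q^{-d(1/r + s)}$. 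Ordering the product $\prod_f \bQ_f^{\vee}(f^{-s})^{-1}$ by $\deg f$, the contribution from all primes of any fixed degree $d$ again differs from~$1$ by at most $q^{-d(1/r+s)}$, which tends to~$0$ as $d \to \infty$, so the partial products are Cauchy in the complete Banach algebra $\TT_z(K_\infty)$.

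The main technical subtlety is the bound $\dnorm{\delta_b} \leqslant 1$, which requires the unique extension of the Gauss norm from $\TT_z$ (or its fraction field~$\LL_z$) to an algebraic closure. One can sidestep this by expanding $\bP_f^{\vee}(X)$ directly via Definition~\ref{D:polytensor}, writing each coefficient as a polynomial with integer coefficients in the elementary symmetric functions of the $\gamma_a^{-1}$ and of the $\delta_b$; the latter are (up to sign) the coefficients of $P_{\psi,f(z)}(X) \in \FF_q[z][X]$ and hence have Gauss norm at most~$1$, while the former are bounded using~\eqref{E:PfandPfvee}, yielding the same estimate.
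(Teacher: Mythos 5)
Your proof follows essentially the same route as the paper's: both factor $\bQ_f^{\vee}$ over the roots $\alpha_i\beta_j$ of the tensor polynomial, combine the $\deg_\theta = d/r$ estimate on the roots of $P_{\phi,f}$ from \S\ref{SSS:Pf} with the $\FF_q[z]$-integrality of the $\beta_j$ to obtain $\deg\bigl(\bQ_f^{\vee}(f^{-s}) - 1\bigr) \leqslant -\bigl(\tfrac{1}{r}+s\bigr)d$, and conclude from the finiteness of monic irreducibles of each degree. Your closing remark about the subtlety of extending the Gauss norm, and the workaround via elementary symmetric polynomials, makes explicit a point the paper handles only through the observation that the $\beta_j$ are integral over $\FF_q[z]$.
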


\begin{proof}
(cf.~\cite{CEP18}*{Cor.~3.6})
We first note by Definition~\ref{D:EEcharpolys} that $\bQ_f^{\vee}(X) \in K[z,X]$. Moreover, if we let $\alpha_1, \dots, \alpha_r \in \oK$ be the reciprocals of the roots of $P_{\phi,f}^{\vee}(X)$ and $\beta_1, \dots, \beta_{\ell} \in \overline{\FF_q(z)}$ be the roots of $P_{\psi,f(z)}(X)$, then by definition
\[
\bQ_f^{\vee}(X) = \prod_{\substack{1 \leqslant i \leqslant r \\ 1 \leqslant j \leqslant \ell}}
\bigl( 1 - \alpha_i \beta_j X \bigr) = \prod_{1 \leqslant j \leqslant \ell} Q_{\phi,f}^{\vee}(\beta_j X).
\]
We note that $\beta_j$ is integral over $\FF_q[z]$ for each $j$. If we let $Q_{\phi,f}^{\vee}(\beta_j X) = 1 + \sum_{i=1}^{r} b_{ij} X^i$, then by combining the degree estimates in \S\ref{SSS:Pf} with \eqref{E:PfandPfvee}, we see that $\deg b_{ij} \leqslant -id/r$, where $d = \deg f$. From this we see that
\[
\deg \Bigl( 1 - \bQ_f^{\vee} \bigl(f^{-s} \bigr) \Bigr) \leqslant -\frac{d}{r} - ds = -\biggl(\frac{1}{r} +s \biggr)d.
\]
Since there are only finitely many polynomials of any given degree, this implies that the product defining $L(\EE^{\vee},s)$ converges when $s \geqslant 0$ in the completion of $K[z]$ with respect to $\dnorm{\,\cdot\,}$, i.e., $\TT_z(K_\infty)$ as discussed in \S\ref{SSS:fields}.
\end{proof}

For each irreducible $f \in \sA_+$, Definition~\ref{D:EEcharpolys} implies that $\bQ_f^{\vee}(1) = \bP_f(1)/\bP_f(0)$. By combining Theorem~\ref{T:EEAord} and~\eqref{E:LEEdef}, we obtain the following identity for $L(\EE^{\vee},0)$, which shows that Demeslay's class module formula (Theorem~\ref{T:Demeslay}) applies to the special values we are considering.

\begin{proposition} \label{P:LEE0}
Let $\EE = \EE(\phi \times \psi) : \bA \to \Mat_{\ell}(\AAA[\tau])$ be defined as in~\eqref{E:EEtdef}. Then in $\TT_z(K_{\infty})$ we have
\[
L(\EE^{\vee},0) = \prod_f \frac{\bigAord{\FF_f(z)^{\ell}}{\AAA}}{\bigAord{\soEE(\FF_f(z))}{\AAA}}.
\]
\end{proposition}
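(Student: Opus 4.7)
The plan is to compare the local factors of $L(\EE^{\vee},0)$ prime-by-prime with the $\AAA$-orders on the right, using Theorem~\ref{T:EEAord} as the main input. Everything reduces to the identity $\bQ_f^{\vee}(1) = \bP_f(1)/\bP_f(0)$, which is already signalled in the paragraph preceding the statement.

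First I would recall from~\eqref{E:LEEdef} that, for $s = 0$,
\[
L(\EE^{\vee},0) = \prod_f \bQ_f^{\vee}(1)^{-1},
\]
with convergence in $\TT_z(K_{\infty})$ guaranteed by Lemma~\ref{L:LEEconverge}. Thus it suffices to verify the local equality
\[
\bQ_f^{\vee}(1)^{-1} = \frac{\bigAord{\FF_f(z)^{\ell}}{\AAA}}{\bigAord{\soEE(\FF_f(z))}{\AAA}}
\]
for every irreducible $f \in A_+$, since the product then converges termwise (both products are over the same indexing set).

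Next I would compute the numerator on the right. Since $\FF_f(z) \cong \AAA/f\AAA$ as an $\AAA$-module, we have $\bigAord{\FF_f(z)}{\AAA} = f$, and hence $\bigAord{\FF_f(z)^{\ell}}{\AAA} = f^{\ell}$. For the denominator, Theorem~\ref{T:EEAord} gives
\[
\bigAord{\soEE(\FF_f(z))}{\AAA} = \frac{\bP_f(1)}{\bP_f(0)} \cdot f^{\ell}.
\]
So the ratio on the right collapses to $\bP_f(0)/\bP_f(1)$, and the proposition reduces to proving $\bQ_f^{\vee}(1) = \bP_f(1)/\bP_f(0)$.

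The last step is a short root computation using Definition~\ref{D:EEcharpolys}. Writing the roots of $P_{\phi,f}(X)$ as $\alpha_1,\dots,\alpha_r$ and those of $P_{\psi,f(z)}(X)$ as $\beta_1,\dots,\beta_{\ell}$, we have
\[
\bP_f(X) = \prod_{i,j}\bigl(X - \alpha_i\beta_j^{-1}\bigr), \qquad
\bQ_f^{\vee}(X) = \prod_{i,j}\bigl(1 - \alpha_i^{-1}\beta_j X\bigr),
\]
since $\bP_f^{\vee}(X) = (P_{\phi,f}^{\vee} \otimes P_{\psi,f(z)})(X)$ has roots $\alpha_i^{-1}\beta_j$. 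Then
\[
\frac{\bP_f(1)}{\bP_f(0)} = \prod_{i,j} \frac{1-\alpha_i\beta_j^{-1}}{-\alpha_i\beta_j^{-1}} = \prod_{i,j}\bigl(1 - \alpha_i^{-1}\beta_j\bigr) = \bQ_f^{\vee}(1),
\]
so the identity holds. Assembling the three ingredients yields the claimed product formula.

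I do not anticipate a serious obstacle here: Theorem~\ref{T:EEAord} is doing all the real work, and the remaining arguments are a bookkeeping exercise relating reciprocal polynomials to their values at $1$ via their constant terms. The only small subtlety is confirming that the local equality permits passage to the infinite product in $\TT_z(K_{\infty})$, which is handled by Lemma~\ref{L:LEEconverge}.
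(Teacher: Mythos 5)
Your proposal is correct and follows essentially the same route as the paper: the paper derives the proposition directly by combining Theorem~\ref{T:EEAord} with the identity $\bQ_f^{\vee}(1) = \bP_f(1)/\bP_f(0)$ (which it attributes to Definition~\ref{D:EEcharpolys}), exactly the three ingredients you assemble. Your explicit root computation verifying $\bQ_f^{\vee}(1) = \bP_f(1)/\bP_f(0)$ is a detail the paper leaves implicit, but it is correct and is the obvious way to check the claim.
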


\begin{example} \label{Ex:LphixC}
\emph{Twisting a Drinfeld module by Carlitz}. Using \eqref{E:LCxCs-1} as a guide, we consider $L(\EE(\phi \times \sC)^{\vee},s)$ for a Drinfeld module $\phi : \sA \to A[\tau]$ of rank~$r$ defined as in~\eqref{E:phidef}. We let $\bQ_f^{\vee}(X) = \bQ_{\phi \times \sC,f}^{\vee}(X) \in K[z][X]$ be defined as above. We note that \eqref{E:PphixC} implies $\bQ_f^{\vee}(X) = Q_{\phi,f}^{\vee}(f(z) X)$, and so
\[
L(\EE(\phi \times \sC)^{\vee},s) = \prod_f Q_{\phi,f}^{\vee} \bigl( f(z) f^{-s} \bigr)^{-1}.
\]
It follows from~\eqref{E:munugen} that
\begin{equation} \label{E:LphixCs-1}
L(\EE(\phi \times \sC)^{\vee},s) = \sum_{a \in A_+} \frac{\mu_{\phi}(a) a(z)}{a^{s+1}}, \quad s \geqslant 0.
\end{equation}
Gezmi\c{s}~\cite{Gezmis19}*{Thm.~1.1, Cor.~1.3} investigated the value when $s=0$, finding that when $\deg \omega_z = 1/(q-1) < \log_q(R_{\phi})$, so that $\Log_{\phi}(\omega_z)$ is well-defined, 
then as in~\eqref{E:EERegLog1},
\begin{equation} \label{E:LphixC1}
L(\EE(\phi\times \sC)^{\vee},0) = \sum_{a \in A_+} \frac{\mu_{\phi}(a) a(z)}{a} = \frac{\Log_{\phi}(\omega_z)}{\omega_z}.
\end{equation}
By \eqref{E:Rphi}, $\deg \omega_z < \log_q(R_\phi) \Leftrightarrow \deg \kappa_i < q^i - (q^i-1)/(q-1)$, for all $i$, $1 \leqslant i \leqslant r$.
\end{example}

\begin{example} \label{Ex:LCxphi}
\emph{Twisting Carlitz by a Drinfeld module}.
We also consider $L(\EE(\sC \times \phi)^{\vee},s)$. By \eqref{E:PCxphi}, we find that $\bQ_{\sC \times \phi,f}^{\vee}(X) = Q_{\phi,f(z)}(X/f)$, and thus by \eqref{E:LEEdef}
\[
L(\EE(\sC \times \phi)^{\vee},s) = \prod_f Q_{\phi,f(z)}\bigl( f^{-s-1} \bigr)^{-1}.
\]
Letting $\nu_{\phi,z} : A_+ \to \FF_q[z]$ be defined by $\nu_{\phi,z}(a) \assign \nu_{\phi}(a)|_{\theta=z}$, we also find from~\eqref{E:munugen},
\begin{equation} \label{E:LCxphis-1}
L(\EE(\sC \times \phi)^{\vee},s) = \sum_{a \in A_+} \frac{\nu_{\phi,z}(a)}{a^{s+1}}.
\end{equation}
\end{example}

\begin{proposition} \label{P:LCxphi1}
For $\phi : \sA \to A[\tau]$ of rank $r$ as in \eqref{E:phidef}, if $\deg \kappa_i \leqslant q$ for each $i$, $1 \leqslant i \leqslant r-1$, then
\[
L(\EE(\sC \times \phi)^{\vee},0) = \sum_{a \in A_+} \frac{\nu_{\phi,z}(a)}{a}
= \det \Bigl ( \Upsilon_{\phi,z}^{-1} \Log_{\sC} \bigl(\Upsilon_{\phi,z} \bigr) \Bigr).
\]
\end{proposition}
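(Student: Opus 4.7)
The plan is to combine Proposition~\ref{P:LEE0} with Demeslay's class module formula (Theorem~\ref{T:Demeslay}) and then to evaluate both the regulator and the class module on the right-hand side under the norm hypothesis. The first equality of the proposition is simply~\eqref{E:LCxphis-1} evaluated at $s=0$, so the content is the second equality. For $\EE = \EE(\sC \times \phi)$ we have $\pd \EE_t = \theta \rI_r$, so $\Aord{\Lie(\soEE)(\FF_f(z))}{\AAA} = f^r = \Aord{\FF_f(z)^r}{\AAA}$ for every irreducible $f \in A_+$. Proposition~\ref{P:LEE0} together with Theorem~\ref{T:Demeslay} then yields
\[
L(\EE(\sC \times \phi)^{\vee},0) = \Reg_{\EE} \cdot \Aord{\rH(\EE)}{\AAA}.
\]

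Next I would verify the norm estimate $\dnorm{\Upsilon_{\phi,z}} < R_{\sC} = \inorm{\theta}^{q/(q-1)}$ under the hypothesis $\deg \kappa_i \leqslant q$ for $1 \leqslant i \leqslant r-1$. The Anderson generating functions $g_1, \dots, g_r$ of $\phi$ from Example~\ref{Ex:Drinfeld2} satisfy
\[
(z-\theta) g_i = \kappa_1 g_i^{(1)} + \cdots + \kappa_r g_i^{(r)},
\]
deduced from $\phi_t(g_i) = tg_i$. Taking Gauss norms on $\TT_z$, using $\dnorm{z-\theta} = \inorm{\theta}$, $\dnorm{g_i^{(j)}} = \dnorm{g_i}^{q^j}$, and $\kappa_r \in \FF_q^{\times}$, the quantity $d_i = \log_q \dnorm{g_i}$ is determined by the balance equation $1 + d_i = \max_{1 \leqslant j \leqslant r}(\deg \kappa_j + d_i q^j)$. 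A case analysis shows that, under the hypothesis, either $d_i = 1/(q^r-1)$ (when $\deg \kappa_j = 0$ for all $j < r$) or $d_i \leqslant 0$ (whenever some $\deg \kappa_{j_0} \geqslant 1$); in both cases the largest twist $\dnorm{g_i^{(r-1)}}$ lies strictly below $\inorm{\theta}^{q/(q-1)} = R_{\sC}$, and hence $\dnorm{\Upsilon_{\phi,z}} < R_{\sC}$.

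With this estimate, $\Log_{\sC}$ converges entrywise at $\Upsilon_{\phi,z}$ in $\Mat_r(\TT_z)$, and the analog of~\eqref{E:EERegLog1} (with the roles of $\phi$ and $\sC$ exchanged) identifies
\[
\Reg_{\EE} = \det \bigl( \Upsilon_{\phi,z}^{-1} \Log_{\sC}(\Upsilon_{\phi,z}) \bigr).
\]
The final ingredient is to show $\Aord{\rH(\EE)}{\AAA} = 1$. Writing $\Log_{\EE} = \Upsilon_{\phi,z}^{-1} \Log_{\sC} \Upsilon_{\phi,z}$ as the conjugation of the scalar Carlitz logarithm and using its convergence at $\Upsilon_{\phi,z}$, one shows $\Exp_{\EE,\KK_{\infty}}(\Lie(\EE)(\KK_{\infty})) + \EE(\AAA) = \EE(\KK_{\infty})$, i.e., $\rH(\EE) = 0$. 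The main technical obstacle is this final surjectivity step, which requires care in tracking how the conjugation by $\Upsilon_{\phi,z}$ (whose entries sit in $\TT_z$, not $\KK_{\infty}$) interacts with the $\AAA$-lattice structure of $\EE(\AAA)$ and $\Lie(\EE)(\KK_{\infty})$ to produce class module triviality.
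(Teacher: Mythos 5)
Your overall plan --- reduce via Proposition~\ref{P:LEE0} and Theorem~\ref{T:Demeslay} to $\Reg_{\EE}\cdot\Aord{\rH(\EE)}{\AAA}$, bound $\dnorm{\Upsilon_{\phi,z}}$, then evaluate --- is the paper's. The gap is in the norm bound. Taking Gauss norms of $(z-\theta)g_i = \sum_j\kappa_j g_i^{(j)}$ only gives the inequality $1 + d_i \leqslant f(d_i)$, where $f(d) = \max_{1\leqslant j\leqslant r}(\deg\kappa_j + dq^j)$, because of possible ultrametric cancellation on the right-hand side. Since all slopes of $f$ exceed $1$, this reads $d_i \geqslant d^*$ where $d^*$ is the unique solution of $1 + d = f(d)$ --- a \emph{lower} bound on $\dnorm{g_i}$, i.e., in the wrong direction. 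Moreover, a literal balance equality would force $\dnorm{g_1}=\cdots=\dnorm{g_r}=q^{d^*}$, which is false for $r\geqslant 2$: the values $g_i(0)$ form an $\FF_q$-basis of $\phi[t]$, and $\phi[t]$ typically contains torsion of several distinct degree levels. A concrete counterexample to your case analysis: $q=3$, $r=2$, $\phi_t=\theta+\theta\tau+\tau^2$, so $\deg\kappa_1=1\leqslant q$. Your analysis asserts $d_i\leqslant 0$, but the Newton polygon of $\phi_t(X)$ has six roots of degree $1/6$, so for any $A$-basis of $\Lambda_\phi$ some $g_i$ has $d_i\geqslant 1/6 > 0$ (while your $d^*=0$).

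The paper sidesteps all of this by invoking \cite{KhaochimP22}*{Thm.~4.4}: for a suitably chosen $A$-basis of $\Lambda_\phi$, one has $\deg\Upsilon_{\phi,z} = q^{r-1}\deg\xi$, where $\xi\in\phi[t]$ has maximal degree, and the Newton polygon of $\phi_t(X)$ gives $\deg\xi = \max\{\deg\kappa_i/(q^r - q^i)\}_{i=0}^{r-1}$ with $\kappa_0=\theta$. Unwinding $q^{r-1}\deg\xi < q/(q-1)$ into inequalities on each $\deg\kappa_i$ then yields the estimate. Finally, you do not need to prove the surjectivity $\Exp_{\EE,\KK_\infty}(\Lie(\EE)(\KK_\infty)) + \EE(\AAA) = \EE(\KK_\infty)$ directly: once $\Reg_\EE = \det(\Upsilon_{\phi,z}^{-1}\Log_\sC(\Upsilon_{\phi,z}))$ is identified via Remark~\ref{R:RegLog1}, the norm argument in the proof of Corollary~\ref{C:Lmunurxr}(b) shows both $L(\EE^\vee,0)$ and $\Reg_\EE$ are $1$-units of $\KK_\infty$, forcing the monic $\Aord{\rH(\EE)}{\AAA}$ to equal~$1$.
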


\begin{proof}
As in \eqref{E:EERegLog1}, we show that $\deg \Upsilon_{\phi,z} < \log_q(R_{\sC}) = q/(q-1)$.
By \cite{KhaochimP23}*{Thm.~4.4}, if we pick $\xi \in \phi[t]$ of maximum degree, then $\deg \Upsilon_{\phi,z} = q^{r-1} \deg \xi > 0$.
Initially \cite{KhaochimP23}*{Thm.~4.4} requires a particular choice of $A$-basis for $\Lambda_{\phi}$, but changing to a different basis does not affect the calculation.
By the Newton polygon for $\phi_t(X)$ (e.g., see~\cite{KhaochimP23}*{Fig.~1}), since $\kappa_r \in \FF_q^{\times}$ it follows that $\deg \xi = \max \{ \deg(\kappa_i)/(q^r-q^i)\}_{i=0}^{r-1}$, where we set $\kappa_0=\theta$. It follows that
\[
\deg \Upsilon_{\phi,z} < \frac{q}{q-1} \quad \Leftrightarrow \quad \deg \kappa_i < q \biggl( 1 + \frac{1}{q} + \cdots + \frac{1}{q^{r-1-i}} \biggr), \quad \forall\,i,\, 0 \leqslant i \leqslant r-1.
\]
Because $q^{-1} + \cdots + q^{-r+i} < 1/(q-1)$ for each $i$, the result follows.
\end{proof}

\begin{remark}
If $\phi =\psi= \sC$, then Gezmi\c{s}'s result \eqref{E:LphixC1} and Proposition~\ref{P:LCxphi1} provide the same identity, which is an alternative forumulation of Pellarin's Theorem~\ref{T:Pellarin}, originally proved by Angl\`{e}s, Pellarin, and Tavares Ribeiro~\cite{APT16}*{Lem.~7.1}.
\end{remark}

The $L$-series $L(\EE(\phi \times \sC)^{\vee},s)$ and $L(\EE(\sC \times \phi)^{\vee},s)$ in \eqref{E:LphixCs-1} and \eqref{E:LCxphis-1} both provide possible extensions, for the Drinfeld module~$\phi$, of Pellarin's $L$-series $L(\AAA,s)$ as in \eqref{E:LCxCs-1}. As such we refer to them as \emph{Pellarin $L$-series for $\phi$}. In the next section we explore more general convolutions between two Drinfeld modules.

\section{Convolutions and special \texorpdfstring{$L$}{L}-values} \label{S:convolutions}

Throughout this section we fix two Drinfeld modules $\phi$, $\psi : \sA \to A[\tau]$ with everywhere good reduction as in \eqref{E:phidef} and~\eqref{E:psidef}, together with the convolution $t$-module $\EE(\phi \times \psi)$ as in~\eqref{E:EEtdef}. Since we have already covered the case that either Drinfeld module is the Carlitz module in \S\ref{SS:Lphixpsi}, we will assume $r$, $\ell \geqslant 2$. Our first task is to express the Dirichlet series for $L(\EE(\phi \times \psi)^{\vee},s)$ in terms of Schur polynomials from \S\ref{SS:Schur}, following the ideas of Bump~\cite{Bump89} and Goldfeld~\cite{Goldfeld}*{Ch.~7, 12}.

\subsection{The functions \texorpdfstring{$\bsmu_{\phi,\theta}$}{mu\_\{phi,theta\}} and \texorpdfstring{$\bsnu_{\phi,\theta}$}{nu\_\{phi,theta\}}} \label{SS:bsmubsnu}
Let $f \in A_+$ be irreducible, and let $P_{\phi,f}(X)$ and $P_{\phi,f}^{\vee}(X)$ be defined as in~\eqref{E:PfandPfvee}. We let $\alpha_1, \dots, \alpha_r \in \oK$ be the roots of $P_{\phi,f}^{\vee}(X)$. For $k_1, \dots, k_{r-1} \geqslant 0$, we define
\begin{align} \label{E:bsmudef}
\bsmu_{\phi,\theta}\bigl( f^{k_1}, \dots, f^{k_{r-1}} \bigr) &\assign
S_{k_1, \dots, k_{r-1}} ( \alpha_1, \dots, \alpha_r ) \cdot f^{k_1 + \cdots + k_{r-1}}, \\
\label{E:bsnudef}
\bsnu_{\phi,\theta} \bigl( f^{k_1}, \dots, f^{k_{r-1}} \bigr) &\assign
S_{k_1, \dots, k_{r-1}} \bigl( \alpha_1^{-1}, \dots, \alpha_r^{-1}
 \bigr),
\end{align}
where $S_{k_1, \dots, k_{r-1}}$ is the Schur polynomial of \eqref{E:Sk}. We note that by~\eqref{E:QfandQfvee} and~\eqref{E:Sei},
\begin{align}
Q_{\phi,f}^{\vee}(fX) &= \begin{aligned}[t]
1 &{}- \bsmu_{\phi,\theta}(f,1,\ldots,1) X + \bsmu_{\phi,\theta}(1,f,1,\ldots,1) f X^2 \\
&{}+ \cdots + (-1)^{r-1}\bsmu_{\phi,\theta}(1,\ldots,1,f) f^{r-2} X^{r-1} + (-1)^r \chi_{\phi}(f) f^{r-1}X^r,
\end{aligned}
\\
Q_{\phi,f}(X) &= \begin{aligned}[t]
1 &{}- \bsnu_{\phi,\theta}(f,1,\ldots,1) X + \bsnu_{\phi,\theta}(1,f,1,\ldots,1) X^2 \\
&{}+ \cdots + (-1)^{r-1}\bsnu_{\phi,\theta}(1,\ldots,1,f) X^{r-1} + (-1)^r \ochi_{\phi}(f) f X^r.
\end{aligned}
\end{align}

We then extend $\bsmu_{\phi,\theta}$ and $\bsnu_{\phi,\theta}$ uniquely to functions on $(A_+)^{r-1}$, by requiring that if $a_1, \dots, a_{r-1}$, $b_1, \dots, b_{r-1} \in A_+$ satisfy $\gcd(a_1 \cdots a_{r-1}, b_1\cdots b_{r-1}) = 1$, then
\begin{align*}
\bsmu_{\phi,\theta}(a_1b_1, \dots, a_{r-1}b_{r-1}) &= \bsmu_{\phi,\theta}(a_1, \dots, a_{r-1}) \bsmu_{\phi,\theta}(b_1, \dots, b_{r-1}), \\
\bsnu_{\phi,\theta}(a_1b_1, \dots, a_{r-1}b_{r-1}) &= \bsnu_{\phi,\theta}(a_1, \dots, a_{r-1}) \bsnu_{\phi,\theta}(b_1, \dots, b_{r-1}).
\end{align*}

\begin{proposition} \label{P:bsmunuprops}
For $a$, $a_1, \dots, a_{r-1} \in A_+$, the following hold.
\begin{alphenumerate}
\item $\bsmu_{\phi,\theta}(a_1, \dots, a_{r-1}) \in A$ and $\bsnu_{\phi,\theta}(a_1, \dots, a_{r-1}) \in A$.
\item $\bsmu_{\phi,\theta}(a,1, \dots, 1) = \mu_{\phi,\theta}(a)$ and $\bsnu_{\phi,\theta}(a,1,\dots, 1) = \nu_{\phi,\theta}(a)$.
\item $\bsmu_{\phi,\theta}(a_1, \dots, a_{r-1}) = \chi_{\phi}(a_1 \cdots a_{r-1}) \cdot \bsnu_{\phi,\theta}(a_{r-1}, \dots, a_1)$.
\item We have
\begin{align*}
\deg_{\theta} \bsmu_{\phi,\theta}(a_1, \dots, a_{r-1}) &\leqslant \frac{1}{r} \bigl( (r-1)\deg_{\theta} a_1 + (r-2) \deg_{\theta} a_2 + \cdots + \deg_{\theta} a_{r-1} \bigr), \\
\deg_{\theta} \bsnu_{\phi,\theta}(a_1, \dots, a_r) &\leqslant \frac{1}{r} \bigl( \deg_{\theta} a_1 + 2\deg_{\theta} a_2 + \cdots + (r-1) \deg_{\theta} a_{r-1} \bigr).
\end{align*}
\end{alphenumerate}
\end{proposition}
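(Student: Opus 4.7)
The plan is to handle all four parts by reducing to prime-power arguments $a_i = f^{k_i}$ via the multiplicativity built into $\bsmu_{\phi,\theta}$ and $\bsnu_{\phi,\theta}$, and then to exploit determinantal identities for the Schur polynomials applied to the roots of $P_{\phi,f}(X)$ and $P_{\phi,f}^{\vee}(X)$. Let $\alpha_1, \dots, \alpha_r \in \oK$ denote the roots of $P_{\phi,f}^{\vee}(X)$, so that $\beta_i \assign \alpha_i^{-1}$ are the roots of $P_{\phi,f}(X)$. Matching $P_{\phi,f}^{\vee}(X) = \sum_{k=0}^{r} (-1)^k e_k(\alpha) X^{r-k}$ against \eqref{E:PfandPfvee} shows $f \cdot e_k(\alpha) \in A$ for every $k = 0, 1, \dots, r$, with $f \cdot e_r(\alpha) = \chi_\phi(f) \in \FF_q^{\times}$; on the other hand $e_k(\beta) \in A$ directly as a coefficient of $P_{\phi,f}(X)$, and $\alpha_1 \cdots \alpha_r = \chi_\phi(f)/f$.

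For (a), I would invoke the dual Jacobi--Trudi identity (see e.g.\ \cite{Stanley}*{\S 7.16}), which for the partition $\lambda = (k_1+\cdots+k_{r-1}, k_2+\cdots+k_{r-1}, \ldots, k_{r-1}, 0)$ with conjugate partition $\lambda'$ gives
\[
S_{k_1,\ldots,k_{r-1}}(\bx) = \det \bigl( e_{\lambda'_i - i + j}(\bx) \bigr)_{i,j=1}^{\lambda_1},
\]
a determinant of size $\lambda_1 = k_1 + \cdots + k_{r-1}$. Distributing one factor of $f$ across each row produces
\[
f^{k_1+\cdots+k_{r-1}} \cdot S_{k_1,\ldots,k_{r-1}}(\alpha) = \det\bigl( f \cdot e_{\lambda'_i - i + j}(\alpha) \bigr)_{i,j=1}^{\lambda_1} \in A,
\]
so $\bsmu_{\phi,\theta}(f^{k_1},\ldots,f^{k_{r-1}}) \in A$; the same determinant applied to $\beta$ lies in $A$ with no prefactor needed. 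For (b), $S_{m,0,\ldots,0} = h_m$ by \eqref{E:Shi}, and the generating function identities $\sum_{k \geqslant 0} h_k(\alpha) T^k = Q_{\phi,f}^{\vee}(T)^{-1}$ and $\sum_{k \geqslant 0} h_k(\beta) T^k = Q_{\phi,f}(T)^{-1}$ combined with \eqref{E:munugen} identify these values with $\mu_\phi(f^m)$ and $\nu_\phi(f^m)$ respectively. Multiplicativity then extends both (a) and (b) to arbitrary $a_1, \ldots, a_{r-1} \in A_+$.

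For (c), after reducing to prime powers, Lemma~\ref{L:Skreorder} applied with inputs $\alpha_1^{-1},\ldots,\alpha_r^{-1}$ and exponent list $(k_{r-1},\ldots,k_1)$ yields
\[
S_{k_{r-1},\ldots,k_1}(\alpha_1^{-1},\ldots,\alpha_r^{-1}) = (\alpha_1 \cdots \alpha_r)^{-(k_1+\cdots+k_{r-1})}\, S_{k_1,\ldots,k_{r-1}}(\alpha),
\]
and substituting $\alpha_1 \cdots \alpha_r = \chi_\phi(f)/f$ gives $\bsnu_{\phi,\theta}(f^{k_{r-1}},\ldots,f^{k_1}) = \ochi_\phi(f^{k_1+\cdots+k_{r-1}}) \bsmu_{\phi,\theta}(f^{k_1},\ldots,f^{k_{r-1}})$, which rearranges to (c). For (d), the Takahashi--Gekeler bound recalled in \S\ref{SSS:Pf} gives $\deg_\theta \beta_i = d/r$ for every $i$ since $\phi$ has everywhere good reduction (so $r_0 = r$), whence $\deg_\theta \alpha_i = -d/r$; since $S_{k_1,\ldots,k_{r-1}}$ is homogeneous of total degree $k_1 + 2k_2 + \cdots + (r-1)k_{r-1}$, both bounds on prime powers follow by direct calculation, and multiplicativity together with additivity of $\deg_\theta$ on products of monic polynomials propagates them to general arguments.

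The main obstacle is part (a): since every coefficient $e_k(\alpha)$ of $P_{\phi,f}^{\vee}(X)$ has $f$ in the denominator, it is not a priori clear that the prefactor $f^{k_1+\cdots+k_{r-1}}$ suffices to clear denominators. The ordinary Jacobi--Trudi formula \eqref{E:JT} (with matrices of size $r-1$) produces monomials in $h_k(\alpha) = \mu_\phi(f^k)/f^k$ whose total $f$-denominator typically overshoots the available prefactor, so this route does not yield integrality directly. It is the \emph{dual} Jacobi--Trudi determinant, whose size equals exactly $\lambda_1 = k_1 + \cdots + k_{r-1}$, that makes the clearing of denominators transparent; the coincidence between the matrix size and the definitional prefactor in \eqref{E:bsmudef} is the key combinatorial input on which the whole argument hinges.
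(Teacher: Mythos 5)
Your proofs of parts (b), (c), and (d) follow essentially the same route as the paper: (b) via $S_{m,0,\ldots,0}=h_m$ and the generating-function description \eqref{E:munugen} of $\mu_\phi$ and $\nu_\phi$; (c) via Lemma~\ref{L:Skreorder} and $\alpha_1\cdots\alpha_r=\chi_\phi(f)/f$; and (d) from the degree $d/r$ of the roots $\alpha_i^{-1}$ together with the homogeneity of $S_{k_1,\ldots,k_{r-1}}$.

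For part (a) you take a genuinely different route, and it is here that I want to flag a comparison. Your argument proves integrality of $\bsmu_{\phi,\theta}$ directly from the dual Jacobi--Trudi expansion: since the determinant has exactly $\lambda_1 = k_1+\cdots+k_{r-1}$ rows and $f\cdot e_k(\alpha)\in A$ for every $k$, distributing one factor of $f$ per row clears all denominators. This is correct and it is a nice observation that the matrix size in the dual Jacobi--Trudi identity matches the normalizing power of $f$ in \eqref{E:bsmudef} exactly. However, the paper's argument is substantially shorter and requires no determinantal identity at all: one first proves (c), then notes that $\alpha_1^{-1},\dots,\alpha_r^{-1}$ are roots of the monic polynomial $P_{\phi,f}(X)\in A[X]$ and hence integral over $A$, so that the symmetric integer-coefficient polynomial $S_{k_1,\ldots,k_{r-1}}(\alpha_1^{-1},\dots,\alpha_r^{-1})=\bsnu_{\phi,\theta}(f^{k_1},\ldots,f^{k_{r-1}})$ automatically lies in $A$, and finally the $\bsmu$ case follows from (c). Your closing paragraph frames the dual Jacobi--Trudi coincidence as ``the key combinatorial input on which the whole argument hinges,'' but this overstates the difficulty of part (a): proving integrality for $\bsnu$ first is elementary, and (c) then transfers it to $\bsmu$ for free. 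What your approach does buy you is a self-contained proof of (a) that is logically independent of (c), and a transparent determinantal expression for $\bsmu_{\phi,\theta}(f^{k_1},\ldots,f^{k_{r-1}})$ with entries in $A$, which could be of independent computational interest alongside the Jacobi--Trudi formula \eqref{E:JTnu} given in \S\ref{SSS:munucalc}.
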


\begin{proof}
By the multiplicativity of $\bsmu_{\phi,\theta}$ and $\bsnu_{\phi,\theta}$ it suffices to check these identities on powers of an irreducible $f \in A_+$. We first verify (c). Let $k_1, \dots, k_{r-1} \geqslant 0$. By \eqref{E:PfandPfvee} we have $\alpha_1 \cdots \alpha_r = \chi_{\phi}(f) f^{-1}$. Substituting $x_1 \leftarrow \alpha_1^{-1}, \dots, x_r \leftarrow \alpha_r^{-1}$ into Lemma~\ref{L:Skreorder}, we find from \eqref{E:bsmudef} and~\eqref{E:bsnudef} that
\[
\ochi_{\phi}(f)^{k_1 + \cdots + k_{r-1}} \bsmu_{\phi,\theta}\bigl( f^{k_1}, \dots, f^{k_{r-1}} \bigr) = \bsnu_{\phi,\theta} \bigl( f^{k_{r-1}}, \dots, f^{k_{1}} \bigr).
\]
This implies the desired result since $\chi_{\phi} : A_+ \to \FF_q^{\times}$ is completely multiplicative.

As $\alpha_1^{-1}, \dots, \alpha_{r}^{-1}$ are the roots of $P_{\phi,f}(X)$, which is a monic polynomial in $A[X]$, it follows that they are integral over $A$. Therefore, since Schur polynomials are symmetric with integer coefficients, \eqref{E:bsnudef} implies that $\bsnu_{\phi,\theta}(f^{k_1}, \dots, f^{k_{r-1}}) \in A$. By part (c), $\bsmu_{\phi,\theta}(f^{k_1}, \dots, f^{k_{r-1}}) \in A$, and thus (a) is proved.

To prove (b) we combine \eqref{E:munugen} and \eqref{E:homsymm} and note that for $k \geqslant 0$,
\begin{equation} \label{E:munufkhk}
\mu_{\phi,\theta}\bigl(f^k\bigr) = h_k(f \alpha_1, \dots, f \alpha_r), \quad
\nu_{\phi,\theta}\bigl(f^k\bigr) =h_k \bigl( \alpha_1^{-1}, \dots, \alpha_r^{-1}
\bigr).
\end{equation}
Then \eqref{E:Shi} implies $\mu_{\phi,\theta}(f^k) = S_{k,0, \ldots, 0}(\alpha_1, \dots, \alpha_r)f^k = \bsmu_{\phi,\theta}(f^k,1, \dots, 1)$, and likewise $\nu_{\phi,\theta}(f^k) = \bsnu_{\phi,\theta}(f^k,1, \dots, 1)$. Finally for (d), we note that since $\deg_{\theta} \alpha_i^{-1} = d/r$ for each~$i$ from~\S\ref{SSS:Pf}, it follows from~\eqref{E:bsnudef} that the degree of $\bsnu_{\phi,\theta}(f^{k_1}, \dots, f^{k_{r-1}})$ in~$\theta$ is at most $(\deg S_{k_1, \dots, k_{r-1}}) \cdot d/r = (k_1 + 2k_2 + \cdots + (r-1)k_{r-1}) \cdot d/r$. The desired inequality for $\deg_{\theta} \bsnu_{\phi,\theta}(a_1, \dots, a_{r-1})$ then follows from the multiplicativity of $\bsnu_{\phi,\theta}$. The inequality for $\deg_{\theta} \bsmu_{\phi,\theta}(a_1, \dots, a_{r-1})$ then follows from~(c).
\end{proof}

The functions $\bsmu_{\phi,\theta}$ and $\bsnu_{\phi,\theta}$ satisfy a number of recursive relations induced by relations on Schur polynomials (cf.~\cite{Goldfeld}*{p.~278}). Fix $f \in A_+$ irreducible. Then Pieri's rule~\eqref{E:Pieri} implies that for $k$, $k_1, \dots, k_{r-1} \geqslant 0$,
\begin{align}
\bsmu_{\phi,\theta} \bigl(f^k, &\,1, \dots, 1\bigr) \bsmu_{\phi,\theta} \bigl( f^{k_1}, \dots, f^{k_{r-1}} \bigr) \\
&= \begin{aligned}[t]
\smash{\sum_{\substack{\substack{m_0 + \cdots + m_{r-1}=k \\ m_1 \leqslant k_1,\, \ldots,\, m_{r-1} \leqslant k_{r-1}}}}} \bsmu_{\phi,\theta} \bigl( f^{k_1 + m_0 - m_1}, f^{k_2 + m_1-m_2}, \dots, &\,f^{k_{r-1}+m_{r-2}-m_{r-1}} \bigr) \\
&{} \cdot \chi_{\phi}(f)^{m_{r-1}} f^{k-m_0},
\end{aligned} \notag \\[10pt]
\bsnu_{\phi,\theta} \bigl(f^k, &\,1, \dots, 1\bigr) \bsnu_{\phi,\theta} \bigl( f^{k_1}, \dots, f^{k_{r-1}} \bigr) \\
&= \begin{aligned}[t]
\smash{\sum_{\substack{\substack{m_0 + \cdots + m_{r-1}=k \\ m_1 \leqslant k_1,\, \ldots,\, m_{r-1} \leqslant k_{r-1}}}}} \bsnu_{\phi,\theta} \bigl( f^{k_1 + m_0 - m_1}, f^{k_2 + m_1-m_2}, \dots, &\, f^{k_{r-1}+m_{r-2}-m_{r-1}} \bigr) \\
&{} \cdot \ochi_{\phi}(f)^{m_{r-1}} f^{m_{r-1}}.
\end{aligned} \notag
\end{align}
The dual Pieri rule~\eqref{E:dualPieri} implies that for $0 \leqslant k \leqslant r-1$,
\begin{align}
\bsmu_{\phi,\theta} &\bigl(\underbrace{1,\ldots,1,f,1,\ldots,1}_{\textup{$k$-th place}}) \bsmu_{\phi,\theta} \bigl( f^{k_1}, \dots, f^{k_{r-1}} \bigr) \\
&= \begin{aligned}[t]
\smash{\sum_{\substack{m_0 + \cdots + m_{r-1} = k \\ (m_0, \ldots, m_{r-1})\, \in\, \cI_{k_1, \dots, k_{r-1}}}}} \bsmu_{\phi,\theta} \bigl( f^{k_1 + m_0 - m_1}, f^{k_2 + m_1-m_2}, \dots, &\,f^{k_{r-1}+m_{r-2}-m_{r-1}} \bigr) \\
&{} \cdot \chi_{\phi}(f)^{m_{r-1}} f^{1-m_0},
\end{aligned} \notag \\[10pt]
\bsnu_{\phi,\theta} &\bigl(\underbrace{1,\ldots,1,f,1,\ldots,1}_{\textup{$k$-th place}}) \bsnu_{\phi,\theta} \bigl( f^{k_1}, \dots, f^{k_{r-1}} \bigr) \\
&= \begin{aligned}[t]
\smash{\sum_{\substack{m_0 + \cdots + m_{r-1} = k \\ (m_0, \ldots, m_{r-1})\, \in\, \cI_{k_1, \dots, k_{r-1}}}}} \bsnu_{\phi,\theta} \bigl( f^{k_1 + m_0 - m_1}, f^{k_2 + m_1-m_2}, \dots, &\, f^{k_{r-1}+m_{r-2}-m_{r-1}} \bigr) \\
&{} \cdot \ochi_{\phi}(f)^{m_{r-1}} f^{m_{r-1}}.
\end{aligned} \notag
\end{align}
In particular for $k \geqslant 1$ (cf.\ \cite{Goldfeld}*{p.~278}),
\begin{align}
\bsmu_{\phi,\theta}( f^k, 1, \dots, 1) \bsmu_{\phi,\theta} (f,1,\ldots, 1)
&= \begin{aligned}[t]
\bsmu_{\phi,\theta}(f^{k+1},&\,1,\ldots, 1) \\
&{}+ \bsmu_{\phi,\theta}(f^{k-1},f,1, \ldots, 1) \cdot f,
\end{aligned}
\\
\bsmu_{\phi,\theta}( f^k, 1, \dots, 1) \bsmu_{\phi,\theta} (1,f,1,\ldots, 1)
&= \begin{aligned}[t]
\bsmu_{\phi,\theta}(f^k,&\,f,1,\ldots,1) \\
&{}+ \bsmu_{\phi,\theta}(f^{k-1},1,f,1, \ldots,1) \cdot f,
\end{aligned}
\notag \\
\bsmu_{\phi,\theta}( f^k, 1, \dots, 1) \bsmu_{\phi,\theta} (1,1,f,1,\ldots, 1)
&= \begin{aligned}[t]
\bsmu_{\phi,\theta}(f^k,&\,1,f,1,\ldots,1) \\
&{}+ \bsmu_{\phi,\theta}(f^{k-1},1,1,f,1, \ldots,1) \cdot f,
\end{aligned}
\notag \\
\bsmu_{\phi,\theta}( f^k, 1, \dots, 1) \bsmu_{\phi,\theta} (1,\ldots, 1,f)
&= \begin{aligned}[t]
\bsmu_{\phi,\theta}(f^k,&\,1,\ldots,1,f) \\
&{}+ \bsmu_{\phi,\theta}(f^{k-1},1, \ldots,1) \cdot \chi_{\phi}(f) f,
\end{aligned}
\notag
\end{align}
and likewise,
\begin{align}
\bsnu_{\phi,\theta}( f^k, 1, \dots, 1) \bsnu_{\phi,\theta} (f,1,\ldots, 1)
&= \begin{aligned}[t]
\bsnu_{\phi,\theta}(f^{k+1},&\,1,\ldots, 1) \\
&{}+ \bsnu_{\phi,\theta}(f^{k-1},f,1, \ldots, 1),
\end{aligned}
\\
\bsnu_{\phi,\theta}( f^k, 1, \dots, 1) \bsnu_{\phi,\theta} (1,f,1,\ldots, 1)
&= \begin{aligned}[t]
\bsnu_{\phi,\theta}(f^k,&\,f,1,\ldots,1) \\
&{}+ \bsnu_{\phi,\theta}(f^{k-1},1,f,1, \ldots,1),
\end{aligned}
\notag \\
\bsnu_{\phi,\theta}( f^k, 1, \dots, 1) \bsnu_{\phi,\theta} (1,1,f,1,\ldots, 1)
&= \begin{aligned}[t]
\bsnu_{\phi,\theta}(f^k,&\,1,f,1,\ldots,1) \\
&{}+ \bsnu_{\phi,\theta}(f^{k-1},1,1,f,1, \ldots,1),
\end{aligned}
\notag \\
\bsnu_{\phi,\theta}( f^k, 1, \dots, 1) \bsnu_{\phi,\theta} (1,\ldots, 1,f)
&= \begin{aligned}[t]
\bsnu_{\phi,\theta}(f^k,&\,1,\ldots,1,f) \\
&{}+ \bsnu_{\phi,\theta}(f^{k-1},1, \ldots,1) \cdot \ochi_{\phi}(f) f.
\end{aligned}
\notag
\end{align}

\subsubsection{Calculating $\bsmu_{\phi,\theta}$ and $\bsnu_{\phi,\theta}$} \label{SSS:munucalc}
To calculate $\bsmu_{\phi,\theta}$ and $\bsnu_{\phi,\theta}$ one can also use the Jacobi-Trudi identity from~\S\ref{SS:JT}. Substituting \eqref{E:munufkhk} into~\eqref{E:JT} and using Proposition \ref{P:bsmunuprops}(b), for $k_1, \dots, k_{r-1} \geqslant 0$ we have
\begin{equation} \label{E:JTnu}
\bsnu_{\phi,\theta}\bigl( f^{k_1}, \dots, f^{k_{r-1}} \bigr) = \det \Bigl(
\nu_{\phi,\theta} \bigl( f^{k_i + \cdots + k_{r-1}-i+j} \bigr) \Bigr)_{i,j=1}^{r-1}.
\end{equation}
We can use~\eqref{E:charpolyscalc} to determine $P_{\phi,f}(X)$ and then use~\eqref{E:nurec} to obtain the entries of this determinant. Then $\bsmu_{\phi,\theta}(f^{k_1}, \dots, f^{k_{r-1}})$ can be found from Proposition~\ref{P:bsmunuprops}(c).

\subsection{Convolution \texorpdfstring{$L$}{L}-series for \texorpdfstring{$r=\ell$}{r=l}} \label{SS:Lconvrxr}
In this section we assume that $\phi$ and $\psi$ from \eqref{E:phidef} and \eqref{E:psidef} have the same rank $r=\ell \geqslant 2$, and we investigate
\[
L(\EE(\phi\times \psi)^{\vee},s) = \prod_f \bQ_f^{\vee}\bigl( f^{-s} \bigr)^{-1}
\]
from~\eqref{E:LEEdef}. We at first fix $f \in A_+$ irreducible, and as previously we let $\alpha_1, \dots, \alpha_r \in \oK$ be the roots of $P_{\phi,f}^{\vee}(X)$, and we let $\beta_1, \dots, \beta_{r} \in \overline{\FF_q(z)}$ be the roots of $P_{\psi,f(z)}(X)$.

As $\bQ_f^{\vee}(X)$ is the reciprocal polynomial of $\bP_{f}^{\vee}(X) = P_{\phi,f}^{\vee}(X) \otimes P_{\psi,f(z)}(X)$ from Definition~\ref{D:EEcharpolys}, we can expand $\bQ_f^{\vee}(f^{-s})^{-1}$ using Cauchy's identity (Theorem~\ref{T:Cauchy}). We note from~\eqref{E:PfandPfvee} that $\alpha_1 \cdots \alpha_r = \chi_{\phi}(f) f^{-1}$ and $\beta_1 \cdots \beta_r = \ochi_{\psi}(f) f(z)$. By the definitions of $\bsmu_{\phi,\theta}$ and $\bsnu_{\psi,z}$ from \eqref{E:bsmudef} and~\eqref{E:bsnudef}, Theorem~\ref{T:Cauchy} implies
\begin{align*}
\bQ_f^{\vee}\bigl( &f^{-s} \bigr)^{-1} \\
&= \biggl( 1 - \frac{\chi_{\phi}(f) \ochi_{\psi}(f) f(z)}{f^{rs+1}} \biggr)^{-1} 
\underset{k=(k_1, \dots, k_{r-1})}{\sum_{k_1=0}^{\infty} \cdots \sum_{k_{r-1}=0}^{\infty}} S_{k}(\bsalpha) S_{k}(\bsbeta) f^{-s(k_1 + 2k_2 + \cdots + (r-1)k_{r-1})} \\
&= \begin{aligned}[t]
\biggl( 1 - \frac{\chi_{\phi}(f) \ochi_{\psi}(f) f(z)}{f^{rs+1}} \biggr)^{-1} \smash{\sum_{k_1, \ldots, k_{r-1} \geqslant 0}} \bsmu_{\phi,\theta} &\bigl(f^{k_1}, \dots, f^{k_{r-1}} \bigr) \bsnu_{\psi,z} \bigl( f^{k_1}, \dots, f^{k_{r-1}} \bigr) \\
&{}\cdot f^{-k_1-k_2 - \cdots - k_{r-1}-s(k_1 + 2k_2 + \cdots (r-1)k_{r-1})},
\end{aligned}
\end{align*}
where $\bsalpha = (\alpha_1, \dots, \alpha_r)$ and $\bsbeta = (\beta_1, \dots, \beta_r)$.
We define the twisted Pellarin $L$-series
\begin{equation}
L(\AAA,\chi_{\phi}\ochi_{\psi},s) \assign \sum_{a \in A_+} \frac{\chi_{\phi}(a)\ochi_{\psi}(a) a(z)}{a^s},
\end{equation}
and finally we define the $L$-series, for $\bsnu_{\phi,z}(a_1, \dots, a_{r-1})\assign \bsnu_{\phi,\theta}(a_1, \dots, a_{r-1})|_{\theta=z} \in \FF_q[z]$,
\begin{equation} \label{E:Lmunurxr}
L(\bsmu_{\phi,\theta} \times \bsnu_{\psi,z},s) \assign \sum_{a_1 \in A_+} \cdots \sum_{a_{r-1} \in A_+} \frac{\bsmu_{\phi,\theta}(a_1, \dots, a_{r-1}) \bsnu_{\psi,z}(a_1, \dots, a_{r-1})}{a_1 \cdots a_{r-1} (a_1 a_2^2 \cdots a_{r-1}^{r-1})^s}.
\end{equation}
The convergence of this series in $\TT_z(K_{\infty})$ can be deduced from Proposition~\ref{P:bsmunuprops}(d) for $s \geqslant 0$. More specifically, if $(a_1, \dots, a_r) \neq (1,\dots, 1)$, then Proposition~\ref{P:bsmunuprops}(d) implies
\[
\deg_{\theta} \biggl( \frac{\bsmu_{\theta}(a_1, \dots, a_{r-1})}{a_1 \cdots a_{r-1}(a_1 a_2^2 \cdots a_{r-1}^{r-1})^s} \biggr) \leqslant -\frac{1}{r},
\]
which implies that $\dnorm{L(\bsmu_{\phi,\theta} \times \bsnu_{\psi,z},s) - 1} < 0$ and so $L(\bsmu_{\phi,\theta} \times \bsnu_{\psi,z},s) \in \TT_z(K_{\infty})^{\times}$. Similarly, $L(\AAA,\chi_{\phi}\ochi_{\psi},s+1) \in \TT_z(K_{\infty})^{\times}$.
After some straightforward simplification we arrive at the following result.

\begin{theorem} \label{T:LEErxr}
Let $\phi$, $\psi : \sA \to A[\tau]$ be Drinfeld modules both of rank $r \geqslant 2$ with everywhere good reduction, as defined in~\eqref{E:phidef} and~\eqref{E:psidef}. Then for $s \geqslant 0$,
\[
L(\EE(\phi \times \psi)^{\vee},s) = L(\AAA,\chi_{\phi}\ochi_{\psi},rs+1) \cdot
L(\bsmu_{\phi,\theta} \times \bsnu_{\psi,z},s),
\]
and $L(\EE(\phi \times \psi)^{\vee},s) \in \TT_z(K_{\infty})^{\times}$.
\end{theorem}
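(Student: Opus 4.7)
The strategy is to verify the identity prime-by-prime on Euler factors, then assemble everything into the two claimed global $L$-series. The essential tool is Cauchy's identity (Theorem~\ref{T:Cauchy}), applied to the tensor product of characteristic polynomials appearing in $\bP_f^{\vee}(X)$.

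First I would fix an irreducible $f \in A_+$ and let $\alpha_1, \ldots, \alpha_r \in \oK$ be the roots of $P_{\phi,f}^{\vee}(X)$ and $\beta_1, \ldots, \beta_r \in \overline{\FF_q(z)}$ be the roots of $P_{\psi,f(z)}(X)$. By Definition~\ref{D:EEcharpolys} and Definition~\ref{D:polytensor},
\[
\bQ_f^{\vee}(X)^{-1} = \prod_{1 \leqslant i,j \leqslant r} (1 - \alpha_i \beta_j X)^{-1}.
\]
Setting $X = f^{-s}$ and applying Theorem~\ref{T:Cauchy} with $n = r$, $x_i = \alpha_i$, $y_j = \beta_j$, and $T = f^{-s}$, yields
\[
\bQ_f^{\vee}(f^{-s})^{-1} = (1 - \bsalpha \bsbeta \, f^{-rs})^{-1} \sum_{k_1, \ldots, k_{r-1} \geqslant 0} S_{k}(\bsalpha) S_{k}(\bsbeta) \, f^{-s(k_1 + 2k_2 + \cdots + (r-1)k_{r-1})},
\]
where $\bsalpha = \alpha_1 \cdots \alpha_r$ and $\bsbeta = \beta_1 \cdots \beta_r$. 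The next step is to identify these ingredients in the notation of \S\ref{SS:bsmubsnu}: by~\eqref{E:PfandPfvee} we have $\bsalpha = \chi_\phi(f) f^{-1}$ and $\bsbeta = \ochi_\psi(f) f(z)$, so the prefactor becomes $(1 - \chi_\phi(f)\ochi_\psi(f) f(z)/f^{rs+1})^{-1}$, which is exactly the local Euler factor of the twisted Pellarin $L$-series $L(\AAA, \chi_\phi \ochi_\psi, rs+1)$. Meanwhile, by~\eqref{E:bsmudef} and~\eqref{E:bsnudef},
\[
S_{k}(\bsalpha) S_{k}(\bsbeta) f^{-k_1 - \cdots - k_{r-1}} = \bsmu_{\phi,\theta}(f^{k_1}, \ldots, f^{k_{r-1}}) \, \bsnu_{\psi,z}(f^{k_1}, \ldots, f^{k_{r-1}}) \, f^{-2(k_1 + \cdots + k_{r-1})}.
\]
Combining the exponent $-2(k_1+\cdots+k_{r-1}) - s(k_1 + 2k_2 + \cdots + (r-1)k_{r-1})$ with the normalization in~\eqref{E:Lmunurxr}, each Euler factor matches the corresponding $f$-power contribution in the product expansion of $L(\bsmu_{\phi,\theta} \times \bsnu_{\psi,z},s)$.

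Next I would multiply over all irreducible $f$. The convergence of the twisted Pellarin factor in $\TT_z(K_\infty)$ for $rs+1 \geqslant 1$ is standard, and convergence of $L(\bsmu_{\phi,\theta} \times \bsnu_{\psi,z}, s)$ for $s \geqslant 0$ follows from the degree bounds of Proposition~\ref{P:bsmunuprops}(d) together with the counting of monic polynomials of bounded degree. The multiplicativity of $\bsmu_{\phi,\theta}$ and $\bsnu_{\psi,z}$ ensures that the Dirichlet series $L(\bsmu_{\phi,\theta} \times \bsnu_{\psi,z}, s)$ factors as an Euler product whose $f$-local piece is exactly the Cauchy-type sum computed above.

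The only potentially delicate step is the bookkeeping: verifying that the product over primes of the local factors actually rearranges as a Dirichlet series indexed by $(a_1, \ldots, a_{r-1}) \in (A_+)^{r-1}$ with the normalization $a_1 \cdots a_{r-1} (a_1 a_2^2 \cdots a_{r-1}^{r-1})^s$ in the denominator. This is where the multiplicativity of $\bsmu_{\phi,\theta}$ and $\bsnu_{\psi,z}$ with respect to \emph{coprime} factorizations is essential, and one must check that the exponents combine correctly when $a_i = \prod_f f^{k_{i,f}}$. Since Cauchy's identity expands everything as a formal power series in independent variables (one per prime), and the two global $L$-series in question are themselves absolutely convergent in $\TT_z(K_\infty)$ in the appropriate range, the prime-by-prime identity rigorously implies the global identity. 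The hard part will therefore not be a conceptual obstacle but rather the careful exponent bookkeeping that turns the Euler product into the stated Dirichlet series.
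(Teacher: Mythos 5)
Your approach is exactly the paper's: apply Cauchy's identity (Theorem~\ref{T:Cauchy}) prime-by-prime to $\bQ_f^{\vee}(f^{-s})^{-1}$, identify the prefactor $(1-XYT^r)^{-1}$ with the Euler factor of the twisted Pellarin series using $\alpha_1\cdots\alpha_r = \chi_\phi(f)f^{-1}$ and $\beta_1\cdots\beta_r = \ochi_\psi(f)f(z)$, identify the Schur sum with the Euler factor of $L(\bsmu_{\phi,\theta}\times\bsnu_{\psi,z},s)$ via \eqref{E:bsmudef}--\eqref{E:bsnudef}, and multiply over all irreducible~$f$.

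There is, however, a bookkeeping slip in your exponent tracking that should be corrected. The Cauchy term is $S_k(\bsalpha)S_k(\bsbeta)\,f^{-s(k_1+2k_2+\cdots+(r-1)k_{r-1})}$, and the identification you need is simply
\[
S_k(\bsalpha)S_k(\bsbeta) = \bsmu_{\phi,\theta}(f^{k_1},\dots,f^{k_{r-1}})\,\bsnu_{\psi,z}(f^{k_1},\dots,f^{k_{r-1}})\,f^{-(k_1+\cdots+k_{r-1})},
\]
which gives the local term $\bsmu_{\phi,\theta}\bsnu_{\psi,z}\,f^{-(k_1+\cdots+k_{r-1})-s(k_1+2k_2+\cdots+(r-1)k_{r-1})}$, matching the denominator $a_1\cdots a_{r-1}(a_1a_2^2\cdots a_{r-1}^{r-1})^s$ of \eqref{E:Lmunurxr} with $a_i=f^{k_i}$. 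Your displayed equation has an extra factor of $f^{-(k_1+\cdots+k_{r-1})}$ inserted on both sides, and the subsequent claim that the relevant exponent is $-2(k_1+\cdots+k_{r-1})-s(\cdots)$ does not match \eqref{E:Lmunurxr}; as written it would fail to reproduce the stated normalization. The fix is only to drop the spurious $f^{-(k_1+\cdots+k_{r-1})}$ on the left-hand side and, correspondingly, the factor of~$2$ on the right, after which your argument matches the paper's line for line.
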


We note that in the case that the leading coefficients $\kappa_r$ and $\eta_r$ of $\phi_t$ and $\psi_t$ are equal, then by \eqref{E:chidef} we have $L(\AAA,\chi_{\phi}\ochi_{\psi},s) = L(\AAA,s)$. We can substitute $s=0$ into Theorem~\ref{T:LEErxr} and obtain the following special value identities.

\begin{corollary} \label{C:Lmunurxr}
Let $\phi$, $\psi : \sA \to A[\tau]$ be Drinfeld modules both of rank $r \geqslant 2$ with everywhere good reduction, as defined in~\eqref{E:phidef} and~\eqref{E:psidef}.
\begin{alphenumerate}
\item If $\kappa_r = \eta_r$, then
\begin{align*}
L(\bsmu_{\phi,\theta} \times \bsnu_{\psi,z},0)
&= \sum_{a_1 \in A_+} \cdots \sum_{a_{r-1} \in A_+} \frac{\bsmu_{\phi,\theta}(a_1, \dots, a_{r-1}) \bsnu_{\psi,z}(a_1, \dots, a_{r-1})}{a_1 \cdots a_{r-1}} \\[10pt]
&= (\theta-z) \cdot \frac{\omega_z}{\tpi} \cdot \Reg_{\EE} \cdot \Aord{\rH(\EE)}{\AAA}.
\end{align*}
\item If $\kappa_r=\eta_r$ and $\dnorm{\Upsilon_{\psi,z}} < R_{\phi}$, where $R_{\phi}$ is the radius of convergence of $\Log_{\phi}(z)$ in~\eqref{E:Rphi}, then
\[
L(\bsmu_{\phi,\theta} \times \bsnu_{\psi,z},0)
=(\theta - z) \cdot \frac{\omega_z}{\tpi} \cdot \det \Bigl( \Upsilon_{\psi,z}^{-1} \Log_{\phi} \bigl(\Upsilon_{\psi,z} \bigr) \Bigr).
\]
\item More generally, if we let $\gamma = (\kappa_r/\eta_r)^{1/(q-1)} \in \oFF_q^{\times}$, then
\[
L(\bsmu_{\phi,\theta} \times \bsnu_{\psi,z},0) = \frac{1}{\gamma \omega_z} \Log_{\sC} \bigl( \gamma \omega_z \bigr) \cdot \Reg_{\EE} \cdot \Aord{\rH(\EE)}{\AAA},
\]
and if additionally $\dnorm{\Upsilon_{\psi,z}} < R_{\phi}$,
\[
L(\bsmu_{\phi,\theta} \times \bsnu_{\psi,z},0) = \frac{1}{\gamma \omega_z} \Log_{\sC} \bigl( \gamma \omega_z \bigr) \cdot \det \Bigl( \Upsilon_{\psi,z}^{-1} \Log_{\phi} \bigl(\Upsilon_{\psi,z} \bigr) \Bigr).
\]
\end{alphenumerate}
\end{corollary}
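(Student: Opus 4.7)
The plan is to combine the factorization in Theorem~\ref{T:LEErxr} at $s=0$ with Demeslay's class module formula (Theorem~\ref{T:Demeslay}) applied to $\EE = \EE(\phi \times \psi)$. Setting $s=0$ in Theorem~\ref{T:LEErxr} gives $L(\EE^{\vee},0) = L(\AAA,\chi_{\phi}\ochi_{\psi},1) \cdot L(\bsmu_{\phi,\theta}\times\bsnu_{\psi,z},0)$, while, because $\pd\EE_t = \theta\rI_{\ell}$, we have $\Aord{\Lie(\soEE)(\FF_f(z))}{\AAA} = \Aord{\FF_f(z)^{\ell}}{\AAA}$ for every irreducible $f\in A_+$, and Proposition~\ref{P:LEE0} then identifies the left-hand side of Demeslay's formula for $\EE$ with $L(\EE^{\vee},0)$. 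Thus $L(\EE^{\vee},0) = \Reg_{\EE}\cdot\Aord{\rH(\EE)}{\AAA}$, and each of (a)--(c) reduces to evaluating $L(\AAA,\chi_{\phi}\ochi_{\psi},1)$ and, under the convergence hypothesis, $\Reg_{\EE}$ and $\Aord{\rH(\EE)}{\AAA}$ individually.

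Part (a) is then immediate: the condition $\kappa_r = \eta_r$ forces $\chi_{\phi}\ochi_{\psi} \equiv 1$, so $L(\AAA,\chi_{\phi}\ochi_{\psi},1) = L(\AAA,1) = -\tpi/((z-\theta)\omega_z)$ by Pellarin's Theorem~\ref{T:Pellarin}. Dividing $\Reg_{\EE}\cdot\Aord{\rH(\EE)}{\AAA}$ by this value yields the claimed $(\theta-z)(\omega_z/\tpi)\cdot\Reg_{\EE}\cdot\Aord{\rH(\EE)}{\AAA}$.

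For part (b), the convergence hypothesis $\dnorm{\Upsilon_{\psi,z}} < R_{\phi}$ makes $\Log_{\phi}$ converge at every entry of $\Upsilon_{\psi,z}$, so~\eqref{E:ExpLogEE} shows $\Upsilon_{\psi,z}^{-1}\Log_{\phi}(\Upsilon_{\psi,z})$ is a well-defined element of $\Mat_{\ell}(\KK_{\infty})$ equal to $\Log_{\EE}(\rI_{\ell})$. Remark~\ref{R:RegLog1} then identifies $\Reg_{\EE}$ with $\det(\Upsilon_{\psi,z}^{-1}\Log_{\phi}(\Upsilon_{\psi,z}))$, the sign normalization being forced by the observation that $\rI_{\ell}$ is the unique highest-$\dnorm{\,\cdot\,}$ term of $\Log_{\EE}(\rI_{\ell})$. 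For the triviality of $\rH(\EE)$, the plan is to exhibit, for each standard basis vector $\bs_i \in \EE(\KK_{\infty})$, the explicit preimage $\Log_{\EE}(\bs_i) \in \Lie(\EE)(\KK_{\infty})$ as the $i$-th column of $\Upsilon_{\psi,z}^{-1}\Log_{\phi}(\Upsilon_{\psi,z})$, and then to use the $\AAA$-lattice property of $\Exp_{\EE,\KK_{\infty}}^{-1}(\EE(\AAA))$ from~\S\ref{SS:Demeslayformula} to conclude that $\Exp_{\EE,\KK_{\infty}}(\Lie(\EE)(\KK_{\infty})) + \EE(\AAA) = \EE(\KK_{\infty})$. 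Substituting both facts into (a) produces (b).

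For part (c) the new ingredient is a general evaluation of $L(\AAA,\chi_{\phi}\ochi_{\psi},1)$. The character $\chi_{\phi}\ochi_{\psi}(a) = (\kappa_r/\eta_r)^{\deg a}$ coincides with $\chi_{\phi'}$ for the auxiliary rank one Drinfeld module $\phi':\sA\to A[\tau]$ given by $\phi'_t = \theta + (\kappa_r/\eta_r)\tau$, and a direct rank one calculation as in~\S\ref{SS:munu} shows $\mu_{\phi'}(a) = \chi_{\phi'}(a)$. Example~\ref{Ex:LphixC} then identifies $L(\AAA,\chi_{\phi}\ochi_{\psi},1)$ with $L(\EE(\phi'\times\sC)^{\vee},0)$, and since $\deg\omega_z = 1/(q-1) < q/(q-1) = \log_q R_{\phi'}$, Gezmi\c{s}'s identity~\eqref{E:LphixC1} rewrites the latter as $\Log_{\phi'}(\omega_z)/\omega_z$. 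Finally, the defining relation $\gamma^{q-1}=\kappa_r/\eta_r$ makes $\gamma:\phi'\to\sC$ into a morphism of Drinfeld modules over a suitable extension of $A$, so the functoriality of exponentials and logarithms (\S\ref{SS:ExpLog}) gives $\Log_{\sC}(\gamma x) = \gamma\Log_{\phi'}(x)$, whence $L(\AAA,\chi_{\phi}\ochi_{\psi},1) = \Log_{\sC}(\gamma\omega_z)/(\gamma\omega_z)$. Combining this evaluation with $L(\EE^{\vee},0) = \Reg_{\EE}\Aord{\rH(\EE)}{\AAA}$ in Theorem~\ref{T:LEErxr} yields the identity in (c), and the explicit logarithm form under $\dnorm{\Upsilon_{\psi,z}} < R_{\phi}$ follows as in (b). The most delicate step will be the triviality of $\rH(\EE)$ in (b), where one must ensure that $\Log_{\EE}$ genuinely lands in the $\KK_{\infty}$-completion rather than in a larger Tate algebra when applied to the standard basis; this is ensured by the coefficient matrices of $\Log_{\EE}$ in~\eqref{E:ExpLogEE} having entries in $K[z] \subset \KK_{\infty}$.
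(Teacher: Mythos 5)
Your outline of part (a) matches the paper's proof exactly: combine Theorem~\ref{T:LEErxr} at $s=0$ with Proposition~\ref{P:LEE0}, Demeslay's formula (Theorem~\ref{T:Demeslay}), and Pellarin's value $L(\AAA,1)=-\tpi/((z-\theta)\omega_z)$. In part (c) your derivation of $L(\AAA,\chi_\phi\ochi_\psi,1)=\Log_\sC(\gamma\omega_z)/(\gamma\omega_z)$ via the rank-one Drinfeld module $\phi'_t=\theta+(\kappa_r/\eta_r)\tau$, Example~\ref{Ex:LphixC}, \eqref{E:LphixC1}, and functoriality of logarithms along the isogeny $\gamma:\phi'\to\sC$ is a correct alternative to the paper's direct citation of~\cite{APT16}. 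The regulator identification in (b) via Remark~\ref{R:RegLog1} also tracks the paper.

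The gap is in your argument that $\Aord{\rH(\EE)}{\AAA}=1$ in part (b). You propose to exhibit $\Log_\EE(\bs_i)\in\Exp_{\EE,\KK_\infty}^{-1}(\EE(\AAA))$ and then ``use the $\AAA$-lattice property of $\Exp_{\EE,\KK_\infty}^{-1}(\EE(\AAA))$ \ldots to conclude that $\Exp_{\EE,\KK_\infty}(\Lie(\EE)(\KK_\infty))+\EE(\AAA)=\EE(\KK_\infty)$.'' This does not follow. The lattice property (Demeslay's Prop.~2.6) holds unconditionally for any abelian, $\bA$-finite $t$-module over $\AAA$, including those with nontrivial class module, so it cannot by itself force $\rH(\EE)$ to vanish. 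Exhibiting preimages $\Log_\EE(\bs_i)$ of the standard basis shows the $\bs_i$ lie in the image of $\Exp_\EE$, but the class module quotient involves \emph{all} of $\EE(\KK_\infty)$, and the $\bs_i$ do not generate $\EE(\AAA)$ as an $\bA$-module under the $\EE$-action. Your final remark that the delicacy lies in ``ensuring $\Log_\EE$ lands in the $\KK_\infty$-completion'' is also not the real issue: the coefficients of $\Log_\EE$ are in $K[z]$ regardless.

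Two fixes are available. The paper's own argument is purely a norm computation: the product in Demeslay's formula (equivalently $L(\EE^\vee,0)$ via Proposition~\ref{P:LEE0}) is a $1$-unit, so $\dnorm{L(\EE^\vee,0)}=1$; since $\dnorm{L(\AAA,1)}=1$ and $\dnorm{\det(\Upsilon_{\psi,z}^{-1}\Log_\phi(\Upsilon_{\psi,z}))}=1$ (the $\rI_\ell$ term dominates by Remark~\ref{R:RegLog1}), the monic element $\Aord{\rH(\EE)}{\AAA}\in\FF_q(z)[\theta]$ must equal $1$. Alternatively, a direct argument closer in spirit to yours: the hypothesis $\dnorm{\Upsilon_{\psi,z}}<R_\phi$ makes $\Log_\EE$ converge on a polydisc of radius $\dnorm{\Upsilon_{\psi,z}}^{-1}R_\phi>1$, so $\Exp_\EE$ is surjective onto the maximal ideal polydisc $\MM_\infty^\ell$ (cf.\ the discussion preceding Example~\ref{Ex:four}); since $\EE(\KK_\infty)=\EE(\AAA)+\MM_\infty^\ell$ set-theoretically, this gives $\Exp_{\EE,\KK_\infty}(\Lie(\EE)(\KK_\infty))+\EE(\AAA)=\EE(\KK_\infty)$ and hence $\rH(\EE)=0$. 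What you need is this radius-of-convergence surjectivity, not the lattice property. The same repair applies to the second identity in (c), where you defer to the argument in~(b).
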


\begin{proof}
By Theorem~\ref{T:LEErxr} we have $L(\bsmu_{\phi,\theta} \times \bsnu_{\psi,z},0) = L(\EE(\phi \times \psi)^{\vee},0)/L(\AAA,1)$. Then
using Proposition~\ref{P:LEE0} together with Pellarin's identity for $L(\AAA,1)$ (Theorem~\ref{T:Pellarin}) and Demeslay's special value formula (Theorem~\ref{T:Demeslay}), we obtain~(a).

For~(b), the discussion in Remark~\ref{R:RegLog1} implies that $\Reg_{\EE} = \det(\Upsilon_{\psi,z}^{-1} \Log_{\phi} (\Upsilon_{\psi,z}))$, and so by~(a) it remains to verify that $\Aord{\rH(\EE)}{\AAA} = 1$ in this case. Now by construction the product in Demeslay's Theorem~\ref{T:Demeslay} has $\dnorm{\,\cdot\,}$-norm~$1$ (moreover, this can be verified directly from~\eqref{E:LEEdef}), and so by Proposition~\ref{P:LEE0}, $\dnorm{L(\EE(\phi \times \psi)^{\vee},0)} = 1$. One checks easily that $\dnorm{L(\AAA,1)} = 1$, and so $\dnorm{L(\bsmu_{\phi,\theta} \times \bsnu_{\psi,z},0)} = 1$. Furthermore, we have $\dnorm{\det(\Upsilon_{\psi,z}^{-1} \Log_{\phi} (\Upsilon_{\psi,z}))}=1$ by Remark~\ref{R:RegLog1}, so both sides of the proposed identity in~(b) have $\dnorm{\,\cdot\,}$-norm~$1$. Since $\Aord{\rH(\EE)}{\AAA}$ is monic in $\AAA = \FF_q(z)[\theta]$, this cannot be the case if $\Aord{\rH(\EE)}{\AAA} \neq 1$.
Part (c) follows in exactly the same way, using the identity
\[
L(\AAA,\chi_{\phi}\ochi_{\psi},1) = \frac{1}{\gamma \omega_z} \Log_{\sC} \bigl( \gamma \omega_z \bigr),
\]
which is due to Angl\`es, Pellarin, and Tavares Ribeiro~\cite{APT16}*{Eqs.~(18), (25)}.
\end{proof}

\subsection{Convolution \texorpdfstring{$L$}{L}-series for \texorpdfstring{$r\neq\ell$}{r<>l}} \label{SS:Lconvrxl}
We continue with the same situation as in~\S\ref{SS:Lconvrxr}, but here we assume that the ranks of $\phi$ and $\psi$ are not equal. This splits into two cases, $r < \ell$ and $r > \ell$. As we have addressed the cases where either $\phi$ or $\psi$ is the Carlitz module in Examples~\ref{Ex:LphixC} and~\ref{Ex:LCxphi}, we will assume that $r$, $\ell \geqslant 2$.

\subsubsection{The case $r < \ell$}
Let $f \in A_+$ be irreducible, and let $\alpha_1, \dots, \alpha_r \in \oK$ be the roots of $P_{\phi,f}^{\vee}(X)$ and $\beta_1, \dots, \beta_{\ell} \in \overline{\FF_q(z)}$ the roots of $P_{\psi,f(z)}(X)$. Using that $\alpha_1 \cdots \alpha_r = \chi_{\phi}(f) f^{-1}$, we apply Bump's specialization of Cauchy's identity (Corollary~\ref{C:Cauchynl}), and similar to calculations in \S\ref{SS:Lconvrxr}, we find
\begin{align*}
\bQ_f^{\vee}\bigl( f^{-s} \bigr)^{-1} &= \underset{\substack{k=(k_1, \dots, k_{r-1}) \\ k'=(k_1, \dots, k_{r},0 \dots, 0)}}{\sum_{k_1=0}^{\infty} \cdots \sum_{k_{r}=0}^{\infty}} S_{k}(\bsalpha) S_{k'}(\bsbeta) \bigl( \chi_{\phi}(f) f^{-1} \bigr)^{k_r} f^{-s(k_1 + 2k_2 + \cdots + rk_r)} \\
&= \begin{aligned}[t]
 \smash{\sum_{k_1, \ldots, k_{r} \geqslant 0}} \bsmu_{\phi,\theta} \bigl(f^{k_1}, \dots, f^{k_{r-1}} \bigr) &\bsnu_{\psi,z} \bigl( f^{k_1}, \dots, f^{k_{r}},1, \ldots, 1 \bigr) \\
&{}\cdot \chi_{\phi} \bigl(f^{k_r} \bigr) f^{-k_1-k_2 - \cdots - k_{r}-s(k_1 + 2k_2 + \cdots + rk_{r})}.
\end{aligned}
\end{align*}
The expression $\bsnu_{\psi,z}( f^{k_1}, \dots, f^{k_{r}},1, \ldots, 1)$ generically has $1$'s in exactly the last $\ell-1-r$ places. We thus define the $L$-series when $r < \ell$,
\begin{equation} \label{E:Lmunurxl1}
L(\bsmu_{\phi,\theta} \times \bsnu_{\psi,z},s) \assign
\sum_{a_1, \ldots, a_r \in A_+}
\frac{\chi_{\phi}(a_r)\bsmu_{\phi,\theta}(a_1, \dots, a_{r-1}) \bsnu_{\psi,z}(a_1, \dots, a_r,1, \ldots, 1)}{a_1 \cdots a_r (a_1 a_2^2 \cdots a_r^r)^s}.
\end{equation}

\subsubsection{The case $r > \ell$}
The case that $r > \ell$ is similar to the $r < \ell$ case, making use of Corollary~\ref{C:Cauchynl} again, with only minor modifications. We skip the details, but when $r > \ell$ we define the $L$-series
\begin{multline} \label{E:Lmunurxl2}
L(\bsmu_{\phi,\theta} \times \bsnu_{\psi,z},s)  \assign \\
\sum_{a_1, \ldots, a_{\ell} \in A_+}
\frac{\ochi_{\psi}(a_{\ell}) a_{\ell}(z) \bsmu_{\phi,\theta}(a_1, \dots, a_{\ell},1, \dots, 1) \bsnu_{\psi,z}(a_1, \dots, a_{\ell-1})}{a_1 \cdots a_{\ell} (a_1 a_2^2 \cdots a_{\ell}^{\ell})^s}.
\end{multline}
After some reasonably straightforward calculation that we omit, we obtain the following theorem that covers both cases.

\begin{theorem} \label{T:LEErxl}
Let $\phi$, $\psi : \sA \to A[\tau]$ be Drinfeld modules of ranks $r$ and $\ell$ respectively with everywhere good reduction, as defined in~\eqref{E:phidef} and~\eqref{E:psidef}. Assume that $r$, $\ell \geqslant 2$ and that $r \neq \ell$. Then for $s \geqslant 0$,
\[
L(\EE(\phi \times \psi)^{\vee},s) = L(\bsmu_{\phi,\theta} \times \bsnu_{\psi,z},s),
\]
and $L(\EE(\phi \times \psi)^{\vee},s) \in \TT_z(K_{\infty})^{\times}$.
\end{theorem}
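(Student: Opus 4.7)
The plan is to mirror the computation of \S\ref{SS:Lconvrxr} but with Bump's asymmetric form of Cauchy's identity (Corollary~\ref{C:Cauchynl}) replacing the symmetric Theorem~\ref{T:Cauchy}. Fix an irreducible $f \in A_+$ of degree~$d$, and let $\alpha_1, \dots, \alpha_r \in \oK$ be the roots of $P_{\phi,f}^{\vee}(X)$ and $\beta_1, \dots, \beta_{\ell} \in \overline{\FF_q(z)}$ be the roots of $P_{\psi,f(z)}(X)$. By Definition~\ref{D:EEcharpolys}, $\bP_f^{\vee}(X) = \prod_{i,j}(X - \alpha_i\beta_j)$, and taking reciprocals,
\[
\bQ_f^{\vee}\bigl(f^{-s}\bigr)^{-1} = \prod_{\substack{1\leqslant i \leqslant r \\ 1 \leqslant j \leqslant \ell}} \bigl(1 - \alpha_i \beta_j f^{-s}\bigr)^{-1}.
\]
From \eqref{E:PfandPfvee} applied to $\phi$ and $\psi$ we have $\alpha_1 \cdots \alpha_r = \chi_{\phi}(f) f^{-1}$ and $\beta_1 \cdots \beta_{\ell} = \ochi_{\psi}(f) f(z)$, which are the ``pivot'' constants whose appearance after applying Cauchy's identity will produce the Dirichlet characters and shifts in \eqref{E:Lmunurxl1} and \eqref{E:Lmunurxl2}.

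First I will treat the case $r < \ell$. Applying Corollary~\ref{C:Cauchynl} with the variables $\bx = (\alpha_1, \dots, \alpha_r)$, $\by = (\beta_1, \dots, \beta_\ell)$, and $T = f^{-s}$, I obtain
\[
\bQ_f^{\vee}\bigl(f^{-s}\bigr)^{-1} = \underset{\substack{k=(k_1,\dots,k_{r-1}) \\ k' = (k_1,\dots,k_{r},0,\dots,0)}}{\sum_{k_1 = 0}^{\infty} \cdots \sum_{k_r = 0}^{\infty}} S_{k}(\bsalpha)\, S_{k'}(\bsbeta)\, \bigl(\chi_{\phi}(f) f^{-1}\bigr)^{k_r} f^{-s(k_1 + 2k_2 + \cdots + r k_r)}.
\]
Using the definitions \eqref{E:bsmudef}--\eqref{E:bsnudef} together with the convention $\bsnu_{\psi,z}(a_1,\dots,a_r) \assign \bsnu_{\psi,\theta}(a_1,\dots,a_r)|_{\theta = z}$, each term $S_{k}(\bsalpha)$ becomes $f^{-(k_1+\cdots+k_{r-1})} \bsmu_{\phi,\theta}(f^{k_1},\dots,f^{k_{r-1}})$, while $S_{k'}(\bsbeta) = \bsnu_{\psi,z}(f^{k_1},\dots,f^{k_r},1,\dots,1)$, where the tuple is padded with $\ell - 1 - r$ trailing ones. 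Multiplying over all irreducible~$f$ and invoking the multiplicativity of $\bsmu_{\phi,\theta}$ and $\bsnu_{\psi,z}$, together with the completely multiplicative $\chi_{\phi}$, the Euler product unfolds into the Dirichlet series \eqref{E:Lmunurxl1}. The exchange of product and sum is justified because the total series converges absolutely in $\TT_z(K_{\infty})$ for $s \geqslant 0$ by the degree bounds of Proposition~\ref{P:bsmunuprops}(d), mirroring the proof of Lemma~\ref{L:LEEconverge}.

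For the case $r > \ell$, the cleanest approach is to apply Corollary~\ref{C:Cauchynl} with the roles of $\bx$ and $\by$ interchanged, i.e.\ with the smaller tuple now being $(\beta_1,\dots,\beta_\ell)$. The same mechanism produces the factor $(\ochi_{\psi}(f) f(z))^{k_\ell}$ from $\beta_1 \cdots \beta_\ell$, leading via \eqref{E:bsmudef}--\eqref{E:bsnudef} to the expression \eqref{E:Lmunurxl2}; here the padding of ones falls on the $\bsmu_{\phi,\theta}$ slot rather than the $\bsnu_{\psi,z}$ slot, which is why the shape of the sum is reflected. Multiplying over~$f$ and rearranging by the multiplicativity of all ingredients assembles the double (or $\ell$-fold) Dirichlet series on the right-hand side.

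The main obstacle I anticipate is essentially bookkeeping rather than conceptual: one must make sure the characters $\chi_\phi$, $\ochi_\psi$, the exponents of $f$ and $f(z)$, and the pattern of trailing $1$'s in the Schur arguments all match the definitions \eqref{E:Lmunurxl1}--\eqref{E:Lmunurxl2} exactly. In particular, verifying that the power of $f$ coming from $(\chi_\phi(f) f^{-1})^{k_r}$ combines with the $f^{-(k_1+\cdots+k_{r-1})}$ from $S_k(\bsalpha)$ and with $f^{-s(k_1 + 2k_2 + \cdots + r k_r)}$ to produce exactly the denominator $a_1 \cdots a_r (a_1 a_2^2 \cdots a_r^r)^s$ of \eqref{E:Lmunurxl1}, with numerator $\chi_\phi(a_r)\bsmu_{\phi,\theta}(a_1,\dots,a_{r-1}) \bsnu_{\psi,z}(a_1,\dots,a_r,1,\dots,1)$, is where the calculation needs to be carried out carefully; the $r > \ell$ case is entirely symmetric once the first case is done.
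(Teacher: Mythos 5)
Your proposal is correct and follows the same route as the paper: apply Corollary~\ref{C:Cauchynl} to $\bQ_f^{\vee}(f^{-s})^{-1}$, with $\bx=(\alpha_1,\dots,\alpha_r)$ being the smaller tuple when $r<\ell$ and the roles of $\bx$, $\by$ swapped when $r>\ell$, then use $\alpha_1\cdots\alpha_r=\chi_{\phi}(f)f^{-1}$ and $\beta_1\cdots\beta_{\ell}=\ochi_{\psi}(f)f(z)$ together with \eqref{E:bsmudef}--\eqref{E:bsnudef} and multiplicativity to unfold the Euler product into \eqref{E:Lmunurxl1} and \eqref{E:Lmunurxl2}. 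The paper carries out the $r<\ell$ case explicitly and leaves the $r>\ell$ case as a similar calculation, which is precisely what you spell out.
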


If we degenerate the expressions in \eqref{E:Lmunurxl1} and~\eqref{E:Lmunurxl2} to the $r=1$ or $\ell=1$ cases of the Carlitz module, then we recover the identities in~\eqref{E:LphixCs-1} and~\eqref{E:LCxphis-1}.
Just as for Corollary~\ref{C:Lmunurxr}, we obtain the following result with the same proof, though there is no longer a factor coming from $L(\AAA,\chi_{\phi}\ochi_{\psi},1)$.

\begin{corollary} \label{C:Lmunurxl}
Let $\phi$, $\psi : \sA \to A[\tau]$ be Drinfeld modules of ranks $r$ and $\ell$ respectively with everywhere good reduction, as defined in~\eqref{E:phidef} and~\eqref{E:psidef}. Assume that $r$, $\ell \geqslant 2$ and that $r \neq \ell$.
\begin{alphenumerate}
\item If $r < \ell$, then
\begin{align*}
L(\bsmu_{\phi,\theta} \times \bsnu_{\psi,z},0)
&= \sum_{a_1, \ldots, a_{r} \in A_+} \frac{\chi_{\phi}(a_r) \bsmu_{\phi,\theta}(a_1, \dots, a_{r-1}) \bsnu_{\psi,z}(a_1, \dots, a_{r},1, \dots, 1)}{a_1 \cdots a_r} \\[10pt]
&= \Reg_{\EE} \cdot \Aord{\rH(\EE)}{\AAA}.
\end{align*}
\item If $r > \ell$, then
\begin{align*}
L(\bsmu_{\phi,\theta} \times \bsnu_{\psi,z},0)
&= \sum_{a_1, \ldots, a_{\ell} \in A_+} \frac{\ochi_{\psi}(a_\ell) a_{\ell}(z) \bsmu_{\phi,\theta}(a_1, \dots, a_{\ell},1, \ldots, 1) \bsnu_{\psi,z}(a_1, \dots, a_{\ell-1})}{a_1 \cdots a_{\ell}} \\[10pt]
&= \Reg_{\EE} \cdot \Aord{\rH(\EE)}{\AAA}.
\end{align*}
\item If $\dnorm{\Upsilon_{\psi,z}} < R_{\phi}$, where $R_{\phi}$ is the radius of convergence of $\Log_{\phi}(z)$ in~\eqref{E:Rphi}, then
\[
L(\bsmu_{\phi,\theta} \times \bsnu_{\psi,z},0)
= \det \Bigl( \Upsilon_{\psi,z}^{-1} \Log_{\phi} \bigl(\Upsilon_{\psi,z} \bigr) \Bigr).
\]
\end{alphenumerate}
\end{corollary}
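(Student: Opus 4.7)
The plan is to mirror the proof of Corollary~\ref{C:Lmunurxr}, replacing the use of Theorem~\ref{T:LEErxr} and Pellarin's formula by the cleaner identity of Theorem~\ref{T:LEErxl}. First, I would set $s=0$ in Theorem~\ref{T:LEErxl}, which yields
\[
L(\bsmu_{\phi,\theta} \times \bsnu_{\psi,z},0) = L(\EE(\phi \times \psi)^{\vee},0),
\]
with no extra Pellarin-type factor, since $r \neq \ell$ kills the contribution from the diagonal term in Cauchy's identity. The explicit Dirichlet sums in parts~(a) and~(b) are then just the definitions \eqref{E:Lmunurxl1} and \eqref{E:Lmunurxl2} evaluated at $s=0$, so nothing new has to be computed there.

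Next I would apply Proposition~\ref{P:LEE0} to rewrite $L(\EE(\phi \times \psi)^{\vee},0)$ as the infinite product
\[
\prod_f \frac{\bigAord{\FF_f(z)^{\ell}}{\AAA}}{\bigAord{\soEE(\FF_f(z))}{\AAA}},
\]
and then invoke Demeslay's class module formula (Theorem~\ref{T:Demeslay}) applied to the abelian and $\bA$-finite $t$-module $\EE = \EE(\phi \times \psi)$, which is known to be abelian and $\bA$-finite from~\S\ref{SSS:AbAfin}. Since $\pd \EE_t = \theta \rI_{\ell}$, the factor $\Aord{\Lie(\soEE)(\FF_f(z))}{\AAA}$ in Demeslay's formula equals $\Aord{\FF_f(z)^{\ell}}{\AAA} = f^{\ell}$, matching the numerator in Proposition~\ref{P:LEE0}. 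This immediately gives the identity
\[
L(\bsmu_{\phi,\theta} \times \bsnu_{\psi,z},0) = \Reg_{\EE} \cdot \bigAord{\rH(\EE)}{\AAA},
\]
establishing both parts~(a) and~(b).

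For part~(c), under the hypothesis $\dnorm{\Upsilon_{\psi,z}} < R_{\phi}$, Remark~\ref{R:RegLog1} together with~\eqref{E:EERegLog1} yields
\[
\Reg_{\EE} = \det \bigl( \Upsilon_{\psi,z}^{-1} \Log_{\phi}(\Upsilon_{\psi,z}) \bigr),
\]
with no scalar ambiguity since the leading term is forced to be $\rI_{\ell}$ under the radius-of-convergence hypothesis. So it remains to verify $\Aord{\rH(\EE)}{\AAA} = 1$. I would argue exactly as in Corollary~\ref{C:Lmunurxr}(b) by comparing Gauss norms. From~\eqref{E:LEEdef} one sees directly that $\dnorm{L(\EE(\phi \times \psi)^{\vee},0)} = 1$, and under the convergence hypothesis $\dnorm{\det(\Upsilon_{\psi,z}^{-1} \Log_{\phi}(\Upsilon_{\psi,z}))} = 1$ as well (the dominant term of $\Log_{\phi}(\Upsilon_{\psi,z})$ is $\Upsilon_{\psi,z}$ itself). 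Since $\Aord{\rH(\EE)}{\AAA}$ is monic in $\theta$ over $\FF_q(z)$, the equality of norms forces it to equal~$1$.

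The main obstacle is not really in any of these steps individually, which are clean applications of the machinery already developed, but rather in verifying that the hypotheses of Demeslay's theorem (abelianness and $\bA$-finiteness of $\EE$) are met and that the matching between the product in Proposition~\ref{P:LEE0} and Demeslay's left-hand side is exact; both of these were settled already in~\S\ref{SSS:AbAfin} and Theorem~\ref{T:EEAord}. The only genuinely subtle point is the norm-and-sign bookkeeping that pins down $\Aord{\rH(\EE)}{\AAA} = 1$ in part~(c), where one must be careful that $\det(\Upsilon_{\psi,z}^{-1} \Log_{\phi}(\Upsilon_{\psi,z}))$ is already monic in $\theta$ over $\FF_q(z)$ so that the argument for triviality of the class module order carries through with no scaling factor left over.
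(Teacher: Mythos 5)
Your proposal matches the paper's own argument: the paper proves this corollary by pointing to Corollary~\ref{C:Lmunurxr} and noting the proof is identical except that the Pellarin factor $L(\AAA,\chi_{\phi}\ochi_{\psi},1)$ disappears, which is exactly what you do via Theorem~\ref{T:LEErxl}, Proposition~\ref{P:LEE0}, Demeslay's formula, and the Gauss-norm argument for $\Aord{\rH(\EE)}{\AAA}=1$. One small wording nit: $\det(\Upsilon_{\psi,z}^{-1}\Log_{\phi}(\Upsilon_{\psi,z}))$ is not a polynomial in $\theta$, so ``monic'' is not quite the right adjective---what you mean, and what Remark~\ref{R:RegLog1} gives, is that it has sign~$1$ (leading coefficient $1$ in $\laurent{\FF_q(z)}{\theta^{-1}}$), which together with $\dnorm{\,\cdot\,}$-norm $1$ is what forces the class module order to be trivial.
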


\subsection{Calculations and examples}
In this section we consider computations of the special value formulas in Corollaries~\ref{C:Lmunurxr} and~\ref{C:Lmunurxl}. In practice, $\bsmu_{\phi,\theta}$ and $\bsnu_{\psi,z}$ can be evaluated using the Jacobi-Trudi identity as described in \S\ref{SSS:munucalc}. Unfortunately the convergence of $L(\bsmu_{\phi,\theta} \times \bsnu_{\psi,z},0)$ is slow and yields few digits of accuracy in $\TT_z(K_{\infty})$. In the first three examples we have $\dnorm{\Upsilon_{\psi,z}} < R_{\phi}$, and we consider the complementary case in~\S\ref{SSS:bigUps} and Example~\ref{Ex:four}.

\begin{example} \label{Ex:one}
($r=\ell=2$).
We let $q=3$ and let $\phi$, $\psi : \FF_3[t] \to \FF_3[\theta][\tau]$ be defined by
\[
\phi_t = \theta  + \theta^2 \tau - \tau^2, \quad
\psi_t = \theta + \tau - \tau^2.
\]
Then for $\EE = \EE(\phi \times \psi)$,
\[
\EE_t =
\begin{pmatrix}
\theta & 0 \\ 0 & \theta
\end{pmatrix}
+
\begin{pmatrix}
0 & -z\theta^2 + \theta^3 \\
\theta^2 & \theta^2
\end{pmatrix}
\tau + 
\begin{pmatrix}
z-\theta & z-\theta \\
-1 & z-1 - \theta^3
\end{pmatrix}
\tau^2.
\]
For example calculations, when $f = \theta^2+1$ and $g = \theta^3 - \theta - 1$, we find that
\begin{align*}
P_{\phi,f}(X) &= X^2 - (\theta+1)X + \theta^2+1, & P_{\psi,f}(X) &= X^2 - (\theta+1) + \theta^2+1,\\
P_{\phi,g}(X) &= X^2 - (\theta-1) X + \theta^3 -\theta -1, & P_{\psi,g}(X) &= X^2 - X + \theta^3 -\theta - 1,
\end{align*}
and thus by~\eqref{E:munuf}, $\mu_{\phi,\theta}(\theta^2+1) = \theta + 1$, $\mu_{\phi,\theta}(\theta^3-\theta-1) = \theta-1$, $\nu_{\psi,z}(\theta^2+1) = z + 1$, and $\nu_{\psi,z}(\theta^3-\theta-1) = 1$. Moreover, Definition~\ref{D:EEcharpolys} implies
\begin{align*}
\bP_f(X) &=
\begin{aligned}[t]
X^4 - \frac{(\theta+1)(z+1)}{z^2+1} X^3 &{}- \frac{(z+\theta)(\theta z + 1)}{(z^2+1)^2} X^2 \\
&{}- \frac{(\theta+1)(\theta^2+1)(z+1)}{(z^2+1)^2}X + \frac{(\theta^2+1)^2}{(z^2+1)^2},
\end{aligned}
\\[10pt]
\bP_g(X) &=
\begin{aligned}[t]
X^4 - \frac{\theta-1}{z^3-z-1} X^3 &{}+ \frac{(z^3-z)\theta^3 + (z^3-z-1)\theta^2 - \theta -1}{(z^3-z-1)^2}X^2 \\
&{}- \frac{(\theta-1)(\theta^3 -\theta-1)}{(z^3-z-1)^2}X + \frac{(\theta^3-\theta-1)^2}{(z^3-z-1)^2}.
\end{aligned}
\end{align*}
Theorem~\ref{T:EEAord} then yields
\begin{align*}
\Aord{\soEE(\FF_f(z))}{\AAA} &= (z^2+1)^2\cdot \bP_f(1)\\
&= \theta^4 - (z+1)\theta^3 + (z+1)\theta^2 - (z^3-z^2-z)\theta + z^4 - z^3 + z^2, \\
\Aord{\soEE(\FF_g(z))}{\AAA} &= (z^3-z-1)^2 \cdot \bP_g(1) \\
&=
\begin{aligned}[t]
\theta^6 + (z^3-z-1)\theta^3 &{}+ (z^3-z+1)\theta^2 \\
&{}- (z^3-z+1)\theta + z^6 + z^4 - z^3+z^2+z-1.
\end{aligned}
\end{align*}
Now to examine $L(\bsmu_{\phi,\theta} \times \bsnu_{\psi,z},0)$, we note from \cite{KhaochimP23}*{Eq.~(2.11)} that $ R_{\phi} = \norm{\theta}^{1/2}$, and from \citelist{\cite{KhaochimP23}*{Thm.~4.4} \cite{Maurischat19}*{Thm.~3.1}} it follows that $\dnorm{\Upsilon_{\psi,z}} = \norm{\theta}^{3/8}$. Thus we are in the situation of Corollary~\ref{C:Lmunurxr}(b) with $r=\ell=2$. We find through direct calculation that
\begin{align*}
L(\bsmu_{\phi,\theta} \times \bsnu_{\psi,z},0)
= \sum_{a \in A_+} \frac{\mu_{\phi,\theta}(a) \nu_{\psi,z}(a)}{a}
= &\:1 - \theta^{-1} + z \theta^{-2} - \theta^{-3} + \theta^{-4} + (1-z)\theta^{-5} \\
&{}+ z\theta^{-6}
+ (z+1)^2 \theta^{-8} - (z+1)\theta^{-9} + O(\theta^{-10}).
\end{align*}
The quantity $\det (\Upsilon_{\psi,z}^{-1} \Log_{\phi}(\Upsilon_{\psi,z})) = \det \Log_{\EE} (\rI_2)$ can be calculated to high precision via \eqref{E:ExpLogEE}, and as expected from Corollary~\ref{C:Lmunurxr}(b), the product $(\theta-z)\cdot \omega_z/\tpi \cdot \det (\Upsilon_{\psi,z}^{-1} \Log_{\phi}(\Upsilon_{\psi,z}))$ agrees numerically with this value.
\end{example}

\begin{example} \label{Ex:two}
($r=3$, $\ell=2$)
Let $\phi$, $\psi : \FF_3[t] \to \FF_3[\theta][\tau]$ be defined by
\[
\phi_t = \theta + \theta^2 \tau + \theta \tau^2 + \tau^3, \quad
\psi_t = \theta - \tau + \tau^2.
\]
We find that for $\EE = \EE(\phi \times \psi)$,
\begin{align*}
\EE_t =
\begin{pmatrix}
\theta & 0 \\ 0 & \theta
\end{pmatrix}
+ \theta^2
\begin{pmatrix}
0 & z-\theta \\ 1 & 1
\end{pmatrix}
\tau &{}+ \theta
\begin{pmatrix}
z-\theta & z - \theta \\
1 & z+1 - \theta^3
\end{pmatrix}
\tau^2 \\
&{}+
\begin{pmatrix}
z-\theta & (z+1-\theta^9)(z-\theta) \\
z+1 - \theta^3 & -z+1 - \theta^3 - \theta^9
\end{pmatrix}
\tau^3.
\end{align*}
Again \cite{KhaochimP23}*{Eq.~(2.11)} implies that $R_{\phi} = \norm{\theta}^{1/2}$, and we find from \citelist{\cite{KhaochimP23}*{Thm.~4.4} \cite{Maurischat19}*{Thm.~3.1}}  that $\dnorm{\Upsilon_{\psi,z}} = \norm{\theta}^{3/8}$. We calculate
\begin{align*}
L(\bsmu_{\phi,\theta} \times \bsnu_{\psi,z},0) &= \sum_{a_1, a_2 \in A_+} \frac{\ochi_{\psi}(a_2) a_2(z) \bsmu_{\phi,\theta}(a_1,a_2) \nu_{\psi,z}(a_1)}{a_1 a_2} \\
&= 1- \theta^{-1} - z\theta^{-2} - \theta^{-3} + O(\theta^{-4}).
\end{align*}
This agrees numerically with $\det( \Upsilon_{\psi,z}^{-1} \Log_{\phi}( \Upsilon_{\psi,z} ))$, as expected from Corollary~\ref{C:Lmunurxl}(bc).
\end{example}

\begin{example} \label{Ex:three}
($r=\ell=3$). Let $\phi$, $\psi : \FF_3[t] \to \FF_3[\theta][\tau]$ be given by
\[
\phi_t = \theta + \theta^2 \tau + \tau^2 + \tau^3, \quad
\psi_t = \theta + \tau + \tau^2 + \tau^3,
\]
so that for $\EE = \EE(\phi \times \psi)$ we have
\begin{align*}
\EE_t =
\begin{pmatrix}
\theta & 0 & 0 \\
0 & \theta & 0 \\
0 & 0 & \theta
\end{pmatrix}
+ \theta^2
\begin{pmatrix}
0 & 0 & z - \theta \\
1 & 0 & -1 \\
0 & 1 & -1
\end{pmatrix}
\tau &{}+
\begin{pmatrix}
0 & z-\theta & -z + \theta \\
0 & -1 & z+1-\theta^3 \\
1 & -1 & 0
\end{pmatrix}
\tau^2 \\
&{}+
\begin{pmatrix}
z-\theta & -z + \theta & 0 \\
-1 & z+1 - \theta^3 & -z + \theta^3 \\
-1 & 0 & z+1 - \theta^9
\end{pmatrix}
\tau^3.
\end{align*}
Here \cite{KhaochimP23}*{Eq.~(2.11)} implies that $R_{\phi} = \norm{\theta}^{1/2}$, and we find from \cite{KhaochimP23}*{Thm.~4.4} that $\dnorm{\Upsilon_{\psi,z}} = \norm{\theta}^{9/26} < R_{\phi}$. We calculate
\begin{align*}
L(\bsmu_{\phi,\theta} \times \bsnu_{\psi,z},0) &= \sum_{a_1, a_2 \in A_+} \frac{\bsmu_{\phi,\theta}(a_1,a_2) \bsnu_{\psi,z}(a_1,a_2)}{a_1 a_2} \\
&= 1 + \theta^{-1} + \theta^{-2} + (1-z)\theta^{-3} + O(\theta^{-4}),
\end{align*}
which agrees numerically with the expected value $(\theta-z) \cdot \omega_z / \tpi \cdot \det (\Upsilon_{\psi,z}^{-1} \Log_{\phi}(\Upsilon_{\psi,z}))$ from Corollary~\ref{C:Lmunurxr}(b).
\end{example}

\subsubsection{The case $\dnorm{\Upsilon_{\psi,z}} \geqslant R_{\phi}$} \label{SSS:bigUps}
Suppose that $\EE = \EE(\phi \times \psi)$ has been chosen with $\phi$, $\psi : \sA \to A[\tau]$ of ranks $r$ and $\ell$ respectively. Further suppose that $\dnorm{\Upsilon_{\psi,z}} \geqslant R_{\phi}$. Then the identities in Corollaries~\ref{C:Lmunurxr}(b) and~\ref{C:Lmunurxl}(c) do not hold, but one can approach determining $\Reg_{\EE}$ by using ideas of Demeslay~\cite{Demeslay22}*{Prop.~2.8} and Taelman~\cite{Taelman10}*{Thm.~1}.

Defining the polydisc $\rD(\KK_\infty) \assign \{ \bz \in \KK_{\infty}^{\ell} \mid \dnorm{\bz} < \dnorm{\Upsilon_{\psi,z}}^{-1} \cdot R_{\phi} \}$, we find that $\rD(\KK_\infty)$ is contained within the domain of convergence of $\Log_{\EE} = \Upsilon_{\psi,z}^{-1} \cdot \Log_{\phi} \cdot \Upsilon_{\psi,z}$ and that~\eqref{E:Ddef} applies. The exponential induces a surjective map of $\FF_q(z)$-vector spaces,
\begin{equation} \label{E:Expquot}
\KK_{\infty}^{\ell} \xrightarrow{\Exp_{\EE,\KK_{\infty}}}
\frac{\EE(\KK_{\infty})}{\EE(\AAA) + \rD(\KK_{\infty})}.
\end{equation}
As $\KK_{\infty} = \laurent{\FF_q(z)}{\theta^{-1}}$, it follows that the right-hand quotient in~\eqref{E:Expquot} is finite dimensional over $\FF_q(z)$: indeed, if we let $j_0 \assign \log_q(\dnorm{\Upsilon_{\psi,z}}^{-1} \cdot R_{\phi}) \in \RR$, then every vector in the right-hand space has entries in the $\FF_q(z)$-linear span of $\{ \theta^{j} \mid j \in \ZZ,\ j_0 \leqslant j \leqslant -1 \}$ modulo $\EE(\AAA) + \rD(\KK_{\infty})$. (We note that $\dnorm{\Upsilon_{\psi,z}} < R_{\phi}$ implies that $j_0 > 0$, in which case $\EE(\AAA) + \rD(\KK_{\infty}) = \EE(\KK_{\infty})$, and moreover the standard basis vectors are in $\EE(\AAA) \cap \rD(\KK_{\infty})$.)

Since $\KK_{\infty}$ is infinite dimensional over $\FF_q(z)$, we can find $\bx_1, \dots, \bx_{\ell} \in \KK_{\infty}^{\ell}$ so that 
\[
 \Exp_{\EE} ( \bx_1), \ldots,
 \Exp_{\EE} ( \bx_{\ell}) \in \EE(\AAA) + \rD(\KK_{\infty}).
\]
For each $\bx_i$, write $\Exp_{\EE}(\bx_i) = \bb_i + \by_i$ with $\bb_i \in \EE(\AAA)$ and $\by_i \in \rD(\KK_{\infty})$.
Then forming $\ell \times \ell$ matrices $\rX = (\bx_i)$, $\rB = (\bb_i)$, and $\rY = (\by_i)$, we have $\Exp_{\EE,\KK_{\infty}}(\rX) = \rB + \rY$, and
\[
\rX - \Log_{\EE}(\rY) \in \Mat_{1 \times \ell} \bigl( \Exp_{\EE,\KK_{\infty}}^{-1} \bigl( \EE(\AAA) \bigr) \bigr).
\]
When chosen appropriately, the columns of $\rX - \Log_{\EE}(\rY)$ are $\AAA$-linearly independent (or equivalently $\KK_{\infty}$-linearly independent since $\Exp_{\EE,\KK_{\infty}}^{-1}(\EE(\AAA))$ is an $\AAA$-lattice) and their $\AAA$-span comprises a submodule of $\smash{\Exp_{\EE,\KK_{\infty}}^{-1}}(\EE(\AAA))$ of finite $\AAA$-index, from which it is possible to determine $\smash{\Exp_{\EE,\KK_{\infty}}^{-1}}(\EE(\AAA))$ in full on a case by case basis.

\begin{example} \label{Ex:four}
Let $\phi : \FF_3[t] \to \FF_3[\theta][\tau]$ be defined by
$\phi_t = \theta + \theta^3 \tau + \tau^2$,
and let $\EE = \EE(\phi \times \phi)$, so
\[
\EE_t =
\begin{pmatrix}
\theta & 0 \\ 0 & \theta
\end{pmatrix}
+ \theta^3
\begin{pmatrix}
0 & -\theta + z \\
1 & -\theta^3
\end{pmatrix}
\tau +
\begin{pmatrix}
z-\theta & -z \theta^9 + \theta^{10}\\
-\theta^3 & z - \theta^3 + \theta^{12}
\end{pmatrix}
\tau^2.
\]
Similar to the previous examples, we compute that $R_{\phi} = 1$ and that $\dnorm{\Upsilon_{\psi,z}} = \norm{\theta}^{3/2} = 3^{3/2} > 1$. In this case $\rD(\KK_{\infty}) = \{ O(\theta^{-2}) \} \subseteq \KK_{\infty}^{2}$, and so $\EE(\KK_{\infty})/(E(\AAA) + \rD(\KK_{\infty})) \cong \FF_q(z) \cdot \theta^{-1}$.
By direct calculation we find
\begin{align*}
\Exp_{\EE} \begin{pmatrix} 1 \\ 0 \end{pmatrix} &=
\begin{pmatrix} 1 \\ 1 \end{pmatrix} +
\begin{pmatrix}
-\theta^{-5} + z\theta^{-6} - \theta^{-8} + z\theta^{-9} + \cdots \\
\theta^{-2}- \theta^{-3} + \theta^{-4} + \theta^{-8} - \theta^{-9} + \cdots
\end{pmatrix}, \\
\Exp_{\EE} \begin{pmatrix} \theta^{-1} \\ 0 \end{pmatrix} &=
\begin{pmatrix} 1 \\ 0 \end{pmatrix} \theta^{-1} +
\begin{pmatrix}
-\theta^{-14} + z \theta^{-15} + \cdots \\
\theta^{-3} + \theta^{-5} + \theta^{-7} + \cdots
\end{pmatrix}, \\
\Exp_{\EE} \begin{pmatrix} \theta \\ 0 \end{pmatrix} &=
\begin{aligned}[t]
\begin{pmatrix}
\theta^{10} - z\theta^{9} - \theta^{4} + z \theta^3 + z \\
\theta^{12} - \theta^6 - \theta^3 + \theta + z-1
\end{pmatrix}
&{}+ \begin{pmatrix} 0 \\ 1 \end{pmatrix} \theta^{-1}\\
&{}+
\begin{pmatrix}
-\theta^{-2} + z\theta^{-3} - \theta^{-4} + \cdots \\
-\theta^{-2}+  - \theta^{-3} + \theta^{-7} + \cdots
\end{pmatrix},
\end{aligned}
\\
\Exp_{\EE} \begin{pmatrix} 0 \\ 1 \end{pmatrix} &=
\begin{aligned}[t]
\begin{pmatrix}
-\theta^{10} + z\theta^{9} + \theta^{4} - z \theta^3 \\
-\theta^{12} + \theta^6 + \theta^3 - \theta - z-1
\end{pmatrix}
&{}+ \begin{pmatrix} -1 \\ -1 \end{pmatrix} \theta^{-1}\\
&\hspace*{-90pt}{}+
\begin{pmatrix}
(z+1)\theta^{-2} - (z+1)\theta^{-3} +(z+1) \theta^{-4} + \cdots \\
\theta^{-2} + (z-1)\theta^{-6} - \theta^{-7} + \cdots
\end{pmatrix}.
\end{aligned}
\end{align*}
Picking $\bx_1 = \bigl( \begin{smallmatrix} 1 \\ 0 \end{smallmatrix} \bigr)$ and $\bx_2 = \bigl( \begin{smallmatrix} \theta + \theta^{-1} \\ 1 \end{smallmatrix} \bigr)$, we find that $\Exp_{\EE}(\bx_1)$, $\Exp_{\EE}(\bx_2) \in \EE(\AAA) + \rD(\KK_{\infty})$. Moreover, defining $\by_1$, $\by_2 \in \rD(\KK_{\infty})$, as in the previous paragraph, we obtain
\begin{align*}
\bslambda_1 &\assign \begin{pmatrix} 1 \\ 0 \end{pmatrix} - \Log_{\EE}(\by_1) =
\begin{pmatrix}
1 - \theta^{-7} + (z+1)\theta^{-8} - (z+1)\theta^{-9} + \cdots \\
-\theta^{-2} - \theta^{-4} - \theta^{-5} - \theta^{-7} - \theta^{-8} - \theta^{-9} + \cdots
\end{pmatrix}, \\
\bslambda_2 &\assign \begin{pmatrix} \theta + \theta^{-1} \\ 1 \end{pmatrix} - \Log_{\EE}(\by_2) \\
&=
\begin{pmatrix}
\theta + \theta^{-1} - z\theta^{-2} + \theta^{-3} - z\theta^{-4} + \theta^{-5} -z\theta^{-6} + \theta^{-7}-z\theta^{-8} + \theta^{-9} + \cdots \\
1 - \theta^{-5} + \theta^{-6} - \theta^{-7} + z\theta^{-8} - (z+1) \theta^{-9} + \cdots
\end{pmatrix},
\end{align*}
and $\bslambda_1$, $\bslambda_2 \in \Exp_{\EE,\KK_{\infty}}^{-1}(\EE(\AAA))$. Taking a determinant we obtain,
\begin{multline} \label{E:detU}
\det(\bslambda_1,\bslambda_2) = 1 + \theta^{-1} - \theta^{-3} + (-z+1)\theta^{-4} \\ {}+ \theta^{-5} + z\theta^{-6} + (-z+1)\theta^{-7} + \theta^{-8} - (z+1)\theta^{-9} + O(\theta^{-10}).
\end{multline}
Since the infinite product in Demeslay's formula in Theorem~\ref{T:Demeslay} is a $1$-unit in $\KK_{\infty}$, the fact that $\det(\bslambda_1,\bslambda_2)$ is also a $1$-unit in $\KK_{\infty}$ implies that
\[
\Exp_{\EE,\KK_{\infty}}^{-1}\bigl( \EE(\AAA) \bigr) = \AAA \bslambda_1 + \AAA \bslambda_2 \quad \textup{and} \quad
\Aord{\rH(\EE)}{\AAA} = 1.
\]
Finally, as in previous examples we can compute directly that
\begin{multline*}
L(\bsmu_{\phi,\theta} \times \bsnu_{\phi,z},0) = \sum_{a \in A_+} \frac{\mu_{\phi,\theta}(a) \nu_{\phi,z}(a)}{a} = 1 + \theta^{-1} - \theta^{-2} + (z+1)\theta^{-3} + \theta^{-4} \\{}- (z+1)\theta^{-5} + (z^2-1)\theta^{-6} + (-z^2+z)\theta^{-8} + z^3 \theta^{-9} + O(\theta^{-10}),
\end{multline*}
and after multiplying by $\tpi/((\theta-z)\omega_z)$ we find that it indeed agrees with $\Reg_{\EE} = \det(\bslambda_1,\bslambda_2)$ in \eqref{E:detU} up to $O(\theta^{-10})$, as expected by Corollary~\ref{C:Lmunurxr}(a).
\end{example}

\section{Modules of Stark units} \label{S:Stark}

As suggested by the referee, we expand our investigations to modules of Stark units for $\EE = \EE(\phi \times \psi)$ in order to derive an explicit description of $L(\EE(\phi \times \psi)^{\vee},0)$ in terms of logarithms of special points. See Theorem~\ref{T:StarkLvalue} and the discussion on log-algebraicity in~\S\ref{SS:logalg}. Our main goal is to prove identities for $\EE$ similar to~\cite{AT17}*{Thm.~1} of Angl\`es and Tavares Ribeiro for Drinfeld modules, which was later extended to the setting of Anderson $t$-modules in~\cites{ANT20, ANT22, APT18}. See also~\cites{ANT17b, APT16, GreenNgoDac23, Tavares21}. We continue with the notation from \S\ref{S:RAtwists}--\ref{S:convolutions}.

\subsection{Deformations of \texorpdfstring{$\EE$}{E}} \label{SS:deformations}
Let $\bszeta$ be a new variable, and set $\tbA \assign \FF_q(z,\bszeta)[t]$ and $\tAA \assign \FF_q(z,\bszeta)[\theta]$. Following \cite{AT17}*{\S 2.4} we define the $t$-module
\[
\DD \assign \DD(\phi \times \psi) : \tbA \to \Mat_{\ell}(\tAA[\tau])
\]
by setting
\[
\DD_t \assign \theta \rI_{\ell} + \kappa_1 \Theta_{\psi,z} \bszeta \tau + \cdots + \kappa_r \Theta_{\psi,z} \Theta_{\psi,z}^{(1)} \cdots \Theta_{\psi,z}^{(r-1)} \bszeta^r \tau^r \in \Mat_{\ell}\bigl( \FF_q[z,\bszeta,\theta][\tau] \bigr).
\]
Then $\DD$ is a $t$-module over $\FF_q(z,\bszeta,\theta)$ in the sense of \S\ref{SS:cAtmodules} with $Z = \{z,\bszeta\}$, and we also consider $\DD$ over the complete field $\tKK_{\infty} \assign \laurent{\FF_q(z,\bszeta)}{\theta^{-1}}$. We refer to $\DD$ as the $\bszeta$-deformation of $\EE$. We consider the infinite product,
\begin{equation} \label{E:LDtA}
\cL(\DD/\tAA) \assign \prod_{\substack{f \in A_+ \\ \textup{irred.} }}  \frac{\bigl[ \Lie(\oDD)(\FF_f(z,\bszeta))]_{\tAA}}{\bigl[ \,\oDD (\FF_f(z,\bszeta)) \bigr]_{\tAA}} \quad \in \tKK_{\infty},
\end{equation}
which converges by Demeslay~\cite{Demeslay22}*{Thm.~2.9}. Demeslay's theorem (Theorem~\ref{T:Demeslay}) also applies so that
\begin{equation} \label{E:LDtAA}
\cL(\DD/\tAA) = \Reg_{\DD} \cdot [\rH(\DD)]_{\tAA}.
\end{equation}

In order to relate $\cL(\DD/\tAA)$ to $L(\EE(\phi \times \psi)^{\vee},0)$ we need to calculate $[\Lie(\oDD)(\FF_f(z,\bszeta))]_{\tAA}$ and $[\oDD(\FF_f(z,\bszeta))]_{\tAA}$ for each $f \in A_+$. For the former, we have $[\Lie(\oDD)(\FF_f(z,\bszeta))]_{\tAA} = f^{\ell}$ as before, whereas for the latter one would \emph{hope} that Theorem~\ref{T:charpoly1} would apply. However, this is not the case, as one of the hypotheses for Theorem~\ref{T:charpoly1} is that $\oDD$ would need to satisfy Definition~\ref{D:Tatemodules}(c), particularly that torsion modules of $\oDD$ have full dimension over $\FF_f(z,\bszeta)$. Instead we analyze $\oDD$ more carefully so that the techniques of Theorem~\ref{T:charpoly1} can be brought to bear. Inspired by results of Gezmi\c{s}~\cite{Gezmis19}*{Prop.~2.4} we obtain the following (cf.\ Theorem~\ref{T:EEAord}).

\begin{proposition} \label{P:deformationorder}
For $f \in A_+$ irreducible of degree $d$, fix $\bP_f(X) \in \AAA[X]$ as in Definition~\ref{D:EEcharpolys}. Then
\[
\bigl[\, \oDD(\FF_f(z,\bszeta)) \bigr]_{\tAA} = \frac{f^{\ell} \cdot \bP_f(\bszeta^d)}{\bP_f(0)}.
\]
In particular,
\[
\bigl[\, \oEE(\FF_f(z)) \bigr]_{\AAA} = \bigl[\, \oDD(\FF_f(z,\bszeta)) \bigr]_{\tAA}\, \big|_{\bszeta=1}.
\]
\end{proposition}

\begin{proof}
We write
\[
\EE_t = \theta \rI_{\ell} + M_1 \tau + \cdots + M_{r} \tau^{r}, \quad M_i \in \Mat_{\ell}(\AAA)
\]
so that
\[
\oEE_t = \otheta \rI_{\ell} + \oM_1 \tau + \cdots + \oM_{r} \tau^{\ell}, \quad
\oDD_t = \otheta \rI_{\ell} + \oM_1 \bszeta \tau + \cdots + \oM_{r} \bszeta^{r} \tau^r.
\]
Let $\cM(\oEE) \assign \Mat_{1 \times \ell}(\FF_f(z)[\tau])$ be the $t$-motive of $\oEE$, and let $\cM(\oDD) \assign \Mat_{1 \times \ell}(\FF_f(z,\bszeta)[\tau])$ be the $t$-motive of $\oDD$. Then as in \S\ref{SSS:AbAfin}, $\{ \tau^j \bs_i : 1 \leqslant i \leqslant \ell,\, 0 \leqslant j \leqslant r-1 \}$ forms an $\FF_f(z)[t]$-basis of $\cM(\oEE)$, as well as an $\FF_f(z,\bszeta)[t]$-basis of $\cM(\oDD)$. By \cite{NamoijamP24}*{Ex.~3.38, Ex.~4.129}, multiplication by $\tau$ on $\cM(\oEE)$ is represented with respect to this basis by
\[
\Gamma_{\oEE} = \begin{pmatrix}
0 & \rI_{\ell} & \cdots & 0 \\
\vdots & \vdots & \ddots & \vdots \\
0 & 0 & \cdots & \rI_{\ell} \\
\oM_r^{-1}(t-\otheta)\rI_{\ell} & -\oM_{r}^{-1} \oM_1 & \cdots & -\oM_{r}^{-1} \oM_{r-1}
\end{pmatrix},
\]
and likewise multiplication by $\tau$ on $\cM(\oDD)$ is represented by
\[
\Gamma_{\oDD} = \begin{pmatrix}
0 & \rI_{\ell} & \cdots & 0 \\
\vdots & \vdots & \ddots & \vdots \\
0 & 0 & \cdots & \rI_{\ell} \\
\bszeta^{-r}\oM_r^{-1}(t-\otheta)\rI_{\ell} & -\bszeta^{-r+1}\oM_{r}^{-1} \oM_1 & \cdots & -\bszeta^{-1} \oM_{r}^{-1} \oM_{r-1}
\end{pmatrix}.
\]
Letting $\Delta \assign \diag(\rI_{\ell}, \bszeta \rI_{\ell}, \dots, \bszeta^{r-1} \rI_{\ell})$, one checks that
\[
\Gamma_{\oDD} = \bszeta^{-1} \Delta^{-1} \Gamma_{\oEE} \Delta,
\]
which implies as in~\eqref{E:GHdef} (and using that $\Delta^{(1)} = \Delta$),
\begin{equation}
\rG_{\oDD} = \Gamma_{\oDD}^{(d-1)} \cdots \Gamma_{\oDD}^{(1)} \Gamma_{\oDD} = \bszeta^{-d} \Delta^{-1} \Gamma_{\oEE}^{(d-1)} \cdots \Gamma_{\oEE}^{(1)} \Gamma_{\oEE} \Delta = \bszeta^{-d} \Delta^{-1} \rG_{\oEE} \Delta.
\end{equation}
Let
\[
\tbP_f(X) \assign \Char( \rG_{\oDD},X) |_{t=\theta}, \quad \bigl( \textup{a priori $\in \FF_q(z,\bszeta)[\theta][X]$} \bigr).
\]
Then
\begin{multline*}
\tbP_f(\bszeta^{-d}X) = \det \bigl( \bszeta^{-d}X \cdot \rI - \rG_{\oDD} \bigr)\big|_{t=\theta}
= \det \bigl( \bszeta^{-d} X \cdot \rI - \bszeta^{-d} \rG_{\oEE} \bigr)\big|_{t=\theta} \\
= \bszeta^{-d\ell r} \det(X \cdot \rI - \rG_{\oEE} \bigr)\big|_{t=\theta}
= \bszeta^{-d \ell r} \bP_f(X),
\end{multline*}
where we have used Corollary~\ref{C:charpolys} in the last step. These expressions lie in $\bszeta^{-d \ell r} \cdot \FF_q(z)[\theta][X]$, and we further obtain
\begin{equation} \label{E:tbP1}
\tbP_f(1) = \bszeta^{-d \ell r}\bP_f(T^d).
\end{equation}

Now we can also consider the dual $t$-motive $\cN(\oDD)$ for $\oDD$, and one finds by~\cite{NamoijamP24}*{Ex.~3.38, Ex.~4.129} that for $C = ( \smash{\oM}_{r}^{(-r)})^{\tr}$,
\[
\Phi_{\oDD} = \begin{pmatrix}
0 & \rI_{\ell} & \cdots & 0 \\
\vdots & \vdots & \ddots & \vdots \\
0 & 0 & \cdots & \rI_{\ell} \\
\bszeta^{-r} C^{-1} (t-\otheta)\rI_{\ell} & -\bszeta^{-r+1}C^{-1} \smash{\oM}_1^{(-1)} & \cdots & -\bszeta^{-1} C^{-1} \smash{\oM}_{r-1}^{(-r+1)}
\end{pmatrix}
\]
represents multiplication by $\sigma$ on $\cN(\oDD)$. Moreover, if we let
\[
V = \begin{pmatrix}
\bszeta \smash{\oM}_1^{\tr} & \bszeta^2 (\smash{\oM}_2^{(-1)})^{\tr} & \cdots & \bszeta^{r-1} (\smash{\oM}_{r-1}^{(-r+2)})^{\tr} & \bszeta^r (\smash{\oM}_r^{(-r+1)})^{\tr} \\
\bszeta^2 \smash{\oM}_2^{\tr} & \bszeta^3 (\smash{\oM}_3^{(-1)})^{\tr} & \cdots & \bszeta^r (\smash{\oM}_{r}^{(-r+2)})^{\tr} & \\
\vdots & \vdots & \reflectbox{$\ddots$} \\
\bszeta^{r-1} \smash{\oM}_{r-1}^{\tr} & \bszeta^r (\smash{\oM}_r^{(-1)})^{\tr} \\
\bszeta^r \smash{\oM}_r^{\tr}
\end{pmatrix} \in \GL_r(\FF_f(z,\bszeta)),
\]
then by \cite{NamoijamP24}*{Ex.~4.129},
\begin{equation} \label{E:Vconj}
V^{(-1)} \Phi_{\oDD} = \Gamma_{\oDD}^{\tr} V.
\end{equation}
By a straightforward modification of \eqref{E:Estarorder}--\eqref{E:detI-G} in the proof of Theorem~\ref{T:charpoly1}, using Lemma~\ref{L:Ffpoints}(b) instead of Lemma~\ref{L:Ffpoints}(a), we find that
\begin{equation} \label{E:oDDorder}
\bigl[\, \oDD(\FF_f(z,\bszeta)) \bigr]_{\tAA} = \gamma \cdot \det \bigl( \rI - \rH_{\oDD} \bigr) \big|_{t=\theta}, \quad \gamma \in \FF_q(z,\bszeta)^{\times},
\end{equation}
where as in~\eqref{E:GHdef},
\[
\rH_{\oDD} = \Phi_{\oDD}^{(-d+1)} \cdots \Phi_{\oDD}^{(-1)} \Phi_{\oDD},
\]
and where $\gamma$ is chosen so that the expression has sign~$1$. Using~\eqref{E:Vconj},
\[
\rH_{\oDD} = \bigl( V^{-1} \big)^{(-d)} \bigl( \Gamma_{\oDD}^{(-d+1)} \bigr)^{\tr} \cdots \bigl( \Gamma_{\oDD}^{(-1)} \bigr)^{\tr} \Gamma_{\oDD}^{\tr} V = V^{-1} \bigl( \Gamma_{\oDD} \Gamma_{\oDD}^{(-1)} \cdots \Gamma_{\oDD}^{(-d+1)} \bigr)^{\tr} V.
\]
In the last step we have used that $V^{(-d)} = V$ since the entries of $V$ are in $\FF_f(z,\bszeta)$. Thus
\[
\rH_{\oDD} = V^{-1} \bigl( \rG_{\oDD}^{(-d+1)} \bigr)^{\tr} V,
\]
and
\[
\Char(\rH_{\oDD},X) = \Char (\rG_{\oDD}^{(-d+1)},X) = \Char(\rG_{\oDD},X)^{(-d+1)} = \Char(\rG_{\oDD},X),
\]
where in the last equality we have used that the coefficients of $\Char(\rG_{\oDD},X)$ lie in $\FF_q(z,\bszeta)[t]$ and are fixed by Frobenius twisting. Returning to~\eqref{E:oDDorder} and using~\eqref{E:tbP1},
\[
\bigl[ \oDD(\FF_f(z,\bszeta)) \bigr]_{\tAA} = \gamma \cdot \det \bigl( \rI - \rG_{\oDD} \bigr)\big|_{t=\theta} = \gamma \cdot \tbP_f(1) = \gamma \cdot \bszeta^{-d \ell r} \bP_f(\bszeta^d).
\]
Similar to the proof of Theorem~\ref{T:EEAord}, we find that $\gamma \cdot \bszeta^{-d\ell r} = f^{\ell}/\bP_f(0)$, which completes the first identity. The second identity then follows from Theorem~\ref{T:EEAord}.
\end{proof}

Recalling that $\tKK_{\infty} = \laurent{\FF_q(z,\bszeta)}{\theta^{-1}}$ is the completion of $K_{\infty}(z,\bszeta)$ with respect to the Gauss norm, we let
\[
\TT_{z,\bszeta}, \quad \TT_{z,\bszeta}(K_{\infty}) \assign \TT_{z,\bszeta} \cap \power{K_{\infty}}{z,\bszeta} = \laurent{\FF_q[z,\bszeta]}{\theta^{-1}},
\]
denote Tate algebras in the variables $z$ and $\bszeta$. Then also $\tKK_{\infty}$ is the completion of the fraction field of $\TT_{z,\bszeta}(K_{\infty})$.

\subsubsection{Deformations $\cL(\tAA,\chi_{\phi} \ochi_{\psi})$ and $\cL(\bsmu_{\phi,\theta} \times \bsnu_{\psi,z})$}
Similar to in \S\ref{SS:Lconvrxr}--\ref{SS:Lconvrxl}, we define
\[
\cL(\tAA,\chi_{\phi} \ochi_{\psi}) \assign \sum_{a \in A_+} \frac{\chi_{\phi}(a) \ochi_{\psi}(a) a(z)}{a} \bszeta^{r \deg a} \quad
\in \TT_{z,\bszeta}(K_{\infty})^{\times},
\]
which we note is a unit in $\TT_{z,\bszeta}(K_{\infty})$ since except for when $a=1$, the Gauss norm of each term is at most $\inorm{\theta}^{-1}< 1$. We also define $\cL(\bsmu_{\phi,\theta} \times \bsnu_{\psi,z}) \in \TT_{z,\bszeta}(K_{\infty})^{\times}$ in the following way. For $(a_1, \dots, a_{n}) \in A_+^{n}$, set
\[
\delta(a_1, \dots, a_{n}) \assign \deg a_1 + 2 \deg a_2 + \dots + n \deg a_{n}.
\]
When $r=\ell$, we define
\[
\cL(\bsmu_{\phi,\theta} \times \bsnu_{\psi,z}) \assign \sum_{a_1, \ldots, a_{r-1} \in A_+} \frac{\bsmu_{\phi,\theta}(a_1, \dots, a_{r-1}) \bsnu_{\psi,z}(a_1, \dots, a_{r-1})}{a_1 \cdots a_{r-1}} \bszeta^{\delta(a_1, \dots, a_{r-1})}. 
\]
When $r < \ell$, set
\[
\cL(\bsmu_{\phi,\theta} \times \bsnu_{\psi,z}) \assign \sum_{a_1, \ldots, a_{r} \in A_+} \frac{\chi_{\phi}(a_r)\bsmu_{\phi,\theta}(a_1, \dots, a_{r-1}) \bsnu_{\psi,z}(a_1, \dots, a_{r},1, \dots, 1)}{a_1 \cdots a_{r}} \bszeta^{\delta(a_1, \dots, a_{r})}, 
\]
and when $r > \ell$, set
\begin{multline*}
\cL(\bsmu_{\phi,\theta} \times \bsnu_{\psi,z}) \assign \\
\sum_{a_1, \ldots, a_{\ell} \in A_+} \frac{\ochi_{\psi}(a_\ell) a_{\ell}(z) \bsmu_{\phi,\theta}(a_1, \dots, a_{\ell},1, \dots, 1) \bsnu_{\psi,z}(a_1, \dots, a_{\ell-1})}{a_1 \cdots a_{\ell}} \bszeta^{\delta(a_1, \dots, a_{\ell})}.
\end{multline*}
That $\cL(\bsmu_{\phi,\theta} \times \bsnu_{\psi,z}) \in \TT_{z,\bszeta}(K_{\infty})^{\times}$ follows from the same arguments as in \S\ref{SS:Lconvrxr}--\S\ref{SS:Lconvrxl}.

\begin{corollary} \label{C:deformationconv}
For $\DD : \tbA \to \Mat_{\ell}(\tAA[\tau])$ defined as above. The following hold.
\begin{alphenumerate}
\item If $r=\ell$, then
\[
\cL(\DD/\tAA) = \cL(\tAA, \chi_{\phi} \ochi_{\psi}) \cdot
\cL(\bsmu_{\phi,\theta} \times \bsnu_{\psi,z}) \quad \in \TT_{z,\bszeta}(K_{\infty})^{\times}.
\]
\item If $r \neq \ell$, then
\[
\cL(\DD/\tAA) = \cL(\bsmu_{\phi,\theta} \times \bsnu_{\psi,z}) \quad \in \TT_{z,\bszeta}(K_{\infty})^{\times}.
\]
\end{alphenumerate}
\end{corollary}

\begin{proof}
For $f \in A_+$ irreducible of degree~$d$, Proposition~\ref{P:deformationorder}, we see that
\[
\frac{\bigl[ \Lie(\oDD)(\FF_f(z,\bszeta))]_{\tAA}}{\bigl[\, \oDD(\FF_f(z,\bszeta)) \bigr]_{\tAA}} = \frac{\bP_f(0)}{\bP_f(\bszeta^d)} = \bQ_f^{\vee} \bigl( \bszeta^d \bigr)^{-1}.
\]
If we first consider the case $r=\ell$, then just as in \S\ref{SS:Lconvrxr}, Cauchy's identity (Theorem~\ref{T:Cauchy}) implies that
\begin{multline*}
\bQ_f^{\vee} \bigl(\bszeta^d \bigr)^{-1} \\
= \begin{aligned}[t]
\biggl( 1 - \frac{\chi_{\phi}(f) \ochi_{\psi}(f) f(z)}{f} \bszeta^{rd} \biggr)^{-1} \smash{\sum_{k_1, \ldots, k_{r-1} \geqslant 0}} \bsmu_{\phi,\theta} &\bigl(f^{k_1}, \dots, f^{k_{r-1}} \bigr) \bsnu_{\psi,z} \bigl( f^{k_1}, \dots, f^{k_{r-1}} \bigr) \\
&{}\cdot f^{-k_1-k_2 - \cdots - k_{r-1}} \bszeta^{d(k_1 + 2k_2 + \cdots (r-1)k_{r-1})}.
\end{aligned}
\end{multline*}
Multiplying over all irreducible $f \in A_+$, we obtain the product in~(a). Likewise, as in \S\ref{SS:Lconvrxl}, when $r < \ell$ we apply Bump's specialization of Cauchy's identity (Corollary~\ref{C:Cauchynl}) to obtain
\[
\bQ_f^{\vee}\bigl( \bszeta^d \bigr)^{-1}
= \begin{aligned}[t]
 \smash{\sum_{k_1, \ldots, k_{r} \geqslant 0}} \bsmu_{\phi,\theta} \bigl(f^{k_1}, \dots, f^{k_{r-1}} \bigr) &\bsnu_{\psi,z} \bigl( f^{k_1}, \dots, f^{k_{r}},1, \ldots, 1 \bigr) \\
&{}\cdot \chi_{\phi} \bigl(f^{k_r} \bigr) f^{-k_1-k_2 - \cdots - k_{r}} \bszeta^{d(k_1 + 2k_2 + \cdots + rk_{r})}.
\end{aligned}
\]
Again multiplying over all irreducible $f \in A_+$ yields the product in (b) for $r < \ell$. The case for $r > \ell$ is similar.
\end{proof}

\begin{remark} \label{R:LEELDDeval}
From Theorems~\ref{T:LEErxr} and~\ref{T:LEErxl} and Corollary~\ref{C:deformationconv}, we have
\begin{equation} \label{E:DAeval}
L(\EE(\phi \times \psi)^{\vee},0) = \cL(\DD/\tAA) |_{\bszeta=1}.
\end{equation}
\end{remark}

\subsection{Stark units} \label{SS:Stark}
Combining various aspects of \cites{ANT20, ANT22, APT18, AT17} on regulators, class modules, and Stark units, we investigate these objects for the $t$-modules 
\[
\EE : \bA \to \Mat_{\ell}(A[z][\tau]), \quad \DD : \tbA \to \Mat_{\ell}(A[z,\bszeta][\tau]).
\]
We adapt the definitions and methods of \citelist{\cite{ANT22}*{\S 2--4} \cite{AT17}*{\S 2--3}}, which cover the cases of $t$-modules over algebraic extensions of $K$ and Drinfeld modules over Tate algebras. However, the reader should be aware that there is an interchange in notation, in that $z$ and~$\bszeta$ here correspond with $t$ and $z$ respectively in~\cite{AT17}*{\S 3}.

Using the expression for $\Exp_{\EE}$ from~\eqref{E:ExpLogEE}, we find that
\[
\Exp_{\DD} = \sum_{i=0}^{\infty} B_i \Theta_{\psi,z} \Theta_{\psi,z}^{(1)} \cdots \Theta_{\psi,z}^{(i-1)} \bszeta^i \tau^i \quad \in \power{\Mat_{\ell}(K[z,\bszeta])}{\tau}.
\]
We define unit modules (cf.~\eqref{E:bElattice}),
\begin{align} \label{E:unitmods}
\rU(\EE/\AAA) &\assign \bigl\{ \bz \in \Lie(\EE)(\KK_{\infty}) \bigm| \Exp_{\EE}(\bz) \in \EE(\AAA) \bigr\} = \Exp_{\EE,\KK_{\infty}}^{-1} \bigl( \EE(\AAA) \bigr), \\
\rU(\EE/A[z]) &\assign \bigl\{ \bz \in \Lie(\EE)(\TT_z(K_{\infty})) \bigm| \Exp_{\EE}(\bz) \in \EE(A[z]) \bigr\}, \notag \\
\rU(\DD/\tAA) &\assign \bigl\{ \bz \in \Lie(\DD)(\tKK_{\infty}) \bigm| \Exp_{\DD}(\bz) \in \DD(\tAA) \bigr\}, \notag \\
\rU(\DD/A[z,T]) &\assign \bigl\{ \bz \in \Lie(\DD)(\TT_{z,\bszeta}(K_{\infty})) \bigm| \Exp_{\DD}(\bz) \in \DD(A[z,\bszeta]) \bigr\}. \notag
\end{align}
We see that $\rU(\EE/\AAA)$ is an $\bA$-module and $\rU(\EE/A[z])$ is an $\sA[z]$-module, and likewise $\rU(\DD/\tAA)$ is an $\tbA$-module and $\rU(\DD/A[z,\bszeta])$ is an $\sA[z,\bszeta]$-module. Demeslay~\cite{Demeslay22}*{Prop.~2.8} proved that $\rU(\DD/\tAA)$ is an $\tAA$-lattice in $\Lie(\DD)(\tKK_{\infty})$. In addition to the class module $\rH(\EE) = \rH(\EE/\AAA)$ of~\eqref{E:bEclass}, we also have 
\begin{align}
\rH(\EE/A[z]) &\assign \frac{\EE(\TT_z(K_{\infty}))}{\Exp_{\EE}(\Lie(\EE)(\TT_z(K_{\infty}))) + \EE(A[z])}, \\
\rH(\DD/\tAA) &\assign \frac{\DD(\tKK_{\infty})}{\Exp_{\DD}(\Lie(\DD)(\tKK_{\infty})) + \DD(\tAA)}, \notag \\
\rH(\DD/A[z,\bszeta]) &\assign \frac{\DD(\TT_{z,\bszeta}(K_{\infty}))}{\Exp_{\DD}(\Lie(\DD)(\TT_{z,\bszeta}(K_{\infty}))) + \DD(A[z,\bszeta])}. \notag
\end{align}
Then $\rH(\EE/\AAA)$ is an $\bA$-module and $\rH(\EE/A[z])$ is an $\sA[z]$-module, and likewise $\rH(\DD/\tAA)$ is an $\tbA$-module and $\rH(\DD/A[z,\bszeta])$ is an $\sA[z,\bszeta]$-module. Demeslay~\cite{Demeslay22}*{Prop.~2.8} proved that $\rH(\DD/\tAA)$ is a finite dimensional $\FF_q(z,\bszeta)$-vector space and thus a finitely generated torsion $\tbA$-module.

In \cite{AT17}*{\S 3}, Angl\`es and Tavares Ribeiro prove various properties and relations among these modules in the setting of $\bszeta$-deformations of Drinfeld modules defined over~$A[z]$. As~$\EE$ is defined over $A[z]$, their results transfer readily to this higher dimensional setting.

\begin{proposition}[{cf.~Angl\`es, Tavares Ribeiro \cite{AT17}*{Prop.~1, Lem.~6}}] \label{P:integrality1}
The following hold.
\begin{alphenumerate}
\item $\rU(\EE/\AAA)$ is the $\FF_q(z)$-span of $\rU(\EE/A[z])$ in $\Lie(\EE)(\KK_{\infty})$.
\item $\rU(\EE/A[z])$ is finitely generated as an $\sA[z]$-module.
\item There exists an $\bA$-basis $\bslambda_1, \dots, \bslambda_{\ell}$ of $\rU(\EE/\AAA)$ such that $\bslambda_i \in \rU(\EE/A[z])$ and
\[
\Reg_{\EE} = \det_{1 \leqslant i \leqslant \ell}(\bslambda_i) \in \TT_z(K_{\infty})^{\times}.
\]
\item $[\rH(\EE/\AAA)]_{\AAA} \in A[z] \cap \TT_z(K_{\infty})^{\times}$.
\end{alphenumerate}
\end{proposition}

\begin{proof}
Part~(a) is essentially the same as~\cite{AT17}*{Prop.~1(1)}. Part~(b) is similar to~\cite{AT17}*{Prop.~1(2)}. Indeed by~(a), we note that for $\bslambda \in \rU(\EE/\AAA)$, there is $\delta \in \FF_q[z]$ so that $\delta \bslambda \in \rU(\EE/A[z])$. Thus we can pick an $\bA$-basis $\bslambda_1, \dots, \bslambda_{\ell}$ of $\rU(\EE/\AAA)$ such that $\bslambda_i \in \rU(\EE/A[z])$.  Identifying $\Lie(\EE)(\KK_{\infty})$ with $\KK_{\infty}^{\ell}$, we have $\bslambda_i \in \TT_z(K_{\infty})^{\ell}$. Since $\rU(\EE/\AAA)$ is an $\AAA$-lattice in $\Lie(\EE)(\KK_{\infty})$, any $\bA$-basis of $\rU(\EE/\AAA)$ will also be a $\KK_{\infty}$-basis of $\Lie(\EE)(\KK_{\infty})$. Thus
\[
\rU(\EE/\AAA) = \bigoplus_{i=1}^{\ell} \AAA \bslambda_i, \quad
\KK_{\infty}^{\ell} = \bigoplus_{i=1}^{\ell} \KK_{\infty} \bslambda_i.
\]
Set
\[
N = \bigoplus_{i=1}^{\ell} \sA[z] \cdot \bslambda_i = \bigoplus_{i=1}^{\ell} A[z] \bslambda_i.
\]
Letting $V$ be the $\TT_{z}(K_{\infty})$-span of $N$ and $W$ be the $\TT_{z}(K_{\infty})$-span of $\rU(\EE/A[z])$, we have $V \subseteq W \subseteq \TT_{z}(K_{\infty})^{\ell}$. By part (a), $\rU(\EE/A[z]) \subseteq \FF_q(z) \cdot N$, which implies there exists $\delta \in \FF_q[z] \setminus \{0\}$ so that $\delta W \subseteq V$. Therefore,
\[
\delta \rU(\EE/A[z]) \subseteq V \cap \FF_q(z)N = N,
\]
and so $\rU(\EE/A[z])$ is finitely generated over $\sA[z]$.

For (c)--(d), we proceed as in~\cite{AT17}*{Lem.~6}. Choosing $\bslambda_1, \dots, \bslambda_\ell \in \rU(\EE/A[z]) \subseteq \TT_z(K_{\infty})^{\ell}$ as above, we let
\[
\varepsilon = \det_{1 \leqslant i \leqslant \ell}(\bslambda_i) \in \TT_z(K_{\infty}).
\]
In this way as in \S\ref{SS:Demeslayformula},
\[
\Reg_{\EE} = [ \Lie(\EE)(\KK_{\infty}) : \rU(\EE/\AAA) ]_{\AAA} = \gamma \cdot \varepsilon, \quad \gamma \in \FF_q(z)^{\times},
\]
where $\gamma$ is chosen so that $\Reg_{\EE}$ has sign~$1$ (leading coefficient~$1$ with respect to~$\theta$ as an element of $\laurent{\FF_q(z)}{\theta^{-1}}$) and the covolume $[-:-]_{\AAA}$ is defined as in~\cite{Demeslay22}*{\S 2}. We note that for any $\delta \in \FF_q[z] \setminus \{0\}$,
\[
A[z] \cap \delta \TT_z(K_{\infty}) = \delta A[z].
\]
In particular, since $\TT_z(K_{\infty})$ is a principal ideal domain, we can use the elementary divisors theorem to adjust $\bslambda_1, \dots, \bslambda_\ell$ if necessary to assume that $\varepsilon$ is not divisible in $\TT_z(K_{\infty})$ by any element of $\FF_q[z]$ (i.e., that $\varepsilon$ is primitive). Thus for $\delta \in \FF_q[z]$ irreducible, if we let $\ord_{\delta}$ denote is valuation on $\LL_z(K_{\infty})$, then $\ord_{\delta}(\varepsilon) = 0$.

By Theorem~\ref{T:Demeslay} and Proposition~\ref{P:LEE0},
\[
L(\EE^{\vee},0) = \Reg_{\EE} \cdot [ \rH(\EE/\AAA)]_{\AAA} = \gamma \cdot \varepsilon \cdot [ \rH(\EE/\AAA)]_{\AAA},
\]
and by Theorems~\ref{T:LEErxr} and~\ref{T:LEErxl}, this value is in $\TT_z(K_{\infty})^{\times}$.  Thus for $\delta \in \FF_q[z]$ irreducible,
\[
\ord_{\delta}(\gamma) + \ord_{\delta}([ \rH(\EE/\AAA)]_{\AAA}) = 0.
\]
Since $[ \rH(\EE/\AAA)]_{\AAA}$ has sign~$1$, we have $\ord_{\delta} ([ \rH(\EE/\AAA)]_{\AAA}) \leqslant 0$, and so it must be that $\gamma \in \FF_q[z]$. Since $L(\EE^{\vee},0)$ and $[\rH(\EE/\AAA)]_{\AAA}$ both have sign~$1$, we also have
\[
\sgn(\gamma \cdot \varepsilon) = \gamma \cdot \sgn(\varepsilon) = 1.
\]
Since $\gamma$, $\sgn(\varepsilon) \in \FF_q[z]$, we conclude that both are in $\FF_q^{\times}$. Adjusting the $\bslambda_i$'s by $\FF_q^{\times}$-multiples if necessary, we can arrange that $\gamma = \sgn(\varepsilon) = 1$. It then follows that for each $\delta \in \FF_q[z]$ irreducible, $\ord_{\delta}([ \rH(\EE/\AAA)]_{\AAA}) = 0$, and so $[\rH(\EE/\AAA)]_{\AAA} \in A[z] \subseteq \TT_z(K_{\infty})$. Since $L(\EE^{\vee},0) \in \TT_z(K_{\infty})^{\times}$, we conclude that $\varepsilon$ and $[\rH(\EE/\AAA)]_{\AAA}$ are both in $\TT_z(K_{\infty})^{\times}$.
\end{proof}
\begin{remark}
Choosing $\bslambda_1, \dots, \bslambda_\ell \in \rU(\EE/A[z])$ as in this proposition, we can set for $1 \leqslant i \leqslant \ell$,
\[
\Exp_{\EE}(\bslambda_i) \rassign \bsalpha_i \in \EE(A[z]).
\]
As in \eqref{E:EERegLog1}, if the entries of $\Upsilon_{\psi,z}$ are within the radius of convergence of $\Log_{\phi}$, then we can choose the $\bslambda_i$'s so that $\bsalpha_i = \bs_i \in \EE(\KK_{\infty})$ are the standard basis vectors. Otherwise, the precise polynomials $\bsalpha_i$ are difficult to determine, similar for example to Example~\ref{Ex:four}.
\end{remark}

We now extend the considerations of Proposition~\ref{P:integrality1} to the $\bszeta$-deformation $\DD$ of $\EE$, continuing to follow results in~\citelist{\cite{ANT22}*{\S 2} \cite{AT17}*{\S 3}}.

\begin{proposition}[{cf.~Angl\`es, Ngo Dac, Tavares Ribeiro \citelist{\cite{ANT22}*{\S 2.2} \cite{AT17}*{Prop.~5}}}] \label{P:integrality2}
The following hold.
\begin{alphenumerate}
\item $\rU(\DD/\tAA)$ is the $\FF_q(z,\bszeta)$-span of $\rU(\DD/A[z,\bszeta])$ in $\Lie(\DD)(\tKK_{\infty})$.
\item $\rU(\DD/A[z,\bszeta])$ is finitely generated as an $\sA[z,\bszeta]$-module.
\item There exists an $\tbA$-basis $\bseta_1, \dots, \bseta_{\ell}$ of $\rU(\DD/\tAA)$ such that each $\bseta_i \in \rU(\DD/A[z,\bszeta])$ and
\[
\Reg_{\DD} = \det_{1 \leqslant i \leqslant \ell}(\bseta_i) \in \TT_{z,\bszeta}(K_{\infty})^{\times}.
\]
\item $\rH(\DD/A[z,\bszeta])$ is a finitely generated $\FF_q[z,\bszeta]$-module, and $\rH(\DD/\tAA) = \{0\}$.
\end{alphenumerate}
In particular,
\[
\cL(\DD/\tAA) = \Reg_{\DD}.
\]
\end{proposition}

\begin{proof}
As in the proof of Proposition~\ref{P:integrality1}, part~(a) is similar to~\cite{AT17}*{Prop.~1(1)}. Part~(b) is similar to the proof of Proposition~\ref{P:integrality1}(b).
Part (c) is similar to the proof of Proposition~\ref{P:integrality1}(c), using that $\cL(\DD/\tAA) \in \TT_{z,\bszeta}(K_{\infty})^{\times}$ as in Corollary~\ref{C:deformationconv} and that $\TT_{z,\bszeta}(K_{\infty})$ is a unique factorization domain.

For (d), we proceed as in \citelist{\cite{ANT22}*{Prop.~2.2} \cite{AT17}*{Prop.~2, Prop.~5}}. We have
\[
\TT_{z,\bszeta}(K_{\infty}) = A[z,\bszeta] \oplus \rD,
\]
where $\rD = \{ \alpha \in \TT_{z,\bszeta}(K_{\infty}) \mid \dnorm{\alpha} < 1 \}$. As in \S\ref{SSS:Expfnpers}, $\Exp_{\EE}$ induces an isomorphism of $\FF_q[z,\bszeta]$-modules on $\theta^{-N} \rD^{\ell}$ for $N$ sufficiently large. It follows as in~\cite{Demeslay22}*{Prop.~2.8} that $\rH(\DD/A[z,\bszeta])$ is finitely generated over $\FF_q[z,\bszeta]$, and is thus a finitely generated and torsion $\sA[z,\bszeta]$-module. Furthermore, the $\FF_q(z,\bszeta)$-module generated by $\TT_{z,\bszeta}(K_{\infty})$ is dense in $\tKK_{\infty}$, so the inclusion $\TT_{z,\bszeta}(K_{\infty}) \hookrightarrow \tKK_{\infty}$ implies that the induced map
\begin{equation} \label{E:HDAiso}
\rH(\DD/A[z,\bszeta]) \otimes_{\FF_q[z,\bszeta]} \FF_q(z,\bszeta) \iso \rH(\DD/\tAA)
\end{equation}
is an isomorphism of $\tbA$-modules.

Now as $\TT_{z,\bszeta}(K_{\infty}) = \TT_{z}(K_{\infty}) \oplus \bszeta \TT_{z,\bszeta}(K_{\infty})$, it follows that for $\bx \in \TT_{z,\bszeta}(K_{\infty})^{\ell}$,
\[
\Exp_{\DD}(\bx) \equiv \bx|_{\bszeta=0} \pmod{\bszeta \TT_{z,\bszeta}(K_{\infty})^{\ell}},
\]
and so
\[
\TT_{z,\bszeta}(K_{\infty})^{\ell} = \bszeta \TT_{z,\bszeta}(K_{\infty})^{\ell} + \Exp_{\DD} \bigl( \TT_{z,\bszeta}(K_{\infty})^{\ell} \bigr).
\]
Therefore, multiplication by $\bszeta$ on $\rH(\DD/A[z,\bszeta])$ is surjective, and we have an exact sequence of finitely generated $\FF_q[z,\bszeta]$-modules,
\[
0 \to \rH(\DD/A[z,\bszeta])[\bszeta] \to \rH(\DD/A[z,\bszeta]) \xrightarrow{\bszeta(\cdot)} \rH(\DD/A[z,\bszeta]) \to 0.
\]
Tensoring with $\FF_q(z)$ over $\FF_q[z]$, we have that $\FF_q(z) \otimes_{\FF_q[z]} \rH(\DD/A[z,\bszeta])$ is a finitely generated $\FF_q(z)[\bszeta]$-module on which multiplication by $\bszeta$ is surjective. By the structure theorem of finitely generated $\FF_q(z)[\bszeta]$-modules, it must be a torsion $\FF_q(z)[\bszeta]$-module with no $\bszeta$-torsion. Tensoring then with $\FF_q(z,\bszeta)$ over $\FF_q(z)$ and using the isomorphism in~\eqref{E:HDAiso}, we obtain that $\rH(\DD/\tAA) = \{ 0 \}$.
\end{proof}

\begin{remark} \label{R:EEclass}
Similar considerations show that $\rH(\EE/A[z])$ is a finitely generated $\FF_q[z]$-module and that, as in \eqref{E:HDAiso}, $\FF_q(z) \otimes_{\FF_q[z]} \rH(\EE/A[z]) \cong \rH(\EE/\AAA)$ as $\bA$-modules.
\end{remark}

\subsubsection{Stark units for $\EE=\EE(\phi \times \psi)$}
With Propositions~\ref{P:integrality1} and~\ref{P:integrality2} in hand, we can define the module of Stark units, as in~\citelist{\cite{ANT22}*{\S 2.2} \cite{AT17}*{\S 3}}. See also \cites{ANT17b, ANT20, APT16, APT18, Tavares21}. We especially follow the situation of \cite{AT17}*{\S 3}, where as in our case the coefficient rings include additional variables.

For $n \geqslant 1$, define the map $\ev : \TT_{z,\bszeta}(K_{\infty})^n \to \TT_z(K_{\infty})^n$ by
\[
\ev(\bx) \assign \bx|_{\bszeta = 1}, \quad \bx \in \TT_{z,\bszeta}(K_{\infty})^n,
\]
which is a continuous map of $\FF_q[z]$-algebras. Because $1$ is fixed by Frobenius, for $\bx \in \TT_{z,\bszeta}(K_{\infty})^{\ell}$ we have
\[
\ev \bigl( \Exp_{\DD}(\bx) \bigr) = \Exp_{\EE}\bigl( \ev(\bx) \bigr), \quad
\ev \bigl( \DD_t(\bx) \bigr) = \EE_t \bigl( \ev(\bx) \bigr).
\]
As in \cite{AT17}*{\S 3}, we conclude that we have an induced isomorphism of $\sA[z]$-modules,
\begin{equation}
\frac{\rH(\DD/A[z,\bszeta])}{(\bszeta - 1) \rH(\DD/A[z,\bszeta])} \iso \rH(\EE/A[z]).
\end{equation}
On the level of unit modules we use the definitions in~\eqref{E:unitmods} to obtain an inclusion of $\sA[z]$-modules
\[
\rU_{\St}(\EE/A[z]) \assign \ev \bigl( \rU(\DD/A[z,\bszeta]) \bigr) \subseteq \rU(\EE/A[z]),
\]
which is the module of \emph{Stark units} for $\EE/A[z]$.

\begin{proposition}[{cf.\ Angl\`es, Tavares Ribeiro~\cite{ANT22}*{Prop.~3}}] \label{P:oalphaiso}
Define $\alpha : \TT_{z,\bszeta}(K_{\infty})^{\ell} \to \TT_{z,\bszeta}(K_{\infty})^{\ell}$ by
\[
\alpha(\bx) \assign \frac{1}{\bszeta - 1} \Bigl( \Exp_{\DD}(\bx) - \Exp_{\EE}(\bx) \Bigr).
\]
Then $\alpha$ induces an isomorphism of $\sA[z]$-modules,
\[
\oalpha : \frac{\rU(\EE/A[z])}{\rU_{\St}(\EE/A[z])} \iso \rH(\DD/A[z,\bszeta])[\bszeta - 1].
\]
\end{proposition}

\begin{proof}
The proof is almost the same as \cite{AT17}*{Prop.~3}, but we include the details for completeness. Because $\alpha$ is $\FF_q[z]$-linear, it induces an $\FF_q[z]$-linear map
\[
\oalpha : \rU(\EE/A[z]) \to \rH(\DD/A[z,\bszeta]).
\]
Write $\EE_t = \theta \rI_{\ell} + M_1 \tau + \cdots + M_r \tau^r$ and $\DD_t = \theta \rI_{\ell} + M_1 \bszeta \tau + \cdots + M_r \bszeta^r \tau^r$. For $\bx \in \rU(\EE/A[z])$,
\[
\alpha(\theta \bx) = \DD_t(\alpha(\bx)) + \underbrace{\Biggl( \sum_{j=1}^r \biggl( \frac{\bszeta^j-1}{\bszeta-1}\biggl) M_j \tau^j \Biggr) \bigl( \Exp_{\EE}(\bx) \bigr)}_{\in \DD(A[z,\bszeta])},
\]
where the second term is in $\DD(A[z,\bszeta])$ since $\Exp_{\EE}(\bx) \in A[z]^{\ell}$ by the definition of $\rU(\EE/A[z])$ and each $M_j \in \Mat_{\ell}(A[z])$. Thus $\oalpha$ is a morphism of $\sA[z]$-modules. Furthermore, for $\bx \in \rU(\EE/A[z])$,
\[
(\bszeta - 1)\alpha(\bx) = \Exp_{\DD}(\bx) - \Exp_{\EE}(\bx) \in \Exp_{\DD}\bigl( \TT_{z,\bszeta}(K_{\infty})^{\ell} \bigr) + \DD(A[z,\bszeta]),
\]
and so we have
\begin{equation} \label{E:oalpha}
\oalpha : \rU(\EE/A[z]) \to \rH(\DD/A[z,\bszeta])[\bszeta -1].
\end{equation}

We first show $\im \oalpha = \rH(\DD/A[z,\bszeta])[\bszeta -1]$. Suppose $\bx \in \TT_{z,\bszeta}(K_{\infty})^{\ell}$ represents a class in $\rH(\DD/A[z,\bszeta])[\bszeta -1]$. Then for some $\by \in \TT_{z,\bszeta}(K_{\infty})^{\ell}$ and $\bsalpha \in A[z,\bszeta]^{\ell}$,
\[
(\bszeta - 1)\bx = \Exp_{\DD}(\by) + \bsalpha.
\]
We write $\by = \bu + (z-1)\bv$ with $\bu \in \TT_z(K_{\infty})^{\ell}$, $\bv \in \TT_{z,\bszeta}(K_{\infty})^{\ell}$, and $\bsalpha = \bsbeta + (\bszeta-1)\bsgamma$ with $\bsbeta \in A[z]^{\ell}$, $\bsgamma \in A[z,\bszeta]$. Substituting into the above expression yields
\[
(\bszeta-1) \bigl( \bx - \bsgamma - \Exp_{\DD}(\bv) \bigr) = \Exp_{\DD}(\bu) + \bsbeta,
\]
and evaluating $\bszeta =1$ yields $\ev\bigl( \Exp_{\DD}(\bu) \bigr) + \bsbeta = \Exp_{\EE}(\bu) + \bsbeta = 0$, and so
\[
\bsbeta = -\Exp_{\EE}(\bu) \in A[z]^{\ell}.
\]
By \eqref{E:unitmods} this implies that $\bu \in \rU(\EE/A[z])$, and therefore, in $\TT_{z,\bszeta}(K_{\infty})^{\ell}$,
\[
(\bszeta - 1)\bigl( \bx - \Exp_{\DD}(\bv) - \bsgamma \bigr) = \Exp_{\DD}(\bu) - \Exp_{\EE}(\bu) = (\bszeta - 1) \alpha(\bu).
\]
But then canceling $\bszeta-1$ yields $\bx - \Exp_{\DD}(\bv) - \bsgamma = \alpha(\bu)$, and so $\bx$ represents the same class as $\alpha(\bu)$ in $\rH(\DD/A[z,\bszeta])[\bszeta - 1]$. Since $\bu \in \rU(\EE/A[z])$, the map $\oalpha$ in \eqref{E:oalpha} is surjective.

Next suppose that $\bx \in \rU_{\St}(\EE/A[z])$. By the definition of $\rU_{\St}(\EE/A[z])$, there exist $\bu \in \rU(\DD/A[z,\bszeta])$ and $\bv \in \TT_{z,\bszeta}(K_{\infty})^{\ell}$ such that $\bx = \bu + (\bszeta - 1)\bv$. Therefore,
\[
\Exp_{\DD}(\bx) = \Exp_{\DD}(\bu) + (\bszeta - 1)\Exp_{\DD}(\bv).
\]
But $\Exp_{\DD}(\bu) \in A[z,\bszeta]^{\ell}$ and $\ev(\Exp_{\DD}(\bu)) = \Exp_{\EE}(\bx) \in A[z]^{\ell}$, and so
\[
\Exp_{\DD}(\bx) - \Exp_{\EE}(\bx) = \bigl( \Exp_{\DD}(\bu) - \Exp_{\EE}(\bx) \bigr) + (\bszeta - 1)\Exp_{\DD}(\bv)
\]
is an element of $(\bszeta - 1) \bigl( \Exp_{\DD}(\TT_{z,\bszeta}(K_{\infty})^{\ell}) + \DD(A[z,\bszeta]) \bigr)$. This implies that $\oalpha(\bx) = 0$.

Finally, suppose $\bx \in \rU(\EE/A[z])$ represents an element of $\ker \oalpha$. Then
\[
\alpha(\bx) = \frac{1}{\bszeta - 1} \Bigl( \Exp_{\DD}(\bx) - \Exp_{\EE}(\bx) \Bigr) \in \Exp_{\DD}(\TT_{z,\bszeta}\bigl( K_{\infty})^{\ell} \bigr) + \DD(A[z,\bszeta]).
\]
Since $\Exp_{\EE}(\bx) \in A[z]^{\ell}$, it follows that
\[
\Exp_{\DD}(\bx) \in (\bszeta - 1) \Exp_{\DD}\bigl( \TT_{z,\bszeta}(K_{\infty})^{\ell} \bigr) + \DD(A[z,\bszeta]),
\]
from which it follows that $\bx \in (\bszeta - 1) \TT_{z,\bszeta}(K_{\infty})^{\ell} + \rU(\DD/A[z,\bszeta])$. Since $\bx = \ev(\bx)$, we have $\bx \in \ev(\rU(\DD/A[z,\bszeta])) = \rU_{\St}(\EE/A[z])$, and thus $\ker \oalpha \subseteq \rU_{\St}(\EE/A[z])$.
\end{proof}

We now set
\[
\rU_{\St}(\EE/\AAA) \assign \Span_{\FF_q(z)} \bigl( \rU_{\St}(\EE/A[z]) \bigr) \subseteq \rU(\EE/\AAA),
\]
which is an inclusion of $\bA$-modules. We recall also the covolume $[-:-]_{\AAA}$ defined in~\cite{Demeslay22}*{\S 2}. We obtain the following theorem, which is a direct analogue of \cite{AT17}*{Thm.~1} in our setting (see also \cite{ANT22}*{Thm.~4.7}).

\begin{theorem}[{cf.\ Angl\`es, Tavares Ribeiro~\cite{AT17}*{Thm.~1}}] \label{T:StarkLvalue}
For $\EE = \EE(\phi \times \psi) : \bA \to \Mat_{\ell}(A[z][\tau])$, let $\DD = \DD(\phi \times \psi) : \tbA \to \Mat_{\ell}(A[z,\bszeta][\tau])$ be its $\bszeta$-deformation.
\begin{alphenumerate}
\item $\rU(\EE/\AAA)/\rU_{\St}(\EE/\AAA)$ is a finitely generated torsion $\bA$-module, and
\[
\biggl[ \frac{\rU(\EE/\AAA)}{\rU_{\St}(\EE/\AAA)} \biggr]_{\AAA}
= \bigl[ \rH(\EE/\AAA) \bigr]_{\AAA}.
\]
\item Moreover,
\[
L(\EE(\phi \times \psi)^{\vee},0) = \bigl[ \Lie(\EE)(\AAA) : \rU_{\St}(\EE/\AAA) \bigr]_{\AAA} = \Reg_{\DD}|_{\bszeta = 1} = \cL(\DD/\tAA)|_{\bszeta=1}.
\]
\end{alphenumerate}
\end{theorem}

\begin{proof}
The finite generation of $\rU(\EE/\AAA)$ over $\bA$ follows from Proposition~\ref{P:integrality1}(a)--(b). We also have isomorphisms of $\bA$-modules,
\begin{equation} \label{E:UmodUStisos}
\frac{\rU(\EE/\AAA)}{\rU_{\St}(\EE/\AAA)}
\cong \FF_q(z) \otimes_{\FF_q[z]} \frac{\rU(\EE/A[z])}{\rU_{\St}(\EE/A[z])}
\cong \FF_q(z) \otimes_{\FF_q[z]} \rH(\DD/A[z,\bszeta])[\bszeta - 1],
\end{equation}
where the second isomorphism follows from Proposition~\ref{P:oalphaiso}. In the proof of Proposition~\ref{P:integrality2}, it is shown that $\FF_q(z) \otimes_{\FF_q[z]} \rH(\DD/A[z,\bszeta])$ is a finitely generated torsion $\FF_q(z)[\bszeta]$-module, and hence is finite dimensional over $\FF_q(z)$. Thus, $\FF_q(z) \otimes_{\FF_q[z]} \rH(\DD/A[z,\bszeta])[\bszeta - 1]$ is a finitely generated torsion $\bA$-module, whence so is $\rU(\EE/\AAA)/\rU_{\St}(\EE/\AAA)$.

Evaluation yields an exact sequence of $\bA$-modules,
\begin{multline*}
0 \to (\bszeta - 1) \cdot \FF_q(z) \otimes_{\FF_q[z]} \rH(\DD/A[z,\bszeta]) \to \FF_q(z) \otimes_{\FF_q[z]} \rH(\DD/A[z,\bszeta]) \\
\xrightarrow{\ev} \FF_q(z) \otimes_{\FF_q[z]} \rH(\EE/A[z]) \to 0.
\end{multline*}
Just as in Remark~\ref{R:EEclass}, we have an isomorphism of $\bA$-modules,
\[
\FF_q(z) \otimes_{\FF_q[z]} \rH(\EE/A[z]) \cong \rH(\EE/\AAA),
\]
and thus we obtain an exact sequence of $\bA$-modules,
\begin{multline*}
0 \to \FF_q(z) \otimes_{\FF_q[z]} \rH(\DD/A[z,\bszeta])[\bszeta - 1] \to \FF_q(z) \otimes_{\FF_q[z]} \rH(\DD/A[z,\bszeta]) \\
\xrightarrow{(\bszeta - 1)(\cdot)} \FF_q(z) \otimes_{\FF_q[z]} \rH(\DD/A[z,\bszeta])
\to \rH(\EE/\AAA) \to 0.
\end{multline*}
As the taking of Fitting ideals alternates in exact sequences \cite{AT17}*{\S 2.1}, we conclude that
\[
\bigl[ \rH(\EE/\AAA) \bigr]_{\AAA}
= \bigl[ \FF_q(z) \otimes_{\FF_q[z]} \rH(\DD/A[z,\bszeta])[\bszeta - 1] \bigr]_{\AAA},
\]
Then Proposition~\ref{P:oalphaiso} and~\eqref{E:UmodUStisos} complete the equality in~(a).

For (b), the final equality follows from Proposition~\ref{P:integrality2} upon evaluation of $\bszeta=1$. We already know that $L(\EE(\phi \times \psi)^{\vee},0) = \cL(\DD/\tAA)|_{\bszeta=1}$ from~\eqref{E:DAeval}. Finally, from Theorem~\ref{T:Demeslay} and Proposition~\ref{P:LEE0} we have
\begin{align*}
L(\EE(\phi \times \psi)^{\vee},0)
&= \bigl[ \Lie(\EE)(\AAA) : \rU(\EE/\AAA) \bigr]_{\AAA} \cdot \bigl[ \rH(\EE/\AAA) \bigr]_{\AAA} \\
&= \bigl[ \Lie(\EE)(\AAA) : \rU(\EE/\AAA)  \bigr]_{\AAA} \cdot \biggl[ \frac{\rU(\EE/\AAA)}{\rU_{\St}(\EE/\AAA)} \biggr]_{\AAA} \\
&= \bigl[ \Lie(\EE)(\AAA) : \rU(\EE/\AAA) \bigr]_{\AAA} \cdot \bigl[ \rU(\EE/\AAA) : \rU_{\St}(\EE/\AAA) \bigr]_{\AAA} \\
&= \bigl[ \Lie(\EE)(\AAA) : \rU_{\St}(\EE/\AAA) \bigr]_{\AAA},
\end{align*}
where the first equality follows from~(a), and the remaining ones utilize basic properties of $\AAA$-orders (Fitting ideals) and covolumes (see~\cite{AT17}*{\S 2.1}).
\end{proof}

\subsection{Log-algebraicity considerations} \label{SS:logalg}
If we choose $\bseta_{1}, \dots, \bseta_{\ell} \in \rU(\DD/A[z,\bszeta])$ as in Proposition~\ref{P:integrality2}, then
\[
\Exp_{\DD}(\bseta_i) \rassign \bsbeta_i \in A[z,\bszeta], \quad 1 \leqslant i \leqslant \ell,
\]
generate an $\tbA$-submodule of $\DD(\tAA)$. In this way $\bseta_1, \dots, \bseta_\ell$ are log-algebraic in the sense of \cites{And94, And96}. Determining the precise values of the $\bsbeta_i$'s is challenging, as one sees for log-algebraicity of Drinfeld modules and Anderson $t$-modules over finite extensions of~$K$, as e.g., in~\cites{And96, ANT20, APT18, AT17, AnglesTaelman15, P22, Thakur}. Moreover,
\begin{equation}
\bseta_i|_{\bszeta=1} \in \rU_{\St}(\EE/\AAA), \quad 1 \leqslant i \leqslant \ell,
\end{equation}
form an $\bA$-basis of $\rU_{\St}(\EE/\AAA)$. Applying $\Exp_{\EE}$, the elements
\[
\Exp_{\EE}\bigl( \bseta_i|_{\bszeta=1} \bigr) = \bsbeta_i|_{\bszeta=1} \in \EE(A[z]), \quad 1 \leqslant i \leqslant \ell,
\]
generate a (finitely generated) $\bA$-submodule of $\EE(\AAA)$, namely
\begin{equation}
\cS_{\EE} \assign \Exp_{\EE} \bigl( \rU_{\St}(\EE/\AAA) \bigr),
\end{equation}
which is a candidate for the module of \emph{special points} for $\EE$ in the sense of Anderson~\cite{And96}. Indeed, as pointed out in \cite{Tavares21}*{\S 7.4.3}, Angl\`es and Taelman~\cite{AnglesTaelman15}*{\S 7, Thm.~7.5} proved in the case of the Carlitz module that the image of the module of Stark units is precisely Anderson's module of special points. Moreover, the identity $[\rU(\EE/\AAA)/\rU_{\St}(\EE/\AAA)]_{\AAA} = [\rH(\EE/\AAA)]_{\AAA}$ of Theorem~\ref{T:StarkLvalue} betokens $\cS_{\EE}$ playing the role of the group of circular units for $\EE$. It would be interesting to fully unravel the log-algebraicity theory for $\EE$.

Log-algebraicity of special points on $t$-modules are richly intertwined with special $L$-values, going back to work of Anderson and Thakur~\cite{AndThak90} and Anderson~\cites{And94, And96}. For example, the reader is directed to \cites{ANT17b, ANT20, AnglesPellarin14, APT18, AnglesTaelman15, AT17, CEP18, GreenNgoDac23, GreenP18, Tavares21, Thakur} for applications of log-algebraicity to $L$-values in different contexts. Corollary~\ref{C:deformationconv} and Proposition~\ref{P:integrality2} imply that
\begin{equation}
\begin{cases}
\cL(\tAA, \chi_{\phi} \ochi_{\psi}) \cdot \cL(\bsmu_{\phi,\theta} \times \bsnu_{\psi,z}), &\textup{if $r = \ell$,} \\
\cL(\bsmu_{\phi,\theta} \times \bsnu_{\psi,z}), &\textup{if $r \neq \ell$,}
\end{cases}
\end{equation}
are determinants of logarithms of elements of $\DD(A[z,\bszeta])$, namely $\bseta_1, \dots, \bseta_{\ell}$. Moreover, Theorem~\ref{T:StarkLvalue}, together with Theorems~\ref{T:LEErxr} and~\ref{T:LEErxl} and Remark~\ref{R:LEELDDeval}, implies that
\begin{equation}
\begin{cases}
L(\AAA,\chi_{\phi},\ochi_{\psi},1) \cdot L(\bsmu_{\phi,\theta} \times \bsnu_{\psi,z},0), &\textup{if $r = \ell$,} \\
L(\bsmu_{\phi,\theta} \times \bsnu_{\psi,z},0), &\textup{if $r \neq \ell$,}
\end{cases}
\end{equation}
can then be interpreted as determinants of logarithms of special points in~$\cS_{\EE}$. The exact way that this unfolds for specific Drinfeld modules would be an interesting undertaking.

\begin{bibdiv}
\begin{biblist}

\bib{Aigner}{book}{
   author={Aigner, M.},
   title={A Course in Enumeration},
   series={Graduate Texts in Mathematics},
   volume={238},
   publisher={Springer, Berlin},
   date={2007},
}

\bib{And86}{article}{
   author={Anderson, G. W.},
   title={$t$-motives},
   journal={Duke Math. J.},
   volume={53},
   date={1986},
   number={2},
   pages={457--502},
}

\bib{And94}{article}{
   author={Anderson, G. W.},
   title={Rank one elliptic $A$-modules and $A$-harmonic series},
   journal={Duke Math. J.},
   volume={73},
   date={1994},
   number={3},
   pages={491--542},
}

\bib{And96}{article}{
   author={Anderson, G. W.},
   title={Log-algebraicity of twisted $A$-harmonic series and special values of $L$-series in characteristic $p$},
   journal={J. Number Theory},
   volume={60},
   date={1996},
   number={1},
   pages={165--209},
}

\bib{ABP04}{article}{
  author={Anderson, G. W.},
  author={Brownawell, W. D.},
  author={Papanikolas, M. A.},
  title={Determination of the algebraic relations among special
  $\Gamma$-values in positive characteristic},
  journal={Ann. of Math. (2)},
  volume={160},
  date={2004},
  number={1},
  pages={237--313},
}

\bib{AndThak90}{article}{
  author={Anderson, G. W.},
  author={Thakur, D. S.},
  title={Tensor powers of the Carlitz module and zeta values},
  journal={Ann. of Math. (2)},
  volume={132},
  date={1990},
  number={1},
  pages={159--191},
}

\bib{ANT17a}{article}{
  author={Angl\`es, B.},
  author={Ngo Dac, T.},
  author={Tavares Ribeiro, F.},
  title={Special functions and twisted $L$-series},
  journal={J. Th\'{e}or. Nombres Bordeaux},
  volume={29},
  date={2017},
  number={3},
  pages={931--961},
}

\bib{ANT17b}{article}{
   author={Angl\`es, B.},
   author={Ngo Dac, T.},
   author={Tavares Ribeiro, F.},
   title={Stark units in positive characteristic},
   journal={Proc. Lond. Math. Soc. (3)},
   volume={115},
   date={2017},
   number={4},
   pages={763--812},
}

\bib{ANT20}{article}{
  author={Angl\`es, B.},
  author={Ngo Dac, T.},
  author={Tavares Ribeiro, F.},
  title={On special $L$-values of $t$-modules},
  journal={Adv. Math.},
  volume={372},
  date={2020},
  pages={art. 107313, 33 pp.},
}

\bib{ANT22}{article}{
   author={Angl\`es, B.},
   author={Ngo Dac, T.},
   author={Tavares Ribeiro, F.},
   title={A class formula for admissible Anderson modules},
   journal={Invent. Math.},
   volume={229},
   date={2022},
   number={2},
   pages={563--606},
}

\bib{AnglesPellarin14}{article}{
  author={Angl\`es, B.},
  author={Pellarin, F.},
  title={Functional identities for $L$-series values in positive characteristic},
  journal={J. Number Theory},
  volume={142},
  date={2014},
  pages={223--251},
}

\bib{AnglesPellarin15}{article}{
   author={Angl\`es, B.},
   author={Pellarin, F.},
   title={Universal Gauss-Thakur sums and $L$-series},
   journal={Invent. Math.},
   volume={200},
   date={2015},
   number={2},
   pages={653--669},
}

\bib{APT16}{article}{
   author={Angl\`es, B.},
   author={Pellarin, F.},
   author={Tavares Ribeiro, F.},
   title={Arithmetic of positive characteristic $L$-series values in Tate algebras},
   note={With an appendix by F. Demeslay},
   journal={Compos. Math.},
   volume={152},
   date={2016},
   number={1},
   pages={1--61},
}

\bib{APT18}{article}{
  author={Angl\`es, B.},
  author={Pellarin, F.},
  author={Tavares Ribeiro, F.},
  title={Anderson-Stark units for $\mathbb{F}_q[\theta]$},
  journal={Trans. Amer. Math. Soc.},
  volume={370},
  date={2018},
  number={3},
  pages={1603--1627},
}

\bib{AnglesTaelman15}{article}{
   author={Angl\`es, B.},
   author={Taelman, L.},
   title={Arithmetic of characteristic $p$ special $L$-values},
   note={With an appendix by V. Bosser},
   journal={Proc. Lond. Math. Soc. (3)},
   volume={110},
   date={2015},
   number={4},
   pages={1000--1032},
}

\bib{AT17}{article}{
   author={Angl\`es, B.},
   author={Tavares Ribeiro, F.},
   title={Arithmetic of function field units},
   journal={Math. Ann.},
   volume={367},
   date={2017},
   number={1-2},
   pages={501--579},
}

\bib{Beaumont23}{article}{
   author={Beaumont, T.},
   title={On equivariant class formulas for Anderson modules},
   journal={Res. Number Theory},
   volume={9},
   date={2023},
   number={4},
   pages={art. 68, 33 pp.},
}

\bib{BP20}{article}{
   author={Brownawell, W. D.},
   author={Papanikolas, M. A.},
   title={A rapid introduction to Drinfeld modules, $t$-modules, and $t$-motives},
   conference={
      title={$t$-Motives: Hodge Structures, Transcendence and other Motivic Aspects},
   },
   book={
      series={EMS Ser. Congr. Rep.},
      publisher={EMS Publ. House, Berlin},
   },
   date={2020},
   pages={3--30},
}

\bib{Bump89}{article}{
   author={Bump, D.},
   title={The Rankin-Selberg method: a survey},
   conference={
      title={Number Theory, Trace Formulas and Discrete Groups},
      address={Oslo},
      date={1987},
   },
   book={
      publisher={Academic Press, Boston, MA},
   },
   date={1989},
   pages={49--109},
}

\bib{Carlitz35}{article}{
   author={Carlitz, L.},
   title={On certain functions connected with polynomials in a Galois field},
   journal={Duke Math. J.},
   volume={1},
   date={1935},
   number={2},
   pages={137--168},
}

\bib{Chang20}{article}{
   author={Chang, C.-Y.},
   title={Frobenius difference equations and difference Galois groups},
   conference={
      title={$t$-Motives: Hodge Structures, Transcendence and other Motivic
      Aspects},
   },
   book={
      series={EMS Ser. Congr. Rep.},
      publisher={EMS Publ. House, Berlin},
   },
   date={2020},
   pages={261--295},
}

\bib{CEP18}{article}{
   author={Chang, C.-Y.},
   author={El-Guindy, A.},
   author={Papanikolas, M. A.},
   title={Log-algebraic identities on Drinfeld modules and special $L$-values},
   journal={J. Lond. Math. Soc. (2)},
   volume={97},
   date={2018},
   number={2},
   pages={125--144},
}

\bib{ChangGreenMishiba21}{article}{
   author={Chang, C.-Y.},
   author={Green, N.},
   author={Mishiba, Y.},
   title={Taylor coefficients of Anderson-Thakur series and explicit formulae},
   journal={Math. Ann.},
   volume={379},
   date={2021},
   number={3-4},
   pages={1425--1474},
}

\bib{CP11}{article}{
  author={Chang, C.-Y.},
  author={Papanikolas, M. A.},
  title={Algebraic relations among periods and logarithms of rank $2$ Drinfeld modules},
  journal={Amer. J. Math.},
  volume={133},
  date={2011},
  number={2},
  pages={359--391},
}

\bib{CP12}{article}{
   author={Chang, C.-Y.},
   author={Papanikolas, M. A.},
   title={Algebraic independence of periods and logarithms of Drinfeld
   modules},
   note={With an appendix by B. Conrad},
   journal={J. Amer. Math. Soc.},
   volume={25},
   date={2012},
   number={1},
   pages={123--150},
}

\bib{Cohn}{book}{
   author={Cohn, P. M.},
   title={Further Algebra and Applications},
   publisher={Springer-Verlag London, Ltd., London},
   date={2003},
}

\bib{DemeslayPhD}{thesis}{
   author={Demeslay, F.},
   title={Formules de classes en caract\'{e}ristique positive},
   date={2015},
   type={Th\`{e}se de doctorat},
   organization={Universit\'{e} de Caen Basse-Normandie},
}

\bib{Demeslay22}{article}{
   author={Demeslay, F.},
   title={A class formula for $L$-series in positive characteristic},
   journal={Ann. Inst. Fourier (Grenoble)},
   volume={72},
   date={2022},
   number={3},
   pages={1149--1183},
}

\bib{EP13}{article}{
   author={El-Guindy, A.},
   author={Papanikolas, M. A.},
   title={Explicit formulas for Drinfeld modules and their periods},
   journal={J. Number Theory},
   volume={133},
   date={2013},
   number={6},
   pages={1864--1886},
}

\bib{EP14}{article}{
   author={El-Guindy, A.},
   author={Papanikolas, M. A.},
   title={Identities for Anderson generating functions for Drinfeld modules},
   journal={Monatsh. Math.},
   volume={173},
   date={2014},
   number={4},
   pages={471--493},
}

\bib{Fang15}{article}{
  author={Fang, J.},
  title={Special $L$-values of abelian $t$-modules},
  journal={J. Number Theory},
  volume={147},
  date={2015},
  pages={300--325},
}

\bib{FGHP22}{article}{
   author={Ferrara, J.},
   author={Green, N.},
   author={Higgins, Z.},
   author={Popescu, C. D.},
   title={An equivariant Tamagawa number formula for Drinfeld modules and applications},
   journal={Algebra Number Theory},
   volume={16},
   date={2022},
   number={9},
   pages={2215--2264},
}

\bib{FresnelvdPut}{book}{
   author={Fresnel, J.},
   author={van der Put, M.},
   title={Rigid Analytic Geometry and its Applications},
   series={Progress in Mathematics},
   volume={218},
   publisher={Birkh\"{a}user Boston, Inc., Boston, MA},
   date={2004},
}

\bib{Gekeler91}{article}{
   author={Gekeler, E.-U.},
   title={On finite Drinfeld modules},
   journal={J. Algebra},
   volume={141},
   date={1991},
   number={1},
   pages={187--203},
}

\bib{Gezmis19}{article}{
   author={Gezmi\c{s}, O.},
   title={Taelman $L$-values for Drinfeld modules over Tate algebras},
   journal={Res. Math. Sci.},
   volume={6},
   date={2019},
   number={1},
   pages={art. 18, 25 pp.},
}

\bib{Gezmis20}{article}{
  author={Gezmi\c{s}, O.},
  title={Deformation of multiple zeta values and their logarithmic interpretation in positive characteristic},
  journal={Doc. Math.},
  volume={25},
  date={2020},
  pages={2355--2411},
}

\bib{Gezmis21}{article}{
  author={Gezmi\c{s}, O.},
  title={Special values of Goss $L$-series attached to Drinfeld modules of rank $2$},
  journal={J. Th\'{e}or. Nombres Bordeaux},
  volume={33},
  date={2021},
  number={2},
  pages={511--552},
}

\bib{GezmisNamoijam21}{article}{
  author={Gezmi\c{s}, O.},
  author={Namoijam, C.},
  title={On the transcendence of special values of Goss $L$-functions attached to Drinfeld modules},
  journal={Kyushu J. Math.},
  date={to appear},
  eprint={arXiv:2110.02569, 2021},
}

\bib{GezmisP19}{article}{
   author={Gezmi\c{s}, O.},
   author={Papanikolas, M. A.},
   title={The de Rham isomorphism for Drinfeld modules over Tate algebras},
   journal={J. Algebra},
   volume={525},
   date={2019},
   pages={454--496},
}

\bib{GezmisPellarin22}{article}{
   author={Gezmi\c{s}, O.},
   author={Pellarin, F.},
   title={Trivial multiple zeta values in Tate algebras},
   journal={Int. Math. Res. Not. IMRN},
   date={2022},
   number={18},
   pages={14319--14383},
}

\bib{Goldfeld}{book}{
   author={Goldfeld, D.},
   title={Automorphic Forms and $L$-functions for the Group $\mathrm{GL}(n,\mathbf{R})$},
   series={Cambridge Studies in Advanced Mathematics},
   volume={99},
   note={With an appendix by K. A. Broughan},
   publisher={Cambridge Univ. Press, Cambridge},
   date={2006},
}

\bib{Goss79}{article}{
   author={Goss, D.},
   title={$v$-adic zeta functions, $L$-series and measures for function fields},
   note={With an addendum},
   journal={Invent. Math.},
   volume={55},
   date={1979},
   number={2},
   pages={107--119},
}

\bib{Goss83}{article}{
   author={Goss, D.},
   title={On a new type of $L$-function for algebraic curves over finite fields},
   journal={Pacific J. Math.},
   volume={105},
   date={1983},
   number={1},
   pages={143--181},
}

\bib{Goss92}{article}{
   author={Goss, D.},
   title={$L$-series of $t$-motives and Drinfeld modules},
   conference={
      title={The Arithmetic of Function Fields},
      address={Columbus, OH},
      date={1991},
   },
   book={
      publisher={de Gruyter, Berlin},
   },
   date={1992},
   pages={313--402},
}

\bib{Goss94}{article}{
   author={Goss, D.},
   title={Drinfeld modules: cohomology and special functions},
   conference={
      title={Motives},
      address={Seattle, WA},
      date={1991},
   },
   book={
      series={Proc. Sympos. Pure Math.},
      volume={55},
      publisher={Amer. Math. Soc., Providence, RI},
   },
   date={1994},
   pages={309--362},
}

\bib{Goss95}{article}{
   author={Goss, D.},
   title={The adjoint of the Carlitz module and Fermat's last theorem},
   note={With an appendix by Y. Taguchi},
   journal={Finite Fields Appl.},
   volume={1},
   date={1995},
   number={2},
   pages={165--188},
}

\bib{Goss}{book}{
   author={Goss, D.},
   title={Basic Structures of Function Field Arithmetic},
   series={Ergebnisse der Mathematik und ihrer Grenzgebiete (3)
   },
   volume={35},
   publisher={Springer-Verlag, Berlin},
   date={1996},
}

\bib{Green22}{article}{
   author={Green, N.},
   title={Tensor powers of rank $1$ Drinfeld modules and periods},
   journal={J. Number Theory},
   volume={232},
   date={2022},
   pages={204--241},
}

\bib{GreenNgoDac23}{article}{
   author={Green, N.},
   author={Ngo Dac, T.},
   title={On log-algebraic identities for Anderson $t$-modules and characteristic $p$ multiple zeta values},
   journal={Int. Math. Res. Not. IMRN},
   date={2023},
   number={16},
   pages={13687--13756},
}

\bib{GreenP18}{article}{
  author={Green, N.},
  author={Papanikolas, M. A.},
  title={Special $L$-values and shtuka functions for Drinfeld modules on elliptic curves},
  journal={Res. Math. Sci.},
  volume={5},
  date={2018},
  number={1},
  pages={art. 4, 47 pp.},
}

\bib{HartlJuschka20}{article}{
   author={Hartl, U.},
   author={Juschka, A.-K.},
   title={Pink's theory of Hodge structures and the Hodge conjecture over
   function fields},
   conference={
      title={$t$-Motives: Hodge Structures, Transcendence and other Motivic Aspects},
   },
   book={
      series={EMS Ser. Congr. Rep.},
      publisher={EMS Publ. House, Berlin},
   },
   date={2020},
   pages={31--182},
}

\bib{HsiaYu00}{article}{
   author={Hsia, L.-C.},
   author={Yu, J.},
   title={On characteristic polynomials of geometric Frobenius associated to Drinfeld modules},
   journal={Compositio Math.},
   volume={122},
   date={2000},
   number={3},
   pages={261--280},
}

\bib{Huang23}{article}{
    author={Huang, W.-C.},
    title={Tensor products of Drinfeld modules and convolutions of Goss $L$-series},
    journal={Documenta Math.},
    date={to appear},
    eprint={arXiv:2308.06340, 2023}
}

\bib{KhaochimP23}{article}{
   author={Khaochim, C.},
   author={Papanikolas, M. A.},
   title={Effective rigid analytic trivializations for Drinfeld modules},
   journal={Canad. J. Math.},
   volume={75},
   date={2023},
   number={3},
   pages={713--742},
}

\bib{Maurischat19}{article}{
   author={Maurischat, A.},
   title={On field extensions given by periods of Drinfeld modules},
   journal={Arch. Math. (Basel)},
   volume={113},
   date={2019},
   number={3},
   pages={247--254},
}

\bib{Maurischat21}{article}{
    author={Maurischat, A.},
    title={Abelian equals $A$-finite for Anderson $A$-modules},
    journal={Ann. Inst. Fourier},
    date={to appear},
    eprint={arXiv:2110.11114, 2021}
}

\bib{Maurischat22}{article}{
   author={Maurischat, A.},
   title={Algebraic independence of the Carlitz period and its hyperderivatives},
   journal={J. Number Theory},
   volume={240},
   date={2022},
   pages={145--162},
}

\bib{MaurischatPerkins22}{article}{
   author={Maurischat, A.},
   author={Perkins, R.},
   title={Taylor coefficients of Anderson generating functions and Drinfeld torsion extensions},
   journal={Int. J. Number Theory},
   volume={18},
   date={2022},
   number={1},
   pages={113--130},
}

\bib{NamoijamP24}{article}{
   author={Namoijam, C.},
   author={Papanikolas, M. A.},
   title={Hyperderivatives of periods and quasi-periods for Anderson $t$-modules},
   journal={Mem. Amer. Math. Soc.},
   volume={302},
   date={2024},
   number={1517},
   pages={v+121},
}

\bib{Ore33a}{article}{
   author={Ore, \O.},
   title={On a special class of polynomials},
   journal={Trans. Amer. Math. Soc.},
   volume={35},
   date={1933},
   number={3},
   pages={559--584},
}

\bib{Ore33b}{article}{
   author={Ore, \O.},
   title={Theory of non-commutative polynomials},
   journal={Ann. of Math. (2)},
   volume={34},
   date={1933},
   number={3},
   pages={480--508},
}

\bib{P08}{article}{
   author={Papanikolas, M. A.},
   title={Tannakian duality for Anderson-Drinfeld motives and algebraic
   independence of Carlitz logarithms},
   journal={Invent. Math.},
   volume={171},
   date={2008},
   number={1},
   pages={123--174},
}

\bib{P22}{article}{
   author={Papanikolas, M. A.},
   title={Hyperderivative power sums, Vandermonde matrices, and Carlitz multiplication coefficients},
   journal={J. Number Theory},
   volume={232},
   date={2022},
   pages={317--354},
}

\bib{Pellarin08}{article}{
   author={Pellarin, F.},
   title={Aspects de l'ind\'{e}pendance alg\'{e}brique en caract\'{e}ristique non nulle (d'apr\`{e}s Anderson, Brownawell, Denis, Papanikolas, Thakur, Yu, et al.)},
   note={S\'{e}minaire Bourbaki. Vol. 2006/2007},
   journal={Ast\'{e}risque},
   number={317},
   date={2008},
   pages={Exp. No. 973, viii, 205--242},
}

\bib{Pellarin12}{article}{
   author={Pellarin, F.},
   title={Values of certain $L$-series in positive characteristic},
   journal={Ann. of Math. (2)},
   volume={176},
   date={2012},
   number={3},
   pages={2055--2093},
}

\bib{PellarinPerkins16}{article}{
  author={Pellarin, F.},
  author={Perkins, R. B.},
  title={On certain generating functions in positive characteristic},
  journal={Monatsh. Math.},
  volume={180},
  date={2016},
  number={1},
  pages={123--144},
}

\bib{PellarinPerkins22}{article}{
  author={Pellarin, F.},
  author={Perkins, R.},
  title={On twisted $A$-harmonic sums and Carlitz finite zeta values},
  journal={J. Number Theory},
  volume={232},
  date={2022},
  pages={355--378},
}

\bib{Perkins14}{article}{
   author={Perkins, R. B.},
   title={Explicit formulae for $L$-values in positive characteristic},
   journal={Math. Z.},
   volume={278},
   date={2014},
   number={1-2},
   pages={279--299},
}

\bib{Poonen96}{article}{
   author={Poonen, B.},
   title={Fractional power series and pairings on Drinfeld modules},
   journal={J. Amer. Math. Soc.},
   volume={9},
   date={1996},
   number={3},
   pages={783--812},
}

\bib{vdPS}{book}{
   author={van der Put, M.},
   author={Singer, M. F.},
   title={Galois Theory of Linear Differential Equations},
   series={Grundlehren der mathematischen Wissenschaften},
   volume={328},
   publisher={Springer-Verlag, Berlin},
   date={2003},
}

\bib{Stanley}{book}{
   author={Stanley, R. P.},
   title={Enumerative Combinatorics, Vol. 2},
   series={Cambridge Studies in Advanced Mathematics},
   volume={62},
   note={With a foreword by G.-C. Rota and Appendix 1 by S. Fomin},
   publisher={Cambridge Univ. Press, Cambridge},
   date={1999},
}

\bib{Taelman09}{article}{
   author={Taelman, L.},
   title={Special $L$-values of $t$-motives: a conjecture},
   journal={Int. Math. Res. Not. IMRN},
   date={2009},
   number={16},
   pages={2957--2977},
}

\bib{Taelman10}{article}{
   author={Taelman, L.},
   title={A Dirichlet unit theorem for Drinfeld modules},
   journal={Math. Ann.},
   volume={348},
   date={2010},
   number={4},
   pages={899--907},
}

\bib{Taelman12}{article}{
  author={Taelman, L.},
  title={Special $L$-values of Drinfeld modules},
  journal={Ann. of Math. (2)},
  volume={175},
  date={2012},
  number={1},
  pages={369--391},
}

\bib{Taguchi95}{article}{
   author={Taguchi, Y.},
   title={A duality for finite $t$-modules},
   journal={J. Math. Sci. Univ. Tokyo},
   volume={2},
   date={1995},
   number={3},
   pages={563--588},
}

\bib{Takahashi82}{article}{
   author={Takahashi, T.},
   title={Good reduction of elliptic modules},
   journal={J. Math. Soc. Japan},
   volume={34},
   date={1982},
   number={3},
   pages={475--487},
}

\bib{Tavares21}{article}{
   author={Tavares Ribeiro, F.},
   title={On the Stark units of Drinfeld modules},
   conference={
      title={Arithmetic and Geometry over Local Fields---VIASM 2018},
   },
   book={
      series={Lecture Notes in Math.},
      volume={2275},
      publisher={Springer, Cham},
   },
   date={2021},
   pages={281--324},
}

\bib{Thakur}{book}{
   author={Thakur, D. S.},
   title={Function Field Arithmetic},
   publisher={World Scientific Publishing Co., Inc., River Edge, NJ},
   date={2004},
}

\bib{YuJK95}{article}{
   author={Yu, J.-K.},
   title={Isogenies of Drinfeld modules over finite fields},
   journal={J. Number Theory},
   volume={54},
   date={1995},
   number={1},
   pages={161--171},
}

\end{biblist}
\end{bibdiv}

\end{document}